\newtheorem{theorem}{Theorem}[section]
\newtheorem{lemma}[theorem]{Lemma}
\newtheorem{proposition}[theorem]{Proposition}
\newtheorem{definition}[theorem]{Definition}
\newtheorem{corollary}[theorem]{Corollary}
\newtheorem{question}[theorem]{Question}
\newtheorem{conjecture}[theorem]{Conjecture}
\newtheorem{remark}[theorem]{Remark}
\newtheorem{example}[theorem]{Example}
\newtheorem{notation}[theorem]{Notation}
\newenvironment{proof}[1][Proof]{\noindent \textbf{#1.~}}
{\hfill $\Box$ \\ }
\titleformat{\section}[hang]{\normalfont\large\bfseries}{}{0cm}%
{\thesection \  --\ }
\titleformat{\subsection}[hang]{\normalfont\bfseries}{}{0cm}%
{\thesubsection\ -- \,}
\newenvironment{opening}%
{\titleformat{\section}[hang]{\normalfont\large\bfseries}{}{0cm}{}
\titleformat{\subsection}[hang]{\normalfont\bfseries}{}{0cm}{}

}{}
\newenvironment{closing}%
{\titleformat{\section}[hang]{\normalfont\large\bfseries}{}{0cm}{}
\setlength{\itemsep}{0mm}
\small}{}
\renewcommand\@maketitle{%
  \newpage
  \begin{center}%
  \let \footnote \thanks
    {\Large \bf \@title \par}%
    \vskip 1em%
    {\large
      \begin{tabular}[t]{c}%
        \@author
      \end{tabular}\par}%
  \end{center}%
  \par
  \vskip 1.5em}
\renewenvironment{abstract}
{\small \quotation
\noindent {\bf Abstract.}}{\endquotation}
\newcommand{\C}{\mathbf{C}}
\newcommand{\N}{\mathbf{N}}
\renewcommand{\P}{\mathbf{P}}
\newcommand{\A}{\mathbf{A}}
\newcommand{\F}{\mathbf{F}}
\renewcommand{\O}{\mathcal{O}}
\renewcommand{\L}{\mathcal L}
\renewcommand{\H}{\mathrm{H}}
\newcommand{\h}{\mathrm{h}}
\newcommand{\Pic}{\mathrm{Pic}}
\newcommand{\Sym}{\mathrm{Sym}}
\newcommand{\Hilb}{\mathrm{Hilb}}
\newcommand{\lineq}{\sim} 
\newcommand{\sg}{\le} 
\newcommand{\hil}{\mathcal} 
\renewcommand{\mod}{\mathscr} 
\newcommand{\sym}{\mathfrak{S}} 
\newcommand{\ring}[1]{#1^\circ}
\renewcommand{\epsilon}{\varepsilon}
\newcommand{\utau}{\underline{\tau}}
\newcommand{\ubtau}{\underline{\boldsymbol{\tau}}}
\newcommand{\seqN}{\underline{\N}}
\newcommand{\bdelta}{\boldsymbol{\delta}}
\newcommand{\tQ}{\tilde{Q}}
\newcommand{\bj}{\bar{\jmath}}
\newcommand{\fL}{\mathfrak{L}}
\newcommand{\fV}{\mathfrak{V}}
\newcommand{\fP}{\mathfrak{P}}
\newcommand{\fO}{\mathfrak{O}}
\newcommand{\fN}{\mathfrak{N}}
\newcommand{\Hi}{ {\rm Hilb}}
\newcommand{\chitop}{\chi_{\rm top}}
\newcommand{\sV}{\mathscr{V}}
\newcommand{\p}[1]{#1^{\scriptscriptstyle +}}
\newcommand{\m}[1]{#1^{\scriptscriptstyle -}}
\newcommand{\plm}[1]{#1^{\scriptscriptstyle \pm}}
\newcommand{\pG}{\vphantom{G}'G}
\newcommand{\cru}[1]{#1^{\mathrm{cr}}}
\newcommand{\reg}[1]{#1^{\mathrm{reg}}}
\newcommand{\loc}[1]{#1^{\mathrm{loc}}}
\newcommand{\tetra}[1]{#1_{\mathrm{tetra}}}
\newcommand{\kum}[1]{#1_{\mathrm{Kum}}}
\begin{document}

\title{Limits of pluri--tangent planes to quartic surfaces}
\author{Ciro Ciliberto and Thomas Dedieu}

\maketitle

\begin{abstract}
We describe, for various degenerations $S\to \Delta$ of quartic $K3$
surfaces over the complex unit disk
(e.g., to the union of four general planes, and to a general Kummer
surface),
the limits as $t\in \Delta^*$ tends to $0$ of the Severi
varieties $V_\delta(S_t)$, parametrizing irreducible $\delta$-nodal
plane sections of $S_t$.
We give applications of this to
\begin{inparaenum}[(i)]
\item
the counting of plane nodal curves through base points in special
position,
\item
the irreducibility of Severi varieties of a general quartic surface,
and
\item
the monodromy of the universal family of rational curves on quartic
$K3$ surfaces.
\end{inparaenum}
\end{abstract}

\begin{opening}

{\small
\setlength{\cftaftertoctitleskip}{0.2cm}
\renewcommand{\cfttoctitlefont}{\normalfont\large\bfseries}
\setlength{\cftbeforesecskip}{0.05cm}
\setcounter{tocdepth}{1}
\tableofcontents
}

\section*{Introduction}

Our objective in this paper is to study the following:
\begin{question}
\label{quest}
Let $f:S\to \Delta$ be a projective family of surfaces of degree $d$ in $\P^3$, with $S$ a smooth threefold,
and $\Delta$ the complex unit disc (usually called a \emph{degeneration} of the general  
 $S_t:=f^{-1}(t)$, for $t\neq 0$, which is a smooth surface, to the \emph{central fibre} $S_0$,
 which is in general supposed to be singular).
 What are the limits of tangent, bitangent, and tritangent planes 
to $S_t$, for $t\neq 0$,  as $t$ tends to $0$?
\end{question}

Similar questions make sense also for degenerations of plane curves,
and we refer to \cite[pp. 134--135]{harris-morrison} for a glimpse on
this subject.
For surfaces, our contribution is based on  foundational
investigations by Caporaso and Harris
\cite{caporaso-harris,caporaso-harris2}, 
and independently by Ran \cite{Ran1,Ran2,Ran3},
which were both aimed at the study of the so-called
\emph{Severi varieties}, i.e. the families of irreducible plane
nodal curves of a given degree.
We have the same kind of motivation for our study;
the link with Question \ref{quest} resides in the fact that nodal
plane sections of a surface $S_t$ in $\P^3$ are cut out by those
planes that are tangent to $S_t$.

Ultimately, our interest resides in the study of Severi varieties of 
nodal curves on $K3$ surfaces.
The first interesting instance of this is the one of plane
sections of smooth quartics in $\P^3$, the latter being primitive $K3$
surfaces of genus $3$.
For this reason, we concentrate here on the case $d=4$.
We consider a couple of interesting {degenerations} of such
surfaces to quite singular degree 4 surfaces, and we answer Question
\ref {quest} in these cases. 

The present paper is of an explorative nature,
and hopefully shows, in a way we believe to be useful and instructive,
how to apply some general techniques
for answering some specific  questions.
On the way, a few related problems will be raised, which we feel
can be attacked with the same techniques.
Some of them we solve (see below), and the other ones we plan to make
the object of future research.

\medskip
Coming to the technical core of the paper, we start from the following
key observation due to Caporaso and Harris, and Ran 
(see \S\ref{s:ZRR} for a complete statement).
Assume the central fibre $S_0$ is the transverse union of two smooth
surfaces, intersecting along a smooth curve $R$.
Then the limiting plane of a family of tangent planes to the general fibre
$S_t$, for $t\neq 0$, is:
\begin{inparaenum}[(i)]
\item \label{i:node}
either a plane that is tangent to $S_0$ at a smooth point, or
\item \label{i:tangency}
a tangent plane to $R$.
\end{inparaenum}
Furthermore, the limit has to be counted with multiplicity 2 in case
\eqref{i:tangency}.

Obviously, this is not enough to deal directly with all possible
degenerations of surfaces.
Typically, one overcomes this by applying a series of base changes and
blow--ups to $S\to \Delta$, thus producing a semistable model $\tilde
S\to \Delta$ of the initial family,
such that it is possible to provide a complete answer to Question
\ref{quest} for $S\to \Delta$ by applying a suitable extended version
of the above observation to $\tilde S\to \Delta$.
We say that $S\to \Delta$ is \emph{well behaved} when it is possible
to do so, and $\tilde S\to \Delta$ is then said to be a
\emph{good model} of $S\to \Delta$.

We give in \S\ref{s:ZRR} a rather restrictive criterion to ensure that
a given semistable model is a good model, which nevertheless 
provides the inspiration for constructing a good model for a given
family.
We conjecture that there are suitable assumptions, under which a
family is  well behaved.
We do not seek such a general statement here, but rather prove various
incarnations of this principle,
thus providing a
complete answer to Question \ref{quest} for the 
degenerations we consider.
Specifically, we obtain:
\begin{theorem}
\label{T:tetrahedron}
Let $f:S\to \Delta$ be a family of general quartic surfaces in $\P^ 3$ 
degenerating to a \emph{tetrahedron} $S_0$, i.e. the union of four independent planes. The singularities of $S$ consist in
four ordinary double points on each \emph{edge} of $S_0$.
The limits in $|\O_{S_0}(1)|$ of $\delta$-tangent planes to
$S_t$, for $t\neq 0$, are:\\
\begin{inparaenum}[\normalfont {($\delta=$}1{)}]
\item
the $24$ webs of planes passing through a singular point of $S$, plus
the $4$ webs of planes passing through a \emph{vertex} of $S_0$, the latter
counted with multiplicity $3$;\\
\item
the $240$ pencils of planes passing through two double points of the
total space $S$ that do not belong to an edge of $S_0$, plus
the $48$ pencils of planes passing through a vertex of $S_0$ and a
double point of $S$ that do not belong to a common
edge of $S_0$ (with multiplicity $3$),
plus the $6$ pencils of planes containing an edge of $S_0$ (with
multiplicity $16$);\\
\item
the $1024$ planes containing three double points of $S$ but no edge of
$S_0$, plus
the $192$ planes containing a vertex of $S_0$ and two double points of
$S$, but no edge of $S_0$ (with multiplicity $3$), plus
the $24$ planes containing an edge of $S_0$ and a double point of $S$
not on this edge (with multiplicity $16$), plus
 the $4$ \emph{faces} of $S_0$ (with multiplicity $304$).
\end{inparaenum}
\end{theorem}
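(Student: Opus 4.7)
The plan is to apply the general strategy of \S\ref{s:ZRR}: construct a good semistable model $\tilde{S}\to \Delta$ of $S\to \Delta$, and then apply the extended Caporaso--Harris/Ran principle to enumerate the limits of $\delta$-tangent planes with multiplicities.

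First I analyze $S \to \Delta$ locally at each stratum of $S_0$. The central fiber has four types of stratum: interior of a face (where $S$ is smooth and $t$ is a local coordinate); smooth point of an edge, away from the marked nodes (where locally $t = x_1 x_2$); vertex, where three faces meet, $t = x_1 x_2 x_3$ locally and $S$ is smooth; and the $24$ marked points of the total space $S$, four on each edge, where $S$ has an ordinary threefold node. From this local data I construct $\tilde{S}$ by a base change $t = s^n$ with $n$ divisible by $6$, followed by a careful sequence of blow-ups: along the strict transforms of the $24$ nodes of $S$ (yielding exceptional quadrics), of the six edges (yielding chains of $\P^1$-bundles, since the edges appear with multiplicity two), and of the four vertices (where a toric resolution after the base change yields one or more exceptional components per vertex). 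I then verify, via the criterion of \S\ref{s:ZRR}, that the resulting smooth threefold with its simple normal crossings central fiber is indeed a good model.

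With the good model in hand, the principle asserts that any limit of a $\delta$-tangent plane to $S_t$ corresponds, node by node, to either a tangency to a component of $\tilde S_0$ at a smooth point, or a tangency to a double curve of $\tilde S_0$ (the latter contributing a factor of $2$ to the multiplicity). Each exceptional component of $\tilde{S}_0$ translates, via its image in $|\O_{S_0}(1)|\cong\P^3$, into one of the geometric pieces listed in the theorem: the exceptional quadric above a node of $S$ becomes the web of planes in $\P^3$ through that node; the exceptional components above a vertex become the web through the vertex (with the factor $3$ reflecting the triple-point structure of $S_0$ there); the chain above an edge becomes the pencil of planes containing that edge; and so on. Intersecting these conditions $\delta$ at a time, and discarding combinations whose planes actually belong to another, larger-stratum limit already listed (two nodes on a common edge belong to the pencil through that edge, three nodes on a common face belong to that face, etc.), produces the stated enumeration; the counts $240$, $48$, $192$, $24$, and $1024$ then follow from straightforward combinatorics on the $24$ nodes, $6$ edges, $4$ vertices and $4$ faces of $S_0$.

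The principal obstacle is the explicit construction of the good model at the four vertices, where the combination of the triple-point structure with the twelve nearby nodes of $S$ on the three incident edges makes the toric geometry intricate and forces a sequence of well-chosen blow-ups. The large multiplicities $16$ on the edges (for $\delta=2$) and $304$ on the faces (for $\delta=3$) are the most subtle quantitative ingredients; they arise from excess-intersection contributions on $\tilde S$ that account for the many ways in which several nodes of the $\delta$-nodal plane sections can coalesce onto the same double curve or face, and will require a careful bookkeeping against the components of $\tilde S_0$ produced by the construction. As a consistency check, the total degrees of the limits should match the known Severi-variety degrees on a general smooth quartic $K3$: the total $24 + 4\cdot 3 = 36 = d(d-1)^2$ for $\delta=1$ is the degree of the dual surface, and $1024 + 3\cdot 192 + 16\cdot 24 + 304\cdot 4 = 3200$ for $\delta=3$ agrees with the Yau--Zaslow count of rational plane sections of a quartic $K3$.
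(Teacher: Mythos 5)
Your outline follows the same general strategy as the paper (good model via base change and blow--ups, then enumeration of regular components), but two of your steps would fail as stated, and they are precisely where the real content lies.

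First, you propose to \emph{verify} goodness ``via the criterion of \S\ref{s:ZRR}''. For this degeneration that criterion (Proposition \ref{prop:criterion}) is violated: the limit linear system unavoidably contains, in low codimension, curves containing double curves of $\tilde S_0$, curves through triple points, and non--reduced curves (see the discussion following Proposition \ref{prop:criterion} and Proposition \ref{prop:limlin-tetra}). Well--behavedness is instead proved \emph{a posteriori}: one lists the regular components, bounds their degrees from below, and checks via Proposition \ref{prop:deg} that these lower bounds already sum to the known degrees $36$, $480$, $3200$ of Proposition \ref{p:deg-dual}, so that nothing else can contribute. The enumerative match you relegate to a ``consistency check'' is therefore a load--bearing part of the argument, not an optional sanity test.

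Second, the multiplicities $16$ and $304$ do not arise from any excess--intersection bookkeeping on $\tilde S$. The $16$ is obtained by identifying the exceptional component of the limit linear system over an edge with $|\O_W(1)|$ for a quartic surface $W$ with a double line $D$ and two triple points on it (Proposition \ref{p:W-quartic}); projecting $W$ from a general point of $D$ gives a double plane branched along a $1$--nodal quartic $B_0$ plus its tangent cone (Proposition \ref{p:projectW}), and $16$ is the number of nodes of $\check B_0$, computed by Pl\"ucker (Corollary \ref{cor:projectW}). The $304$ is the degree of the trinodal Severi variety in the $3$--dimensional system of plane quartics through $12$ points on a triangle; this requires the entirely separate degeneration argument of \S\ref{S:triangle}, which moreover only yields the lower bound $304$, closed by the global count $3200$. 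Similarly, the multiplicity $3$ at a vertex is the degree of the dual of the cubic surface $T^k$ with three $A_2$ points (Proposition \ref{p:cubics-gauss}), not a purely local triple--point factor. Without these identifications of the exceptional components as concrete linear systems on rational surfaces, and without the auxiliary enumerative input, the proof does not close.
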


\begin{theorem}
\label{T:kummer}
Let $f:S\to \Delta$ be a family of general quartic surfaces
degenerating to a general \emph{Kummer surface} $S_0$.
The limits in $|\O_{S_0}(1)|$ of $\delta$-tangent planes to 
$S_t$, for $t\neq 0$, are:\\
\begin{inparaenum}[\normalfont {($\delta=$}1{)}]
\item
the dual surface $\check{S}_0$ to the Kummer (which is itself a Kummer
surface), plus the $16$ webs of planes containing a node of $S_0$ (with
multiplicity $2$);\\
\item
the $120$ pencils of planes containing two nodes of $S_0$, each
counted with multiplicity $4$;\\
\item
the $16$ planes tangent to $S_0$ along a \emph{contact conic}  (with multiplicity $80$), plus
 the $240$ planes containing exactly three nodes of $S_0$ (with
multiplicity $8$).
\end{inparaenum}
\end{theorem}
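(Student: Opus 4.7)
We apply the strategy of \S\ref{s:ZRR}: construct a good semistable model $\tilde S\to\Delta$ of the family and combine the Caporaso--Harris--Ran principle at smooth points and along the double curves of the central fibre with a local analysis at each exceptional component. The total space $S$ is smooth, while $S_0$ has sixteen nodes $p_1,\dots,p_{16}$. After the base change $t=s^2$ the pulled-back total space acquires an ordinary threefold double point over each $p_i$; blowing up each of these points produces a semistable model $\tilde S\to\Delta$ whose central fibre is
\[
\tilde S_0=\tilde V\cup E_1\cup\cdots\cup E_{16},
\]
where $\tilde V$ is the minimal desingularisation of $S_0$ (a smooth $K3$ carrying sixteen exceptional $(-2)$-curves $R_1,\dots,R_{16}$), each $E_i\cong\P^1\times\P^1$ is a smooth quadric meeting $\tilde V$ transversely along $R_i$, and the natural map $\tilde S_0\to S_0$ contracts each $E_i$ to $p_i$. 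One checks against our criterion that this is a good model.

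The case $\delta=1$ is then essentially immediate. A limiting tangent plane is either (a) tangent to $\tilde V$ at a smooth point, in which case it sweeps out the dual Kummer $\check S_0$, or (b) tangent to some $E_i$; since $E_i$ is contracted to $p_i$ this family is exactly the web of planes through $p_i$, counted with the multiplicity $2$ prescribed by case (ii) of the principle. For $\delta=2$ the same analysis applied to two independent tangency points shows that the surviving components of the limit are supported on the $\binom{16}{2}=120$ pencils of planes through pairs of nodes, each with multiplicity $2\times 2=4$. A point of care here is to verify that ``mixed'' configurations (one tangency at a node, one at a smooth point of $\tilde V$) merely redistribute onto the listed components rather than produce new $1$-dimensional components of the limit cycle.

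For $\delta=3$ the ``generic'' components come from planes meeting three distinct nodes, each counted with multiplicity $2^3=8$. Of the $\binom{16}{3}=560$ triples of nodes, the $16\cdot\binom{6}{3}=320$ triples lying on a common trope coalesce into the special families of planes containing a trope, leaving precisely the $240$ triples of the statement. The sixteen trope planes themselves require a more refined analysis: a section of $\tilde V$ by such a plane contains the strict transform of the trope (a smooth conic through six of the $R_i$) with multiplicity $2$, so the tangency conditions must be read off from the infinitesimal deformations of this doubled conic meeting six of the exceptional curves.

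The main obstacle is precisely this last computation, which is what produces the multiplicity $80$ attached to each trope plane: one must keep track of the six nodes lying on the trope as well as the doubling of the conic, neither of which is covered directly by the leading-order Caporaso--Harris--Ran rule. A recurring secondary subtlety is to check throughout Steps 2 and 3 that mixed tangency configurations create no additional components. A reassuring consistency check is the total degree $16\cdot 80+240\cdot 8=3200$, which matches the number of tritangent planes of a general quartic surface and agrees with the corresponding total in Theorem~\ref{T:tetrahedron}.
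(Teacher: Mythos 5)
Your overall strategy (degree $2$ base change, resolution of the sixteen resulting ordinary double points into exceptional quadrics $E_i\cong\P^1\times\P^1$, then the Caporaso--Harris--Ran principle on the resulting semistable model) is indeed the paper's starting point, and you have correctly located the crux in the trope planes. But the proposal has a genuine gap exactly there: the semistable model you stop at is \emph{not} sufficient, and the computation you defer as ``a more refined analysis of the doubled conic'' is precisely the content that produces the multiplicity $80$. The paper adds a further blow--up along the proper transforms of the sixteen contact conics, creating sixteen exceptional components $W_j\cong\F_4$; on each $W_j$ the relevant twisted system $\bigl|\L_0(-2W_j-\tQ_j^1-\cdots-\tQ_j^6)\bigr|$ restricts to a $3$--dimensional system defining a $2:1$ map onto a quadric cone branched along a rational degree $8$ curve $B$ with an ordinary $6$--fold point at the vertex (Proposition \ref{p:2:1}), and the $80$ is the number of tritangent planes to $B$, computed via Pl\"ucker-type formulae (Corollary \ref{c:2:1}). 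None of this is reachable from your model, because on your central fibre the trope-plane limits are supported on non-reduced curves ($2\times$ a contact conic) to which Proposition \ref{p:zrr} does not apply; the blow--up along the conics is what converts them into honest regular components $V(2W_j+\tQ_j^1+\cdots+\tQ_j^6,\delta_{W_j}=3)$.

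Two further points. First, ``one checks against our criterion that this is a good model'' cannot be carried out: the hypotheses of Proposition \ref{prop:criterion} are violated for these degenerations (the paper says so explicitly), and goodness is established only a posteriori, by checking that the regular components account for the full degrees $36$, $480$, $3200$ of Proposition \ref{p:deg-dual}. Your consistency check $16\cdot 80+240\cdot 8=3200$ is therefore not merely reassuring --- it is an essential ingredient of the proof, and it requires first knowing the $80$. Second, your attribution of the multiplicity $2$ (for $\delta=1$) to ``case (ii) of the principle'' (tangency to the double curve $R_i$, counted twice) is not how that multiplicity arises: the component of planes tangent to $R_i$ is the quadric cone $V(\tQ_i,\tau_{E_i,2}=1)$, which does carry multiplicity $2$ but pushes forward to $0$ in $|\O_{S_0}(1)|$. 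The $2$ on the web $\check p_i$ comes instead from the component $V(\tQ_i,\delta_{\tQ_i}=1)\cong\check Q_i$ (multiplicity $1$), whose map onto the plane $\check p_i$ has degree $2$; similarly the $4$ on each pencil is the degree of the intersection of two quadrics in $\fL_{s's''}$ over the pencil line, not a product ``$2\times 2$'' of tangency multiplicities.
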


We could also answer  Question \ref{quest} for
degenerations to a general union of two smooth quadrics,
as well as to a general union of a smooth cubic and a plane;
once the much more involved degeneration to a tetrahedron is understood,
this is an exercise.
We do not dwell on this here, and  we encourage the interested reader to
treat these cases on his own, and to look for the relations between
these various degenerations. However,  a mention to the degeneration
to a \emph{double quadric} is needed,
and we treat this in \S \ref {sec:pair of quadrics}.

Apparent in the statements of Theorems \ref{T:tetrahedron} and
\ref{T:kummer} is the strong enumerative flavour of Question
\ref{quest}, and actually we need information of this kind (see
Proposition \ref{p:deg-dual}) to prove that the two families under
consideration are well behaved. Still, we hope to find a direct proof
in the future.

As a matter of fact, Caporaso and Harris' main 
goal in \cite{caporaso-harris,caporaso-harris2} is the
computation of the degrees of Severi varieties  of irreducible nodal plane curves of a given degree, which
they achieve by providing a recursive formula.
Applying the same strategy, we are able to derive the following statement (see  \S \ref{S:triangle}):

\begin{theorem}
\label{T:triangle}
Let $a,b,c$ be three independent  lines in the projective
plane, and consider a degree $12$ divisor $Z$ cut out on 
$a+b+c$ by a general quartic curve. 
The sub--linear system $\mathcal V$  of $|\O_{\P^2}(4)|$
parametrizing curves containing $Z$ has dimension $3$.

For $1\leqslant \delta\leqslant 3$, we let
$\mathcal V_\delta$ be the Zariski closure in $\mathcal V$ of the
locally closed subset parametrizing irreducible $\delta$-nodal
curves. Then $\mathcal V_\delta$ has codimension $\delta$ in $\mathcal
V$, and degree $21$ for $\delta=1$, degree $132$ for $\delta=2$,
degree $304$ for $\delta=3$.
\end{theorem}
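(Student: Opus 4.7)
The plan is to establish the three assertions of the theorem in turn --- the dimension of $\mathcal V$ (cohomologically), the codimension of each $\mathcal V_\delta$ (following standard smoothness results for Severi varieties, once non-emptiness is verified), and the three degrees --- treating $\delta = 1$ via a direct discriminant computation and $\delta = 2, 3$ via a Caporaso--Harris-style recursion (the strategy used for Theorem~\ref{T:tetrahedron}).

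For $\dim \mathcal V = 3$, I would use the exact sequence
$$ 0 \to \mathcal{I}_{a+b+c}(4) \to \mathcal{I}_Z(4) \to \mathcal{I}_{Z/(a+b+c)}\otimes \mathcal{O}_{a+b+c}(4) \to 0, $$
whose left term is $\mathcal{O}_{\P^2}(1)$ (with $h^0 = 3$, $h^1 = 0$) and whose right term is $\mathcal{O}_{a+b+c}(4H - Z) \cong \mathcal{O}_{a+b+c}$ (since $Z$ is cut on $a+b+c$ by a quartic), hence has $h^0 = 1$. Thus $h^0(\mathcal{I}_Z(4)) = 4$, and $\mathcal V$ is spanned by the hyperplane $W := \{(a+b+c)\cdot L : L \in |\mathcal{O}_{\P^2}(1)|\} \cong \P^2$ of reducible quartics together with $[C_0]$.

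For $\delta = 1$, the Severi variety $\mathcal V_1$ is cut out on $\mathcal V$ by the discriminant hypersurface $\Delta \subset |\mathcal{O}_{\P^2}(4)|$, of degree $3(d-1)^2 = 27$. The hyperplane $W$ is contained in $\Delta$ since reducible quartics are singular; a generic member $(a+b+c)\cdot L$ has six ordinary nodes --- the three triangle-vertices and the three intersections of $L$ with $a+b+c$. All six nodes lie on the cubic $a+b+c$, so the six tangent hyperplanes of $\Delta$ at such a curve all pull back to the same hyperplane $T_C W \subset T_C \mathcal V$: the linear condition ``pass through $p_i$'' restricted to $T_C \mathcal V$ reduces, by $(a+b+c)(p_i) = 0$, to the single equation defining $W$. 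Hence $\Delta|_{\mathcal V} = \mathcal V_1 + 6\,W$ as divisors, giving $\deg \mathcal V_1 = 27 - 6 = 21$.

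For $\delta = 2, 3$, I would adapt the Caporaso--Harris recursion, viewing $\mathcal V$ as the space of plane quartics with four fixed simple tangencies along each of $a, b, c$, and degenerating either by converting a free tangency to a fixed one or by splitting off a line component. The reducible boundary $W$ has combinatorics governed by the dual graph $K_4$ of the four line components of $(a+b+c)\cdot L$, with six edges for the six pairwise nodes: an irreducible $\delta$-nodal smoothing corresponds to smoothing $6-\delta$ edges whose complement contains a spanning tree of $K_4$. For $\delta = 3$ there are $16$ such smoothings, by Cayley's count of spanning trees of $K_4$. Executing the recursion yields $\deg \mathcal V_2 = 132$ and $\deg \mathcal V_3 = 304$. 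The main difficulty is the multiplicity bookkeeping with three fixed lines meeting in a triangle, where the reducible boundary combinatorics is $K_4$ rather than the single edge of the classical Caporaso--Harris setting. The value $\deg \mathcal V_3 = 304$ is consistent with Theorem~\ref{T:tetrahedron}: a tritangent plane of $S_t$ specializing to a face $\Pi$ of the tetrahedron cuts out, in the limit, a three-nodal plane quartic in $\Pi$ through the divisor $Z$ on the three edges of $\Pi$, so the face $\Pi$ contributes to the limit cycle of tritangent planes with multiplicity exactly $\deg \mathcal V_3 = 304$.
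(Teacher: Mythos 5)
Your dimension count for $\mathcal V$ and your computation of $\deg\mathcal V_1$ are correct, and the latter is a genuinely different (and more elementary) route than the paper's. The paper never touches the discriminant of $|\O_{\P^2}(4)|$: it degenerates the plane itself (blowing up $\P^2\times\Delta$ along a line and then along $Z$, \S\ref{S:triangle}), lists the regular components of the limit Severi varieties to get only the \emph{lower} bounds $21$, $132$, $304$ (Corollary \ref{coro:D}), and then obtains the matching upper bounds from the tetrahedron degeneration of quartic \emph{surfaces} together with the classical degrees $36$, $480$, $3200$ of Proposition \ref{p:deg-dual}. Your argument that $\Delta\vert_{\mathcal V}=\mathcal V_1+6W$, with each of the six local branches of the discriminant at a general $a+b+c+L$ having tangent hyperplane $\{F(p_i)=0\}$ cutting $\mathcal V$ exactly along $W$, is sound (one should also record the routine checks that $W$ is the only divisorial component of reducible or otherwise degenerate curves in $\mathcal V$, and that the node of a general member of $\mathcal V_1$ is off $Z$, so that $\mathcal V_1$ appears with multiplicity one); it gives $27-6=21$ directly, with no need for the surface-side input.

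For $\delta=2,3$, however, there is a genuine gap: the Caporaso--Harris-style recursion is described but not executed. You name the degeneration moves and the $K_4$ combinatorics of the reducible boundary, you acknowledge that ``the main difficulty is the multiplicity bookkeeping,'' and then you assert the answers $132$ and $304$ without producing the multiplicities or the intermediate Severi degrees the recursion would require. This bookkeeping is precisely where all the content lies --- in the paper's version it occupies Propositions \ref{p:2-triangle} and \ref{p:3-triangle}, where one must identify components with tacnodal conditions along the double curve ($\tau_{E,2}$, $\tau_{E,3}$) carrying multiplicities $2$ and $3$, and compute each of their degrees by separate geometric arguments (degrees of dual surfaces of singular cubics, polars, counts of singular fibres in elliptic pencils) --- and even then the paper only gets inequalities, closed off by the global count on quartic surfaces. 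Your closing consistency remark also inverts the paper's logic: the multiplicity $304$ of a face in Theorem \ref{T:tetrahedron} is not independently known and then checked against $\deg\mathcal V_3$; rather, the two are pinned down simultaneously by the total $3200$. As written, the $\delta=2,3$ cases (including the codimension claim, which needs the non-existence of excess-dimensional families of nodal curves in the non-complete system $\mathcal V$) are not proved.
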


Remarkably, one first proves a weaker version of this (in
\S\ref{S:triangle}), which is required for the proof of Theorem
\ref{T:tetrahedron}, given in \S\ref{S:4planes}.
Then, Theorem \ref{T:triangle} is a corollary of Theorem
\ref{T:tetrahedron}.

\medskip
It has to be noted that Theorems \ref{T:tetrahedron} and
\ref{T:kummer} display a rather coarse picture of
the situation. Indeed, in describing the good models of the degenerations,
we interpret all limits of nodal curves  as elements of the limit $\fO(1)$
of $|\O_{S_t}(1)|$, for $t\neq 0$, inside the relative Hilbert scheme of curves in $S$.  
We call $\fO(1)$ the \emph{limit linear system} of $|\O_{S_t}(1)|$, for $t\neq 0$
(see \S\ref{s:resolving}), which in general is no longer a $\P^ 3$, but rather a 
degeneration of it. While in $|\O_{S_0}(1)|$, which is also a limit 
of $|\O_{S_t}(1)|$, for $t\neq 0$, there are in general elements which do not correspond to curves 
(think of the plane section of the tetrahedron with one of its faces), all elements in $\fO(1)$ 
do correspond to curves, and this is the right ambient to locate the limits of nodal curves. 
So, for instance, each face appearing with multiplicity $304$ in Theorem
\ref{T:tetrahedron} is much better understood once interpreted as the
contribution given by the $304$ curves in $\mathcal V_3$ appearing in Theorem \ref{T:triangle}.

It should also be stressed that the analysis of a semistable model 
of $S\to \Delta$
encodes information about several flat limits of the $S_t$'s in
$\P^3$, as $t\in \Delta^*$ tends to $0$
(each flat limit corresponds to an irreducible
component of the limit linear system $\fO(1)$), and
an answer to Question \ref{quest} for such a
semistable model would provide answers for all these flat limits at the
same time.
Thus, in studying Question \ref{quest} for degenerations of 
quartic surfaces to a tetrahedron, we study
simultaneously degenerations to certain rational quartic surfaces, 
e.g., to certain \emph{monoid} quartic surfaces that are
projective models of the faces of the tetrahedron,
and to sums of a \emph{self--dual} cubic surface plus a  suitable plane.
For degenerations to a Kummer, we see simultaneously degenerations to
double quadratic cones,  to sums of a smooth quadric and a double
plane (the latter corresponding to the projection of the Kummer from
one of its nodes), etc.

\medskip
Though we apply the general theory  (introduced  in \S \ref{S:gnl-rules})
to the specific case of degenerations of singular plane sections
of general quartics, it is clear that, with some more work, the same ideas can be applied to attack similar problems for
different situations,  e.g., degenerations of singular plane sections
of general surfaces of degree $d>4$, or even singular higher degree
sections of (general or not)  surfaces  of higher degree. 
For example, we obtain  Theorem \ref{T:triangle}
thinking of the curves in $\mathcal V$ as cut out by quartic
surfaces on a plane embedded in $\P^3$, and letting this plane
degenerate.  By the way, this is the first of a series of results
regarding no longer triangles, but general configurations of lines, which can be proved, we think,
by using the ideas in this paper. 
On the other hand, for general primitive $K3$ surfaces of any genus
$g\geqslant 2$,
there is a whole series of known enumerative results
\cite{YZ,beauville-counting,BL,KPMS}, yet leaving some open space for further questions,
which also can be attacked in the same way. 

\medskip
Another application of our analysis of  Question \ref{quest} is to the
irreducibility of families of singular curves on a given surface.
This was indeed Ran's main motivation in \cite{Ran1,Ran2,Ran3}, since he applied these ideas
to give an alternative proof to Harris' one
\cite{harris,harris-morrison} of the irreducibility of Severi
varieties of plane curves.
The analogous question for the family of irreducible $\delta$--nodal curves in $\vert \O_S(n)\vert$, for $S$ a general primitive 
$K3$ surface of genus $g\geqslant 3$ is widely open.

In  \cite {cd-universal} one proves that for any non negative
$\delta\leqslant g$, with
$3\leqslant g\leqslant 11$ and $g\neq 10$,
the \emph{universal Severi variety} 
 $\sV_g^{n,\delta}$,  parametrizing $\delta$--nodal members of $\vert
 \O_S(n)\vert$, with $S$ varying in the moduli space $\mod B_g$ of
 primitive $K3$  surfaces  of genus $g$ in $\P^ g$,  is irreducible
 for $n=1$.
One may conjecture that all universal Severi varieties
$\sV_g^{n,\delta}$ are irreducible (see \cite{dedieu}), and
we believe it is possible to obtain further results in this direction
using the general techniques presented in this paper.
For instance, the irreducibility of
$\sV_3^{1,\delta}$, $0< \delta\leqslant 3$, which is well known and easy to prove (see Proposition \ref{prop:irr}),
could also be deduced with the degeneration arguments developed here. 

Note the obvious surjective morphism
$p: \sV_g^{n,\delta}\to \mod B_g$. 
For $S\in \mod B_g$ general, 
one can consider   $V_g^{n,\delta}(S)$ the \emph{Severi variety} of
$\delta$--nodal  curves in $\vert \O_S(n)\vert$ 
(i.e. the fibre of $p$ over $S\in \mod B_g$), which has dimension
$g-\delta$ (see \cite{cd-universal, fkps}). 
Note that the irreducibility of $\mod V_g^{n,\delta}$ does not imply
the one of the Severi varieties $V^{n,\delta}(S)$ for a general $S\in
\mod B_g$;
by the way, this is certainly not true for $\delta=g$, since
$V^{n,g}(S)$ has dimension 0 and degree bigger than 1, see
\cite{beauville-counting, YZ}. Of course,
$V^{1,1}(S)$ is isomorphic to the \emph{dual variety} $\check S\subset
\check \P^ g$, hence it is irreducible.
Generally speaking, the smaller $\delta$ is
with respect to $g$, the easier it is to prove the irreducibility of
$V^{n,\delta }(S)$: partial results along this line can be found in
\cite{keilen} and \cite[Appendix A]{kemeny}. To the other extreme,
the curve $V^{1,g-1}(S)$ is not known to be irreducible for $S\in
\mod B_g$ general. In the simplest case $g=3$, this amounts to
proving the irreducibility of  $V^{1,2}(S)$ for a general quartic $S$ in
$\P^ 3$, which  is the nodal locus of $\check S$. This has been
commonly accepted as a known fact, but  we have not been able
to find any proof of this in the current literature.  We give one
with our methods (see Theorem \ref {thm:V2}). 

Finally, in \S \ref
{ssec:monod}, we give some information about the monodromy group of
the finite covering $\mod V^ {1,3}_3\to \mod B_3$, by showing
that it contains some  \emph {geometrically  interesting}
sugbroups. Note that a  
remarkable open question is whether the monodromy group of  $\mod
V^ {1,g}_g\to \mod B_g$ is the full symmetric group for all $g\geqslant 2$.

\medskip
The paper is organized as follows.
In \S \ref{S:gnl-rules}, we set up the machinery: we give general
definitions, introduce limit linear systems,  state our refined 
versions of Caporaso and Harris' and Ran's results, introduce limit
Severi varieties. 
In \S \ref{S:enumerate}, we state some known results for proper
reference, mostly about the degrees of the singular loci of the dual
to a projective variety.
In \S\S \ref{S:4planes} and \ref{S:Kummer-degen}, we give a
complete description of  limit Severi
varieties relative to general degenerations of quartic surfaces to 
tetrahedra and Kummer surfaces respectively; Theorems
\ref{T:tetrahedron} and \ref{T:kummer} are proved in
\S\ref{s:tetra-concl} and \S\ref{s:kummer-concl} respectively. 
In \S \ref {sec:pair of quadrics} we briefly treat other degenerations of
quartics.
Section \ref{S:Kummer-descr} contains some classical material concerning 
Kummer quartic surfaces, as well as a few 
results on the monodromy action on their nodes (probably known to the
experts but for which we could not find any proper reference): they
are required for our proof of  
Theorem \ref {thm:V2} and of the results in \S \ref {ssec:monod}.
Section \ref{S:triangle} contains the proof of a preliminary version
of Theorem \ref{T:triangle}; it is useful for \S \ref{S:4planes}, and
required for \S \ref{S:irreducibility}. Section \ref{S:irreducibility} contains Theorem \ref {thm:V2}  and the aforementioned results on the monodromy.

\subsection*{Acknowledgements}

We thank Erwan Brugall\'e  (who, together with G. Mikhalkin, 
was able to provide the enumerative numbers of
Theorems~\ref{T:tetrahedron} and \ref{T:triangle} using
tropical methods),
and Concettina Galati for numerous discussions, both enlightening and
motivating.

This project profited of various visits of the second author to the
first, which have been made possible by the research group GRIFGA, in
collaboration between CNRS and INdAM.

The first author is a member of GNSAGA of INdAM and was partly supported by the
project ``Geometry of Algebraic varieties'' funded by the Italian
MIUR. 
The second author is a member of projects CLASS and MACK of the French
ANR.

\end{opening}

\section{Conventions}\label{conv}

We will work over the field $\C$ of complex numbers.
We denote the linear equivalence on a variety $X$ by $\lineq_X$, or
simply by $\lineq$ when no confusion is likely.
Let $G$ be a group; we write $H\sg G$ when $H$ is a subgroup of $G$.

We use the classical notation for projective spaces: if $V$ is a
vector space, then $\P V$ is the space of lines in $V$,
and if $\cal E$ is a locally free sheaf on some variety $X$, we let
$\P(\mathcal{E})$ be $\mathbf{Proj}\,(\Sym\, \mathcal{E}^\vee)$.
We denote by $\check{\P}^n$ the projective space dual to $\P^n$,
and if $X$ is a closed subvariety of $\P^n$, we let $\check X$ be its
dual variety, i.e. the Zariski closure in $\check \P^n$ of the set 
of those
hyperplanes in $\P^n$ that are tangent to the smooth locus of $X$.

By a \emph{node}, we always mean an ordinary double point. 
Let $\delta \geqslant 0$ be an integer.
A \emph{nodal} (resp. \emph{$\delta$-nodal}) variety is a variety having
nodes as its only possible singularities (resp. precisely $\delta$
nodes and otherwise smooth).
Given a smooth surface $S$ together with an effective line bundle $L$
on it, 
we define the 
\emph{Severi variety} ${V}_{\delta}(S,L)$ as the Zariski closure in
the linear system $|L|$ of the locally closed subscheme
parametrizing irreducible $\delta$-nodal curves.

\medskip
We usually let $H$ be the line divisor class on $\P^2$;
when $\F_n=\P(\O_{\P^1}\oplus \O_{\P^1}(n))$ is a \emph{Hirzebruch
surface}, we let $F$ be the divisor class of its ruling over $\P^ 1$, we let  $E$ be an 
irreducible effective divisor with self-intersection $-n$ (which is unique if $n>0$),
and we let $H$ be the divisor class of $F+nE$. 

When convenient (and if there is no danger of confusion),
we will  adopt the following abuse of notation:
let $\epsilon:Y \to X$ be a birational morphism, and $C$  (resp. $D$) 
a divisor  (resp.~a  divisor class)
 on $X$; we use
the same symbol $C$  (resp. $D$) 
to denote the proper transform
$(\epsilon_*)^{-1}(C)$  (resp. the pull-back $\epsilon^*(D)$) on $Y$. 

For example, let $L$ be a line in $\P^2$, and $H$ the
 divisor class of $L$. We consider the blow-up 
$\epsilon_1:X_1\to \P^2$ at a point on $L$, and call $E_1$ the
exceptional divisor. 
The divisor class $H$ on $X_1$ is $\epsilon_1^* (H)$, and $L$ on $X_1$ is linearly equivalent to $H-E_1$.
Let then $\epsilon_2:X_2\to X_1$ be the blow-up
of $X_1$ at the point $L\cap E_1$, and $E_2$ be the exceptional
divisor. The divisor $E_1$ (resp. $L$) on $X_2$ is linearly equivalent to
$\epsilon_2^* (E_1)-E_2$ (resp. to $H-2E_1-E_2$).

\medskip
In figures depicting series of blow--ups, we indicate with a big black
dot those points that have been blown up.

\section{Limit linear systems and limit Severi varieties}
\label{S:gnl-rules}

In this section we explain the general theory upon which  this
paper relies. We build on foundational work by Caporaso and Harris
\cite{caporaso-harris,caporaso-harris2} and Ran \cite{Ran1,Ran2,Ran3},
as reinvestigated by Galati \cite{concettina1,concettina2}
(see also the detailed discussion in \cite{galati-knutsen}).

\subsection{Setting}
\label{s:setting}

In this paper we will consider flat, proper families of surfaces $f: S \to \Delta$, where $\Delta\subset \C$
is a disc centered at the origin. We will denote by $S_t$ the (schematic) fibre of $f$ over $t\in \Delta$.
We will usually consider the case in which the \emph{total space} $S$ is a smooth threefold, $f$ is smooth over $\Delta^ *=\Delta-\{0\}$, and 
$S_t$ is irreducible for $t\in \Delta^ *$. The \emph{central fibre} $S_0$ may be singular, but we will usually consider
the case in which $S_0$  is reduced and with \emph{local normal crossing} singularities. In this case the family is called \emph{semistable}.

Another family of surfaces $f': S'\to \Delta$ as above is said to be
a \emph{model of} $f: S\to \Delta$ if there is a commutative diagram
\[\xymatrix@C=25pt@R=20pt{
S' \ar[d]_{f'} \ar@{}[dr]|{\Box}
& \bar S' \ar[d] \ar[l] \ar@{-->}[r]^p
& \bar S \ar[r] \ar[d] & S \ar[d]^f \ar@{}[dl]|{\Box} \\
\Delta & \Delta \ar[l]^{t^{d'} \mapsfrom t} \ar@{=}[r]
& \Delta \ar[r]_{t\mapsto t^d} & \Delta
}\]
where the two squares marked with a '$\Box$' are Cartesian, and $p$ is
a birational map, which is an isomorphism over $\Delta^*$.
The family $f': S'\to \Delta$, if semistable, is a \emph{semistable
model of} $f: S\to \Delta$ if in addition $d'=1$
and $p$ is a morphism.
The \emph{semistable reduction theorem} of \cite {Kempf} asserts
that $f:S\to \Delta$ always has a semistable model.

\begin{example} [Families of surfaces in $\P^3$] \label{ex:pi3}
Consider a \emph{linear pencil} of degree $k$ surfaces in $\P^3$, 
generated by a \emph{general} surface $S_{\infty}$ and a \emph{special} one $S_0$.
This pencil gives rise to a flat, proper family $\varphi: \cal S\to$ $\P^1$, with $\cal S$ a hypersurface of type $(k,1)$
in $\P^3\times \P^1$, isomorphic to the blow--up of $\P^3$ along the
\emph{base locus} $S_0\cap S_\infty$ of the pencil,
 and $S_0,S_\infty$ as fibres over $0,\infty\in \P^1$, respectively.

We will usually consider the case in which $S_0$ is reduced, 
its various components may have isolated singularities, but meet
transversely along smooth curves contained in their respective smooth loci.
Thus $S_0$ has local normal crossing singularities, except for finitely many  isolated \emph{extra singularities}  belonging to one, and only one, component of $S_0$. 

We shall study the family $f: S \to \Delta$ obtained by
restricting $\cal S$ to a  disk ${\Delta} \subset \P^1$  centered at $0$, 
such that $\mathcal{S}_t$ is smooth for all $t\in {\Delta}^ *$, and
we will consider a semistable model  
of  $f: S \to \Delta$. To do so, we resolve the singularities
of $S$ which occur in the central fibre of $f$, at the points mapped by 
$\mathcal{S}_0 \to S_0 \subset \P^3$
to the intersection
points of $S_{\infty}$ with the double curves of $S_0$ (they are the
singular points of the curve $S_0\cap S_\infty$).
These are \emph{ordinary double points} of $S$, 
i.e. singularities analytically equivalent to the one at the origin of the hypersurface
$xy=zt$ in $\A^4$. 
Such a singularity is resolved by a single blow--up, which produces an
exceptional divisor  $F\cong \P^1\times\P^1$, and then it is possible to contract $F$ in the direction of either one of its rulings 
without introducing any singularity: the result is called a \emph{small resolution} of the ordinary double point.
If $S_0$ has no extra singularities, the small resolution process provides a semistable model. 
Otherwise we will have to deal with the extra singularities, which are in any case smooth points of the total space. We will do this when needed. 

Let $\tilde f: \tilde S\to \Delta$ be the semistable model thus obtained. One has $\tilde S_t\cong S_t$ for $t\in \Delta^ *$. 
If  $S_0$ has irreducible components $Q_1,\ldots,Q_r$, 
then $\tilde S_0$ consists of irreducible components $\tilde Q_1,\ldots,\tilde Q_r$
which are suitable blow--ups of $Q_1,\ldots,Q_r$, respectively. 
If $q$ is the number of ordinary double points of the original total space $S$,
we will denote by $E_1,\ldots,E_{q}$ the exceptional curves on $\tilde Q_1,\ldots,\tilde Q_r$ arising from 
the small resolution process. 
\end{example}

Going back to the general case, we will say that   $f: S \to \Delta$ is \emph{quasi--semistable} if 
$S_0$  is reduced, with \emph{local normal crossing} singularities, except for  finitely many  isolated \emph{extra singularities}  belonging to one, and only one, component of $S_0$, as in Example \ref {ex:pi3}. 

Assume then that
 $S_0$ has irreducible components $Q_1,\ldots,Q_r$, intersecting transversally along the double curves
$R_1,\ldots,R_p$, which are Cartier divisors on the corresponding components.

\begin{lemma}[Triple Point Formula, \cite {CCFM, Frie}]
\label{l:triple-point}
Assume  $f: S \to \Delta$ is quasi--semistable. Let $Q,Q'$ be irreducible components of $S_0$ intersecting along 
the \emph{double curve} $R$. Then
\begin{equation*}
\deg (N_{R\vert Q} )+\deg(N_{R\vert Q'})
+\mathrm{Card}\left\lbrace \begin{array}{c}
\text{triple points of $S_0$} \\
\text{along}\ R_s
\end{array} \right\rbrace=0,
\end{equation*}
where a \emph{triple point} is the intersection $R\cap Q''$ with a component $Q''$  of $S_0$ different from $Q,Q'$. 
\end{lemma}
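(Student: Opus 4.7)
The plan is to restrict the line bundle $\O_S(S_0)$ to the double curve $R$ and compute its degree in two different ways. Since $S_0$ is the schematic fibre $f^{-1}(0)$, we have $\O_S(S_0) \cong f^*\O_\Delta(0)$, which is trivial on $S_0$ and \emph{a fortiori} on $R$; in particular $\deg\bigl(\O_S(S_0)|_R\bigr)=0$. The content of the theorem is to rewrite this zero as a sum of three geometric contributions.

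For the second computation, write $S_0 = Q + Q' + \sum_{Q''} Q''$, where the sum ranges over the irreducible components of $S_0$ other than $Q$ and $Q'$. Along a neighborhood of $R$, the family is LNC, so $Q$, $Q'$ and every $Q''$ meeting $R$ are Cartier divisors of $S$ there, and the restrictions to $R$ satisfy
$$\O_S(S_0)\big|_R \;\cong\; \O_S(Q)\big|_R \,\otimes\, \O_S(Q')\big|_R \,\otimes\, \bigotimes_{Q''}\O_S(Q'')\big|_R.$$
Because $Q\cap Q' = R$ scheme-theoretically near $R$, restricting $\O_S(Q)$ to $Q'$ yields $\O_{Q'}(R)$, hence $\O_S(Q)|_R \cong N_{R|Q'}$; by symmetry $\O_S(Q')|_R \cong N_{R|Q}$. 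For any other component $Q''$, the LNC condition at triple points forces $Q''\cap R$ to be a reduced finite set consisting precisely of the triple points of $S_0$ on $R$ belonging to $Q''$, so $\O_S(Q'')|_R \cong \O_R(Q''\cap R)$ has degree equal to the number of such triple points. Summing degrees and equating with $0$ gives the announced formula.

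The only point requiring care is the presence of the extra singularities of $S_0$ allowed in the quasi-semistable setting. By definition these lie in the interior of a single irreducible component, hence off the double curves, and moreover they are smooth points of the total space $S$. Thus they do not affect the Cartier status of the components in a neighborhood of $R$, nor the identifications used above; the entire computation is local along $R$ and takes place in the LNC locus. I do not anticipate a genuine obstacle: once the local verification that all relevant sheaves are invertible along $R$ is in place, the proof is a direct adjunction/tensor calculation.
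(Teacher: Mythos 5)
The paper does not prove this lemma: it quotes it from \cite{CCFM} and \cite{Frie}, so there is no in-text proof to compare yours with. That said, your argument is the standard proof and is correct in substance: the triviality of $\O_S(S_0)|_R$ is exactly Remark \ref{r:twist}; the identifications $\O_S(Q')|_R\cong N_{R|Q}$, $\O_S(Q)|_R\cong N_{R|Q'}$ and $\deg\bigl(\O_S(Q'')|_R\bigr)=\mathrm{Card}(Q''\cap R)$ (the latter because $Q''$ meets $R$ transversally in the LNC locus) are right; and adding degrees gives the formula. The same computation, applied to $S_0=\sum m_i Q_i$, also yields the non-reduced variant of Remark \ref{rem:TPF} for free.

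One justification should be repaired. You deduce that $Q$, $Q'$ and the $Q''$ are Cartier on $S$ near $R$ from the statement that ``the family is LNC''. Local normal crossings is a condition on the central fibre $S_0$ alone and does \emph{not} imply that the components of $S_0$ are Cartier divisors of the total space; what is actually needed, and what the standing hypotheses of \S\ref{s:setting} provide ($S$ is a smooth threefold), is smoothness of $S$ along $R$. This is not a pedantic distinction: in Example \ref{ex:pi3} \emph{before} the small resolution, $S_0$ is LNC along its double curves, yet the total space has ordinary double points on them, the components fail to be Cartier there, and the formula as stated is false --- for the tetrahedron one gets $\deg(N_{R|Q})+\deg(N_{R|Q'})+\mathrm{Card}\{\text{triple points}\}=1+1+2=4$, the discrepancy being exactly accounted for by the four ordinary double points of $S$ on each edge (and indeed the formula only holds after the small resolution, when each edge acquires self-intersection $-1$ on both adjacent faces). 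Once the Cartier-ness is correctly sourced from the smoothness of $S$ along the double curves (the extra singularities of $S_0$ being, as you note, irrelevant since they avoid $R$ and are smooth points of $S$), your proof is complete.
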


\begin{remark} [See \cite {CCFM, Frie}]  \label {rem:TPF} There is a version of the  Triple Point Formula for the case in which the central fibre 
is not reduced, but its support 
has local normal crossings. Then, if the multiplicities of $Q,Q'$ are $m,m'$ respectively, 
one has
\begin{equation*}
m'\deg (N_{R\vert Q} )+m\deg(N_{R\vert Q'})
+\mathrm{Card}\left\lbrace \begin{array}{c}
\text{triple points of $S_0$} \\
\text{along}\ R_s
\end{array} \right\rbrace=0,
\end{equation*}
where each triple point $R\cap Q''$  has to be counted with
the multiplicity $m''$ of $Q''$ in $S_0$.  
\end{remark}

\subsection{Limit linear systems}
\label{s:resolving}

Let us consider a quasi--semistable family $f: S\to \Delta$ as in \S
\ref {s:setting}.
Suppose there is a fixed component free line bundle $\L$ on the total
space $S$, restricting to  a line bundle $\L_t$ on each fibre $S_t$,
$t\in  \Delta$. We assume $\L$ to be ample, with $\h^ 0(S_t,\L_t)$
constant for $t\in \Delta$. If $W$ is an effective divisor supported
on the central fibre $S_0$, we may consider the line bundle $\L(-W)$,
which is said to be obtained from $\L$ by \emph{twisting} by $W$. 
For $t\in \Delta^ *$, its restriction to $S_t$ is the same as $\L_t$,
but in general this is not the case for $S_0$;
any such a line bundle $\left. \L(-W) \right |_{S_0}$ is called a
\emph{limit line bundle} of $\L_t$ for $t\in \Delta^ *$.  
 
 \begin{remark}
\label{r:twist}
Since $\Pic(\Delta)$ is trivial,   the
divisor $S_0 \subset S$ is linearly equivalent to $0$.
So if  $W$ is a divisor supported on $S_0$, one has
$\L(-W) \cong  \L(mS_0-W)$ for all integers $m$. 
In particular if $W+W'=S_0$ then $\L(-W)\cong \L(W')$.
\end{remark}
 
Consider the subscheme ${\rm Hilb}(\L)$ of the \emph{relative Hilbert
  scheme} of curves 
of $S$ over $\Delta$, which is the Zariski closure of the set of all curves $C\in \vert \L_t\vert$, for $t\in \Delta^ *$.  
We assume  that ${\rm Hilb}(\L)$ is a component of the relative
Hilbert scheme, a condition  satisfied if $\Pic (S_t)$ has no
torsion,  
which will always be the case in our applications. 
One has a natural projection morphism $\varphi: {\rm Hilb}(\L)\to
\Delta$, which is a projective bundle over $\Delta^ *$; actually ${\rm
  Hilb}(\L)$ is isomorphic to $\fP:=\P(f_*(\L))$ over $\Delta^ *$.  
We call the fibre of $\varphi$ over $0$ the \emph{limit linear system}
of $\vert \L_t\vert$ as $t\in \Delta^*$ tends to $0$,
and we denote it by  $\fL$.  

\begin{remark}
\label{r:twist2} In general,
\emph{the limit linear system is not a linear system}.
One would be tempted to say that $\fL$ is nothing but $\vert
\L_0\vert$; this is the case if $S_0$ is irreducible, but it is 
in general no longer true when $S_0$ is reducible. In the latter case,
there may be non--zero sections of $\L_0$ whose zero--locus
contains some irreducible component of $S_0$, and accordingly points
of $\vert \L_0\vert$ which do not correspond to points in  the Hilbert
scheme of curves
(see, e.g., Example \ref{ex:gh} below).
\end{remark} 

In any event,  ${\rm Hilb}(\L)$ is a birational modification of
$\fP$, and $\fL$ is a suitable degeneration of the projective
space  $\vert \L_t\vert$, $t\in \Delta^ *$.  
One has:
\begin{lemma}
\label{l:lim-lin}
Let $\fP'\to \Delta$ be a flat and proper morphism, 
isomorphic to $\P(f_*(\L))$ over $\Delta^*$,
and such that $\fP'$ is a Zariski closed subset of the
relative Hilbert scheme of curves of $S$ over $\Delta$. 
Then $\fP'=\Hilb(\L)$.
\end{lemma}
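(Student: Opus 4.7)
The plan is to show that both $\fP'$ and $\Hilb(\L)$ coincide, inside the relative Hilbert scheme $\mathcal H$ of curves in $S$ over $\Delta$, with the scheme-theoretic closure of one and the same open subscheme of $\mathcal H|_{\Delta^*}$. The argument rests on two ingredients: the identification of the restrictions over $\Delta^*$, and the general fact that a flat closed subscheme of $\mathcal H$ over the disc $\Delta$ is the scheme-theoretic closure of its generic fibre.

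First, I would identify $\fP'|_{\Delta^*}$ and $\Hilb(\L)|_{\Delta^*}$ as one and the same subscheme of $\mathcal H|_{\Delta^*}$. Since $f$ is smooth over $\Delta^*$ and $\h^0(S_t,\L_t)$ is constant there, the projective bundle $\P(f_*\L)|_{\Delta^*}$ embeds into $\mathcal H|_{\Delta^*}$ as the locally closed subscheme parametrizing effective divisors in the linear systems $|\L_t|$, $t\in\Delta^*$. By its very definition, $\Hilb(\L)|_{\Delta^*}$ equals this image. On the other hand, the hypothesis that $\fP'\subset\mathcal H$ is isomorphic to $\P(f_*\L)$ over $\Delta^*$ forces $\fP'|_{\Delta^*}$ to coincide with the same image: a point of $\fP'|_{\Delta^*}$ corresponds tautologically to a curve in $S_t$, and via the given isomorphism this curve lies in $|\L_t|$.

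Next, I would invoke flatness. The scheme $\fP'$ is flat over $\Delta$ by hypothesis; and $\Hilb(\L)$, being by construction the scheme-theoretic closure in $\mathcal H$ of an open subscheme flat over $\Delta^*$, is also flat over the Dedekind base $\Delta$ (no associated point can lie over $0$, as any such point would give a torsion section of the structure sheaf over $\Delta$). The same argument applied to $\fP'$ shows that $\fP'$ has no associated point over $0\in\Delta$, so $\fP'$ equals the scheme-theoretic closure in $\mathcal H$ of $\fP'|_{\Delta^*}$. Combined with the previous step, both $\fP'$ and $\Hilb(\L)$ are the scheme-theoretic closure in $\mathcal H$ of the same open subscheme, hence they coincide.

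The main potential obstacle is the identification step in the first paragraph: one has to rule out the pathology where the abstract isomorphism $\fP'|_{\Delta^*}\cong\P(f_*\L)|_{\Delta^*}$ is not compatible with the embedding in $\mathcal H$. This is however forced by the modular interpretation of $\mathcal H$, as indicated above. Everything else is formal DVR-flatness, which is routine.
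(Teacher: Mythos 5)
Your argument is correct and is essentially the paper's: the paper's proof is the one-line observation that $\fP'$ and $\Hilb(\L)$ are irreducible Zariski closed subsets of the relative Hilbert scheme coinciding over $\Delta^*$, which is exactly your "both are the closure of their common restriction over $\Delta^*$" (flatness over the disc being what guarantees no component sits over $0$). Your extra care about the scheme structure and about the compatibility of the isomorphism $\fP'|_{\Delta^*}\cong\P(f_*\L)|_{\Delta^*}$ with the embedding into the Hilbert scheme only makes explicit what the paper treats as part of the hypothesis.
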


\begin{proof}
The two Zariski closed subsets $\fP'$ and $\Hilb(\L)$
are irreducible, and coincide over $\Delta^*$.
\end{proof}

In passing from $\P(f_*(\L))$ to ${\rm Hilb}(\L)$,
one has to perform a series of blow--ups along
smooth centres contained in the central fibre, which correspond to
spaces of non--trivial sections of some (twisted)  line  bundles which
vanish on divisors contained in the central fibre. 
The exceptional divisors
one gets in this way give rise to components of $\fL$, and may be
identified with birational modifications of sublinear systems of
twisted linear systems restricted to $S_0$, 
as follows from Lemma \ref{l:tg-id} below.
We will see examples of
this later (the first one in Example \ref{ex:gh}).

\begin{lemma}
\label{l:tg-id}
\begin{inparaenum}[\normalfont (i)]
\item \label{pt:gnl-case}
Let $X$ be a connected variety, $\L$ a line bundle on
$X$, and $\sigma$ a non zero global section of $\L$ defining a
subscheme $Z$ of $X$.
Then the projectivized tangent space to $\P\H^0(X,\L)$ at $\left<
  \sigma \right>$ canonically identifies with
   the \emph{restricted linear system}  
\[
\P\hspace{.05cm}
\mathrm {Im}\big(\H^ 0(X,\L)\to \H^ 0(Z,\left.\L\right|_{Z})\big),
\]
also called the \emph{trace} of $\vert \L\vert$ on $Z$
(which in general is not the  complete linear system 
$\vert \L\otimes \O_Z \vert$).

\item \label{subspace}
More generally, let $\mathfrak l$ be a linear subspace of $\P\H^0(X,\L)$
with fixed locus scheme $F$ defined by the  system of equations
$\{\sigma=0\}_{\left<\sigma\right>\in \mathfrak{l}}$.
Then the projectivized normal bundle of $\mathfrak l$ in $\P\H^0(X,\L)$
canonically identifies with 
\[
\mathfrak{l}\times 
\P\hspace{.05cm}
\mathrm {Im}\big(\H^ 0(X,\L)\to \H^ 0(F,\left.\L\right|_{F})\big).
\]
\end{inparaenum}
\end{lemma}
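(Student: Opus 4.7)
The plan is to derive both parts from the canonical description of tangent directions in a projective space together with the long exact sequence in cohomology associated with multiplication by a section. Throughout I write $W := \H^0(X,\L)$.

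For part (\ref{pt:gnl-case}), the projectivized tangent space to $\P W$ at the point $[\sigma]$ is canonically $W/\langle \sigma \rangle$ (regarded as a projective space, ignoring the irrelevant twist by $\langle \sigma \rangle^{-1}$). Multiplication by $\sigma$ fits into a short exact sequence of sheaves
$$0 \to \O_X \xrightarrow{\,\cdot\sigma\,} \L \to \L|_Z \to 0,$$
and the associated long exact sequence in cohomology reads
$$0 \to \H^0(X,\O_X) \to \H^0(X,\L) \to \H^0(Z,\L|_Z).$$
Since $X$ is connected, $\H^0(X,\O_X)$ is one-dimensional with image equal to $\langle \sigma \rangle$ in $W$, and hence $W/\langle \sigma \rangle$ injects into $\H^0(Z,\L|_Z)$ with image precisely $\mathrm{Im}\bigl(\H^0(X,\L) \to \H^0(Z,\L|_Z)\bigr)$. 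Projectivizing yields the asserted canonical identification.

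For part (\ref{subspace}), let $V \subset W$ be the vector subspace with $\mathfrak{l} = \P V$. The normal bundle of a linear subspace in a projective space is a direct sum of copies of $\O_{\mathfrak{l}}(1)$, so its projectivization is canonically the trivial product $\mathfrak{l} \times \P(W/V)$. It then remains to identify $W/V$ canonically with $\mathrm{Im}\bigl(\H^0(X,\L) \to \H^0(F,\L|_F)\bigr)$. This is obtained by running the cohomological argument of part (\ref{pt:gnl-case}) for the whole family of sections in $V$ at once: the kernel of the restriction map $W \to \H^0(F,\L|_F)$ is exactly $V$, because by hypothesis $F$ is the scheme-theoretic common zero-locus of the sections in $V$.

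The main obstacle is precisely this last identification: one must check that $V$ coincides with, and is not merely contained in, the kernel of $W \to \H^0(F,\L|_F)$. This is where the scheme-theoretic definition of $F$ as the fixed-locus scheme cut out by the system $\{\sigma=0\}_{\langle \sigma \rangle \in \mathfrak{l}}$ is essential, since any slack between the ideal sheaf $I_F$ and the image of the evaluation map $V \otimes \L^{-1} \to \O_X$ would force the identification to drop to a proper surjection.
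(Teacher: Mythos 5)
Your argument is correct and takes essentially the same route as the paper's own (very terse) proof: part \eqref{pt:gnl-case} via the identification of the tangent space at $\left<\sigma\right>$ with the cokernel of multiplication by $\sigma$ on global sections, and part \eqref{subspace} via the splitting of the normal bundle of a linear subspace as a sum of copies of $\O_{\mathfrak{l}}(1)$, which trivializes its projectivization. You additionally make explicit the one delicate point the paper leaves implicit, namely that the kernel of the restriction map $\H^0(X,\L)\to \H^0(F,\left.\L\right|_F)$ is exactly the subspace underlying $\mathfrak{l}$.
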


\begin{proof}
Assertion \eqref{pt:gnl-case} comes from  the identification of
the tangent space of $\P\H^0(X,\L)$ at 
$\left< \sigma \right>$ with the 
cokernel of the injection
$
\H^0(X,\O_X)\to \H^0(X,\L)
$,
given by the multiplication by $\sigma$.
As for \eqref{subspace},  note that the normal bundle of
$\mathfrak l$ in $\P\H^0(X,\L)$ splits as a direct sum of copies of $\O_\mathfrak l(1)$, hence
the associated projective bundle is trivial. Then the proof is similar
to that of \eqref{pt:gnl-case}.
\end{proof}

\begin{example} [See \cite{gh-degen}] 
\label {ex:gh}
Consider a family of degree $k$ surfaces $f: S\to \Delta$ arising,
as in Example \ref{ex:pi3}, from a pencil
  generated by a general surface $S_\infty$ and by $S_0=F\cup P$,
  where $P$ is a plane and $F$ a general surface of degree $k-1$. 
One has a semistable model 
$\tilde f: \tilde S\to \Delta$ of this family,
as described in  Example \ref {ex:pi3},
with $\tilde S_0=F\cup \tilde P$, where $\tilde P\to P$ is the
blow--up of $P$ at the $k(k-1)$ intersection points of $S_\infty$ with
the smooth degree $k-1$ plane curve $R:=F\cap P$ (with exceptional
divisors $E_i$, for $1\leqslant i\leqslant k(k-1)$).  

We let $\L:=\O_{\tilde S}(1)$ be the pull--back by $\tilde S\to S$ of
$\O_S(1)$, obtained by pulling back $\O_{\P^3}(1)$ via the map
$S\to \P^3$.
The component $\Hilb(\L)$ of the Hilbert scheme is gotten from the
projective bundle $\P(f_*(\O_{\tilde S}(1)))$, by blowing up  
the point of the central fibre $\vert \O_{S_0}(1)\vert $ corresponding to the 1--dimensional space of non--zero sections vanishing on the plane $P$.  
The limit linear system $\fL$ is  the union of $\fL_1$, the blown--up
$\vert \O_{S_0}(1)\vert $, and of the exceptional divisor $\fL_2\cong
\P^ 3$, 
identified as the twisted linear system $\vert \O_{S_0}(1)\otimes
\O_{S_0}(-P)\vert$. The corresponding twisted line bundle restricts to
the trivial linear system on $F$, and to $\vert \O_{\tilde
  P}(k)\otimes \O_{\tilde P}(-\sum_{i=1}^ {k(k-1)}E_i)\vert $ on
$\tilde P$.

The components $\fL_1$ and $\fL_2$ of $\fL$  meet along the
exceptional divisor $\mathfrak{E}\cong \P^ 2$ of the morphism 
$\fL_1\to |\O_{S_0}(1)|$. 
Lemma \ref {l:tg-id} shows that the elements of 
$\mathfrak{E}\subset \fL_1$ 
identify as the points of $\vert \O_R(1)\vert\cong \vert \O_P(1)\vert$, 
whereas the plane $\mathfrak{E}\subset \fL_2$ is the set of elements  
$\Gamma\in \vert \O_{\tilde P}(k)\otimes \O_{\tilde P}(-\sum_{i=1}^
{k(k-1)}E_i)\vert$ containing the proper transform $\hat R \cong R$ of
$R$ on $\tilde P$. The corresponding element of $\vert \O_R(1)\vert$
is cut out on $\hat R$ by the further component of $\Gamma$, which
is the pull--back to $\tilde P$ of a line in $P$. \end{example}

\subsection{Severi varieties and their limits}
\label{s:Severi_1}

Let $f: S\to \Delta$ be a semistable family as in \S \ref {s:setting},
and $\L$ be a line bundle on $S$ as in \S \ref {s:resolving}.  
We fix  a non--negative integer $\delta$, and consider 
the locally closed subset  $\mathring{V}_{\delta}(S,\L)$ of  $\Hi(\L)$
formed by all curves $D\in \vert \L_t\vert$, for $t\in \Delta^ *$,
such that $D$ is  irreducible, nodal, and has exactly $\delta$ nodes.
We  define ${V}_{\delta}(S,\L)$ (resp. $\cru{V}_{\delta}(S,\L)$) as
the Zariski closure of 
$\mathring{V}_{\delta}(S,\L)$ in $\Hi(\L)$ (resp. in $\P(f_*(\mathcal
L))$). This is the \emph{relative Severi variety} (resp. the
\emph{crude relative Severi variety}).
We may write  $\mathring{V}_{\delta}$,
 ${V}_{\delta}$,  and $\cru V_\delta$, rather than
 $\mathring{V}_{\delta}(S,\L)$,  ${V}_{\delta}(S,\L)$,
 and $\cru{V}_{\delta}(S,\L)$, respectively.
  
We have a natural map $f_{\delta}: {V}_{\delta}\to \Delta$. If $t\in
\Delta^* $, the fibre $V_{\delta,t}$
of $f_{\delta}$ over $t$ is the  \emph{Severi variety}
${V}_{\delta}(S_t,\L_t)$ of $\delta$--nodal curves in the linear
system $\vert \L_t\vert$ on $S_t$, whose degree, independent on $t\in
\Delta^ *$,  we denote by $d_\delta(\L)$ (or simply by $d_\delta$).  
We let ${\fV}_{\delta}(S,\L)$ (or simply $\fV_\delta$) be
the central fibre of $f_{\delta}: {V}_{\delta}\to \Delta$;
it is the \emph{limit Severi variety} of ${V}_{\delta}(S_t,\L_t)$ as
$t\in \Delta^*$ tends to $0$.  This is a subscheme of  the
limit linear system $\fL$, which, as we said, has been studied by
various authors.
In particular, one can describe in a number of situations its
various irreducible components, with their multiplicities (see \S \ref
{s:ZRR} below). 
This is what we will do for several families of quartic
surfaces in $\P^ 3$. 

In a similar way, one defines the \emph{crude limit Severi variety}
$\cru{\fV}_{\delta}(S,\L)$ (or $\cru\fV_\delta$),
sitting in $\vert \L_0 \vert$. 

\begin{remark}\label{rem:exp} For $t\in \Delta^ *$, the \emph{expected dimension} of the Severi variety ${V}_{\delta}(S_t,\L_t)$ is
$\dim (\vert L_t\vert)-\delta$. We will always assume that the dimension of (all components of) ${V}_{\delta}(S_t,\L_t)$  equals the expected one for all 
$t\in \Delta^ *$. This is a strong assumption, which will be satisfied
in all our applications. 
\end{remark}

\begin{notation}
Let  $f: S\to \Delta$ be a family of degree $k$ surfaces in $\P^ 3$ as
in Example \ref {ex:pi3}, and let $\tilde f: \tilde S\to \Delta$ be a
semistable model of $f: S\to \Delta$.
We consider the line bundle $\O_S(1)$, defined as the pull--back of
$\O_{\P^3}(1)$ via the natural map $S\to \P^3$, and let 
$\O_{\tilde S}(1)$ be its pull--back  on $\tilde S$.
We denote by $\fV_{n,\delta}(\tilde S)$
(resp. $\fV_{n,\delta}(S)$), or simply $\fV_{n,\delta}$,
the limit Severi variety $\fV_{\delta}(\tilde S,\O_{\tilde S}(n))$
(resp. $\fV_{\delta}(S,\O_{S}(n))$).
Similar notation $\cru \fV_{n,\delta}(\tilde S)$
(resp. $\cru \fV_{n,\delta}(S)$), or $\cru \fV_{n,\delta}$, will be used for
the crude limit. 
\end{notation}

\subsection{Description of the limit Severi variety}
\label{s:ZRR}

Let again $f: S\to \Delta$ be a semistable family as in \S \ref
{s:setting}, and $\L$ a line bundle on $S$ as in \S \ref {s:resolving}.
The local machinery developed in \cite{concettina1,concettina2,
galati-knutsen} enables us to identify
the components of the limit Severi variety, with their multiplicities. As usual, we will suppose that  $S_0$ has irreducible components $Q_1,\ldots,Q_r$, intersecting transversally along the double curves
$R_1,\ldots,R_p$.  We will also assume that there are $q$ exceptional curves $E_1,\ldots, E_q$ on $S_0$, arising from a small resolution of an original family with singular total space, as discussed in \S \ref {s:setting}. 

\begin{notation}
Let $\seqN$ be the set of sequences $\utau=(\tau_m)_{m\geqslant 2}$ of
non--negative integers 
 with only finitely many non--vanishing terms. We
define two maps $\nu,\ \mu: \seqN \to \N$ as follows:
\begin{equation*}
\nu\left( \utau \right) =
\sum\nolimits_{m \geqslant 2} \tau_m \cdot (m-1),
\quad \text{and} \quad
\mu\left( \utau \right) =
\prod\nolimits_{m \geqslant 2} m^{\tau_m}.
\end{equation*}
Given a $p$-tuple
$\ubtau=(\utau_1,\ldots,\utau_p) \in  \seqN^p$,
we set 
\begin{equation*}
\nu(\ubtau) = \nu(\utau_1)+\cdots + \nu(\utau_p),
\quad \text{and} \quad
\mu(\ubtau) = \mu(\utau_1) \cdots \mu(\utau_p),
\end{equation*}
thus defining two maps $\nu,\ \mu: \seqN^p \to \N$.
Given $\bdelta=(\delta_1,\ldots,\delta_r) \in \N^r$, we set
\begin{equation*}
|\bdelta| := \delta_1 + \cdots + \delta_r.
\end{equation*}
Given a subset $I \subset \{1,\ldots,q\}$, $|I|$
will denote its cardinality.
\end{notation}

\begin{definition}
\label{d:zrr}
Consider a divisor
$W$ on $S$, supported on the central fibre $S_0$, i.e. a linear
combination of $Q_1,\ldots,Q_r$. Fix $\bdelta \in \N^r$, $\ubtau\in
\seqN^p$, and $I\subseteq \{1,\ldots,r\}$.
We let $\mathring{V}(W,\bdelta,I,\ubtau)$
be the Zariski locally closed subset in
$|\L(-W)\otimes \O_{S_0}|$ parametrizing curves $D$ such that:\\
\begin{inparaenum}[\normalfont (i)]
\item
\label{no-double-curve}
  $D$
neither contains any curve $R_l$, with $l\in \{1,\ldots, p\}$, 
nor passes through any triple point of $S_0$;\\
\item $D$ contains the exceptional divisor $E_i$, with multiplicity 1,
  if and only if $i\in I$, and has a node on it;\\
\item $D-\sum_{i\in I} E_i$ has  $\delta_s$ nodes on $Q_s$, for $s\in \{1,\ldots, r\}$, off the singular locus of $S_0$, and is 
otherwise smooth;\\
\item 
\label{c:tacnodes}
for every $l\in \{1,\ldots, p\}$ and $m \geqslant 2$, there are exactly
$\tau_{l,m}$ points on $R_l$, off the intersections with $\sum_{i\in I} E_i$, 
at which $D$ has an \emph{$m$-tacnode} (see below for the definition),
with reduced tangent cone equal to the tangent line of $R_l$ there.
\end{inparaenum}

We let ${V}(W,\bdelta,I, \ubtau)$ be the Zariski closure of
$\mathring{V}(W,\bdelta,I, \ubtau)$ in
$|\L(-W)\otimes \O_{X_0}|$.
\end{definition}

Recall that an \emph{$m$-tacnode} is an $A_{2m-1}$-double point,
i.e. a plane
curve singularity locally analytically isomorphic 
to the hypersurface of $\C^2$ defined by the equation 
$y^2=x^{2m}$ at the origin.
Condition \eqref{c:tacnodes} above requires that $D$ is a divisor having $\tau_{l,m}$ 
$m$--th order tangency points with the curve $R_l$, at 
points of $R_l$ which are not triple points of $S_0$.

\begin{notation}\label{not:1}
In practice, we shall not use the notation ${V}(W,\bdelta,I, \ubtau)$,
but rather a more expressive one like, e.g., 
$V(W,\delta_{Q_1}=2,E_1,\tau_{R_1,2}=1)$ for the variety
parametrizing curves in $|\L(-W)\otimes \O_{S_0}|$, with two nodes on $Q_1$, one
simple tacnode along $R_1$, and containing the exceptional curve
$E_1$.
\end{notation}

\begin{proposition}
[\cite{concettina1,concettina2,galati-knutsen}]
\label{p:zrr}
Let $W,\bdelta, I, \ubtau$ be as above,
and set $|\bdelta|+\vert I\vert+\nu(\ubtau)=\delta$.
Let $V$ be an irreducible component of $V(W,\bdelta,I,\ubtau)$.
If \\
\begin{inparaenum}[(i)]
\item
\label{c:hilb}
the linear system $|\L(-W)\otimes \O_{X_0}|$
has the same dimension
as $|\L_t|$ for $t\in \Delta^*$, and\\
\item
\label{c:versal}
$V$ has (the expected) codimension $\delta$ in 
$|\L(-W)\otimes \O_{X_0}|$,
\end{inparaenum}\\
then $V$ is an irreducible component of multiplicity $\mu(V):=\mu(\ubtau)$
of the limit Severi variety $\mathfrak V_\delta(S,\L)$.
\end{proposition}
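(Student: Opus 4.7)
The plan is to follow the strategy of Caporaso--Harris, Ran, and Galati--Knutsen, reducing the global statement to a local analysis at each singularity of a general curve $D$ parametrized by $V(W,\bdelta,I,\ubtau)$. Such a $D$ has exactly three kinds of singularities: $|\bdelta|$ nodes on the smooth part of $S_0$, $|I|$ nodes on the exceptional curves $E_i$, and, for each $l$ and $m$, precisely $\tau_{l,m}$ tacnodes of type $m$ on the double curve $R_l$, for a total count $\delta = |\bdelta| + |I| + \nu(\ubtau)$. The aim is to show that each such $D$ is a limit of genuinely $\delta$-nodal curves on $S_t$, and to count the sheets of the forgetful map $V_\delta \to \Delta$ near $D$.

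The second step is the local analysis at each kind of singularity. A node of $D$ lying at a smooth point of $S_0$ lifts to a unique equisingular deformation in the family and contributes multiplicity $1$. A node lying on an exceptional curve $E_i$ (which is a component of $D$) is treated similarly using local coordinates coming from the small resolution, and also contributes multiplicity $1$. At an $m$-tacnode on $R_l$, one exploits the fact that $S$ is smooth and semistable to choose analytic coordinates in which the total space reads $\{uv = t\}$ with $R_l = \{u=v=0\}$, and the two branches of $D$ on the two components of $S_0$ are tangent to $R_l$ to order $m$. A direct local deformation-theoretic computation (the content of \cite{concettina1,concettina2,galati-knutsen}) then shows that there are exactly $m$ distinct one-parameter smoothings of such a tacnode inside the family --- morally, a choice of $m$-th root --- each producing the expected $m-1$ genuine nodes on $S_t$. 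This is the source of the factor $m$ contributed by every $m$-tacnode.

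The third step is to globalize using hypotheses (i) and (ii). Hypothesis (i) ensures that the twisted linear system has the expected dimension, so that $|\L(-W) \otimes \O_{S_0}|$ sits inside the limit linear system $\fL$ as a component of the correct dimension, with no excess component of the relative Hilbert scheme interfering. Hypothesis (ii), together with an infinitesimal computation in the spirit of Lemma \ref{l:tg-id}, implies that the local deformations at the several singular points of a general $D \in V$ are independent and transverse: the versal deformation of $D$ in the family splits, up to smooth factors, as a product over its singular points. Consequently the local multiplicities multiply to give the global multiplicity $\mu(\ubtau)$, and $V$ appears as an irreducible component of $\fV_\delta(S,\L)$ with exactly this multiplicity.

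The main obstacle is the local analysis at an $m$-tacnode lying on a double curve: one has to exhibit explicit smoothings within the semistable total space $\{uv=t\}$ and count them, verifying that each produces the expected $m-1$ nodes on $S_t$. This is the technical core of the Caporaso--Harris--Ran--Galati--Knutsen machinery and cannot be shortcut; the factor $m$ ultimately reflects the $\Z/m\Z$-ambiguity inherent in unfolding an $A_{2m-1}$-singularity inside a family whose total space has a node transverse to $R_l$ at the tacnode point.
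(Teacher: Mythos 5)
The paper does not prove this proposition at all: it is imported verbatim from the cited works of Galati and Galati--Knutsen (and ultimately Caporaso--Harris and Ran), so there is no in-paper argument to compare yours against. Your outline is a faithful reconstruction of the strategy in those references --- local analysis at the $|\bdelta|$ nodes, the $|I|$ nodes on exceptional curves, and the $m$-tacnodes along the double curves, with the factor $m$ per $m$-tacnode and $m-1$ nodes appearing on the nearby fibres, then globalization via hypotheses (i) and (ii) --- and, like the paper, it defers the technical core (the explicit unfolding of an $A_{2m-1}$-singularity inside $\{uv=t\}$) to those same sources, so it is an accurate summary rather than a new or self-contained proof.
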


\begin{remark}
\label{r:smoothness}
Same assumptions as in Proposition \ref{p:zrr}. If  there is
at most one tac\-node (i.e. all $\tau_{l,m}$ but 
possibly one vanish, and this is equal to $1$), 
the relative Severi variety $V_\delta$ is smooth at the general
point of $V$ (see \cite {concettina1, concettina2,
galati-knutsen}),
and thus $V$ belongs to only one
 irreducible component of $V_\delta$.
There are other cases in which such a smoothness property holds (see
\cite{caporaso-harris}).

If $V_\delta$  is smooth at the
general point  $D\in V$, the multiplicity of $V$ in the limit Severi
variety $\mathfrak V_\delta$  is the minimal integer $m$
such that there are local analytic \emph{$m$--multisections} of
$V_\delta \to \Delta$, i.e. analytic smooth curves in 
$V_\delta$, passing through $D$ and intersecting the general
fibre $V_{\delta, t}$, $t\in \Delta^ *$, at $m$ distinct points.
\end{remark}

Proposition \ref{p:zrr}  still does not provide a complete picture of 
the limit Severi variety.
For instance, curves passing
through a  triple point of $S_0$ could  play a role in this limit.
It would be desirable to know that one can always  obtain a semistable model of the original family,
where  \emph{every} irreducible component of the limit Severi variety
is realized 
as a family of curves of the kind stated in Definition \ref{d:zrr}. 

\begin{definition}
\label{def:reg-comps}
Let $f: S\to \Delta$ be a semistable family as in \S \ref {s:setting},
$\L$ a line bundle on $S$ as in \S \ref {s:resolving},
and $\delta$ a positive integer.
The \emph {regular part of the limit Severi variety}
$\fV_\delta(S,\L)$ is the cycle in the limit linear system $\fL
\subset \Hilb(\L)$
\begin{equation}
\label{reg-Severi}
\reg \fV_\delta(S,\L):=
\sum_W \sum_{|\bdelta|+|I|+\nu(\ubtau)=\delta} \mu(\ubtau) \cdot
\Biggl( \sum_{V \in \mathrm{Irr}^\delta\left( V(W,\bdelta,I,\ubtau)
    \right)}  V \Biggr)
\end{equation}
(sometimes simply denoted by $\reg \fV_\delta$), where:\\
\begin{inparaenum}[(i)]
\item \label{def:2} $W$ varies among all effective divisors on $S$
supported on the central fibre $S_0$, such that
$\h^0(\L_0(-W))=\h^0(\L_t)$ for $t\in \Delta^*$;\\
\item ${\rm Irr}^\delta(Z)$ denotes the set of all codimension
  $\delta$ irreducible components of a scheme $Z$. 
\end{inparaenum}
\end{definition}

Proposition \ref{p:zrr} asserts that the cycle
$Z(\fV_\delta)-\reg \fV_\delta$ is effective, with support disjoint
in codimension $1$ from that of $\reg \fV_\delta$
(here, $Z(\fV_\delta)$ is the cycle associated to $\fV_\delta$).
We call the irreducible components of the support of $\reg \fV_\delta$
the \emph {regular components} of the limit Severi variety.

Let  $\tilde f: {\tilde S} \to \Delta$ be a semistable model of
$f: S\to \Delta$, and $\tilde \L$ the pull--back on $\tilde S$ of
$\L$.
There is a natural map $\Hilb(\tilde\L)\to \Hilb(\L)$, which induces a
morphism $\phi: \tilde\fL\to |\L_0|$.

\begin{definition}
\label{def:good-model}
The semistable model $\tilde f: {\tilde S} \to \Delta$ is a 
$\delta$--\emph{good model} of $f: S\to \Delta$
(or simply \emph{good model}, if it is clear which $\delta$ we are
referring at), if the following equality of cycles holds
\[
\phi_* \bigl( \reg\fV_\delta(\tilde S, \tilde \L) \bigr)=
\cru\fV_\delta(S,\L).
\]
\end{definition}

Note that the cycle $\cru\fV_\delta(S,\L)-
\phi_* \bigl( \reg\fV_\delta(\tilde S, \tilde \L) \bigr)$
is effective.
The family $f:S\to \Delta$ is said to be $\delta$--\emph{well behaved}
(or simply \emph{well behaved}) if it has a $\delta$-good model.
A semistable model $\tilde f: \tilde S\to \Delta$
of $f: S\to \Delta$ as above
is said to be  $\delta$--\emph{absolutely good} if $\fV_\delta(\tilde S, \tilde \L)=
\reg \fV_\delta(\tilde S, \tilde \L)$ as cycles in the relative
Hilbert scheme.
It is then a $\delta$--good model both of itself, and
of $f: S\to \Delta$.

Theorems \ref{T:tetrahedron} and \ref{T:kummer} will be proved by
showing that the corresponding families of quartic surfaces are well
behaved.

\begin{remark}
\label{r:other-lim}
Suppose that $f: S\to \Delta$ is $\delta$--well behaved, with
$\delta$--good model $\tilde f:{\tilde S} \to \Delta$. 
It is possible that some components in
$\reg\fV_\delta(\tilde S,\tilde \L)$ are
contracted by ${\rm Hilb}(\tilde \L)\to |\L_0|$ to varieties of
smaller dimension, and therefore that their push--forwards are zero.
Hence these components of $\fV_\delta(\tilde S)$ are 
\emph{not visible} in $\cru\fV_\delta(S)$.
They are however usually visible in the crude limit Severi variety of
another model $f': S'\to \Delta$, obtained from $\tilde S$ via an
appropriate twist of $\L$.
The central fibre $S'_0$ is then a flat limit of $S_t$, as
$t\in\Delta^*$ tends to $0$, different from $S_0$.
\end{remark}

\begin{conjecture}\label{conj:goodness}
Let $f:S\to \Delta$ be a semistable family of surfaces, endowed with a
line bundle $\L$ as above, and $\delta$ a positive integer. Then:\\
{\bf (Weak version)} Under suitable assumptions (to be discovered), $f: S\to \Delta$ is
$\delta$--well behaved.\\
{\bf (Strong version)} Under suitable assumptions (to be discovered), $f: S\to \Delta$
has a $\delta$--absolutely good semistable model.
\end{conjecture}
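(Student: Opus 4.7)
Since Conjecture \ref{conj:goodness} leaves the hypotheses to be discovered, any proof proposal must do two things in tandem: identify a plausible set of \emph{suitable assumptions}, and outline a procedure for producing a good (or absolutely good) model under them. My plan is to take an iterative approach in the spirit of Caporaso--Harris and Ran, bootstrapping from Proposition \ref{p:zrr} by successively resolving the \emph{non-regular} contributions to $\fV_\delta$.

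As a tentative list of hypotheses, I would assume that $f:S\to \Delta$ is quasi-semistable with $S_0$ having only local normal crossings (no extra singularities, or only ordinary double points resolvable by small resolutions, as in Example \ref{ex:pi3}); that $\L$ is sufficiently ample so that the Severi varieties $V_\delta(S_t,\L_t)$ have the expected dimension for $t\in \Delta^*$ (Remark \ref{rem:exp}); and that on each component $Q_s$ of $S_0$ the restricted linear system separates enough jets along the double curves $R_l$ to rule out pathological higher-order tangencies. One should also require a combinatorial genericity on the dual complex of $S_0$ (e.g. no too-concentrated triple points), to control the non-regular loci.

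The strategy then runs as follows. First, starting from $\tilde f:\tilde S\to \Delta$ semistable, enumerate via local deformation theory (cf.\ \cite{concettina1, concettina2, galati-knutsen}) all codimension-$\delta$ loci in $\fL$ that can arise as limits of $\delta$-nodal curves but are not of the form $V(W,\bdelta,I,\ubtau)$ of Definition \ref{d:zrr}. These are the \emph{offenders}: curves through a triple point of $S_0$, curves with a tacnodal branch meeting a triple point, and degenerate incidences with the exceptional curves $E_i$. Second, for each offending locus, perform a base change $t\mapsto t^k$ of suitable order $k$ followed by an equimultiple blow-up of $\tilde S$ centered at the offending stratum of $\tilde S_0$; by the Triple Point Formula (Lemma \ref{l:triple-point} and Remark \ref{rem:TPF}), one can predict the normal bundle data of the new exceptional components, and hence arrange that the new semistable model $\tilde S'\to \Delta$ converts the offender into a union of regular components in the sense of Definition \ref{def:reg-comps}. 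Third, use Lemma \ref{l:tg-id} to track how the limit linear system changes under these modifications, and Proposition \ref{p:zrr} to assign multiplicities. Finally, verify the pushforward equality defining a good model by checking that the total contributions, component by component, match the degree $d_\delta(\L)$ of $V_\delta(S_t,\L_t)$ on the generic fibre.

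The main obstacle, beyond pinning down the hypotheses, is showing that this iterative resolution terminates and stays under control: each blow-up may create new triple points or new tacnodal incidences, and one must ensure that the combinatorial invariant measuring ``offendedness'' strictly decreases. A secondary difficulty is verifying condition \eqref{c:versal} of Proposition \ref{p:zrr}, i.e.\ that the strata $V(W,\bdelta,I,\ubtau)$ produced by the procedure have expected codimension $\delta$; this requires a transversality argument at each stage, of the kind that in the present paper is carried out by hand for the tetrahedron and Kummer cases (in \S\ref{S:4planes} and \S\ref{S:Kummer-degen}) and often relies on delicate enumerative input such as Proposition \ref{p:deg-dual}. For the strong version of the conjecture, one further needs to exclude the ``invisible'' components of Remark \ref{r:other-lim}, which likely requires choosing $W$ to have minimal support on $S_0$ and reading off the constraint directly from the intersection combinatorics of the dual complex. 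I expect that any rigorous formulation will need to impose strong global positivity on $\L$ and a genericity condition on the combinatorial type of $S_0$ comparable to the one implicit in the two test cases of this paper.
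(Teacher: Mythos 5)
The statement you are addressing is a conjecture, and the paper does not prove it: the authors explicitly leave the hypotheses ``to be discovered'' and only establish particular incarnations (Theorems \ref{T:tetrahedron} and \ref{T:kummer}) by ad hoc constructions. So there is no proof in the paper to compare yours against; and, as written, your proposal is not a proof either. The two difficulties you flag at the end --- termination of the iterative resolution, and verification of the expected-codimension condition \eqref{c:versal} of Proposition \ref{p:zrr} at every stage --- are not side issues but the entire mathematical content of the conjecture. Until you exhibit a strictly decreasing invariant for your blow-up-and-base-change procedure and a transversality statement that survives each modification, you have reformulated the conjecture rather than proved it.

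There is also a specific circularity in your final verification step. You propose to certify the pushforward equality of Definition \ref{def:good-model} by checking that the total contributions match the degree $d_\delta(\L)$ of the generic Severi variety. That is exactly the mechanism of Proposition \ref{prop:deg}, and it requires knowing $d_\delta(\L)$ by independent means: in the paper this is supplied by the classical formulas of Proposition \ref{p:deg-dual}, and the authors explicitly regret the dependence (``we hope to find a direct proof in the future''). For a general pair $(S,\L)$ no such external count is available, so your scheme gives no way to close the argument. Note finally that the paper's own sufficient criterion for absolute goodness, Proposition \ref{prop:criterion}, already fails in both worked examples --- the limit linear systems contain, in codimension $\leqslant \delta+1$, non-reduced curves and curves through triple points (Propositions \ref{prop:limlin-tetra} and \ref{prop:limlin-kummer}). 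This is direct evidence that hypotheses of the ``genericity of the dual complex'' type you suggest will not suffice, since the tetrahedron and Kummer degenerations are already as generic as one could ask within their respective strata.
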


The local computations in \cite{concettina2} provide a criterion for
absolute goodness:

\begin{proposition}\label{prop:criterion}
Assume there is a semistable model $\tilde f:{\tilde S} \to \Delta$ of
$f:S \to \Delta$, 
with a limit linear system $\tilde \fL$ free in codimension $\delta+1$
of curves of the following types: \\
\begin{inparaenum}[(i)]
\item
\label{cat:first} curves containing 
double curves  of $\tilde S_0$;\\
\item \label {cat:trip} curves passing through a triple point of
  $\tilde S_0$;\\
 \item
\label{cat:last}
non--reduced  curves.\\
\end{inparaenum}
If in addition, for $W,\bdelta,I,\ubtau$ as in Definition \ref{d:zrr},
every irreducible component of ${V}(W,\bdelta,I, \ubtau)$ has the
expected codimension in $|\L_0(-W)|$,
then $\tilde f:{\tilde S} \to \Delta$ is $\delta$--absolutely good,
which implies that $f:S \to \Delta$ is $\delta$--well behaved.
\end{proposition}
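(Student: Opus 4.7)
The plan is to establish the first conclusion of the proposition, namely that $\tilde f$ is $\delta$--absolutely good, i.e.\ the equality of cycles $\fV_\delta(\tilde S, \tilde \L) = \reg \fV_\delta(\tilde S, \tilde \L)$ in $\Hilb(\tilde\L)$; the well--behavedness of $f$ will then follow from the observation right after Definition \ref{def:good-model}. Since Proposition \ref{p:zrr} already guarantees that $\reg \fV_\delta(\tilde S, \tilde \L)$ is an effective subcycle of $\fV_\delta(\tilde S, \tilde \L)$, what remains to be shown is that every irreducible component of $\fV_\delta$ appears on the right--hand side with the correct multiplicity.

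To this end, I would fix an irreducible component $V$ of $\fV_\delta(\tilde S, \tilde \L)$; by Remark \ref{rem:exp}, it has codimension $\delta$ in $\tilde \fL$. The freeness hypotheses (i)--(iii) say that the three loci in $\tilde\fL$ consisting of curves containing a double curve of $\tilde S_0$, passing through a triple point, or being non--reduced, all have codimension at least $\delta+1$. Hence none of them contains the generic point of $V$, so a generic $D \in V$ is reduced, avoids the triple points of $\tilde S_0$, and contains no double curve $R_l$. In particular, one can choose an effective twist $W$ supported on $\tilde S_0$ with $\h^0(\tilde\L_0(-W)) = \h^0(\tilde\L_t)$ for $t \in \Delta^*$, such that $D \in |\tilde \L(-W) \otimes \O_{\tilde S_0}|$.

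The heart of the argument is then the classification of the singularities of such a $D$. Being a flat limit of irreducible $\delta$--nodal curves on the smooth fibres $\tilde S_t$, the local deformation analysis of \cite{concettina1,concettina2,galati-knutsen} forces the only singularities that $D$ may carry to be: $\delta_s$ nodes on the smooth locus of each component $Q_s$, a node on each exceptional curve $E_i$ for $i$ in some subset $I \subseteq \{1,\ldots,q\}$, and, for each double curve $R_l$, some number of $m$--tacnodes whose reduced tangent cone coincides with the tangent line of $R_l$. Packaging these data into $\bdelta$, $I$, and $\ubtau$ with $|\bdelta|+|I|+\nu(\ubtau)=\delta$, one concludes $V \subseteq V(W,\bdelta,I,\ubtau)$. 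Since $V$ has codimension $\delta$ and every irreducible component of $V(W,\bdelta,I,\ubtau)$ has the expected codimension $\delta$ by assumption, $V$ must in fact coincide with one of those components. Proposition \ref{p:zrr} then assigns to $V$ the multiplicity $\mu(\ubtau)$ in $\fV_\delta$, matching its multiplicity in $\reg \fV_\delta$ by the formula \eqref{reg-Severi}, and the desired equality of cycles follows.

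The main obstacle in this plan is the local singularity classification invoked above: one must rule out exotic singularities of $D$ at points of the double curves and of the exceptional curves, and show that any tacnode of $D$ along a double curve must be tangent to that double curve. This substantive technical content is imported from \cite{concettina1,concettina2,galati-knutsen}; the role of the global hypotheses of the proposition is precisely to convert these local statements into the required global structural description of every component of $\fV_\delta$.
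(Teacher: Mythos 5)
Your argument is correct and is essentially the proof the paper intends: the paper in fact gives no explicit proof of Proposition \ref{prop:criterion}, deferring entirely to the local computations of \cite{concettina1,concettina2,galati-knutsen}, and your reconstruction correctly isolates that deformation-theoretic classification of the singularities of a limit curve as the one imported ingredient, while the freeness hypotheses \eqref{cat:first}--\eqref{cat:last} and the expected-codimension hypothesis do exactly the jobs you assign them (forcing the generic member of each codimension-$\delta$ component of $\fV_\delta$ to satisfy the conditions of Definition \ref{d:zrr}, and then pinning down its multiplicity via Proposition \ref{p:zrr}). No changes needed.
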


Unfortunately, in the cases we shall consider
conditions \eqref{cat:first}--\eqref{cat:last} in Proposition \ref {prop:criterion} are violated
(see Propositions \ref{prop:limlin-tetra} and
\ref{prop:limlin-kummer}), which indicates that further investigation
is needed to prove the above conjectures.
The components of the various ${V}(W,\bdelta,I, \ubtau)$ have
nevertheless the expected codimension, and we are able to prove that
our examples are well--behaved, using additional enumerative
information.

Absolute goodness seems to be a property hard to prove,
except when the dimension of the Severi varieties under
consideration is $0$, equal to the expected one
(and even in this case, we will need extra enumerative information for the proof).
We note in particular that the $\delta$--absolute goodness of
$\tilde f:{\tilde S} \to \Delta$ implies that it is a $\delta$--good
model of every model $f': S'\to \Delta$, obtained from $\tilde S$ via a
twist of $\tilde \L$ corresponding to an irreducible component of the
limit linear system $\tilde\fL$.

\subsection{An enumerative application}
\label{s:enumeration}

Among the applications of the theory described above,
there are the ones to enumerative problems, in particular to  the
computation of the degree $d_\delta$ of 
Severi varieties $V_\delta(S_t,\L_t)$, for the general member $S_t$
of a family $f: S\to \Delta$ as in \S \ref {s:setting},
with $\L$ a line bundle on $S$ as in \S \ref {s:resolving}.

Let $t\in \Delta^ *$ be  general,
and let $m_\delta$ be the dimension of $V_\delta(S_t,\L_t)$, which we
assume to be $m_\delta=\dim (\vert \L_t\vert)-\delta$.
Then $d_\delta$ is the number of points in common of
$V_\delta(S_t,\L_t)$ with $m_\delta$
\emph{sufficiently general}  hyperplanes of $\vert \L_t\vert$. Given
$x\in S_t$,
\[
{H}_{x} := \left\{ [D] \in |\L_t|\ \text{s.t.}\ x\in D \right\}
\]
is a plane in $|\L_t|$.  It is well known, and easy to check
(we leave this to the reader), that if $x_1,\ldots, x_{m_\delta}$ are \emph{general points}
of $S_t$, then $H_{x_1},\ldots, H_{x_{m_\delta}}$ are \emph{sufficiently general}  planes of $\vert \L_t\vert$
with respect to $V_\delta(S_t,\L_t)$.  Thus \emph{$d_\delta$ is the number of $\delta$--nodal curves in $\vert \L_t\vert$
passing through $m_\delta$  general points of $S_t$.}

\begin{definition}\label{def:enne} 
In the above setting, let $V$ be an irreducible component of the limit
Severi variety $\fV_\delta(S,\L)$, endowed with its reduced
structure. 
We let $Q_1,\ldots,Q_r$ be the irreducible components of $S_0$,
and ${\bf n}=(n_1,\ldots,n_r) \in \N^r$ be such that 
$|{\bf n}|:=n_1+\cdots+n_r=m_{\delta}$.
Fix a collection $Z$ of $n_1,\ldots,n_r$ general
points on $Q_1,\ldots,Q_r$ respectively.
The \emph{$\bf n$--degree} of $V$ is the number
$\deg_{\bf n}(V)$
of points in $V$ corresponding
to curves passing through the points in $Z$.  
\end{definition}

Note that in case $m_{\delta}=0$, the above definition is somehow
pointless: in this case, $\deg_{\bf n}(V)$ is simply the number of
points in $V$.
By contrast, when $V$ has positive dimension, 
it is  possible that $\deg_{\bf n} (V)$ be zero for various $\bf n$'s. 
This is  related to the phenomenon described in Remark
\ref{r:other-lim} above.  We will see examples of this  below. 

By flatness, the following result is clear:
\begin{proposition}
\label{prop:deg}
Let $\tilde f:\tilde S \to \Delta$ be a semistable model, and
name $P_1,\ldots,P_{\tilde r}$ the irreducible components of $\tilde
S_0$, in such a way that $P_1,\ldots,P_r$ are the proper transforms of
$Q_1,\ldots,Q_r$ respectively.\\
\begin{inparaenum}[(i)]
\item
For every ${\bf \tilde n}=(n_1,\ldots,n_{ r},0,\ldots,0) \in
\N^{\tilde r}$ such that ${|{\bf \tilde n}|=m_{\delta}}$, 
one has
\begin{equation}
\label{degree}
d_\delta \geqslant
\sum_{V \in \mathrm{Irr}(\reg\fV_\delta(\tilde S,\tilde \L))} 
\mu(V)\cdot \deg_{\bf \tilde n} (V) 
\end{equation}
(recall the definition of $\mu(V)$ in Proposition \ref{p:zrr}).\\
\item If equality holds in \eqref{degree} for every $\bf \tilde n$ as above, then 
 $\tilde f:\tilde S \to \Delta$ is a $\delta$--good model
of $f: S\to \Delta$ endowed with $\L$.
\end{inparaenum}
\end{proposition}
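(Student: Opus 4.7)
The plan is to deduce both parts from the flatness of the relative Severi variety $V_\delta\to \Delta$ (and its crude counterpart) together with the multiplicity formula of Proposition~\ref{p:zrr}.

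For part (i), fix ${\bf\tilde n}$ with $|{\bf\tilde n}|=m_\delta$, and choose a general configuration $Z=(z_1,\ldots,z_{m_\delta})$ consisting of $n_i$ points on each component $P_i$ of $\tilde S_0$. Because $\tilde f:\tilde S\to \Delta$ is flat, $Z$ extends to a flat family $Z_t\subset \tilde S_t\cong S_t$ for $t$ near $0$; for generic $t\in\Delta^*$, $Z_t$ is in general position in $S_t$, so the incidence $V_{\delta,t}\cap\bigcap_j H_{z_{j,t}}$ is a reduced $0$-scheme of length $d_\delta$. By flatness of $V_\delta\subset \Hilb(\tilde\L)$ over $\Delta$, the scheme-theoretic central fibre of the incidence family $V_\delta\cap\bigcap_j H_{z_j}\to\Delta$ still has total length $d_\delta$. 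Each regular component $V\subset\reg\fV_\delta$ then contributes exactly $\mu(V)\cdot\deg_{\bf\tilde n}(V)$: the factor $\mu(V)$ is the multiplicity of $V$ in $\fV_\delta$ given by Proposition~\ref{p:zrr}, while $\deg_{\bf\tilde n}(V)$ counts the points of $V$ satisfying the hyperplane conditions, which by the general choice of $Z$ and the smoothness of $V_\delta$ at a general point of $V$ (Remark~\ref{r:smoothness}) are cut out transversely. Summing over $V$ and recalling that the non-regular part of $\fV_\delta$ contributes a non-negative number yields the inequality~\eqref{degree}.

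For part (ii), the crude relative Severi variety $\cru V_\delta\to \Delta$ is flat with generic fibre of degree $d_\delta$, so $\cru\fV_\delta$ also has $\bf\tilde n$-degree $d_\delta$ for every admissible $\bf\tilde n$. The projection $\Hilb(\tilde\L)\to \P(f_*\L)$ restricts to an isomorphism over $\Delta^*$ between the relative Severi varieties, and being proper it pushes $\fV_\delta$ forward to $\cru\fV_\delta$ as cycles. Consequently the effective difference $D:=\cru\fV_\delta-\phi_*\bigl(\reg\fV_\delta(\tilde S,\tilde\L)\bigr)$ satisfies $\deg_{\bf\tilde n}(D)=0$ for every admissible $\bf\tilde n$, precisely when equality holds in~\eqref{degree}. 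But any irreducible component $V'$ of $D$ (having codimension $\delta$ in $|\L_0|$, hence dimension $m_\delta$) parametrizes curves on $S_0$, and a generic curve in $V'$ admits some distribution $\bf\tilde n$ of $m_\delta$ generic points across the components $Q_1,\ldots,Q_r$ making $\deg_{\bf\tilde n}(V')$ strictly positive; effectivity of $D$ then forces $\deg_{\bf\tilde n}(D)>0$, a contradiction unless $D=0$. This is exactly the definition of $\tilde f$ being a $\delta$-good model.

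The step I expect to require the most care is the multiplicity count in part (i): specifically, verifying that the hyperplane conditions cut each regular component $V$ transversely, so that its contribution is exactly $\mu(V)\cdot\deg_{\bf\tilde n}(V)$ and not strictly larger. For components parametrizing curves with at most one tacnode this is immediate from Remark~\ref{r:smoothness}, but for components $V(W,\bdelta,I,\ubtau)$ with several tacnodes one must invoke the local deformation-theoretic analysis of~\cite{concettina2,galati-knutsen} to confirm transversality of the generic linear conditions to every branch of $V_\delta$ meeting $V$.
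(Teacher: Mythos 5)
Your part (i) is essentially the argument the paper intends (it offers nothing beyond ``by flatness''): degenerate the $m_\delta$ general point conditions onto the components $P_i$ and use semicontinuity of the number of solutions together with Proposition \ref{p:zrr}. Two caveats: the central fibre of the incidence scheme need not have length exactly $d_\delta$ (it may acquire positive--dimensional or embedded components when the hyperplanes $H_{z_j}$ meet $\fV_\delta$ improperly), but the correct statement --- the isolated limit points absorb at most $d_\delta$ solutions, and each point of a regular component $V$ absorbs at least $\mu(V)$ of them --- still gives \eqref{degree}; and the transversality issue you flag at the end is harmless for (i), since a failure of transversality only increases the left--hand side of \eqref{degree}.

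Part (ii) has a genuine gap. You assert that $\cru\fV_\delta$ has ${\bf \tilde n}$--degree $d_\delta$ and deduce that $D=\cru\fV_\delta-\phi_*\bigl(\reg\fV_\delta\bigr)$ has $\deg_{\bf \tilde n}(D)=0$; but $\deg_{\bf \tilde n}(V)$ is computed \emph{upstairs} in the limit linear system while $D$ lives \emph{downstairs} in $|\L_0|$, and the two bookkeepings do not match component by component, so the subtraction is not legitimate. The paper's own tetrahedron computation makes this concrete: for $\delta=1$ and ${\bf n}={\bf n}_{ii}$, the component $V(M_i,\delta_{P_i}=1)$ contributes $21$ to $\sum_V\mu(V)\deg_{\bf n}(V)$ yet is contracted by $\phi$, hence contributes nothing to $\phi_*(\reg\fV_1)$; conversely the twelve components $V(\plm{E}_{jk})$ with $i\in\{j,k\}$ have ${\bf n}_{ii}$--degree $0$ upstairs, while their images $\check p$ each meet $H_{x_1}\cap H_{x_2}$ in one point (namely $[P_i]$) downstairs. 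So equality in \eqref{degree} does not translate into $\deg_{\bf \tilde n}(D)=0$. Moreover your closing step --- that every component of $D$ has positive ${\bf \tilde n}$--degree for some ${\bf \tilde n}$ supported on $Q_1,\dots,Q_r$ --- is precisely the crux and is asserted without proof: one must show that an $m_\delta$--dimensional family of curves on $S_0$ can always be cut down to a nonempty finite set by successively imposing general points of the \emph{original} components, i.e.\ that the relevant restrictions never become constant along the way. A repair is to stay upstairs: equality in \eqref{degree} for every ${\bf \tilde n}$ forces all $d_\delta$ solutions on nearby fibres to specialize onto $\reg\fV_\delta$; one then compares honest degrees in $|\L_0|$ via $\phi_*\bigl(Z(\fV_\delta)\bigr)=Z(\cru\fV_\delta)$ (projection formula) and shows $\deg\phi_*\bigl(\reg\fV_\delta\bigr)=d_\delta$, which is in fact how the paper verifies goodness in its applications (e.g.\ Corollary \ref{cor:thmBi}).
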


\section{Auxiliary results}
\label{S:enumerate}

In this section we collect a few results which we will use later.

First of all, for a general surface $S$ of degree $k$ in $\P^ 3$,
we know from classical projective geometry the degrees $d_{\delta,k}$
of the Severi varieties $V_\delta(S,\O_S(1))$,
for $1\leqslant \delta \leqslant 3$.
For $K3$ surfaces, this fits in a more general
framework of known  numbers
(see \cite{beauville-counting,BL,KPMS, YZ}). One has:

\begin{proposition}[\cite {salmon, vainsencher}] 
\label{p:deg-dual}
Let $S$ be a general degree $k$ hypersurface in $\P^3$. Then
\begin{align*}
d_{1,k}=& k(k-1)^2,\\
d_{2,k}=& \frac{1}{2}k(k-1)(k-2)(k^3-k^2+k-12),\\
d_{3,k}=& \frac{1}{6}k(k-2)(k^7-4k^6+7k^5-45k^4+114k^3-111k^2+548k-960).
\end{align*}
For $k=4$, these numbers are $36$, $480$, $3200$ respectively.
\end{proposition}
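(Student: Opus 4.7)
The plan is to identify $V_\delta(S, \O_S(1))$ inside $|\O_S(1)| \cong \P^3$ with classical loci on the dual surface $\check{S}$: the surface $V_1$ is isomorphic to $\check{S}$, the curve $V_2$ coincides with the locus of bitangent planes (the nodal double curve of $\check{S}$), and the finite set $V_3$ parametrizes the tritangent planes (the triple points of $\check{S}$). The three formulas thus reduce to computing the degrees of these well--known loci.

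For $\delta=1$, I would invoke the classical value $\deg \check{S} = k(k-1)^{n-1}$ for a smooth degree $k$ hypersurface in $\P^n$, specializing to $d_{1,k} = k(k-1)^2$ when $n=3$. This follows, for instance, from computing $c_2$ of the first jet bundle $\mathcal{P}^1(\O_S(1))$ by means of the exact sequence $0 \to \Omega^1_S(1) \to \mathcal{P}^1(\O_S(1)) \to \O_S(1) \to 0$, combined with the Chern-class identities $K_S = (k-4)H|_S$ and $c_2(S) = k(k^2-4k+6)$ for a smooth degree $k$ surface in $\P^3$.

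For $\delta = 2$ and $\delta = 3$, I would invoke Vainsencher's universal formulas from \cite{vainsencher}: for $\delta \leqslant 6$, the number of $\delta$-nodal members of a suitably general $\delta$-dimensional linear subsystem of $|L|$ on a smooth projective surface is a universal polynomial in the four invariants $L^2$, $L\cdot K_S$, $K_S^2$, $c_2(S)$. By the discussion in \S\ref{s:enumeration}, $d_{\delta,k}$ equals the number of $\delta$-nodal plane sections of $S$ through $m_\delta = 3-\delta$ general points, and this enumerative problem fits exactly into Vainsencher's framework. Substituting the intersection numbers
\begin{equation*}
L^2 = k, \quad L\cdot K_S = k(k-4), \quad K_S^2 = k(k-4)^2, \quad c_2(S) = k(k^2-4k+6)
\end{equation*}
into Vainsencher's polynomials for $\delta = 2,3$ gives the announced closed-form expressions, and specialization to $k=4$ confirms the values $36$, $480$, $3200$.

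The main obstacle will be verifying the genericity hypothesis underlying Vainsencher's theorem for the specific subsystem $\bigcap_{i=1}^{m_\delta} H_{x_i}$ cut out by passage through general points $x_i \in S$. This is handled by a Bertini-type argument: one checks that the incidence correspondence $\{(x_1,\ldots,x_{m_\delta}, [D]) : x_i \in D\ \forall i\}$ meets each relevant Severi stratum in $|\O_S(1)|$ in the expected dimension, so that for generic $x_i$ the resulting linear subsystem is transverse to every stratum entering the count; this genericity step is the only non--classical input required by the proof.
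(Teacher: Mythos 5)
Your proposal is correct and matches the paper's approach: the paper offers no proof of this proposition, justifying it solely by the citations to \cite{salmon} and \cite{vainsencher}, and your argument is precisely a reconstruction of those sources (the jet-bundle computation $\deg\check S=c_2(\mathcal P^1(\O_S(1)))=3L^2+2L\cdot K_S+c_2(S)=k(k-1)^2$ for $\delta=1$, and Vainsencher's universal polynomials in $L^2$, $L\cdot K_S$, $K_S^2$, $c_2(S)$ for $\delta=2,3$). The genericity of the point conditions that you flag as the main remaining step is exactly the issue the paper disposes of in \S\ref{s:enumeration} by the same Bertini-type reasoning, so nothing essential is missing.
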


Note that $V_1(S,\O_S(1))$ identifies  with the \emph{dual surface}
$\check{S} \subset \check{\P}^3$. The following is an extension of the computation of $d_{1,k}$ 
for  surfaces with certain singularities.
This is well--known and the details can be left to the reader. 

\begin{proposition}
\label{p:deg-dual-sing}
Let $S$ be a degree $k$ hypersurface in $\P^3$, having
$\nu$ and $\kappa$ double points of type $A_1$ and $A_2$ respectively
as its only singularities.
Then 
\begin{equation*}
\deg (\check{S})=
k(k-1)^2-2\nu-3\kappa.
\end{equation*}
\end{proposition}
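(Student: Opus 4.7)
The plan is to realise $\deg(\check S)$ as the residual in a Bézout-type intersection number involving two polar surfaces, and then compute the local contribution of each singularity by a direct analytic calculation.

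I would first fix a general line $L \subset \P^3$ and two general points $o_1, o_2 \in L$, and consider for $i=1,2$ the first polar surfaces $\Pi_i := \{\sum_j (o_i)_j \partial F/\partial X_j = 0\}$ of $S$ with respect to $o_i$, each of degree $k-1$. At a smooth point $p \in S$ one has $p \in \Pi_i$ iff $o_i \in T_pS$, so $S_{\sm} \cap \Pi_1 \cap \Pi_2$ consists of those smooth points whose tangent plane contains $L$; by definition of $\deg(\check S)$ as the cardinality of $\check S \cap \ell$ for a general line $\ell \subset \check\P^3$ (and taking for $\ell$ the pencil of planes through $L$), this set has exactly $\deg(\check S)$ elements, each contributing with multiplicity $1$ to the intersection cycle for general $L$. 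Bézout's theorem then yields
\begin{equation*}
k(k-1)^2 \;=\; \deg(\check S) \;+\; \sum_{p \in \mathrm{Sing}(S)} i_p(S, \Pi_1, \Pi_2),
\end{equation*}
reducing the proof to showing that the local intersection multiplicity is $2$ at each $A_1$ point and $3$ at each $A_2$ point.

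The core of the argument is this local analysis. In analytic coordinates $(x,y,z)$ centred at a singular point $p$, for a generic choice of $o_i$ each $\Pi_i$ is smooth at $p$ with distinct tangent planes, hence $C := \Pi_1 \cap \Pi_2$ is a smooth analytic curve germ at $p$; parametrising it by $\gamma(t)$, the intersection multiplicity $i_p(S, \Pi_1, \Pi_2)$ equals the $t$-adic order of $f(\gamma(t))$, where $f$ is the local equation of $S$. At an $A_1$ point, $f = Q(x,y,z) + \mathrm{h.o.t.}$ with $Q$ a non-degenerate quadratic form; the linear parts of $\Pi_1, \Pi_2$ are $\nabla Q \cdot o_i$, defining for generic $o_i$ a line with direction $\vec v$ satisfying $Q(\vec v) \neq 0$, so $f(\gamma(t)) = t^2 Q(\vec v) + O(t^3)$ has order $2$. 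At an $A_2$ point I can take $f = x^2 + y^2 + z^3 + \mathrm{h.o.t.}$; the polars then have linear parts $2a_ix + 2b_iy + 3c_iz^2 + \mathrm{h.o.t.}$, both containing the $z$-axis, and solving the linear system in $(x,y)$ with parameter $z^2$ yields $\gamma(t) = (\alpha t^2, \beta t^2, t) + \mathrm{h.o.t.}$, whence $f(\gamma(t)) = t^3 + O(t^4)$ has order exactly $3$. Substituting these contributions $2\nu$ and $3\kappa$ into the displayed formula completes the proof.

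The main obstacle is the bookkeeping of genericity: one must ensure that for $L$ general, the intersection $S \cap \Pi_1 \cap \Pi_2$ is zero-dimensional and supported exactly on $\mathrm{Sing}(S)$ together with the simple tangency points, that each such simple tangency contributes multiplicity $1$, and that at every singularity the local computation above applies (i.e.\ both $\Pi_i$ are smooth and the auxiliary non-vanishing conditions on $o_i$ hold). These are routine transversality arguments using the generality of $L$, but they are the place where care is required.
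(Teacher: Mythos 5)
Your argument is correct, and it is worth saying up front that the paper itself does not prove this statement: it declares the formula well known and leaves the details to the reader, remarking only (after Lemma \ref{l:pencil}) that a proof can be based on the topological formula $\chitop(S)=\chitop(F_{\rm gen})\chitop(B)+\sum_b\bigl(\chitop(F_b)-\chitop(F_{\rm gen})\bigr)$ applied to a general pencil of plane sections. So your route is genuinely different from the one the authors gesture at. Theirs is global and topological: one counts the special members of the pencil of planes through a general line $L$ by comparing Euler characteristics of the fibres (a simple tangency raises $\chitop$ of the fibre by $1$, and the unique plane of the pencil through an $A_1$, resp.\ $A_2$, point contributes the extra Euler characteristic of a nodal, resp.\ cuspidal, plane section), the bookkeeping being done on a resolution of $S$. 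Yours is the classical polar construction: $\deg(\check S)$ is the residual intersection of $S$ with two first polars, and the corrections $2$ and $3$ are local intersection multiplicities at the singular points. Your version has the advantage of being entirely local and algebraic, and of making transparent where the coefficients $2$ and $3$ come from (the order of $f$ along the polar curve germ); the topological version avoids all transversality bookkeeping at the price of having to control $\chitop$ of the resolution. Both local computations in your proposal check out: at an $A_1$ point the linear parts of the two polars are $2B(\,\cdot\,,o_i')$ for the bilinear form $B$ of $Q$, so the polar curve is smooth with generic direction $\vec v$, $Q(\vec v)\neq 0$, and $\ord_t f(\gamma(t))=2$; at an $A_2$ point the partial $\partial f/\partial z$ and the Euler-relation term $\partial F/\partial X_0$ both vanish to order $\geqslant 2$, so both tangent planes contain the $z$-axis, $x,y=O(z^2)$ along the polar curve, and $\ord_t f(\gamma(t))=3$. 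The genericity issues you flag at the end (zero-dimensionality of $S\cap\Pi_1\cap\Pi_2$, multiplicity one at simple tangencies via reflexivity in characteristic $0$) are indeed the only remaining points, and they are routine.
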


The following topological formula is well-known (see, e.g.,
\cite[Lemme VI.4]{beauville}).

\begin{lemma}
\label{l:pencil}
Let $p:S \to B$ be a surjective morphism of a smooth projective
surface onto a smooth curve.
One has
\[
\chitop(S)=\chitop(F_{\rm gen})\chitop(B)+
\sum_{b\in {\rm Disc}(p)}\bigl(\chitop(F_b)-\chitop(F_{\rm gen})\bigr),
\]
where $F_{\rm gen}$ and $F_b$ respectively denote the fibres of $p$
over the generic  point of $B$ and a closed point $b\in B$, and
${\rm Disc}(p)$ is the set of points above which $p$ is not smooth.
\end{lemma}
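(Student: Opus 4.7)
The plan is to deduce the formula from two standard ingredients: Ehresmann's fibration theorem on the smooth locus of $p$, and the additivity of the topological Euler characteristic for decompositions into locally closed pieces.

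First I would set $D:=\mathrm{Disc}(p)$, which is a finite subset of $B$ because $p$ is a proper, surjective morphism between smooth varieties and so its singular locus in $B$ has positive codimension (hence is $0$-dimensional). Write $U:=B\setminus D$ and $S_U:=p^{-1}(U)$. On $U$, the restricted map $p\colon S_U\to U$ is a proper submersion of $C^\infty$ manifolds, so Ehresmann's theorem yields that it is a locally trivial fibration with fibre diffeomorphic to $F_{\rm gen}$. For a locally trivial fibration over a reasonable base, the topological Euler characteristic is multiplicative, so
\[
\chitop(S_U)=\chitop(U)\cdot \chitop(F_{\rm gen}).
\]
(One standard way to see this is via the Leray spectral sequence for compactly supported cohomology, or by choosing a finite trivialising open cover of $U$ and applying Mayer–Vietoris inductively.)

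Next I would apply the additivity of $\chitop$ for stratifications by locally closed subsets (valid via compactly supported cohomology, which satisfies $\chi_c=\chitop$ for complex algebraic varieties). Decomposing $B=U\sqcup D$ and $S=S_U\sqcup\bigsqcup_{b\in D}F_b$ gives
\[
\chitop(B)=\chitop(U)+|D|,\qquad
\chitop(S)=\chitop(S_U)+\sum_{b\in D}\chitop(F_b).
\]
Substituting $\chitop(S_U)=\chitop(F_{\rm gen})\chitop(U)=\chitop(F_{\rm gen})\bigl(\chitop(B)-|D|\bigr)$ into the second identity and rearranging the $|D|$ terms into the sum yields
\[
\chitop(S)=\chitop(F_{\rm gen})\chitop(B)+\sum_{b\in D}\bigl(\chitop(F_b)-\chitop(F_{\rm gen})\bigr),
\]
as desired.

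The only nontrivial points are the multiplicativity of $\chitop$ over the fibration $p\colon S_U\to U$ and the additivity of $\chitop$ across the stratification; both are classical. No ingredient is really hard here, but if I had to single out the step that deserves most care it is the multiplicativity statement: it requires knowing that $p|_{S_U}$ is a topologically locally trivial fibration (Ehresmann) and that $\chitop$ of a fibration over a finite CW base is the product of the Euler characteristics of base and fibre, which one can also package via the Leray spectral sequence of $p|_{S_U}$.
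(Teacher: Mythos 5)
Your proof is correct and is essentially the classical argument that the paper refers to (it cites \cite[Lemme VI.4]{beauville} rather than giving a proof): multiplicativity of $\chitop$ over the locally trivial fibration on $B\setminus \mathrm{Disc}(p)$ given by Ehresmann's theorem, combined with additivity of the compactly supported Euler characteristic over the stratifications $B=U\sqcup D$ and $S=S_U\sqcup\bigsqcup_{b\in D}F_b$. The bookkeeping at the end is also correct, so there is nothing to add.
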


As a side remark, note that it is possible to give a proof of the
 Proposition \ref {p:deg-dual-sing} based on Lemma \ref {l:pencil}.
 This can be left to the reader.

Propositions \ref{p:deg-dual} and \ref{p:deg-dual-sing} are sort of
Pl\"ucker formulae for surfaces in $\P^3$. The next proposition  
provides analogous formulae for curves in a
projective space of any dimension.

\begin{proposition}
\label{p:dJ}
Let $C \subset \P^N$ be an irreducible, non--degenerate curve of degree $d$ and of genus
$g$, the normalization morphism of which is unramified.
Let $\tau \leqslant N$ be a non-negative integer, and assume $2\tau <
d$.
Then the Zariski closure of the locally closed subset of $\check{\P}^N$ parametrizing 
\emph{$\tau$--tangent} hyperplanes  to $C$ (i.e. planes tangent to $C$ at $\tau$ distinct points) has degree equal to the coefficient of
$u^{\tau}v^{d-2\tau}$ in 
\[
(1+4u+v)^g (1+2u+v)^{d-\tau-g}.
\]
\end{proposition}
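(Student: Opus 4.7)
The plan is to reduce the statement to the classical formula of de Jonqui\`eres on a smooth curve, via the normalization of $C$. Let $\pi: \tilde C \to C$ be the normalization, a smooth projective curve of genus $g$, and note that by hypothesis the composed map $\tilde C \to \P^N$ is unramified. First I would observe that, under this assumption, a hyperplane $H \in \check \P^N$ tangent to $C$ at $\tau$ distinct smooth points $q_1, \ldots, q_\tau$ pulls back on $\tilde C$ to a divisor of the form $\pi^*(H \cdot C) = 2 D_1 + D_2$, with $D_1 = \sum_i \pi^{-1}(q_i)$ reduced of degree $\tau$ and $D_2$ reduced effective of degree $d - 2\tau$, disjoint from $D_1$ for the generic such $H$; conversely, because $C$ is non-degenerate, such a decomposition determines a unique hyperplane. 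Unramifiedness is precisely what rules out additional spurious tangencies coming from cuspidal singularities of $C$.

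Next I would rephrase the degree as an enumerative count. By definition, $\deg V_\tau$ is the number of $\tau$-tangent hyperplanes through $N-\tau$ general points of $\P^N$. Such hyperplanes cut out on $\tilde C$ a linear subseries $\mathfrak g$ of projective dimension $\tau$, sitting inside the $g^N_d$ cut by $|\mathcal O_{\P^N}(1)|$; the genericity of the chosen points transfers into a genericity of $\mathfrak g$ among $\tau$-dimensional linear subseries of $|\pi^*\mathcal O(1)|$. Thus $\deg V_\tau$ equals the number of divisors in $\mathfrak g$ that decompose as $2 D_1 + D_2$ with $D_1, D_2$ reduced and disjoint, of degrees $\tau$ and $d - 2\tau$ respectively.

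This is precisely the statement of de Jonqui\`eres' classical formula, which I would either quote directly or prove by intersection theory on the symmetric product $\tilde C^{(d)}$. In the latter approach, one combines Macdonald's description of $H^*(\tilde C^{(d)})$ by the hyperplane class $x$ (a fixed point plus $\tilde C^{(d-1)}$) and the theta class $\theta$ (pulled back from $\mathrm{Pic}^d$) with the map
\[
\phi: \tilde C^{(\tau)} \times \tilde C^{(d - 2\tau)} \to \tilde C^{(d)}, \qquad (D_1, D_2) \mapsto 2 D_1 + D_2,
\]
which is generically injective. The number we seek is the intersection number of the image of $\phi$ with a generic $\tau$-dimensional linear subspace $\mathfrak g \subset \tilde C^{(d)}$.

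The main obstacle will be the combinatorial identification of this intersection number with the coefficient of $u^\tau v^{d - 2\tau}$ in $(1 + 4u + v)^g (1 + 2u + v)^{d - \tau - g}$. The required pullbacks $\phi^* x$ and $\phi^* \theta$ split naturally under K\"unneth into contributions from the two factors (with the weight $4$ in the theta pullback being the square of the multiplicity $2$ on the first factor); after expanding, pushing forward by $\phi$, and integrating against $[\mathfrak g]$, one is left with a sum of monomials in the generators of the two factors, which the standard integration identities on symmetric products and the Poincar\'e formula $\theta^g / g! = [\mathrm{pt}]$ in $H^{2g}(\mathrm{Pic}^d)$ reorganize into the stated coefficient extraction. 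The factor $(1 + 4u + v)^g$ accounts for the $g$ "theta" degrees of freedom and $(1 + 2u + v)^{d - \tau - g}$ for the remaining "hyperplane" ones. The assumption $2\tau < d$ ensures that $D_2$ has positive degree and that the count is transverse.
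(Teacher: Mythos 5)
Your proposal is correct and follows essentially the same route as the paper: pass to the normalization, use unramifiedness to identify $\tau$-tangent hyperplanes with divisors having $\tau$ double points in a general $\tau$-dimensional linear subseries of the pulled-back hyperplane series, and conclude by de Jonqui\`eres' formula (the paper simply cites \cite[p.~359]{acgh} for the latter, where you additionally sketch its proof via intersection theory on the symmetric product).
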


\begin{proof}
Let $\nu:\bar{C} \to C$ be the normalization of $C$, and let $\mathfrak g$
be the $g^N_\mu$ on $\bar C$ defined as the pull--back on $\bar C$ of the 
hyperplane linear series on $C$. Since $\nu$ is unramified, the degree of
the subvariety of $\check{\P}^N$ parametrizing 
$\tau$-tangent hyperplanes to $C$ is equal to the number of divisors having $\tau$
double points in a general sublinear series $g^\tau_\mu$ of $\mathfrak g$. This number is 
computed by a particular instance of de Jonqui\`eres'
formula, see \cite[p. 359]{acgh}.
\end{proof}

The last result we shall need is:

\begin{lemma}
\label{l:tangency}
Consider a smooth, irreducible curve $R$, contained in a smooth
surface $S$ in $\P^3$.
Let $\check{R}_S$ be the irreducible curve in $\check \P^3$
parametrizing planes tangent to $S$ along $R$.
Then the dual varieties $\check{S}$ and $\check{R}$ both contain
$\check{R}_S$, and do not intersect transversely at its general point. 
\end{lemma}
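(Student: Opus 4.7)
The plan is to reduce the non-transversality assertion to an equality of embedded tangent planes via biduality. First I verify the two inclusions $\check R_S \subseteq \check S$ and $\check R_S \subseteq \check R$. For any $p \in R$, the plane $H = T_p S$ is tangent to $S$ at $p$, hence $[H] \in \check S$; and since $R \subset S$ implies $T_p R \subset T_p S = H$, the plane $H$ is also tangent to $R$ at $p$, hence $[H] \in \check R$.

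Next, I fix a general point $[H] = [T_p S] \in \check R_S$ and analyze the embedded tangent planes of $\check S$ and $\check R$ at $[H]$. For generic $p \in R$, the plane $H$ has $p$ as its unique point of contact with both $S$ and $R$, and $[H]$ is a smooth point of both $\check S$ and $\check R$; this can be checked by a routine dimension count. Then, by the symmetric (Lagrangian) nature of the conormal variety $I_X = \{(x,K) : T_x X \subset K\}$---equivalently, by the biduality theorem $\check{\check X}=X$---for any smooth point $[K] \in \check X$ with unique tangency point $x$, the embedded tangent space $T_{[K]} \check X$ is contained in the $2$-plane $\check x \subset \check{\P}^3$ of hyperplanes through $x$. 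Since $\check S$ and $\check R$ are both surfaces (assuming $R$ is non-degenerate in $\P^3$, as is the case in our applications), this inclusion is an equality in each case, yielding $T_{[H]} \check S = \check p = T_{[H]} \check R$. Two distinct surfaces sharing the same embedded tangent plane at a common smooth point cannot be transverse there, so $\check S$ and $\check R$ are not transverse at $[H]$.

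The main obstacle is the identification $T_{[H]} \check R = \check p$ for the dual of the curve $R$. For the hypersurface $\check S$ this is a direct consequence of $\check{\check S}=S$, but $\check R$ is a developable ruled surface and one has to argue either via the symmetry of the conormal variety, or through a direct tangent-space computation on the ruled parametrization. In the latter approach, if $H_1(t), H_2(t)$ is a moving basis for the pencil of hyperplanes containing $T_{\gamma(t)} R$, then differentiating both incidence conditions $H_i(\gamma(t)) = 0$ and $H_i(\gamma'(t)) = 0$ yields $H_i'(t)(\gamma(t)) = 0$; hence every tangent direction to $\check R$ at $[H]$ is a combination of $H_1, H_2, s_1 H_1' + s_2 H_2'$ modulo $H$, all of which vanish at $p = \gamma(t)$ and therefore lie in $\check p$.
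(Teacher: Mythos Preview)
Your proof is correct and takes essentially the same route as the paper's: both verify the inclusions $\check{R}_S \subset \check{S} \cap \check{R}$ and then use biduality to show that, at a general point $[H]=[T_pS]$ of $\check{R}_S$, the embedded tangent planes of $\check{S}$ and $\check{R}$ both coincide with $\check{p}$. The only minor difference is that the paper disposes of the possibility that $\check{S}$ or $\check{R}$ is singular at the general point of $\check{R}_S$ by noting that non-transversality is then automatic, rather than asserting generic smoothness as you do.
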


\begin{proof}
Clearly  $\check{R}_S$ is contained in  $\check{S}\cap \check{R}$.
If either  $\check{S}$ or $\check{R}$ are singular at the general
point of $\check{R}_S$, there is nothing to prove. Assume that
$\check{S}$ and $\check{R}$ are both smooth at the general point of
$\check{R}_S$. 
We have to show that they are tangent there. Let $x\in R$ be
general. Let $H$ be the tangent plane to $S$ at $x$. Then $H\in
\check{R}_S$ is the general point. Now, the biduality theorem (see ,
e.g., \cite [Example 16.20]{harris2}) says that the tangent plane to
$\check{S}$ and of $\check{R}$ at $H$ both coincide with the set of
planes in $\P^ 3$ containing $x$, hence the assertion.
\end{proof}

\section{Degeneration to a tetrahedron}
\label{S:4planes}

We consider a family $f:S \to \Delta$ of surfaces in $\P^3$,
induced (as in Example \ref {ex:pi3} and in \S \ref {s:resolving})
by a pencil generated by a general quartic surface $S_{\infty}$ and a
\emph{tetrahedron} $S_0$
(i.e. $S_0$ is the union of four independent planes,
called the \emph{faces} of the tetrahedron), together with the
pull-back $\O_S(1)$ of $\O_{\P^3}(1)$.
We will prove that it is $\delta$--well behaved for
$1\leqslant \delta\leqslant 3$ by constructing a suitable good
model.

The plan is as follows.
We construct the good model in \S\ref{ssec:model}, and complete its
description in \S\ref{s:identifications}.
We then construct the corresponding limit linear system: the core of
this is \S\ref{s:4planes-limlin}; the paragraphs \ref{s:planes},
\ref{s:rat4ics}, and \ref{s:cubics}, are devoted to the study of the
geometry of the exceptional components of the limit linear system
(alternatively, of the geometry of the corresponding flat limits of
the smooth quartic surfaces $S_t$, $t\in\Delta^*$);
eventually, we complete the description in \S\ref{s:good}.
We then identify the limit Severi varieties in \S\ref{s:tetra-concl}.

\subsection{A good model}\label{ssec:model}

The outline of the construction is as follows:\\
{\it
 \begin{inparaenum}[(I)]
\item \label{Tetra:sm-resol}
we first make a small resolution of the singularities of $S$ as in
Example \ref {ex:pi3};\\ 
\item \label{Tetra:bs-change}
then we  perform a degree $6$ base change;\\
\item \label{Tetra:granchio-resol}
next we resolve the singularities  of the total space arisen with the
base change, thus obtaining a new semistable family
$\pi:X\to\Delta$;\\ 
\item \label{Tetra:flop}
finally  we will \emph{flop} certain double curves in the central fibre
$X_0$, thus obtaining a new semistable family $\varpi:\bar{X}\to
\Delta$.
\end{inparaenum}}\\
The central fibre of the intermediate family $\pi:{X}\to \Delta$ is
pictured in Figure \ref{f:granchio} (p.\ \pageref{f:granchio};
we provide a cylindrical projection of a 
real picture of $X_0$, the dual graph of which is topologically an
$\mathbf{S}^2$ sphere),
and the flops are described in Figure
\ref{f:flop} (p.\ \pageref{f:flop}). 
The reason why we need to make the degree 6 base change is, intuitively, the following: 
a degree $3$ base change is needed to understand the contribution to
the limit Severi variety of curves passing through a \emph{vertex} 
(i.e.~a triple point) of  the tetrahedron,
while an additional degree $2$ base change enables one to understand
the contributions due to the \emph{edges} (i.e.~the double lines) of
the tetrahedron. 

\subsubsection{Steps (\ref{Tetra:sm-resol}) and (\ref{Tetra:bs-change})}
The singularities of the initial total space $S$ consist of four
ordinary double points on each edge of $S_0$. We consider 
(cf. Example \ref {ex:pi3})
the small resolution $\tilde{S} \to S$ obtained by arranging for every
edge the four $(-1)$--curves two by two on the two adjacent faces.
We call $\tilde{f}:\tilde{S} \to \Delta$ the new family.

Let $p_1,\ldots,p_4$ be the triple points of $\tilde{S}_0$. For each
$i\in \{1,\ldots,4\}$, we let $P_i$ be the irreducible component of
$\tilde{S}_0$ 
which is opposite to the vertex $p_i$: it is a plane blown-up at six
points. 
For distinct $i,j\in \{1,\ldots,4\}$, we let $\p{E}_{ij}$ and
$\m{E}_{ij}$ be the two $(-1)$-curves contained in $P_i$ and meeting
$P_j$.  
We call $\p{z}_{ij}$ and $\m{z}_{ij}$ the two points cut out on $P_i$
by $\p{E}_{ji}$ and $\m{E}_{ji}$ respectively.

Let now
$\bar{f}:\bar{S} \to \Delta$
be the family obtained from $\tilde{f}:\tilde{S} \to \Delta$  by the
base change $t\in\Delta \mapsto t^6\in\Delta$. The  central fibre
$\bar S_0$ is isomorphic to $\tilde S_0$, so we will keep the above
notation for it.

\subsubsection{Step (\ref{Tetra:granchio-resol})} 

As a first step in the desingularization of $\bar{S}$,
we perform the following sequence of
operations  for all $i\in \{1,\ldots,4\} $. 
The total space $\bar{S}$ around $p_i$ is locally analytically
isomorphic to the hypersurface of $\mathbf C^ 4$ defined by the
equation $ xyz = t^6$ at the origin.
We blow-up $\bar{S}$ at $p_i$. The blown--up total space locally sits in $\bf C^ 4\times \P ^ 3$.
Let $(\xi:\eta:\zeta:\vartheta)$ be the homogeneous coordinates  in $\bf P^ 3$. Then 
the new total space is locally defined in $\bf C^ 4\times \P ^ 3$ by the equations
\begin{equation}
\label{eq1-granchio}
\xi^4\eta\zeta=\vartheta^6x^3, \quad
\xi\eta^4\zeta=\vartheta^6y^3, \quad
\xi\eta\zeta^4=\vartheta^6z^3, \quad \text{and} \quad
\xi\eta\zeta=\vartheta^3t^3.
\end{equation}
The equation of the exceptional divisor  (in the exceptional $\bf P^3$ of the blow--up of $\bf C^ 4$) is $\xi\eta\zeta=0$, hence
this is the union of three planes meeting transversely at a point $p'_i$  in $\bf P^ 3$.
For $i,j$ distinct in $\{1,\ldots,4\}$, we call $A_j^i$ the exceptional planes  meeting the proper transform of  $P_j$
(which, according to our conventions, we still denote by $P_j$, 
see \S \ref {conv}).

The equation of the new family around the point $p'_i$ given by 
$\bigcap_{j\neq i}A_j^i$ is $\xi\eta\zeta=t^3$
(which sits  in the affine chart $\vartheta=1$).
Next we blow-up the points $p'_i$, for $i\in \{1,\ldots,4\}$.  
The new exceptional divisor $T^ i$ at each point $p'_i$ is isomorphic
to the cubic surface with equation $\xi\eta\zeta=t^3$ 
in the $\P^3$ with coordinates $(\xi:\eta:\zeta:t)$. Note that $T^ i$ has
three $A_2$--double points, at the vertices of the triangle $t=0,\,\,
\xi\eta\zeta=0$.

Next we have to get rid of the singularities of the total space along the double curves of the central fibre. 
First we take care of the curves $C_{hk}:=P_h\cap P_k$, for $h,k$ distinct in $\{1,\ldots,4\}$.
The model we  constructed so far is defined
along such a curve by an equation of the type
$\xi\eta=\vartheta^6z^3$, 
(as it follows, e.g., from the third equation in \eqref{eq1-granchio} by setting $\zeta=1$). The curve
$C_{hk}$  is defined by $\xi=\eta=\vartheta=0$. If $i\in \{1,\ldots,4\}-\{h,k\}$,  the
intersection point $p_{hki}:=C_{hk} \cap A_h^i\cap A_k^i$ is cut out on $C_{hk}$ by 
the hyperplane with equation $z=0$.  Away from the $p_{hki}$'s, with
$i\in \{1,\ldots,4\}-\{h,k\}$,  the points of $C_{hk}$
are double points of type  $A_5$  for the total space.
We blow--up along this curve: this introduces new homogeneous 
coordinates $(\xi_1:\eta_1:\vartheta_1)$,
with new equations for the blow--up
\begin{equation*}
\xi_1^5\eta_1=\vartheta_1^6\xi^4z^3, \quad
\xi_1\eta_1^5=\vartheta_1^6\eta^4z^3, \quad \text{and} \quad
\xi_1\eta_1=\vartheta_1^2\vartheta^4z^3.
\end{equation*}
The exceptional divisor is defined by $\xi_1\eta_1=0$, and is the
transverse union of two ruled surfaces: we call $W_{hk}'$ the one that
meets $P_h$, and $W_{kh}'$ the other. 
The affine chart we are interested in is $\vartheta_1=1$, where the
equation is $\xi_1\eta_1=\vartheta^4z^3$.
We then blow--up along the curve $\xi_1=\eta_1=\vartheta=0$, which
gives in a similar 
way the new equation $\xi_2\eta_2=\vartheta^2z^3$ with the new
coordinates 
$(\xi_2:\eta_2:\vartheta_2)$. The exceptional divisor consists of
two ruled 
surfaces, and we call $W_{hk}''$ (resp. $W_{kh}''$) the one that meets
$W_{hk}'$ (resp. $W_{kh}'$).
Finally, by blowing-up along the curve $\xi_2=\eta_2=\vartheta=0$,
we obtain a new 
equation $\xi_3\eta_3=\vartheta_3^2z^3$, with new coordinates
$(\xi_3:\eta_3:\vartheta_3)$. The 
exceptional divisor is a ruled surface, with two  $A_2$--double
points at its intersection points with the curves $C^ i_{hk}:=A_h^i\cap A_k^ i$, with $i\in \{1,\ldots,4\}-\{h,k\}$.
We call it either $W_{hk}$ or $W_{kh}$, with no ambiguity.

The final step of our desingularization process consists in
blowing--up along the twelve curves
$C^i_{hk}$, with pairwise distinct $h,k,i\in \{1,\ldots, 4\}$. 
The total space is given along each of these curves
by an equation of the type  $\xi\eta=\vartheta^3t^3$ in the variables
$(\xi,\eta,\theta,t)$, obtained from the last equation in \eqref
{eq1-granchio} by setting $\zeta=1$.  The curve $C^i_{hk}$ is defined
by the local equations $\xi=\eta=t=0$, which
 shows that they consist  of $A_2$--double points for the total
space.  They also contain an $A_2$--double point of $W_{hk}$ and
$T^i$ respectively. A computation similar to the above shows that the  blow--up along these curves resolves all
singularities in a single move. The exceptional divisor over $C^i_{hk}$ 
is the union of two transverse ruled surfaces: we call $V_{hk}^i$ the
one that meets $A_h^i$, and $V_{kh}^i$ the other.

At this point, we have a semistable family $\pi: X\to \Delta$, whose
central fibre is depicted in Figure \ref{f:granchio}:
for each double curve we indicate  its 
self--intersections in the two components of the central fibre it
belongs to. This is obtained by applying the Triple Point
Formula (see Lemma \ref{l:triple-point}).

\begin{figure}
\begin{center}
\includegraphics[width=16cm]{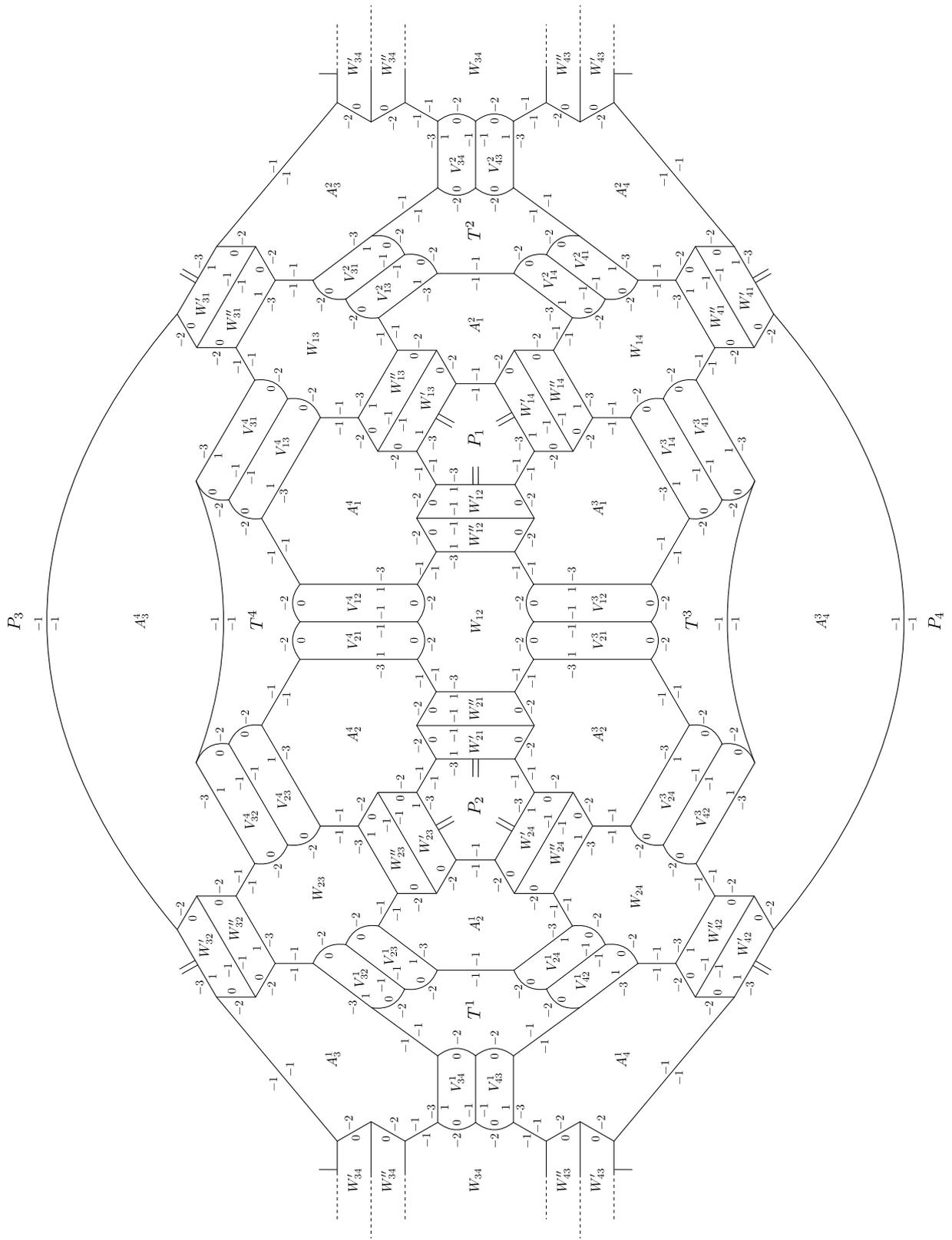}
\end{center}
\caption{Planisphere of the model $X_0$ of the degeneration into
four planes}
\label{f:granchio}
\end{figure}

\subsubsection{Step (\ref{Tetra:flop})}
For our purposes,
we need to further blow-up the total space along the twelve curves
$\Gamma^ i_{hk}:=V_{hk}^i\cap V_{kh}^i$.
This has the drawback of introducing components with multiplicity two
in the central fibre, namely the corresponding exceptional divisors.
To circumvent this, we will  flop these curves as follows.

\begin{figure}
\begin{center}
\includegraphics[width=12cm]{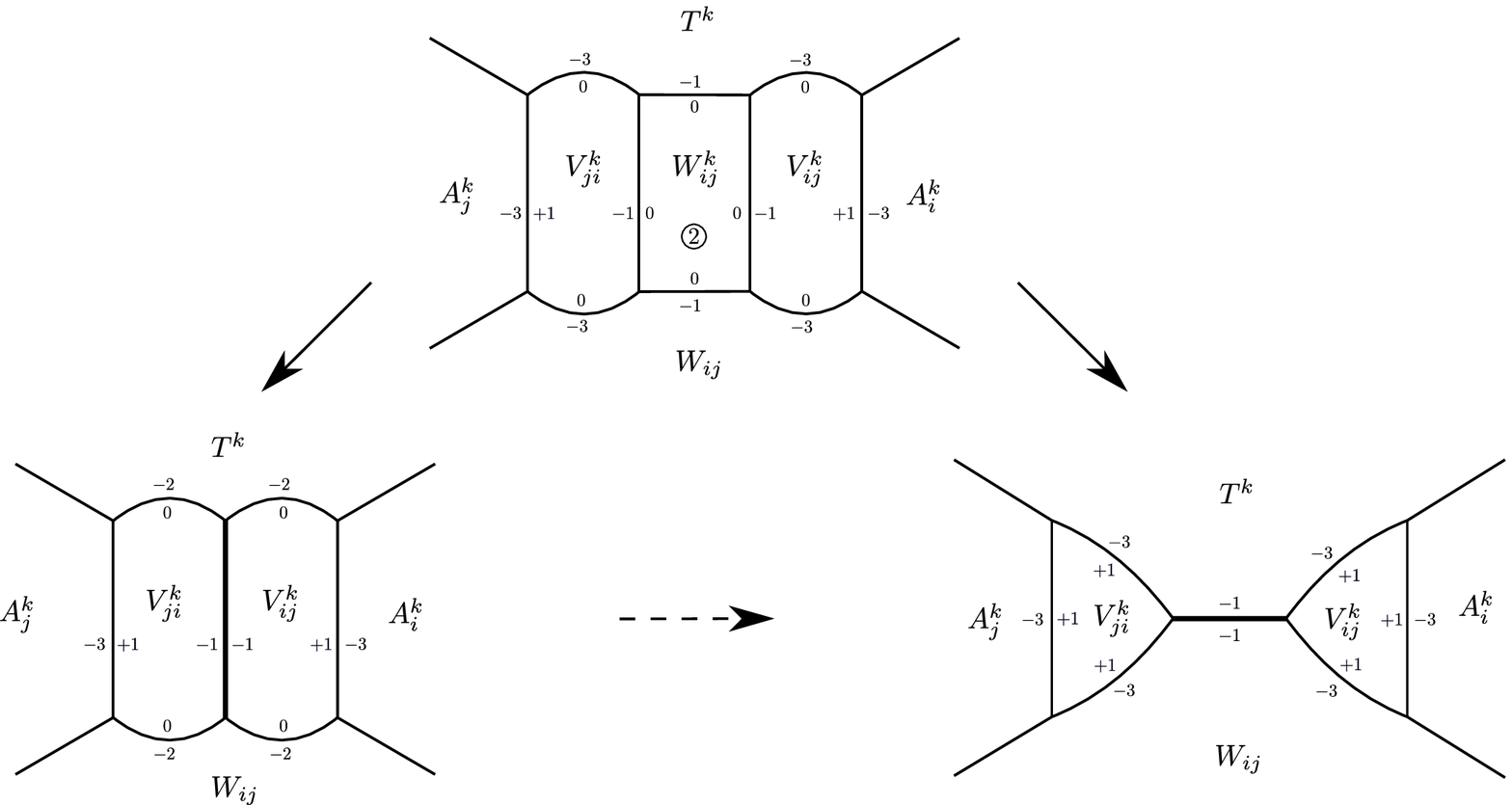}
\end{center}
\caption{One elementary flop of the birational transformation 
  $X \dashrightarrow \bar{X}$
}
\label{f:flop}
\end{figure}

Let $\hat{\pi}:\hat{X} \to \Delta$ be the family obtained
by  blowing-up $X$ along the $\Gamma^ i_{hk}$'s. We call $W_{hk}^i$ (or,
unambiguously, $W_{kh}^i$) the corresponding exceptional divisors:
they appear with multiplicity two in the central
fibre $\hat{X}_0$.  By applying the Triple Point Formula as in Remark \ref {rem:TPF}, 
one checks that the  surfaces $W_{kh}^i$ are all
isomorphic to $\P^1\times \P^1$. Moreover, 
it is possible to contract
$W_{hk}^i$ in the direction of the ruling cut out by $V_{hk}^i$ and
$V_{kh}^i$, as indicated on Figure \ref{f:flop}.
We call $\hat{X} \to \bar{X}$ the contraction of the twelve divisors $W_{hk}^i$ in
this way, and $\varpi: \bar{X} \to \Delta$ the corresponding semistable family of
surfaces. 

Even though $\bar{X} \dashrightarrow X$ is only a
birational map, we have a birational morphism
$\bar{X} \to \bar{S}$ over $\Delta$.

\subsection{Identification of the components of the
  central fibre} 
\label{s:identifications}

Summarizing, the irreducible components of the central fibre
$\bar X_0$ are the following:

\medskip\noindent
(i)
 The 4  surfaces $P_i$,  with  $1\leqslant i
  \leqslant 4$.

\smallskip \noindent
Each $P_i$ is a plane  blown--up at 6+3 points,
and $H$ (i.e. the pull-back of a general line in the plane, recall our
conventions in \S\ref{conv}) is the restriction class of
$\O_{\bar{X}}(1)$ on $P_i$.
For $j,k \in \{1,\ldots, 4\}-\{i\}$, we set
\[  L_{ij}:=P_i\cap W_{ij}'
\quad {\rm and} \quad 
G^ k_i:= P_i\cap A_i^k,\]
as indicated in Figure \ref{fig:P}. 
In addition to the three $(-1)$--curves $G_i^k$, we have on $P_i$ the
six exceptional curves $\p{E}_{ij}, \m{E}_{ij}$, for all $j\in
\{1,\ldots, 4\}-\{i\}$, 
with $\p{E}_{ij}, \m{E}_{ij}$ intersecting $L_{ij}$ at one
point. 
Moreover, for $j\in \{1,\ldots,4\}-\{i\}$, we have on $L_{ij}$
the two points $\plm{z}_{ji}$ defined as the strict transform of the
intersection $\plm{E}_{ji}\cap L_{ij}$ in $\tilde S$. We will denote
by $Z_i$ the $0$--dimensional scheme   
of length 6 given by $\sum_{j\neq i} (\p{z}_{ji}+ \m{z}_{ji})$.
We let $\mathcal{I}_{Z_i}\subset \O_{P_i}$ be its defining sheaf of
ideals.

\begin{figure}
\begin{center}
\begin{minipage}{5.96cm}
\includegraphics[width=5.96cm]{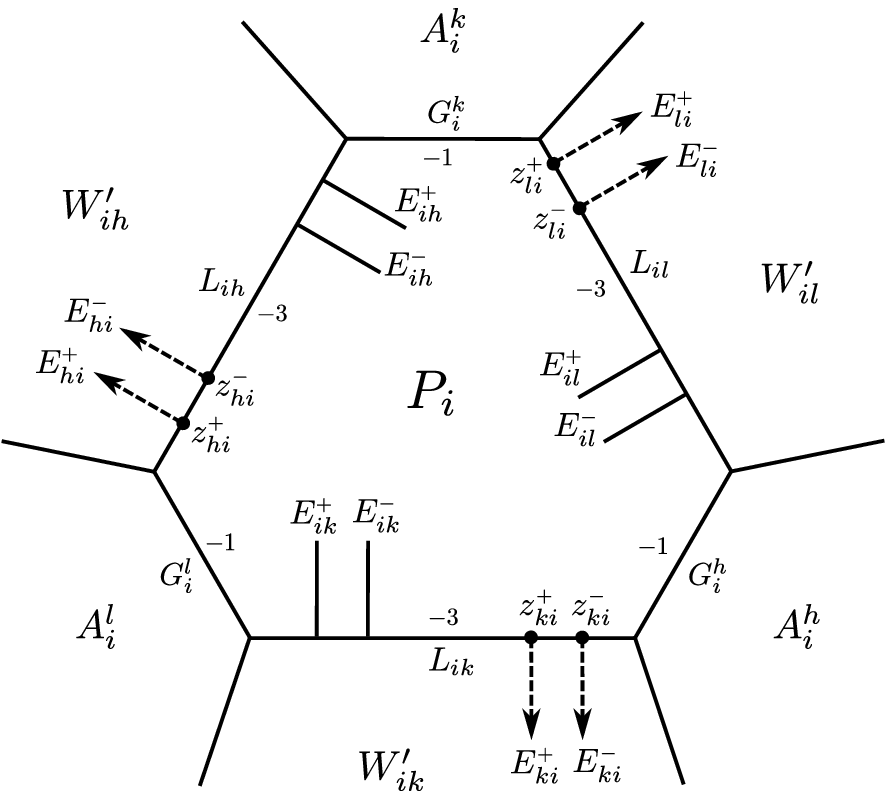}
\caption{Notations for $P_i\subset \bar{X}_0$}
\label{fig:P}
\end{minipage}
\hspace{1.5cm}
\begin{minipage}{7.11cm}
\includegraphics[width=7.11cm]{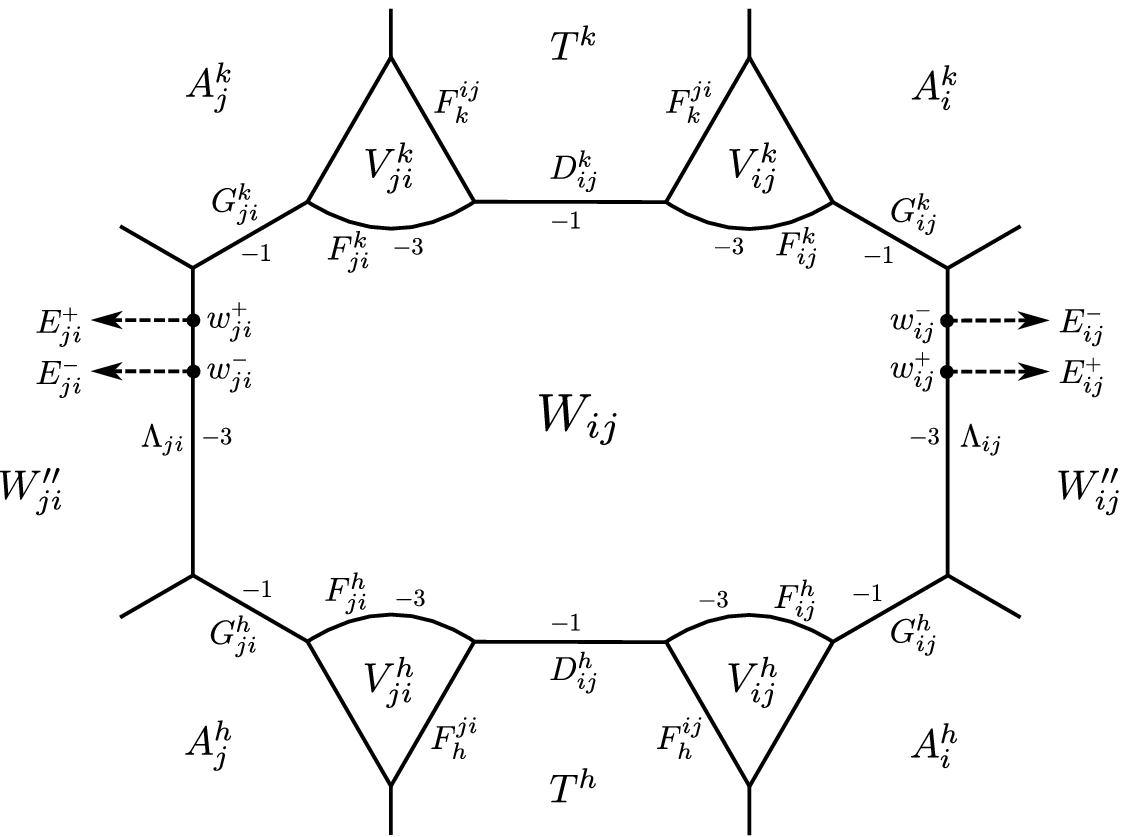}
\caption{Notations for $W_{ij} \subset \bar{X}_0$}
\label{fig:W}
\end{minipage}
\end{center}
\end{figure}

\medskip\noindent
(ii)
 The  24 surfaces $W'_{ij}, W''_{ij}$, with
  $i,j\in \{1,\ldots,4\}$ distinct.

\smallskip \noindent
Each of them  is isomorphic to ${\bf F}_1$.  We denote by $\vert
F\vert$ the ruling. Note that the divisor class $F$ corresponds to the
restriction of $\O_{\bar{X}}(1)$.

\medskip\noindent
(iii)
 The 6 surfaces  $W_{ij}$, with $i,j\in
  \{1,\ldots,4\}$ distinct.

\smallskip \noindent
For each $k\in \{1,\ldots, 4\}-\{i,j\}$, we set
\[
\Lambda_{ij}:=W_{ij}''\cap {W_{ij}}, 
\quad G_{ij}^k:= W_{ij}\cap  A_i^k,
\quad F_{ij}^k=W_{ij}\cap V_{ij}^k ,
\quad D_{ij}^k=W_{ij}\cap T^ k,
\]
and define similarly $\Lambda_{ji}$, $G_{ji}^k$, $F_{ji}^k$
($D_{ij}^k$ may be called $D_{ji}^k$ without ambiguity). This is
indicated in Figure \ref{fig:W}.

A good way of thinking to the surfaces $W_{ij}$ is to
consider them as (non--minimal) rational ruled surfaces, 
for which the two curves $\Lambda_{ji}$ and $\Lambda_{ij}$ are
sections which do not meet, and the two rational chains
\[
G_{ji}^k+ F_{ji}^k+2 D_{ij}^k+F_{ij}^k+G_{ij}^k,
\quad  k\in \{1,\ldots, 4\} - \{i,j\},
\]
are two disjoint reducible fibres of the ruling $\vert F\vert$. 
One has furthermore $\O_{W_{ij}}(F)=\O_{\bar{X}}(1)\otimes
\O_{W_{ij}}$. 

The surface $W_{ij}$ has the length 12 anticanonical cycle
\begin{equation}
\label{Wantican}
\Lambda_{ji}
+G_{ji}^k+ F_{ji}^k+ D_{ij}^k+F_{ij}^k+G_{ij}^k
+\Lambda_{ij}
+G_{ij}^h+ F_{ij}^h+ D_{ij}^h+F_{ji}^h+G_{ji}^h
\end{equation}
cut out by $\bar X_0-W_{ij}$, where we fixed $k$ and $h$ such that
$\{i,j,k,h\}=\{1,\ldots,4\}$.
It therefore identifies with a plane blown--up as
indicated in Figure \ref{f:WT}:
consider a general triangle $L_1,L_2,L_3$ in $\P^ 2$, with vertices
$a_1,a_2,a_3$, where $a_1$ is opposite to $L_1$, etc.;
then blow--up the three vertices $a_s$, and call $E_s$ the
corresponding exceptional divisors;
eventually blow--up the six points $L_r \cap E_s$, $r \neq s$, and
call $E_{rs}$ the corresponding exceptional divisors.
The obtained surface has the anticanonical cycle
\begin{equation}
\label{plane-antican}
L_1
+E_{13}+E_1+E_{23}+L_2+E_{21}
+E_1
+E_{31}+L_3+E_{32}+E_2+E_{12},
\end{equation}
which we identify term-by-term and in this order with the anticanonical
cycle \eqref{Wantican} of $W_{ij}$.

\begin{figure}
\begin{center}
\includegraphics[width=250pt]{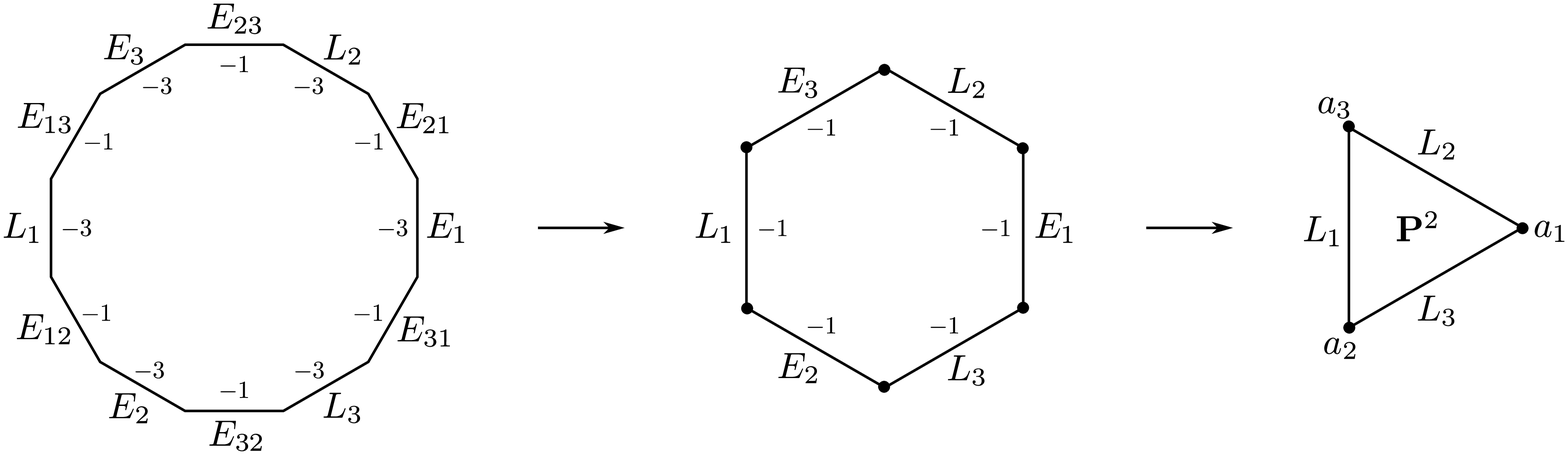}
\end{center}
\caption{$W_{ij}$ and $T^k$ as blown--up planes}
\label{f:WT}
\end{figure}

We let $H$ be, as usual, (the  transform of) a general line in the
plane
\begin{equation}
\label{acca}
H\lineq_{W_{ij}}\; \Lambda_{ji}+\sum\nolimits_{k\not\in
  \{i,j\}}(2G_{ji}^k+F_{ji}^k+D_{ji}^k).
\end{equation}
The ruling $\vert F\vert$  is  the strict transform of  the pencil
of lines through the point $a_1$, hence
\begin{equation}
\label{effe}
\vert F\vert =\bigl\vert H-(\Lambda_{ij}+G_{ij}^k+G_{ij}^h)\bigr\vert, 
\quad \text{with} \quad \{1,\ldots, 4\}=\{i,j,k,h\}.
\end{equation}

\medskip\noindent
(iv)
  The 4 surfaces $T^ k$, with $1\leqslant
  k\leqslant 4$.

\smallskip \noindent
Here we set 
\[
\Gamma^k_i= T^k\cap A^k_i, \ \text{for}\ i\in \{1,\ldots,4\}-\{k\},
\quad \text{and}\quad F_k^{ij}= T^ k\cap V^ k_{ij}, 
\ \text{for}\ i,j\in \{1,\ldots,4\}-\{k\} \ {\rm distinct}.
\]
Also recall that $D_{ij}^k=T^k\cap W_{ij}$ for $i,j\in
\{1,\ldots,4\}-\{k\}$ distinct.
This is indicated in Figure \ref{fig:T}.

\begin{figure}
\begin{center}
\includegraphics[height=5.5cm]{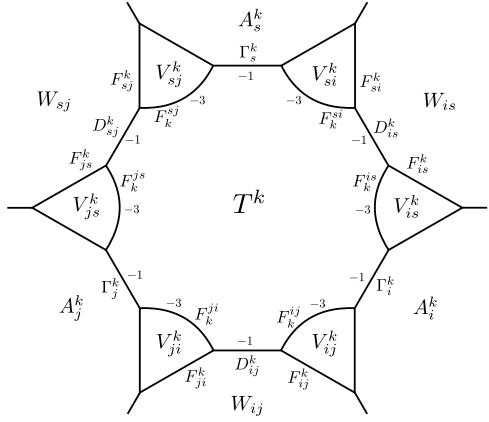}
\end{center}
\caption{Notations for $T^k \subset \bar{X}_0$}
\label{fig:T}
\end{figure}

Each $T^ k$  identifies with a plane blown--up as
indicated in Figure \ref{f:WT}, as in the case of the $W_{ij}$'s:
it has the length 12 anticanonical cycle
\begin{equation}
\label{Tantican}
F_k^{js}+D_{sj}^k+F_k^{sj}
+ \Gamma_s^k
+F_k^{si} +D^k_{is}+F_k^{is}
+\Gamma^k_i
+F_k^{ij}+D_{ij}^k+F_k^{ji}
+\Gamma^k_j
\end{equation}
(where we fixed indices $s,i,j$ such that
$\{s,i,j,k\}=\{1,\ldots,4\}$)
cut out by $\bar X_0-T^k$ on $T^ k$,
which we identify term-by-term and in this order with the anticanonical
cycle \eqref{plane-antican}.
This yields
\begin{equation}
\label{acca-T}
H \lineq_{T^k}\; F_k^{js}+(2D_{sj}^k+F_k^{sj}+\Gamma_s^k)
+(2\Gamma_j^k+F_k^{ji}+D_{ij}^k).
\end{equation}

We have on $T^k$ the proper transform of 
a  pencil of (bitangent) conics that meet the curves 
$\Gamma_s^k$   and  $D_{ij}^k$ in one point respectively, 
and do not meet any other curve in the anticanonical cycle
\eqref{Tantican}:
we call this pencil $|\Phi^k_s|$, and we have
\begin{equation*}
\bigl|\Phi^k_s\bigr|=
\bigl\vert 2H-(F_k^{sj}+D_{sj}^k+2\Gamma_s^k)
-(F_k^{ji}+\Gamma_j^k+2D_{ij}^k)
\bigr\vert.
\end{equation*}

The restriction of $\O_{\bar{X}}(1)$ on $T^k$ is
trivial.

\medskip\noindent
(v)
  The 12 surfaces $A^k_i$, with $i,k\in \{1,\ldots,
  4\}$ distinct.

\smallskip \noindent
Each of them  identifies with a 
blown--up plane as indicated in Figure \ref{f:A}.
It is equipped with the ruling $|H-\Gamma_i^k|$, 
the members of which meet the curves $G_i^k$ and $\Gamma_i^k$ at one
point respectively, and do not meet any other curve in the length 8
anticanonical cycle cut out by $\bar{X}_0-A_i^k$ on $A_i^k$.
The restriction of $\O_{\bar{X}}(1)$ on $A_i^k$ is trivial.

\begin{figure}
\begin{center}
\includegraphics[width=11cm]{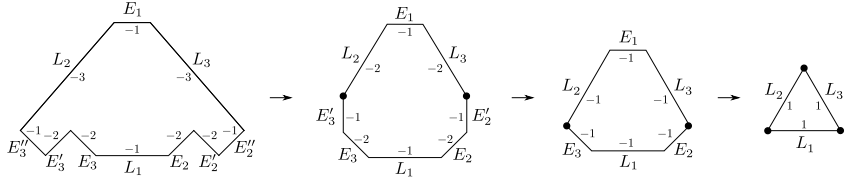}
\end{center}
\caption{$A_j^k$ as a blown-up plane}
\label{f:A}
\end{figure}

\medskip\noindent
(vi)
  The 24 surfaces $V^ k_{ij}$ with $i,j,k\in
  \{1,\ldots 4\}$ distinct.

\smallskip \noindent
These are all copies of $\P^ 2$, on which the restriction of
$\O_{\bar{X}}(1)$ is trivial.

\subsection{The limit linear system, I: construction}
\label{s:4planes-limlin}

According with the general principles stated in \S \ref{s:resolving},
we shall now describe  the limit linear system 
of $|\O_{\bar X_t}(1)|$ as $t\in \Delta^*$ tends to $0$.
This will suffice for the proof,  
presented in \S \ref {s:good}, that $\varpi:\bar{X} \to \Delta$ is a 
$\delta$--good model for  $1\leqslant \delta\leqslant 3$. 

We start with  ${\fP} := \P(\varpi_*(\O_{\bar X}(1)))$, 
which is a $\P^ 3$--bundle over $\Delta$, whose fibre 
at $t\in \Delta$ is $\vert \O_{\bar X_t}(1)\vert$. 
We set $\L=\O_{\bar{X}}(1)$, and $\vert \O_{\bar X_t}(1)\vert=|\L_t|$; 
note that $\vert \L_0 \vert\cong \vert \O_{S_0}(1)\vert$. 
We will often use the same notation to denote a divisor (or a divisor
class)  on the central fibre and its restriction to a component of the
central fibre, if this does not cause any confusion.
 
We will  proceed as follows: \\
{\it \begin{inparaenum}[(I)]
\item \label{Tetra:faces}
we first blow--up $\fP$ at the  points $\pi_i$ corresponding to the  
irreducible components $P_i$ of $S_0$, for $i\in \{1,\ldots,4\}$  (the
new central fibre then consists of  $ \vert \O_{S_0}(1)\vert \cong \bf P^
3$ blown--up at four independent points, plus the four exceptional
$\P^ 3$'s); \\ 
\item \label{Tetra:edges}
next, we blow--up the total space along the proper transforms $\ell_{ij}$ of the six lines of $ \vert \O_{S_0}(1)\vert$  joining two distinct points $\pi_i, \pi_j$, with
$i,j\in \{1,\ldots,4\}$, corresponding to pencils of planes  with base locus an edge of $S_0$ 
(the new central fibre is the proper transform of the previous one,
plus the six exceptional
$\P\bigl(\O_{\P^1}\oplus \O_{\P^1}(1)^{\oplus 2}\bigr)$'s);\\
\item \label{Tetra:vertices}
finally, we further blow--up along the proper transforms of the planes
$\Pi_k$  corresponding to the webs of 
planes passing through the  vertices $p_k$ of $S_0$, for $k\in
\{1,\ldots,4\}$ (this adds four more exceptional divisors  to the
central fibre, for a total of fifteen irreducible components).
\end{inparaenum}} \\
In other words, we successively blow--up $\fP$ along all the cells of
the tetrahedron dual to $S_0$ in $\fP_0$, by increasing order of
dimension. 

Each of these blow--ups will be interpreted in terms of suitable
twisted   linear systems as indicated in Remark \ref{r:twist2}. 
It will then become apparent that every point in the central fibre of
the obtained birational modification of $\fP$ corresponds to a curve
in $\bar{X}_0$ (see \S\ref{s:good}), and hence that this modification
is indeed the limit linear system $\fL$.

\subsubsection{Step (\ref{Tetra:faces})}

\newcounter{flag}
\setcounter{flag}{\arabic{enumi}}
%
%

In $\H^0(\bar X_0, \O_{\bar X_0}(1))$ there is for each 
$i\in \{1,\ldots,4\}$ 
the 1--dimensional subspace of sections vanishing on  $P_i$, which
corresponds to the sections of $\H^0(S_0, \O_{S_0}(1))$ vanishing on
the plane $P_i$. As indicated in Remark \ref{r:twist2},
in order to construct the limit linear system, we have to blow up the
corresponding points $\pi_i\in \vert \L_0\vert$.  
Let  ${\fP}' \to {\fP}$  be this blow--up, and call $\tilde\fL_i$,
$1\leqslant i\leqslant 4$, the exceptional divisors.
Each $\tilde\fL_i$ is a $\P^3$, and can be interpreted as the trace of the  linear
system
$\bigl| \L_0(-P_i) \bigr|$ on $X_0$
(see Lemma \ref {l:tg-id} and
Example \ref {ex:gh}).
However, any section of
$\H^0(\bar X_0,\L_0(-P_i))$
still vanishes on components of $\bar X_0$ different from $P_i$. By subtracting all of them
with the appropriate multiplicities (this computation is tedious but
not difficult  and can be left to the reader), one sees that
$\tilde\fL_i$ can be identified as the linear system
$\fL_i:=\bigl| \L_0(-M_i)\bigr |$,
where 

\begin{multline}
\label{faces-twist}
M_i:=
6P_i 
+\sum_{j\neq i}(5W_{ij}'+4W_{ij}''+3W_{ij}+2W_{ji}''+W_{ji}')+\\
+\sum_{k\neq i}\Biggl(
2T^k+4A_i^k
+\sum_{j\not\in\{i,k\}}\left(3V_{ij}^k +2V_{ji}^k +A_j^k \right)
+ \sum_{\{j<\bj\}\cap \{i,k\}=\emptyset}\left(
  V_{j \bj}^k+ V_{\bj j}^k \right)
\Biggr).
\end{multline}

With the notation introduced in \S \ref {s:identifications}, one has:

\begin{lemma} \label{lem:comput} 
The restriction class of $\L_0(-M_i)$ to the irreducible components of $\bar X_0$ is as follows:\\
\begin{inparaenum}[(i)]
\item \label{lem:comput:P_i} on $P_i$, we find $4H-\sum_{j\neq i}
  (\p{E}_{ij}+\m{E}_{ij})$; \\
\item \label{i} on $P_j$, $j\neq i$, we find
  $\p{E}_{ji}+\m{E}_{ji}$;\\
\item \label{lem:comput:W}
for each $j\neq i$, we find $2F$ on each of the surfaces $W_{ij}'$,
$W_{ij}''$, $W_{ij}$, $W_{ji}''$, $W_{ji}'$.\\
\item on the remaining components the restriction is trivial.
\end{inparaenum}
\end{lemma}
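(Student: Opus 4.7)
The plan is to verify each item by a direct computation on each type of irreducible component of $\bar X_0$. The key identity, coming from the triviality of the normal bundle of $\bar X_0$ in $\bar X$ (the fibre of a family over a disk), is the following: since $\bar X_0$ is reduced by semistability and $\bar X_0 \lineq 0$ in $\Pic(\bar X)$, the restriction of $\bar X_0 = \sum_{Q'} Q'$ to any component $Q$ yields
\[
Q|_Q \lineq -\sum\nolimits_{Q' \neq Q}(Q' \cap Q) \quad \text{in } \Pic(Q).
\]
Writing $M_i = \sum_{Q'} m_{Q'} Q'$ with the coefficients read off from \eqref{faces-twist}, this gives $M_i|_Q \lineq \sum_{Q' \neq Q} (m_{Q'} - m_Q)(Q' \cap Q)$. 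Combined with the restrictions $\L_0|_Q$ already tabulated in \S\ref{s:identifications}, the only remaining input is the class on $Q$ of each double curve $Q' \cap Q$.

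For $Q = P_i$, the components of $\bar X_0$ meeting $P_i$ along a curve are the $W'_{ij}$ (along $L_{ij}$, $j \neq i$, with coefficient difference $-1$) and the $A^k_i$ (along $G^k_i$, $k \neq i$, difference $-2$). Since $L_{ij}$ is the proper transform in $P_i$ of the edge between $P_i$ and $P_j$ of the tetrahedron, passing through the vertices $p_k$ with $k \in \{1,\ldots,4\}-\{i,j\}$ and through the two nodes resolved by $\p{E}_{ij}, \m{E}_{ij}$, we have $L_{ij} \lineq H - \p{E}_{ij} - \m{E}_{ij} - G^{k_1}_i - G^{k_2}_i$. Substituting and collecting terms yields $M_i|_{P_i} \lineq -3H + \sum_{j \neq i}(\p{E}_{ij} + \m{E}_{ij})$, which combined with $\L_0|_{P_i} = H$ proves \eqref{lem:comput:P_i}. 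The case $Q = P_j$, $j \neq i$, is strictly analogous: only $W'_{ji}$ (difference $+1$) and the two $A^k_j$ with $k \not\in \{i, j\}$ (difference $+1$) contribute, and after expanding $L_{ji}$ the $G^k_j$ terms cancel to leave $\p{E}_{ji} + \m{E}_{ji}$, as stated in \eqref{i}.

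For each of the ruled components $W'_{ij}, W''_{ij}, W_{ij}, W''_{ji}, W'_{ji}$, one first observes from the local description of the blow-ups in \S\ref{ssec:model} that the surface meets, besides its two neighbours in the chain $P_i \text{--} \ldots \text{--} P_j$ (along sections of the ruling), also further components along entire fibres of the ruling. The classes of the two sections in $\Pic(\F_1) = \Z F \oplus \Z E$ are pinned down by the Triple Point Formula (Lemma \ref{l:triple-point}); for example on $W'_{ij}$, from $L_{ij}^2|_{P_i} = -3$ and the presence of two triple points on $L_{ij}$ (at its intersections with the $A^k_i$), one gets $L_{ij}^2|_{W'_{ij}} = 1$, whence $L_{ij} \lineq F + E$, while the other section $W'_{ij} \cap W''_{ij}$ has class $E$. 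Substituting the telescoping coefficient differences $\pm 1$ of $M_i$ along the chain produces $M_i|_Q \lineq -F$ on each surface in the chain, and hence $\L_0(-M_i)|_Q = F - (-F) = 2F$, proving \eqref{lem:comput:W}. For the central $W_{ij}$, which is not an $\F_1$ but the more complex plane-with-nine-blow-ups described in \S\ref{s:identifications}, the same strategy goes through by using the two identities \eqref{acca} and \eqref{effe} for its Picard group: substitution again yields $M_i|_{W_{ij}} \lineq -F$.

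For the remaining components $T^k$, $A^k_i$, $V^k_{ij}$, $V^k_{j\bar j}$, the restriction $\L_0|_Q$ is trivial, so it remains only to show that $M_i|_Q$ also vanishes in $\Pic(Q)$. This is a calculation of the same type, the coefficients in \eqref{faces-twist} being tuned precisely so that the terms telescope to zero. The principal obstacle is the combinatorial complexity of enumerating, for every type of component, the list of adjacent components and the classes of the corresponding double curves on each ruled or blown-up surface, with careful use of the Triple Point Formula to pin down self-intersections of sections; once this bookkeeping is assembled, the verification is routine manipulation with divisor classes.
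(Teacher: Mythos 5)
Your proof is correct and follows essentially the same route as the paper's: the paper also reduces everything to the identity $S_0|_Q\lineq -\sum_{Q'\neq Q}(Q'\cap Q)$ (i.e.\ twisting $-M_i$ by a suitable multiple of $S_0$ so that the coefficient of $Q$ vanishes) and then identifies the resulting sum of double curves via \eqref{acca} and \eqref{effe}, carrying out only the case of $W_{ij}$ explicitly and leaving the others — which you have correctly worked out — to the reader.
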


\begin{proof} This is a tedious but standard  computation. 
As a typical sample we prove \eqref{lem:comput:W}, and 
leave the remaining cases to the reader. Set
$\{h,k\}=\{1,\ldots,4\}-\{i,j\}$. Then, recalling \eqref{acca} and
\eqref{effe}, we see that the restriction of $\L_0(-M_i)$ to $W_{ij}$
is the line bundle determined by the divisor class 
\begin{multline*}
 F +\Bigl( W_{ji}'' -W_{ij}''
+\sum_{k\not\in\{i,j\}}\bigl( 2A_j^k +V_{ji}^k +T^k -A_i^k \bigr)
\Bigr) \biggr|_{W_{ij}} \\
\begin{aligned}
&\lineq F + \Lambda_{ji}-\Lambda_{ij}
+(2G_{ji}^k+F_{ji}^k+D_{ij}^k-G_{ij}^k)
+(2G_{ji}^h+F_{ji}^h+D_{ij}^h-G_{ij}^h)\\
&= F +\bigl(\Lambda_{ji}+(2G_{ji}^k+F_{ji}^k+D_{ij}^k)
+(2G_{ji}^h+F_{ji}^h+D_{ij}^h) \bigr)
-(\Lambda_{ij}+G_{ij}^k+G_{ij}^h)
= 2F.\\
\end{aligned}
\end{multline*}
\end{proof}

From this, we deduce that $\fL_i$ identifies with its restriction to
$P_i$:

\begin{proposition}\label{prop:descrL_i}
There is a natural isomorphism 
\begin{equation} \label{4ics-3angle}
\fL_i\cong
\Bigl|
\O_{P_i}\Bigl( 4H-\sum\nolimits_{j\neq i}(\p{E}_{ij}+\m{E}_{ij})
\Bigr)
\otimes {\cal I}_{Z_i}
\Bigr|.
\end{equation}
\end{proposition}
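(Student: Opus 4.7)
The plan is to define a natural restriction map
\[
r:\fL_i \longrightarrow \Bigl|\O_{P_i}\Bigl(4H-\sum\nolimits_{j\neq i}(\p{E}_{ij}+\m{E}_{ij})\Bigr)\otimes \mathcal{I}_{Z_i}\Bigr|
\]
and show it is an isomorphism, by combining Lemma~\ref{lem:comput} with the combinatorics of the gluing of $\bar X_0$ spelled out in \S\ref{s:identifications}.

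I would first analyse a section $\sigma \in H^0(\bar X_0,\L_0(-M_i))$ component by component. By Lemma~\ref{lem:comput}: on each $P_j$ with $j\neq i$ the class $\p{E}_{ji}+\m{E}_{ji}$ has a unique effective representative (the two $(-1)$-curves themselves); on all $T^k$, $A^k_j$ and $V^k_{ab}$ the restriction is trivial; and on each of the five $W$-surfaces of a chain $P_i\to W'_{ij}\to W''_{ij}\to W_{ij}\to W''_{ji}\to W'_{ji}\to P_j$ the restriction lies in the $\P^2$ of pairs of fibres in the class $2F$ of the ruling. So the only apparent freedom beyond the restriction to $P_i$ consists in choosing such pairs of fibres along the three chains indexed by $j\neq i$.

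Next I would propagate constraints along each chain. The curve $L_{ji}=P_j\cap W'_{ji}$ is a section of the ruling on $W'_{ji}\cong\F_1$, so the restriction map $|2F|\to |\O_{L_{ji}}(2)|$ is an isomorphism; the fixed divisor $\p{E}_{ji}+\m{E}_{ji}$ on $P_j$ therefore determines the divisor on $W'_{ji}$ as the pair of fibres cutting $L_{ji}$ at $\plm{E}_{ji}\cap L_{ji}$. Repeating this argument along the chain, the divisor on each intermediate $W$-surface is pinned down, and on $W'_{ij}$ it restricts to $L_{ij}=P_i\cap W'_{ij}$ precisely as $\p{z}_{ji}+\m{z}_{ji}$ by the very definition of $\plm{z}_{ji}$ as the strict transform of $\plm{E}_{ji}\cap L_{ij}$ in $\tilde S$. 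This establishes simultaneously that $D_i:=r(\sigma)|_{P_i}$ vanishes on $Z_i$, so that $r$ lands in the claimed target, and that $\sigma$ is recovered uniquely from $D_i$ by running the same propagation in reverse, so that $r$ is injective.

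It remains to match dimensions. We know $\fL_i\cong\P^3$ by construction (exceptional divisor of the blow-up of a smooth point in a smooth threefold). The target linear system is also a $\P^3$: it identifies with the system of plane quartics passing through $12$ prescribed points placed $4$ on each edge of the triangle formed by the $L_{ij}$'s in $\P^2$, and a direct count via the restriction sequence to the triangle (whose kernel is the $\P^2$ of triangle-plus-line quartics, and whose image has codimension $3$ inside the triple product of degree-$4$ divisors on the edges because of vertex compatibility) yields dimension $3$. The injection $r$ between two $\P^3$'s is then an isomorphism. The main technical obstacle will be the propagation step: one must carefully verify that the gluing along each intermediate double curve in the $W$-chain transports pairs of fibres to pairs of fibres as predicted, using that each $W$-surface is ruled with the relevant boundary curves as sections, and that the final restriction to $L_{ij}$ matches exactly the birational definition of $\plm{z}_{ji}$; this is ultimately a bookkeeping exercise with the birational geometry of \S\ref{ssec:model}.
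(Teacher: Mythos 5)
Your proof is correct and follows essentially the same route as the paper's: Lemma~\ref{lem:comput} fixes the restriction classes, the rigidity of $\p{E}_{ji}+\m{E}_{ji}$ on $P_j$ propagates uniquely along the chain $W'_{ji},W''_{ji},W_{ij},W''_{ij},W'_{ij}$ to force the base points $\plm{z}_{ji}$ on $P_i$ and to show each member is determined by its trace on $P_i$, and the $3$-dimensionality of the target (the triangle computation of \S\ref{s:planes}) closes the argument. Your extra remarks on why the propagation step works (restriction of $|2F|$ to a section of the ruling being an isomorphism) are exactly the bookkeeping the paper leaves implicit.
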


\begin{proof}
For each $j\neq i$, the restriction of $\fL_i$ to $P_j$ has 
$\p{E}_{ji}+\m{E}_{ji}$ as its only member.
This implies that its restriction to $W_{ji}'$ has only one member as
well, which is the sum of the two curves in $|F|$ intersecting
$\p{E}_{ji}$ and $\m{E}_{ji}$ respectively.
On $W_{ji}''$, we then only have the sum of the two curves in $|F|$
intersecting the two curves on $W_{ji}'$ respectively, and so on on
$W_{ij}$, $W_{ij}''$, and $W_{ij}'$.
Now the two curves on $W_{ij}'$ impose the two base points
$\p{z}_{ji}$ and $\m{z}_{ji}$ to the restriction of $\fL_i$ to $P_i$.
The right hand side in \eqref{4ics-3angle}
being $3$--dimensional, this ends the proof with
\eqref{lem:comput:P_i} of Lemma \ref{lem:comput}.
\end{proof}

\subsubsection{Step (\ref{Tetra:edges})}\label {step(2)}
Next, we consider the blow--up ${\fP}'' \to {\fP}'$ along the proper 
transforms $\ell_{ij}$ of the six lines of $\vert \L_0\vert$  joining
two distinct points $\pi_i, \pi_j$, with $i,j\in \{1,\ldots,4\}$,
corresponding to the pencils of planes in 
$|\O_{S_0}(1)|$ respectively containing the lines $P_i \cap P_{j}$.
The exceptional divisors are isomorphic to $\P(\O_{\P^ 1}\oplus
\O_{\P^ 1}(1)^ {\oplus 2} )$; we call them $\tilde{\fL}_{ij}$, 
$1\leqslant i<j\leqslant 4$.
Arguing as in Step (\ref{Tetra:faces}) and  leaving the details to the
reader,  we see  that  
$\tilde{\fL}_{ij}$ is in a natural way a birational modification (see \S \ref {s:rat4ics} below) of the complete linear system 
$\fL_{ij}:=\left|\L_0(-M_{ij})\right|$, where 
\begin{multline}
\label{spigolo-twist}
M_{ij}:=
3W_{ij}+(2W_{ji}''+W_{ji}')+(2W_{ij}''+W_{ij}')+ \\
+\sum_{k\not\in\{i,j\}}\Biggl(
2T^k
+\sum_{s\neq k} A_s^k
+2\left(V_{ji}^k+V_{ij}^k\right)
+\sum_{\substack{s\in\{i,j\} \\ r\not\in\{i,j,k\}}}
\left(V_{sr}^k+V_{rs}^k\right)
\Biggr).
\end{multline}

We will denote by $k<h$ the two indices in $\{1,\ldots,4\}-\{i,j\}$,
and go on using the notations introduced in \S\ref{s:identifications}.

\begin{lemma} \label{lem:comput2} The restriction class of
$\L_0(-M_{ij})$ to the irreducible components of $\bar{X}_0$ is as
follows:\\
\begin{inparaenum}[(i)]
\item \label {i0} on $P_k$ (resp. $P_h$) we find $H-G_k^h$ 
(resp. $H-G_h^k$);\\
\item \label {i00} on each of the surfaces $W'_{kh}$, $W''_{kh}$,
  $W_{kh}$, $W''_{hk}$, and $W'_{hk}$, we find $F$;\\
\item \label{i1} on $A_h^k$ (resp. $A_k^h$) we find  $H-\Gamma_h^k$ 
(resp. $H-\Gamma_k^h$);\\
\item \label{ii1} on $T^k$ (resp. $T^h$), we find $\Phi^k_h$
(resp. $\Phi^h_k$);\\
\item \label{iii1} on $P_i$ (resp. $P_j$), we find
$\p{E}_{ij}+\m{E}_{ij}$ 
(resp. $\p{E}_{ji}+\m{E}_{ji}$);\\
\item \label{iv1} on $W_{ij}',W_{ij}'',W_{ji}'',W_{ji}'$, we find  $2F$;\\
\item \label{v1} on $W_{ij}$, with $H$ as in \eqref{acca}, we find 
\begin{equation*}
4H -2\left(\Lambda_{ij}+G_{ij}^k+G_{ij}^h \right)
-\left( F_{ji}^k+G_{ji}^k+D_{ij}^k \right)
-\left( F_{ji}^h+G_{ji}^h+D_{ij}^h \right)
-D_{ij}^k-D_{ij}^h;
\end{equation*}
\item \label{vi1} on the remaining components  the restriction is trivial.
\end{inparaenum}
\end{lemma}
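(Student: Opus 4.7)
The proof strategy is exactly that of Lemma \ref{lem:comput}: for each irreducible component $Q$ of $\bar{X}_0$, one computes $\L_0(-M_{ij})|_Q$ by modifying $M_{ij}$ modulo $\bar{X}_0$. Since $\bar{X}_0 \sim_{\bar{X}} 0$ (Remark \ref{r:twist}), we have $\L_0(-M_{ij}) \cong \L_0(-M_{ij}+c_Q \bar{X}_0)$ for any integer $c_Q$; we choose $c_Q$ equal to the coefficient of $Q$ in $M_{ij}$, so that the resulting divisor $M'_{ij}$ has vanishing coefficient along $Q$. Then $M'_{ij}|_Q$ is just a combination of intersection curves $Q \cap Q'$ with $Q'$ ranging among the other components of $\bar{X}_0$, which are recorded in Section \ref{s:identifications} and depicted in Figures \ref{fig:P}, \ref{fig:W}, \ref{fig:T} and \ref{f:A}. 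Combined with the description of $\L_0|_Q$ (namely $H$ on the $P_i$, $F$ on the $W'_{ij}$ and $W''_{ij}$, the class \eqref{effe} on the $W_{ij}$, and trivial on the $T^k$, $A^k_i$, $V^k_{ij}$), this gives the stated restrictions.

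The representative (and most involved) case is item \eqref{v1}, the restriction to $W_{ij}$. One takes $M'_{ij}:=M_{ij}-3\bar{X}_0$, whose coefficient on $W_{ij}$ is $0$; the only components of $M'_{ij}$ meeting $W_{ij}$ are the twelve ones appearing in the anticanonical cycle \eqref{Wantican}, with multiplicities read off from \eqref{spigolo-twist}: $W'_{ij}$ and $W'_{ji}$ contribute $-2\Lambda_{ij}$ and $-2\Lambda_{ji}$ respectively; $W''_{ij}$, $W''_{ji}$ contribute $-\Lambda_{ij}$, $-\Lambda_{ji}$; the $A^k_i, A^k_j, A^h_i, A^h_j$ contribute $-2G^k_{ij}$, $-2G^k_{ji}$, $-2G^h_{ij}$, $-2G^h_{ji}$; the $T^k, T^h$ contribute $-D^k_{ij}, -D^h_{ij}$; and the $V^k_{ij}, V^k_{ji}, V^h_{ij}, V^h_{ji}$ contribute $-2F^k_{ij}, -2F^k_{ji}, -2F^h_{ij}, -2F^h_{ji}$. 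Summing, simplifying using \eqref{acca} and \eqref{effe}, and adding $\L_0|_{W_{ij}}\lineq F$ yields the displayed formula.

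All the other items follow by the same recipe with considerably lighter bookkeeping: for components $Q$ not in the support of $M_{ij}$ (items \eqref{i0}, \eqref{i00}, \eqref{iii1}, \eqref{iv1}, \eqref{vi1}) no twist is needed and one simply intersects; for the remaining ones (items \eqref{i1}, \eqref{ii1}) one cancels the coefficient on $Q$ and reads off the curves from the anticanonical cycles \eqref{Tantican} and the analogous one for $A^k_i$ (cf.\ Figure \ref{f:A}). I do not anticipate any genuine obstacle; the only difficulty is accounting accurately for the large number of components and intersections, which has already been organised in Section \ref{s:identifications}.
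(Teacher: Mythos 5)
Your overall strategy --- replace $-M_{ij}$ by $-M_{ij}+c_Q\bar X_0$ with $c_Q$ chosen so that the coefficient of the component $Q$ vanishes, then read off the intersection curves with the adjacent components and add $\L_0|_Q$ --- is exactly the paper's, which likewise only verifies the sample case \eqref{v1} and leaves the rest to the reader. But the bookkeeping in the one case you do work out is wrong in two places, and as written it does not produce the stated class. First, $W_{ij}'$ and $W_{ji}'$ do not meet $W_{ij}$ at all: the chain is $P_i+W_{ij}'+W_{ij}''+W_{ij}+W_{ji}''+W_{ji}'+P_j$, and $\Lambda_{ij}=W_{ij}''\cap W_{ij}$. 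So $W_{ij}'$ and $W_{ji}'$ contribute nothing, and the only $\Lambda$--contributions are $-\Lambda_{ij}$ and $-\Lambda_{ji}$, coming from $W_{ij}''$ and $W_{ji}''$ (coefficient $2-3=-1$ each in $M_{ij}-3\bar X_0$); your list totals $-3\Lambda_{ij}-3\Lambda_{ji}$. Second, $V_{ij}^k,V_{ji}^k,V_{ij}^h,V_{ji}^h$ appear in \eqref{spigolo-twist} with coefficient $2$, hence with coefficient $-1$ after the twist, so they contribute $-F_{ij}^k$, etc., not $-2F_{ij}^k$.

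These are not harmless slips: your totals give the restriction
$F+3\Lambda_{ij}+3\Lambda_{ji}+\sum_{l\in\{k,h\}}\bigl(2G_{ij}^l+2G_{ji}^l+2F_{ij}^l+2F_{ji}^l+D_{ij}^l\bigr)$,
which differs from the correct
$F+\Lambda_{ij}+\Lambda_{ji}+\sum_{l\in\{k,h\}}\bigl(2G_{ij}^l+2G_{ji}^l+F_{ij}^l+F_{ji}^l+D_{ij}^l\bigr)$
by the nonzero effective class $2\Lambda_{ij}+2\Lambda_{ji}+F_{ij}^k+F_{ji}^k+F_{ij}^h+F_{ji}^h$; no simplification via \eqref{acca} and \eqref{effe} can turn your expression into the displayed formula, since a nonzero effective divisor is never linearly trivial. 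Once the two errors are corrected, the computation does close: using \eqref{acca}, \eqref{effe} and the decomposition $F\lineq G_{ji}^l+F_{ji}^l+2D_{ij}^l+F_{ij}^l+G_{ij}^l$ of the reducible fibres, the corrected sum reduces to the class in \eqref{v1}, exactly as in the paper. Your treatment of the remaining items (no twist needed off the support of $M_{ij}$, light twists for $A^k_h$ and $T^k$) is sound.
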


\begin{proof} As for Lemma \ref {lem:comput}, this is a tedious but not difficult computation. 
Again we make a sample verification, proving \eqref {v1} above. The restriction class is
\begin{multline*}
F +\Bigl( W_{ji}'' +
\sum_{l=k,h}\left( 2A_j^l +V_{ji}^l +T^l +V_{ij}^l +2A_i^l \right)
+W_{ij}''\vert _{W_{ij}} \Bigr) 
\biggr|_{W_{ij}} \\
\begin{aligned}
& \lineq F+
\Lambda_{ji} + \sum_{l=k,h}\left( 
2G_{ji}^l +F_{ji}^l+D_{ij}^l +F_{ij}^l+2G_{ij}^l
\right)+\Lambda_{ij} \\
\end{aligned}
\end{multline*}
which, by taking into account the identification of Figure \ref{f:WT},
i.e. with \eqref{acca} and \eqref{effe},
is easily seen to be equivalent to the required class. 
\end{proof}

Let $\plm{w}_{ij} \in W_{ij}$ be the two points cut out on
$W_{ij}$ by the two connected chains of curves in 
$|F|_{W_{ij}'} \times |F|_{W_{ij}''}$ meeting $\plm{E}_{ij}$
respectively. We let $\plm{w}_{ji} \in W_{ij}$ be the
two points defined in a similar fashion by starting with $\plm{E}_{ji}$.
Define the $0$-cycle 
$
Z_{ij}=\p{w}_{ij}+\m{w}_{ij}+\p{w}_{ji}+\m{w}_{ji}
$
on $W_{ij}$, and let 
$\mathcal{I}_{Z_{ij}} \subset \O_{W_{ij}}$ be its defining sheaf of ideals.

\begin{proposition}\label{prop:descrL_ij}
There is a natural isomorphism between $\fL_{ij}$ and its
restriction to $W_{ij}$, which is the $3$--dimensional linear system
\begin{equation} \label{eq:systema}
\Bigl| \O_{W_{ij}}\Bigl(4H -2\left(\Lambda_{ij}+G_{ij}^k+G_{ij}^h \right)
-\left( F_{ji}^k+G_{ji}^k+D_{ij}^k \right)
-\left( F_{ji}^h+G_{ji}^h+D_{ij}^h \right)
-D_{ij}^k-D_{ij}^h \Bigr)
\otimes \mathcal{I}_{Z_{ij}} \Bigr|,
\end{equation}
where we set $\{1,\ldots,4\}=\{i,j,h,k\}$, and 
$H$ as in \eqref{acca}.
\end{proposition}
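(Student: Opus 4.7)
The plan is to mirror the proof of Proposition \ref{prop:descrL_i}: show that restriction to $W_{ij}$ gives an isomorphism from $\fL_{ij}$ onto the linear system \eqref{eq:systema}. First, by Lemma \ref{lem:comput2}, the restriction of $\fL_{ij}$ to $P_i$ reduces to the unique divisor $\p{E}_{ij}+\m{E}_{ij}$, and similarly on $P_j$. Propagating these rigid restrictions across $L_{ij}$ into $W'_{ij}$, whose restriction has class $2F$, forces the divisor there to be the sum of the two fibres of $|F|$ meeting $\plm{E}_{ij}$; continuing rigidly through $W''_{ij}$, we arrive at $\Lambda_{ij}$ with the two points $\plm{w}_{ij}$ imposed as base points on $W_{ij}$. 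The symmetric argument from $P_j$ yields the remaining base points $\plm{w}_{ji}$, so the image of restriction is contained in \eqref{eq:systema}.

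Second, I would show that every $D_W \in$ \eqref{eq:systema} extends uniquely to a member of $\fL_{ij}$, yielding both injectivity and surjectivity of restriction. The remaining components carrying non-trivial restriction are $P_k, P_h$, the chain $W'_{kh}, W''_{kh}, W_{kh}, W''_{hk}, W'_{hk}$ (each with ruling pencil $|F|$), the surfaces $A_h^k, A_k^h$ (pencils of class $H-\Gamma$), and $T^k, T^h$ (pencils $|\Phi^k_h|, |\Phi^h_k|$). The key observation is that $T^k$ (resp.\ $T^h$) is adjacent to $W_{ij}$ along $D_{ij}^k$ (resp.\ $D_{ij}^h$). An intersection computation on the plane model of $W_{ij}$ shows that $D_W$ meets $D_{ij}^k$ in one unfixed point, which singles out the unique member of $|\Phi^k_h|$ on $T^k$ through that point; this in turn determines the pencil member on $A_h^k$ via $\Gamma_h^k$, then on $P_h$ via $G_h^k$, and then along the chain $P_h \to W'_{hk} \to \cdots \to W'_{kh} \to P_k$ by successive matching on each double curve. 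The symmetric propagation from $D_{ij}^h$ via $T^h, A_k^h, P_k$ provides an independent determination that must agree with the first, since both encode the single parameter remaining in $\fL_{ij}$ once $D_W$ is fixed.

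The dimension of $\fL_{ij}$ is $3$, as $\tilde\fL_{ij} \cong \P(\O_{\P^1} \oplus \O_{\P^1}(1)^{\oplus 2})$ by construction, so bijectivity of the restriction also yields the $3$-dimensionality of \eqref{eq:systema}. The main obstacle is the consistency check in the second step: verifying that the two chains of matchings originating from $D^k_{ij}$ and $D^h_{ij}$ produce the same pencil members on $P_k$ and $P_h$; this amounts to explicit bookkeeping of intersection points on each double curve, carried out via the plane models of $T^k$, $W_{ij}$ and $A_h^k$ introduced in the previous subsections.
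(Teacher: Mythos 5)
Your overall architecture — restriction to $W_{ij}$, rigidity of the tails on $P_i+W_{ij}'+W_{ij}''+\cdots+P_j$ forcing the base points $Z_{ij}$, and propagation of pencil members along the chain $T^k+A^k_h+P_h+\cdots+P_k+A^h_k+T^h$ — is exactly the paper's, but you have inverted the logical order in a way that leaves a genuine gap. The paper proves injectivity of the restriction and, \emph{separately}, computes that \eqref{eq:systema} has dimension $3$; since $\fL_{ij}$ has dimension at least $3$, an injective linear map between systems of the same dimension is then automatically surjective. You instead try to prove surjectivity directly, by extending an arbitrary $D_W\in$ \eqref{eq:systema} across the second chain, and you correctly identify that this requires the two propagations emanating from $D_W\cap D^k_{ij}$ and $D_W\cap D^h_{ij}$ to meet in the middle. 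But your justification — that "both encode the single parameter remaining in $\fL_{ij}$ once $D_W$ is fixed" — is circular: it presupposes that $D_W$ \emph{is} the restriction of a member of $\fL_{ij}$, which is precisely the surjectivity you are trying to establish. Concretely, the chain $\fN^{kh}$ is a $\P^1$, determined by the point it cuts on $D^k_{ij}$; the condition that it then lands on $D^h_{ij}$ at the prescribed point $D_W\cap D^h_{ij}$ is the requirement that two linear projections \eqref{eq:systema}$\,\dashrightarrow D^h_{ij}$ coincide, and the locus where they agree is a priori only a divisor of bidegree $(1,1)$. If the consistency failed, the image of the restriction would be a proper linear subspace of \eqref{eq:systema} and the proposition would be false as stated; so the check cannot be waved away, and "explicit bookkeeping" is not supplied.

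The clean way to close the gap is the one the paper uses, and it costs one computation you have omitted: an independent upper bound $\dim$\eqref{eq:systema}$\,\leqslant 3$. Transporting \eqref{eq:systema} to the plane model of $W_{ij}$ of Figure \ref{f:WT}, one gets the system $\mathcal W$ of quartics with a double point at $a_1$, two infinitely near simple base points there, base points at $a_2,a_3$ with infinitely near points along $L_3,L_2$, and two further base points on $L_1$; imposing the triangle $L_1+L_2+L_3$ costs two conditions and leaves the pencil of lines through $a_1$ as residual system, whence $\dim\mathcal W=3$. With this in hand your first step (injectivity, image contained in \eqref{eq:systema}) already finishes the proof, and the consistency of the two chains becomes a corollary rather than a prerequisite. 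Everything else in your proposal — the identification of the components with non--trivial restriction, the role of $|\Phi^k_h|$ on $T^k$ meeting $D^k_{ij}$ in one point, and the matching along $\Gamma^k_h$, $G^k_h$, etc. — agrees with the paper's argument.
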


\begin{proof} Consider a triangle $L_1,L_2,L_3$ in $\P^ 2$, with vertices
  $a_1,a_2,a_3$, where $a_1$ is opposite to $L_1$, etc. Consider the linear
  system $\mathcal W$ of quartics with a double point at $a_1$, two
  simple  base points  infinitely near to $a_1$ not on $L_2$ and $L_3$, two
  base points at $a_2$ and $a_3$ with two infinitely near base points
  along $L_3$ and $L_2$ respectively, two more base points along $L_1$.
There is a birational transformation of $W_{ij}$ to the plane (see
Figure \ref{f:WT}) mapping \eqref{eq:systema} to a linear
system of type $\mathcal W$. One sees that  two independent
conditions are needed to impose to the curves of $\mathcal W$ to
contain the three lines $L_1,L_2,L_3$ and the residual system consists
of the pencil of lines through $a_1$. This proves the dimensionality
assertion  (see \S \ref {s:rat4ics} below for a more detailed
discussion). 

Consider then the restriction of $\fL_{ij}$ to the chain of surfaces
\begin{equation*}
P_j+W_{ji}'+W_{ji}''+W_{ij}+W_{ij}''+W_{ij}'+P_i.
\end{equation*}
By taking into account \eqref{iii1}, \eqref {iv1}, and \eqref {v1}, of 
Lemma \ref {lem:comput2}, we 
see that each divisor $C$ of this system determines, and is
determined, by its restriction $C'$ on $W_{ij}$, since $C$ consists of
$C'$ plus four rational tails matching it. 

The remaining components of $\bar X_0$ on
which $\fL_{ij}$ is non--trivial, all sit in the chain 
\begin{equation} \label{chain_kl}
T^k +A^k_h+P_h+W_{lk}' +W_{lk}'' +W_{hk} +W_{kh}'' +W_{kh}' 
+P_k +A^h_k +T^h.
\end{equation}
The restrictions of  $\fL_{ij}$ to each
irreducible component of this chain is a base point free pencil of
rational curves, 
hence $\fL_{ij}$ restricts on \eqref{chain_kl} to the $1$--dimensional
system of  connected chains of rational curves in these
pencils: we call it $\fN^{kh}$.
Given a curve in $\fL_{ij}$, it cuts $T^k$ and $T^ h$ in one point
each, and there is a unique chain of rational curves in $\fN^{kh}$
matching these two points.
\end{proof}

\subsubsection{Step (\ref{Tetra:vertices})}
\label{step:vertices}
Finally, we consider the blow-up 
${\fP}''' \to {\fP}''$
along the proper transforms of the three planes that are strict
transforms of the webs of planes in $|\O_{S_0}(1)|$ containing a
vertex $p_k$, with $1\leqslant k\leqslant 4$. 
For each $k$, the exceptional divisor $\tilde{\fL}^k$
is a birational modification 
(see \S\ref{s:cubics} below)
of the complete linear system 
$\fL^k:=\bigl|\L_0(-M^k)\bigr|$, where 
\begin{equation*}
M^k:=
2T^k
+\sum_{s\neq k} A_s^k
+\sum_{\{s<r\} \not\ni k}\left( V_{sr}^k+V_{rs}^k \right).
\end{equation*}

\begin{lemma} \label{lem:comput3} The restriction class of
  $\L_0(-M^ k)$ to the irreducible components of $\bar{X}_0$ is as
  follows:\\  
\begin{inparaenum}[(i)]
\item \label {i2} on $P_i$, $i\neq k$, we find $H-G^ k_i$;\\
\item \label {ii2} on $A^ k_i$, $i\neq k$, we find $H-\Gamma_i^k$; \\
\item \label{iii2} on $P_k$, as well as on the chains
  $W_{ik}'+W_{ik}''+W_{ik}+W_{ki}''+W_{ki}'$, $i\neq k$, 
we find the restriction class of $\L_0$;\\
\item \label{iii3} on $T^k$,
we find 
\begin{equation*}
3H-(F_k^{sj}+D^k_{sj}+2\Gamma_s^k)
-(F_k^{ji}+D_{ij}^k+2\Gamma_j^k)
-(F_k^{is}+D_{is}^k+2\Gamma_i^k),
\end{equation*}
with $\{s,i,j,k\}=\{1,\ldots,4\}$, and $H$ as in \eqref{acca-T};\\
\item \label{iii4}  on the remaining components  it is trivial. 
\end{inparaenum}
\end{lemma}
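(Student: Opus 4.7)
The plan is to follow exactly the template of Lemmas \ref{lem:comput} and \ref{lem:comput2}, of which this statement is the natural sequel: once the right framework is set up, the verification is tedious but purely mechanical.

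First, for every irreducible component $Q \subset \bar{X}_0$ that is disjoint from the support of $M^k$ (namely $T^k$, the three $A^k_s$ with $s\neq k$, and the six $V^k_{sr}$ with $\{s<r\}\not\ni k$), one immediately has $M^k|_Q = 0$ and therefore $\L_0(-M^k)|_Q \lineq \L|_Q$, which is item \eqref{iii4}. Inspecting Figure \ref{f:granchio}, the components involved in this step are exactly those \emph{not} listed in items \eqref{i2}--\eqref{iii3}.

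For the remaining components I would exploit Remark \ref{r:twist} to rewrite $\L_0(-M^k)|_Q \lineq \L|_Q + (\bar{X}_0 - M^k)|_Q$, so that only restrictions of other components of $\bar{X}_0$ to $Q$ need to be handled: these are either intersection curves that can be read off from Figure \ref{f:granchio}, or self--intersections $Q|_Q$, which by adjunction are the opposite of the anticanonical cycle cut out on $Q$ by $\bar{X}_0 - Q$. For the three relatively easy cases \eqref{i2}, \eqref{ii2}, \eqref{iii2}, the component $Q$ meets the support of $M^k$ transversally along a single curve (or, in the case of the $W$--chain adjoining $P_k$, does not meet it at all, the non--triviality coming entirely from $\L|_Q$); a short manipulation in the identifications of \S \ref{s:identifications} then produces the stated formulas.

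The main case, and the principal bookkeeping obstacle, is \eqref{iii3}: here $Q=T^k$ meets every piece of the support of $M^k$, and since $\L|_{T^k}$ is trivial one must carefully evaluate
\[
-M^k\bigr|_{T^k} \;\lineq\; -2\,T^k\bigr|_{T^k} \;-\; \sum_{s\neq k} \Gamma^k_s \;-\; \sum_{\{s<r\}\not\ni k} F^{sr}_k,
\]
expand $T^k|_{T^k}$ as the opposite of the length--$12$ anticanonical cycle \eqref{Tantican}, and translate the resulting class back into the $H$--coordinates on the blown--up plane of Figure \ref{f:WT} by means of \eqref{acca-T}. The hard part is purely combinatorial: one must keep track of which $\Gamma$'s, $F$'s and $D$'s appear and with what coefficient; once this is done the claimed expression drops out by direct simplification, and the proof is complete.
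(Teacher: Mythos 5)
Your overall strategy --- twist $-M^k$ by a multiple of $\bar X_0\lineq 0$ so that only intersection curves with $Q$ survive, then translate via the identifications of \S\ref{s:identifications} --- is exactly the paper's, and your plan for the main case \eqref{iii3} matches the one computation the paper actually writes out: there the restriction to $T^k$ is presented as that of $\sum_{r\neq k}A_r^k+\sum_{\{r<r'\}\not\ni k}\bigl(V^k_{rr'}+2W_{rr'}+V^k_{r'r}\bigr)$, i.e.\ of $2\bar X_0-M^k$, which is equivalent to your expansion of $T^k\big|_{T^k}$ as minus the anticanonical cycle \eqref{Tantican} followed by \eqref{acca-T}. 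Items \eqref{i2} and \eqref{iii2} are likewise fine in outline.

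The genuine gap is your first step, which is meant to dispose of item \eqref{iii4}. It is not true that the components outside the list \eqref{i2}--\eqref{iii3} are exactly those disjoint from the support of $M^k$, nor that triviality on them is immediate. First, the six planes $V^k_{sr}$ are themselves components of $M^k$: there one must expand the self--intersection $V^k_{sr}\big|_{V^k_{sr}}=-\bigl(T^k+A^k_s+V^k_{rs}+W_{sr}\bigr)\big|_{V^k_{sr}}$ and observe the cancellation $M^k\big|_{V^k_{sr}}\lineq T^k\big|_{V^k_{sr}}-W_{sr}\big|_{V^k_{sr}}\lineq 0$, a difference of two lines in $V^k_{sr}\cong \P^2$. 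Second, every surface of the three chains $W'_{ij}+W''_{ij}+W_{ij}+W''_{ji}+W'_{ji}$ with $i,j\neq k$ carries the nontrivial class $\L_0\big|_W=F$, so the asserted triviality in \eqref{iii4} forces $M^k\big|_W\lineq F\neq 0$; in particular none of these surfaces is disjoint from the support of $M^k$ (indeed $W_{ij}$ meets $T^k$ along $D^k_{ij}$, $A^k_i$ along $G^k_{ij}$ and $V^k_{ij}$ along $F^k_{ij}$). Taken literally, your step either returns the wrong answer $F$ on these fifteen surfaces or simply does not apply to them; the correct statement is that $M^k\big|_{W_{ij}}=G^k_{ji}+F^k_{ji}+2D^k_{ij}+F^k_{ij}+G^k_{ij}$ is precisely one of the reducible fibres of the ruling $|F|$ recorded in \S\ref{s:identifications}, with a single fibre likewise subtracted on $W'_{ij}$ and $W''_{ij}$. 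These cancellations are easy but must be carried out --- the triviality of $\L_0(-M^k)$ away from $T^k$ and the chains through $P_k$ is what makes Proposition \ref{prop:descr2} work --- so \eqref{iii4} is not established by your argument. (A smaller imprecision of the same kind affects \eqref{ii2}: $A^k_i$ is itself a component of $M^k$ and also meets $V^k_{ij}$ and $V^k_{ih}$, so it does not meet the support of $M^k$ along a single curve, although the stated answer there is correct.)
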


\begin{proof}
We limit ourselves to a brief outline of how things work for
$T^k$. The restriction class is 
\begin{equation*}
\Bigl( 
\sum_{r\neq k} A_r^k
+\sum_{\{r<r'\} \not\ni k}\left( V_{rr'}^k+2W_{rr'}+V_{r'r}^k \right)
\Bigr) 
\biggr|_{T^k}
\end{equation*}
which is seen to be equal to the required class with the
identification of Figures \ref{f:WT} and \ref{fig:T},
i.e. with $H$ as in \eqref{acca-T}.
\end{proof}

\begin{proposition}\label{prop:descr2}
There is a natural isomorphism between $\fL^k$ and its
restriction to $T^k$, which is the $3$--dimensional linear system
\begin{equation*}
\bigl| 3H-(F_k^{sj}+D^k_{sj}+2\Gamma_s^k)
-(F_k^{ji}+D_{ij}^k+2\Gamma_j^k)
-(F_k^{is}+D_{is}^k+2\Gamma_i^k) \bigr|,
\end{equation*}
where we set $\{s,i,j,k\}=\{1,\ldots,4\}$, and $H$ as in
\eqref{acca-T}. 
\end{proposition}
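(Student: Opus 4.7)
The approach parallels the proofs of Propositions~\ref{prop:descrL_i} and~\ref{prop:descrL_ij}: I will show that the restriction map from $\fL^k$ to the linear system on $T^k$ stated in the proposition is an injective linear map between vector spaces of dimension~$3$, hence an isomorphism. By Lemma~\ref{lem:comput3}, the restriction of $\fL^k$ is trivial on every component of $\bar X_0$ except $T^k$, the three surfaces $A^k_i$ and $P_i$ (for $i \neq k$), the face $P_k$, and the three chains $W_{ik}' + W_{ik}'' + W_{ik} + W_{ki}'' + W_{ki}'$, so it suffices to analyze these.

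First I would verify that the target linear system on $T^k$ has dimension~$3$. Working on the plane-blowup model of $T^k$ from Figure~\ref{f:WT} via the identification of the anticanonical cycle~\eqref{Tantican} with~\eqref{plane-antican} and the formula~\eqref{acca-T} for~$H$, the stated divisor class translates to a system of plane cubics passing through each of the three blown-up vertices and satisfying a further infinitely-near condition at each. Denoting this class by $L$, a Riemann-Roch computation on the blowup gives $L^2 = 3$ and $L \cdot K_{T^k} = -3$, hence $\chi(L) = 4$; combined with the vanishing of $h^1(L)$ and $h^2(L)$ (by Kawamata-Viehweg applied to $L - K_{T^k}$, whose numerical positivity I would check by computing its intersections with the anticanonical cycle), this gives $h^0(L) = 4$, so the dimension is $3 = \dim \fL^k$.

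Next I would prove injectivity of the restriction by showing that a section $\sigma \in \fL^k$ is uniquely determined by $C := \sigma|_{T^k}$, via propagation across incidences. On each $A^k_i$ ($i \neq k$), the restriction lies in the pencil $|H - \Gamma^k_i|$ of Section~\ref{s:identifications}(v), each member of which is determined by its intersection with $\Gamma^k_i = T^k \cap A^k_i$, hence by $C \cap \Gamma^k_i$. The restriction on $P_i$ then lies in $|H - G^k_i|$ and is determined by its intersection with $G^k_i = P_i \cap A^k_i$. On each chain $W_{ik}' + W_{ik}'' + W_{ik} + W_{ki}'' + W_{ki}'$, the component-wise rulings in~$|F|$ propagate from the point determined on $L_{ik} = P_i \cap W_{ik}'$ uniquely to an endpoint on $L_{ki} = P_k \cap W_{ki}'$, by the same chain-of-rulings argument as at the end of the proof of Proposition~\ref{prop:descrL_ij}. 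Finally, the restriction on $P_k$ is the unique member of $|H|_{P_k}$ passing through the three resulting endpoints, compatibility of these matching conditions being automatic for a global section.

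The main obstacle will be the dimension computation on $T^k$: checking independence of the six imposed conditions on plane cubics is delicate because the subtracted divisor has coefficient~$2$ along three of the curves $\Gamma^k_\cdot$ in the anticanonical cycle, so the naive count must be justified by the vanishing argument. Once this is in place, injectivity of the resulting linear restriction map between $3$-dimensional vector spaces forces surjectivity, and the natural isomorphism asserted in the statement follows.
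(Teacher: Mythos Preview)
Your proposal is correct and follows essentially the same propagation-of-matching-conditions argument as the paper: a section in $\fL^k$ is determined by its restriction to $T^k$, since this forces the curves on the pencils $|H-\Gamma_i^k|_{A^k_i}$ and $|H-G^k_i|_{P_i}$, hence the rational chains across the $W$-surfaces, and finally the line on $P_k$. The only genuine difference is the dimension count on $T^k$: you invoke Riemann--Roch and Kawamata--Viehweg vanishing, whereas the paper defers this to \S\ref{s:cubics}, where the linear system is identified concretely with the system $\mathcal T$ of plane cubics through three points with prescribed tangent directions, and shown to map $T^k$ birationally onto a cubic surface in $\P^3$ (Proposition~\ref{p:cubics}); this gives $\dim=3$ without cohomological machinery and yields the geometric description needed later anyway.
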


\begin{proof}
This is similar (in fact, easier) to the proof of Proposition
\ref{prop:descrL_ij}, so we will be sketchy here. The dimensionality
assertion will be discussed 
in \S\ref{s:cubics} below.

For each $i\neq k$, the restriction of $\fL^k$ to each irreducible
component of the chain 
\begin{equation}
\label{chain^k_i}
A^k_i+P_i+W_{ik}'+W_{ik}''+W_{ik}+W_{ki}''+W_{ki}'
\end{equation}
is a base point free pencil of rational curves, 
and $\fL^k$ restricts on \eqref{chain^k_i} to the $1$--dimensional
system of  connected chains of rational curves in these
pencils, that we will call $\fN^{k}_i$.

Now the general member of $\fL^k$ consists of a curve in 
$\fL^k\bigr|_{T^k}$, which uniquely determines three chains of rational
curves in $\fN^{k}_i$, $i\neq k$, which in turn determine a unique
line in $|\O_{P^k}(H)|$.
\end{proof}

\subsection{The linear systems $\fL_i$.}
\label{s:planes}

Let  $a,b,c$ be three independent  lines   in $\P^ 2$, and consider a $0$--dimensional scheme $Z$ cut out on $a+b+c$ by
a general quartic curve.  Consider the linear system $\mathcal P$ of plane quartics containing $Z$.
This is a linear system of dimension 3. Indeed containing the union of the three lines $a,b,c$ is one condition for
the curves in $\mathcal P$ and the residual system is the $2$--dimensional complete linear system of all  lines in the plane. 

Proposition \ref{prop:descrL_i} shows that $\fL_i$ can be identified
with a system of type $\mathcal P$.
We denote by $\sigma_i: P_i\dasharrow \P^ 3$ (or simply by $\sigma$)
the rational map determined by $\fL_i$ and by $Y$ its image, which is
the same as the image of the plane via the rational map determined by
the linear system $\mathcal P$.

\begin{proposition}\label{prop:monoid} The map $\sigma: P_i\dasharrow Y$ is birational, and $Y$ is a \emph{monoid} quartic surface,
with a triple point $p$ with tangent cone consisting of a triple  of  independent planes through $p$, and with no other singularity.
\end{proposition}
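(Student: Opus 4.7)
The plan is to produce an explicit equation for $Y$, from which all three claims follow by direct inspection.

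Choose homogeneous coordinates $(x_1:x_2:x_3)$ on $\P^2$ so that $a,b,c$ are the coordinate lines $\{x_i=0\}$. Every curve of the form $(a+b+c) + \ell$ with $\ell$ a line belongs to $\mathcal{P}$, and so does a general quartic $F$ cutting out $Z$; by the dimension count just performed, these generate $\mathcal{P}$. A convenient basis is therefore
$$x_1^2 x_2 x_3,\quad x_1 x_2^2 x_3,\quad x_1 x_2 x_3^2,\quad F(x_1,x_2,x_3),$$
so, writing $m := x_1 x_2 x_3$ and $(y_1:y_2:y_3:y_4)$ for the coordinates on $\P^3$, the map $\sigma$ reads
$$\sigma\colon (x_1:x_2:x_3)\;\longmapsto\; (x_1 m : x_2 m : x_3 m : F(x_1,x_2,x_3)).$$
On the locus where $m\neq 0$ one has $(y_1:y_2:y_3) = (x_1:x_2:x_3)$, so the projection from $p := (0:0:0:1)$ is a rational inverse of $\sigma$; in particular $\sigma$ is birational onto its image $Y$.

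Next one computes an equation for $Y$: from $y_1 y_2 y_3 = m^4$ and the homogeneity identity $F(y_1,y_2,y_3) = F(x_1m,x_2m,x_3m) = m^4 F(x_1,x_2,x_3) = (y_1 y_2 y_3)\cdot y_4$, one sees that $Y$ is contained in the surface
$$Y_0\colon\quad y_1 y_2 y_3 \cdot y_4 \;=\; F(y_1, y_2, y_3).$$
For generic $F$, the polynomial $y_1 y_2 y_3 y_4 - F(y_1,y_2,y_3)$ is irreducible---viewed as a polynomial of degree $1$ in $y_4$ with leading coefficient $y_1y_2y_3$, it cannot factor nontrivially since $F$ is not divisible by any $y_i$---so $Y = Y_0$ is an irreducible quartic surface. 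This is a monoid equation in standard form with vertex $p$: in the affine chart $y_4 = 1$, the equation $y_1 y_2 y_3 = F(y_1, y_2, y_3)$ has lowest-degree part $y_1 y_2 y_3$ of degree $3$, so $\mathrm{mult}_p(Y) = 3$ and the tangent cone at $p$ is $\{y_1 y_2 y_3 = 0\}$, the union of three independent planes, as required.

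It remains to rule out further singularities for general $F$. A singular point of $Y$ satisfies $y_1 y_2 y_3 = 0$ (from $\partial/\partial y_4$), together with $y_4 y_j y_k = F_{y_i}(y_1, y_2, y_3)$ for each cyclic permutation $(i,j,k)$ of $(1,2,3)$. If exactly one coordinate, say $y_1$, vanishes with $y_2 y_3 \neq 0$, the second and third equations force $F_{y_2}(0, y_2, y_3) = F_{y_3}(0, y_2, y_3) = 0$, and by Euler's relation also $F(0, y_2, y_3) = 0$; this means the binary quartic $F|_{y_1 = 0}$ has a multiple root at $(y_2 : y_3)$, which fails for $F$ general. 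If two coordinates vanish, say $y_1 = y_2 = 0$, being on $Y$ forces the coefficient of $y_3^4$ in $F$ to vanish, again excluded by genericity. Hence $p$ is the only singular point of $Y$, and the proposition follows. The argument has no serious obstacle; the only mildly delicate step is verifying the genericity conditions on $F$, for which Euler's relation is the key tool.
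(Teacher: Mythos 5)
Your proof is correct. The paper's own argument is a one--line synthetic version of the same underlying idea: it observes that $p$ is the image of the triangle $a+b+c$, and that subtracting the triangle from $\mathcal P$ leaves the homaloidal net of lines, so that projection of $Y$ from $p$ inverts $\sigma$; this yields birationality and $\mathrm{mult}_p(Y)=\deg Y-1=3$ in one stroke, but it leaves the shape of the tangent cone and the absence of further singularities to the reader. Your basis $x_1m,\,x_2m,\,x_3m,\,F$ is precisely the coordinate incarnation of that decomposition (the first three members are ``triangle plus line'', and projection from $p=(0:0:0:1)$ is the residual net of lines), so the two routes agree on the key step. What your version buys, by pushing through to the explicit monoid equation $y_1y_2y_3\,y_4=F(y_1,y_2,y_3)$, is that the tangent cone $\{y_1y_2y_3=0\}$ falls out for free, and the smoothness of $Y\setminus\{p\}$ reduces to a Jacobian computation using only that $Z$ consists of four distinct points on each side of the triangle and avoids the vertices -- i.e.\ a complete verification of the two claims the paper's proof does not spell out. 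The genericity checks via Euler's relation are handled correctly, and the irreducibility of $y_1y_2y_3\,y_4-F$ (needed to conclude $Y=Y_0$) is properly justified by the non-divisibility of $F$ by the coordinate forms.
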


\begin{proof} The triple point $p\in Y$ is the image of the curve $C=\sum_{i=j}^  3(2D_i^j+L_{ij})$ (alternatively, of the sides of the triangle $a,b,c$). By subtracting $C$ to $\fL_i$ one gets a homaloidal net, mapping to the net of lines in the plane. This proves the assertion.
\end{proof}

\begin{remark} \label{rem:monoid}
The image of $\bar{X}$ by the complete linear system
$|\L(-M_i)|$ provides a model $f':S'\to \Delta$ of the initial family
$f:S\to \Delta$, such that the corresponding flat limit of $S'_t\cong
S_{t^6}$ with $t\neq 0$, is  $S'_0=Y$ the quartic monoid image of the face
$P_i$ of the tetrahedron via $\sigma$. 
The map $\bar{X}_0 \to S'_0$ contracts all other irreducible
components of $\bar{X}_0$ to the triple point of the monoid.
\end{remark}

\begin{remark}
 Theorem \ref {T:triangle} says that
the degree of the dual surface of the monoid $Y$ is 21.
\end{remark}

The strict transform of $\tilde \fL_i$ in $\fP_0'''$ (which we still
denote by $\tilde \fL_i$, see \S\ref{conv}) can  be identified
as a blow--up of $\fL_i \cong \mathcal{P}$:
first blow--up the three points corresponding to the three
non--reduced curves $2a+b+c$, $2b+a+c$, $2c+a+b$.  
Then blow--up the proper transforms of the three pencils of
lines with centres at $A, B, C$ plus the fixed part $a+b+c$.  
We will interpret this geometrically in \S\ref{s:good}, using Lemma
\ref{l:tg-id}.

\subsection{The linear systems $\fL_{ij}$.}
\label{s:rat4ics}

Next, we need to study some of the geometric properties of the
linear systems $\fL_{ij}$ as in  Proposition \ref{prop:descrL_ij}.
Consider the rational map $\varphi_{ij}: W_{ij}\dasharrow  \P^ 3$ (or
simply $\varphi$) determined by ${\fL}_{ij}$. Alternatively, one may
consider the rational map, with the same image $W$  (up to projective
transformations), determined by the planar linear system $\mathcal W$
of quartics considered in the proof of Proposition \ref {prop:descrL_ij}.
 
\begin{proposition}
\label{p:W-quartic}
The map $\varphi$ is birational onto its image, which is  a quartic
surface $W \subset \P^3$, with a double line $D$, and two triple
points  on $D$.
\end{proposition}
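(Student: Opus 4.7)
The plan is to analyze the rational map $\varphi = \varphi_{\mathfrak L_{ij}}$ via the birational identification $W_{ij} \dashrightarrow \P^2$ of Figure~\ref{f:WT}, under which $\mathfrak L_{ij}$ corresponds to the planar linear system $\mathcal W$ described in the proof of Proposition~\ref{prop:descrL_ij}: quartics with a double point at $a_1$ carrying two simple infinitely near base points off $L_2, L_3$, a simple base point at each of $a_2$ and $a_3$ with one infinitely near base point along $L_3$ and $L_2$ respectively, and two further simple base points on $L_1$. Let $\pi \colon X \to \P^2$ be the total blowup resolving these nine base points, and let $M$ be the resulting base-point-free divisor class on $X$.

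Summing squared base-point multiplicities yields $M^2 = 16 - 4 - 8 = 4$, hence the degree relation $\deg \varphi \cdot \deg W = 4$. Determining which of the possibilities $(\deg \varphi, \deg W) \in \{(1,4), (2,2), (4,1)\}$ actually holds will follow from the analysis of the singular locus below: once $W$ is shown to carry two distinct isolated singular points, the cases of a plane $(4,1)$ and of a quadric $(2,2)$ are both ruled out, leaving the birational case with $\deg W = 4$.

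Next I would identify the curves contracted by $\varphi$. Computing $M \cdot C$ for each relevant class, one sees that the contracted curves are the proper transforms of the three sides $L_1, L_2, L_3$, the three vertex-exceptional curves $E_1, E_2, E_3$, together with four of the six edge--vertex exceptional divisors, namely $\tilde E_{12}, \tilde E_{13}, \tilde E_{21}, \tilde E_{31}$. They split into two disjoint connected chains
\[
E_2 - \tilde E_{12} - L_1 - \tilde E_{13} - E_3 \qquad \text{and} \qquad L_2 - \tilde E_{21} - E_1 - \tilde E_{31} - L_3,
\]
each being contracted to a distinct point $q_1, q_2 \in W$. A local tangent-cone analysis of the chain contractions (or a comparison with the classical normal form for quartic surfaces with a double line) then shows that each $q_i$ is a singular point of multiplicity~$3$, as required.

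For the double line, the two remaining edge-vertex exceptionals $\tilde E_{23}, \tilde E_{32}$ satisfy $M \cdot \tilde E_{ij} = 1$, so each maps isomorphically onto a line in $\P^3$. Intersection computations show that $\tilde E_{23}$ meets $E_3$ (in the chain contracting to $q_1$) and $L_2$ (in the chain contracting to $q_2$), and symmetrically $\tilde E_{32}$ meets $E_2$ and $L_3$. The two images therefore both pass through $q_1$ and $q_2$ and must coincide with the unique line $D = \overline{q_1 q_2}$; since $X$ carries two distinct irreducible curves mapping birationally onto $D$, this line is a double curve of $W$, completing the description. The main obstacle is confirming the precise multiplicity~$3$ at $q_1, q_2$ (rather than a higher multiplicity with additional infinitely near structure), which requires either the cited local analysis of the chain contractions or an appeal to classical projective invariants of quartic surfaces with prescribed singular locus.
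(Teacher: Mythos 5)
Your overall strategy is the same as the paper's: pass to the planar model $\mathcal W$, resolve the base points, compute that the moving part has self--intersection $4$, identify the contracted curves as two connected chains (the total transforms of $L_1$ and of $L_2+L_3$ respectively), and obtain the double line as the common image of the two remaining exceptional curves (the paper's $D_{ij}^k$ and $D_{ij}^h$). However, there is a genuine gap exactly where you flag "the main obstacle": you never actually prove that $\varphi$ is birational, nor that the two contracted chains go to \emph{distinct} points, nor that each image point has multiplicity exactly $3$. Waving at "a local tangent--cone analysis of the chain contractions" or "classical normal forms for quartic surfaces with a double line" does not close this: the tangent cone of the image at the point under a contracted negative curve is not determined by the combinatorics of the chain alone, and your elimination of the cases $(\deg\varphi,\deg W)=(2,2)$ and $(4,1)$ is predicated on already knowing that $q_1\neq q_2$ are honest singular points of $W$, which you assert rather than prove.

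The paper fills all three holes with one computation: for the contracted chain $C_{ji}$ (your first chain), the residual system $\bar{\fL}_{ij}-C_{ji}$ corresponds to the residual of $L_1$ in $\mathcal W$, which is a homaloidal net of cubics (rational curves of self--intersection $(\bar\fL_{ij}-C_{ji})^2=4-2\cdot 0+(-3)=1$ mapping $W_{ij}$ birationally onto $\P^2$). This simultaneously gives birationality of $\varphi$ and the multiplicity: two general plane sections through $c_1$ pull back to divisors containing $C_{ji}$ whose residuals meet in a single point off the contracted locus, so $\mathrm{mult}_{c_1}W=4-1=3$. Distinctness of $c_1$ and $c_2$ is proved by observing that subtracting $L_1$ from $\mathcal W$ does not force subtracting the whole triangle $L_1+L_2+L_3$. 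If you incorporate this residual--system argument, your write--up becomes a complete proof along the paper's lines; without it, the central assertions of the proposition (birationality and the two triple points) remain unestablished.
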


\begin{proof}
 First we get rid of the four base points 
in  $Z_{ij}$ by blowing them up and taking the proper transform $\bar
{\fL}_{ij}$ of the system. Let $u: \bar {W} \to W_{ij}$
be this blow--up, and let $\plm{I}_{ij}$
(resp. $\plm{I}_{ji}$) 
be the two $(-1)$--curves that meet $\Lambda_{ij}$
(resp. $\Lambda_{ji}$).

The strict transform $\bar {\fL}_{ij}:= u^ *({\fL}_{ij})-  \left(
\p{I}_{ij}+\m{I}_{ij}+\p{I}_{ji}+\m{I}_{ij} \right)$,  has
self--intersection $4$. Set, as usual, $\{1,\ldots,4\}=\{i,j,h,k\}$
and consider the  curves  
\begin{equation}
\label{C_ij}
C_{ji}:=\Lambda_{ji}+(2G_{ji}^k+F_{ji}^k)+(2G_{ji}^h+F_{ji}^h)
\quad \text{and} \quad
C_{ij}:=\Lambda_{ij}+(2G_{ij}^k+F_{ij}^k)+(2G_{ij}^h+F_{ij}^h).
\end{equation}
One has 
\[\bar {\fL}_{ij}\cdot C_{\bf s}=0, \quad p_a(C_{\bf s})=0,\quad
C_{\bf s}^ 2=-3, \quad
\text{for} \quad {\bf s}\in \{ (ij),(ji) \}
.\] 
 By mapping $\bar W$ to  $W_{ij}$, and this to the plane as in Figure
 \ref{f:WT} with \eqref{Wantican} and \eqref{plane-antican}
 identified, one sees that $C_{ji}$ goes to the 
 line $L_1$ and $C_{ij}$ to the union of the two lines $L_2,L_3$.  
The considerations in the proof of Proposition \ref {prop:descrL_ij} show
that $\bar {\fL}_{ij}$ has no base points on $C_{ji}\cup C_{ij}$ (i.e.,
$\mathcal W$ has only the prescribed base points along the triangle
$L_1+L_2+L_3$).  
On the other hand, the same considerations show that the base points
of $\bar {\fL}_{ij}$ may only lie on $C_{ji}\cup C_{ij}$. This shows that
$\bar {\fL}_{ij}$ is base points free, and the associated morphism
$\bar \varphi:\bar{W} \to \P^3$ contracts $C_{ji}$ and $C_{ij}$ to points
$c_1$ and $c_2$ respectively.  

The points $c_1$ and $c_2$ are distinct, since subtracting the line
$L_1$ from the planar linear system $\mathcal W$ does not force
subtracting the whole triangle $L_1+L_2+L_3$ to the system. By
subtracting $C_{ji}$ from $\bar {\fL}_{ij}$, the residual linear system is
a linear 
system of rational curves with self--intersection 1, mapping $W_{ij}$
birationally to the plane. Indeed, this residual linear system
corresponds to the residual linear system of $L_1$ with respect to
$\mathcal W$, which is the linear system of plane cubics, with a
double point at $a_1$, two simple  base points  infinitely near to $a_1$
not on $L_2$ and $L_3$, two base points at $a_2$ and $a_3$, and this
is a homaloidal system. This shows that $c_1$ is a triple point of
$W$ and that $\bar \varphi$ is birational. The same for $c_2$. Finally 
$\bar \varphi$ maps (the proper transforms of) $D_{ij}^k$ and
$D_{ij}^h$ both to the unique line $D$ containing $c_1$ and $c_2$. 
\end{proof} 

\begin{remark}
The subpencil of $\fL_{ij}$ corresponding to planes in $\P^3$ that
contain the line $D$ corresponds to the subpencil of curves in
$\mathcal W$ with the triangle $L_1+L_2+L_3$ as its fixed part, plus
the pencil of lines through $a_1$.  
In this subpencil we have two special curves, namely $L_1+2L_2+L_3$
and $L_1+L_2+2L_3$. This  shows that the tangent cone  
to $W$ at the general point of $D$ is fixed, formed by two planes.
\end{remark}

\begin{remark}
The image of $\bar{X}$ via the complete linear system
$|\L(-M_{ij})|$ provides a model $f':S'\to \Delta$ of the initial family
$f:S\to \Delta$, such that the corresponding flat limit of $S'_t\cong
S_{t^6}$ with $t\neq 0$, is  $S'_0=W$ the image of $W_{ij}$ via
$\varphi$. 
The map $\bar{X}_0 \to S'_0$ contracts the chain \eqref{chain_kl} to
the double line of $W$, 
and the two connected components of $\bar{X}_0-W_{ij}$ minus the chain
\eqref{chain_kl} (cf. Figure \ref{f:granchio}) to the two triple
points of $W$ respectively.
\end{remark}

\begin{corollary}\label{cor:excep} The exceptional divisor 
$\tilde{\fL}_{ij}$ of ${\fP}'' \to {\fP}'$ is naturally isomorphic to
the blow--up of the complete linear system 
$\bar{\fL}_{ij} \cong \left|\O_{W}(1)\right|$ 
along its subpencil corresponding to planes in $\P^3$ containing the
line $D$.
\end{corollary}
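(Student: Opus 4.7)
The plan is to construct a natural birational morphism $\psi : \tilde{\fL}_{ij} \to |\O_W(1)| \cong \P^3$, to analyse its fibres, and to conclude via the universal property of the blow-up. By Lemma \ref{l:lim-lin}, $\tilde{\fL}_{ij}$ embeds into the relative Hilbert scheme $\Hilb(\L)$, so each of its points corresponds to a genuine curve $D \subset \bar{X}_0$. The assignment $D \mapsto \varphi_{ij}(D\vert_{W_{ij}}) \subset W$, combined with Propositions \ref{prop:descrL_ij} and \ref{p:W-quartic}, defines a morphism $\psi$ that extends the birational identification $\tilde{\fL}_{ij} \dashrightarrow \fL_{ij} \cong |\O_W(1)|$.

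I would then examine the fibres of $\psi$ case by case. For a plane $H \in |\O_W(1)| \setminus \mathfrak{p}$, the intersection $H \cap W$ is an irreducible quartic curve meeting the double line $D$ transversely at a single point; as noted at the end of the proof of Proposition \ref{prop:descrL_ij}, this point uniquely determines a connected rational chain in the family $\fN^{kh}$ on the chain of surfaces \eqref{chain_kl}, and this chain together with $\varphi_{ij}^{-1}(H\cap W)$ on $W_{ij}$ and the fixed pieces prescribed by the twist $M_{ij}$ on the remaining components assembles into a unique curve in $\bar{X}_0$. Hence $\psi^{-1}(H)$ is a single reduced point. For $H \in \mathfrak{p}$, one has $H\cap W = 2D+C$ for a residual conic $C$ varying with $H$, and the pullback to $W_{ij}$ necessarily contains $D^k_{ij}+D^h_{ij}$ together with the strict transform of $C$. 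The residual intersections with $T^k$ and $T^h$ are then fixed by $C$, but the rational chain on the intermediate components of \eqref{chain_kl} is no longer pinned by a single point of intersection with $D$, and a straightforward matching argument using the base-point-free pencils $|F|$ on each component shows that a whole $\P^1$ of connected chains is available; thus $\psi^{-1}(H) \cong \P^1$.

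Putting this together, $\psi$ is a proper birational morphism of smooth projective threefolds, bijective over $|\O_W(1)|\setminus \mathfrak{p}$ and with $\P^1$-fibres over $\mathfrak{p} \cong \P^1$. By the universal property of the blow-up along the smooth centre $\mathfrak{p}$, $\psi$ factors through a morphism $\tilde{\psi} : \tilde{\fL}_{ij} \to \mathrm{Bl}_{\mathfrak{p}} |\O_W(1)|$, which is a proper bijection between smooth threefolds, hence an isomorphism by Zariski's Main Theorem.

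The main technical obstacle is the fibre analysis over $\mathfrak{p}$: one must verify that the $\P^1$ of rational chains over a given $H \in \mathfrak{p}$ yields a reduced $\P^1$ in $\Hilb(\L)$ (rather than a thickened or partially contracted locus) and exhausts the fibre of $\psi$. This can be carried out by a tangent-space computation via Lemma \ref{l:tg-id} applied to the pencil $\mathfrak{p} \subset \fL_{ij}$ and its trace on each of its members, and is consistent with the abstract identification $\tilde{\fL}_{ij} \cong \P(\O_{\P^1} \oplus \O_{\P^1}(1)^{\oplus 2})$ recorded in Step (\ref{Tetra:edges}) of \S\ref{s:4planes-limlin}, which matches the well-known structure of the blow-up of $\P^3$ along a line.
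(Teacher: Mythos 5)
Your overall strategy -- interpret the points of $\tilde\fL_{ij}$ as curves in $\bar X_0$, map each to the plane of $|\O_W(1)|$ cutting out the $\varphi_{ij}$-image of its $W_{ij}$-component, compute the fibres, and conclude by the universal property of the blow-up -- is sound, and it rests on the same inputs the paper invokes (Propositions \ref{prop:descrL_ij} and \ref{p:W-quartic} together with the description of $\fP''\to\fP'$ in Step (\ref{Tetra:edges})). The paper dismisses the corollary as a ``reformulation'' of these facts, so your argument is essentially the omitted verification; in particular your description of the fibre over a plane $H$ containing the double line $D$ -- a $\P^1$ of chains in $\fN^{kh}$, no longer pinned once the $W_{ij}$-component contains $D^k_{ij}+D^h_{ij}$ -- is precisely the content of the curves \eqref{D+conic} in the later proof of Proposition \ref{prop:limlin-tetra}.

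Two steps, however, are genuinely incomplete as written. First, you open by invoking Lemma \ref{l:lim-lin} to embed $\tilde\fL_{ij}$ in the relative Hilbert scheme, but at this point of the paper that identification is not yet available: it is only established in Proposition \ref{prop:limlin-tetra}, whose proof relies on the description of the exceptional components, including the present corollary. The circularity is repairable -- the curves attached to the points of $\tilde\fL_{ij}$ can be read off directly from the blow-up construction and Lemma \ref{l:tg-id}\,\eqref{subspace} applied to the normal bundle of $\ell_{ij}$, with no appeal to the Hilbert scheme -- but the logical order must be corrected. Second, the universal property of the blow-up requires the ideal $\mathcal{I}_{\mathfrak{p}}\cdot\O_{\tilde\fL_{ij}}$ (where $\mathfrak{p}\subset|\O_W(1)|$ denotes the pencil of planes through $D$) to be invertible; knowing that the set-theoretic fibres over $\mathfrak{p}$ are $\P^1$'s sweeping out a divisor is not enough, since an ideal such as $(x^2,xy)$ has divisorial zero locus without being locally principal. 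This is not merely the reducedness issue you flag. It can be settled either by the tangent-space computation you propose, or more directly by matching the two $\P^2$-bundle structures: $\tilde\fL_{ij}\cong\P\bigl(\O_{\P^1}\oplus\O_{\P^1}(1)^{\oplus 2}\bigr)$ over $\ell_{ij}$ by construction, $\mathrm{Bl}_{\mathfrak{p}}|\O_W(1)|$ has the same form over $\mathfrak{p}\cong|\O_D(1)|$, and the two projections are identified via the rational map $\fL_{ij}\dasharrow|\O_D(1)|$ recorded in the paper immediately after the corollary; one then only has to check compatibility of the tautological maps to $\P^3$ on a dense open set.
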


\begin{proof} 
This is a  reformulation of the description of $\fP'' \to \fP'$
(cf. Step \eqref{Tetra:edges} in \S\ref{s:4planes-limlin} above),
taking into account Propositions \ref{prop:descrL_ij} and
\ref{p:W-quartic}.
\end{proof}

The divisor $\tilde{\fL}_{ij} \subset \fP_0''$ is a 
$\P(\O_{\P^ 1}(1)^ {\oplus 2}\oplus \O_{\P^ 1})$, and its structure of
$\P^2$--bundle over $\P^1$ is the minimal resolution of
indeterminacies of the rational map $\fL_{ij}\dasharrow
|\O_D(1)|$, which
sends a general divisor $C\in  \fL_{ij}$ to its intersection point
with $D$.
The next Proposition provides an identification of the general
fibres of $\tilde{\fL}_{ij}$ over $|\O_D(1)|=\P^1$ as certain linear
systems.

\begin{proposition}
\label{p:projectW}
The projection of $W$ from a general point of $D$ is a double
cover of the plane, branched over a sextic $B$ which is the union 
\[
B=B_0+B_1+B_2
\]
of a quartic $B_0$ with a node $p$, and of its tangent cone $B_1+B_2$
at $p$, such that the two branches of $B_0$ at $p$ both have a
flex there 
(see Figure \ref{f:diramazione}; the intersection $B_i\cap B_0$ is concentrated at
the double point $p$, for $1\leqslant i\leqslant 2$).
\end{proposition}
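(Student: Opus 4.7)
Proof plan. The strategy is to compute the branch sextic of the projection in adapted coordinates and read off its decomposition.

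Choose projective coordinates $(x_0{:}x_1{:}x_2{:}x_3)$ on $\P^3$ such that $D=\{x_0=x_1=0\}$, the two triple points of $W$ are $c_1=(0{:}0{:}1{:}0)$ and $c_2=(0{:}0{:}0{:}1)$, and, after diagonalizing the (fixed) quadratic form cutting out the tangent cone of $W$ along $D$, the two tangent planes are $B_1=\{x_0=0\}$ and $B_2=\{x_1=0\}$. The combined conditions that $D$ is a double line and $c_1, c_2$ are triple points force the equation of $W$ into the normal form
\[
F=(g_2\,x_2+g_3)\,x_3+h_3\,x_2+h_4,
\]
with $g_2=x_0x_1$ up to scalar and $g_3,h_3,h_4$ forms in $(x_0,x_1)$ of degrees $3,3,4$ respectively. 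The specific geometry of $W$ coming from the planar model $\mathcal W$ of Proposition \ref{p:W-quartic} imposes the divisibilities $g_2\mid g_3$ and $g_2\mid h_3$; equivalently, each tangent plane $B_i$ meets $W$ in $4D$ rather than the generic $3D+L_i$. This is verified either by direct calculation on a basis of $\mathcal W$ --- for instance the monoid quartic $(x_0^2-x_1^2)^2-x_0x_1x_2x_3$ has $g_3=h_3=0$ --- or by observing that the cubic tangent cones of $W$ at $c_1$ and $c_2$ each factor as $g_2$ times a linear form. Write $g_3=g_2\tilde g_3$ and $h_3=g_2\tilde h_3$ with $\tilde g_3, \tilde h_3$ linear.

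Next, parametrize the projection point as $p_0=(0{:}0{:}1{:}\lambda)\in D$ for generic $\lambda$, and substitute the line through $p_0$ in direction $(d_0{:}d_1{:}d_3)$, i.e. $(td_0,td_1,1,\lambda+td_3)$, into $F$. Collecting powers of $t$ and factoring out the $t^2$ contributed by the double point at $p_0$, the two residual intersections solve a quadratic in $t$ whose discriminant is the branch sextic
\[
\Delta=(d_3g_2+\lambda g_3+h_3)^2-4\lambda g_2(d_3g_3+h_4).
\]
Using the divisibility relations, $g_2$ factors out:
\[
\Delta=g_2\cdot B_0,\qquad B_0=g_2\bigl((d_3+\lambda\tilde g_3+\tilde h_3)^2-4\lambda d_3\tilde g_3\bigr)-4\lambda h_4,
\]
giving the sought decomposition $\Delta=B_0+B_1+B_2$ with $B_1=\{d_0=0\}$ and $B_2=\{d_1=0\}$ the two linear components of $g_2$.

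Finally, read off the local geometry of $B_0$ at the image $p=(0{:}0{:}1)$ of $D$. In the affine chart $d_3=1$, the leading term of $B_0$ at $p$ is $g_2(d_0,d_1)$, so $\mathrm{mult}_p(B_0)=2$ with tangent cone $g_2=B_1\cdot B_2$: hence $B_0$ has a node at $p$ with the prescribed tangent cone $B_1+B_2$. For the flex property, restrict $B_0$ to $B_1=\{d_0=0\}$; the first summand kills since $g_2|_{d_0=0}=0$, leaving $-4\lambda h_4(0,d_1)=-4\lambda\gamma\,d_1^4$, where $\gamma$ is the coefficient of $d_1^4$ in $h_4$ (nonzero since $W$ is irreducible). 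This vanishes only at $d_1=0=p$, to order $4$; therefore $(B_0\cdot B_1)_p=4=B_0\cdot B_1$, so the branch of $B_0$ tangent to $B_1$ has third-order contact with $B_1$, i.e.\ a flex; the symmetric computation handles $B_2$. The main obstacle is the divisibility $g_2\mid g_3,\,h_3$ of step one, which encodes the special geometry of $W$ coming from the planar model and does not follow from the double-line-plus-two-triple-points data alone.
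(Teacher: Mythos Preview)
Your approach is correct and genuinely different from the paper's. The paper argues in reverse: it takes a double plane with the stated branch sextic, performs the canonical resolution of its singularities, and then matches the resulting configuration of negative curves with that of $\bar W$ blown up at two conjugate points of $D_{ij}^k\cup D_{ij}^h$, finally checking that the hyperplane class pulls back correctly. Your route --- write $W$ in adapted coordinates and factor the discriminant of the projection --- is more direct and more elementary; the paper's route, on the other hand, yields as a by--product the explicit identification of the resolved double plane with a blow--up of $\bar W$, which is what is really used in Corollary~\ref{cor:projectW}.

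The one genuine gap, which you flag yourself, is the divisibility $g_2\mid g_3$ and $g_2\mid h_3$. Neither your example nor the remark on tangent cones proves it. It can, however, be closed cleanly: using the planar model of Proposition~\ref{p:W-quartic} with coordinates $a_1=(1{:}0{:}0)$, $a_2=(0{:}1{:}0)$, $a_3=(0{:}0{:}1)$, one finds that a basis of $\mathcal W$ is
\[
F_0=x^2(z-\gamma y)(z-\delta y),\quad F_1=xy^2z,\quad F_2=xyz^2,\quad F_3=yz(z-\alpha y)(z-\beta y),
\]
where $\gamma,\delta$ encode the two infinitely--near base points over $a_1$ and $\alpha,\beta$ the two base points on $L_1$. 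A direct elimination gives the single quartic relation
\[
X_0X_1X_2X_3=(X_2-\gamma X_1)(X_2-\delta X_1)\,(X_2-\alpha X_1)(X_2-\beta X_1),
\]
so after the coordinate relabelling $(x_0,x_1,x_2,x_3)=(X_1,X_2,X_3,X_0)$ one has $g_2=x_0x_1$, $h_4=-(x_1-\gamma x_0)(x_1-\delta x_0)(x_1-\alpha x_0)(x_1-\beta x_0)$, and in fact $g_3=h_3=0$ identically, which is stronger than the divisibility you need. With this in hand your discriminant computation goes through, and the rest of your argument (node, tangent cone, flexes via $(B_0\cdot B_i)_p=4$) is correct as written.
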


\begin{figure}
\begin{center}
\includegraphics[width=14cm]{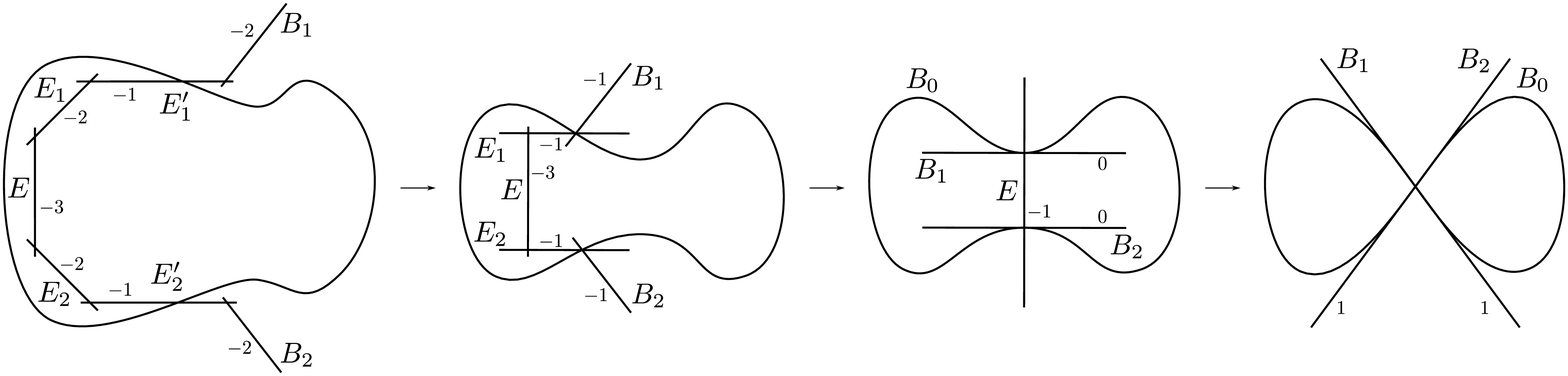}
\end{center}
\caption{Desingularization of the branch curve of the projection of
  $W_{ij}$} 
\label{f:diramazione}
\end{figure}

\begin{proof}
Let us consider a double cover of the plane as in the
statement. 
It is singular. Following \cite[\S 4]{calabri-ferraro}, 
we may obtain a resolution of singularities as a double cover of a
blown--up plane with non--singular branch curve. 
We will then observe that it identifies with
$\bar{W}$ blown--up at two general \emph{conjugate} points on
$D_{ij}^k$ and $D_{ij}^h$ respectively
(here \emph{conjugate} means that the two points are mapped to the
same point $x$ of $D$ by $\bar \varphi$).
We will denote by  $\tilde{W}$ the surface $\bar W$
blown--up at two such points, and by $I_x',I_x''$ the two exceptional
divisors. 

First note that our double plane is rational, because it
has a pencil of rational curves, namely the pull-back of
the pencil of lines passing through  $p$ (eventually this will correspond to the pencil of conics cut out on $W$ by the planes through $D$).

In order  to resolve the singularities of the branch curve (see
Figure~\ref{f:diramazione}), 
we first blow--up $p$, pull--back the double cover
and normalize it. Since $p$ has multiplicity $4$, which is even, 
the  exceptional divisor $E$ of the blow--up does not belong to the branch
curve of the new double cover, which is the proper transform  $B$ (still denoted by $B$ according to our general convention).
Next we blow--up the two double points of $B$ which lie on $E$, and
repeat the process.  Again, the two
exceptional divisors $E_1,E_2$ do not belong to the  branch
curve. 
Finally we blow--up the two double points of $B$ (which lie one on
$E_1$ one on $E_2$, off $E$), and repeat the process.
Once more, the two exceptional divisors $E'_1,E'_2$ do not belong to
the  branch curve which is the union of $B_0$, $B_1$ and $B_2$
(which denote here the proper transforms 
of the curves with the same names on the plane). 
This curve is smooth, so the corresponding double
cover is smooth.

The final double cover has the following configuration of
negative curves:
$B_1$ (resp. $B_2$) is contained in the branch divisor, so over it we
find a $(-1)$-curve; 
$E_1'$ (resp. $E_2'$) meets the branch divisor at two points, so its
pull-back is a $(-2)$-curve;
$E_1$ (resp. $E_2$) does not meet the branch divisor, so its pull-back
is the sum of two disjoint $(-2)$-curves;
similarly, the pull-back of $E$ is the sum of two disjoint
$(-3)$-curves.
In addition, there are four lines through $p$ tangent to
$B_0$  and distinct from $B_1$ and $B_2$. After the
resolution, they are curves with self-intersection $0$ and meet
the branch divisor at exactly one point with multiplicity $2$. The
pull-back of any such a curve is the transverse union of two
$(-1)$-curves, each of which meets transversely one component of the
pull-back of $E$.

This configuration is precisely the one we have on $\tilde{W}$,
after the contraction of the four $(-1)$-curves
$G_{ji}^k$, $G_{ij}^k$, $G_{ji}^h$, and $G_{ij}^h$.
Moreover, the pull-back of the line class of $\P^2$ is the pull--back
to $\tilde W$ of 
$\fL_{ij}(-(I_x'+I_x''))$. 
\end{proof}

\begin{corollary}\label{cor:projectW}
In the general fibre of the generic $\P^ 2$ bundle structure of
$\tilde \fL_{ij}$, the Severi variety of $1$--nodal
(resp. $2$--nodal) irreducible curves is an irreducible curve of degree
$10$ (resp. the union of $16$ distinct points).
\end{corollary}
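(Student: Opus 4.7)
By Proposition~\ref{p:projectW}, the generic fibre $F \cong \P^2$ of $\tilde{\fL}_{ij} \to \P^1$ is naturally identified with $\check{\P}^2$: a point $\ell \in F$ is a line in $\P^2$, corresponding to the member $\pi^{-1}(\ell)$ of the family, where $\pi$ is the resolved double cover branched over $B = B_0 + B_1 + B_2$. A generic $\ell$ meets $B$ transversely at $6$ smooth points, so $\pi^{-1}(\ell)$ is a smooth genus $2$ curve; it degenerates exactly when $\ell$ becomes tangent to $B$ or passes through a singular point of $B$.

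For $V_1$, I observe that the $1$-nodal irreducible members correspond to lines $\ell$ simply tangent to $B_0$ at a single smooth point, so $V_1$ coincides with the dual curve $B_0^\vee \subset F$, which is irreducible of degree $10$ by Plücker's class formula $d(d-1) - 2\delta = 12 - 2 = 10$. Equivalently, Proposition~\ref{p:dJ} with $N = 2$, $d = 4$, $g = 2$, $\tau = 1$ gives $\deg V_1$ as the coefficient of $uv^2$ in $(1+4u+v)^2(1+2u+v)$, which is $10$.

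For $V_2$, I identify the $2$-nodal irreducible members with the proper bitangents of $B_0$ (tangent at two distinct smooth points), hence with the ordinary nodes of $B_0^\vee$. Since $p_a(B_0^\vee) = \binom{9}{2} = 36$ and $p_g(B_0^\vee) = g(B_0) = 2$ by biduality, the $\delta$-invariants of the singularities of $B_0^\vee$ sum to $34$. Assuming all these singularities are ordinary nodes (contributing $\delta^\vee$) and ordinary $A_2$ cusps (contributing $\kappa^\vee$), the class formula applied to $B_0^\vee$ reads $4 = \deg(B_0^{\vee\vee}) = 90 - 2\delta^\vee - 3\kappa^\vee$, and combined with $\delta^\vee + \kappa^\vee = 34$ it yields $\delta^\vee = 16$ and $\kappa^\vee = 18$. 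I then need to check that these $16$ nodes correspond to distinct $2$-nodal irreducible members and to exclude the other singular points of $B_0^\vee$ from $V_2$: the $2$ cusps of $B_0^\vee$ at $B_i^\vee$ correspond to $\ell = B_i \subset B$ (whose preimages are non-reduced), while the $4$ simple intersections of $p^\vee$ with $B_0^\vee$ away from $\{B_1^\vee, B_2^\vee\}$ correspond to planes through the double line $D$ (giving reducible sections of the form $2D + \mathrm{conic}$, which lie in the closure of $V_1$ but not in $V_2$).

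The hard part will be the analysis of the flex branches of $B_0$ at $p$ and their effect on $\kappa^\vee$. A local parameterization of branch $1$ as $(t, t^3)$ with tangent line $[3t^2 : -1 : -2t^3]$ shows that it sweeps out an $A_2$ cusp of $B_0^\vee$ at $B_1^\vee$, and likewise for branch $2$ at $B_2^\vee$. Reconciling this with $\kappa^\vee = 18$ (rather than a naive $18 + 2 = 20$) requires a Hessian computation: the Hessian of $B_0$ vanishes to order $8$ at $p$ (versus the generic $6$), so only $24 - 8 = 16$ smooth-point flexes survive, which together with the $2$ cusps at $B_i^\vee$ recover $\kappa^\vee = 18$ and make the bidual formula self-consistent.
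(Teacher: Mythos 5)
Your proof is correct and follows the same route as the paper's: identify $V_1$ with the dual curve $\check B_0$ of the nodal quartic $B_0$ and $V_2$ with its ordinary double points, then compute via Pl\"ucker formulae. The paper leaves the computation implicit; your careful bookkeeping of the two extra cusps of $\check B_0$ at the points dual to $B_1,B_2$ (coming from the flexed branches of $B_0$ at its node, which lower the number of smooth flexes from $18$ to $16$ while leaving the bitangent count at $16$) is precisely the point where the non-genericity of $B_0$ must be handled, and you handle it correctly.
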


\begin{proof}
This follows from the fact that the above mentioned Severi
varieties are respectively the dual curve $\check B_0$ of a plane
quartic as in Proposition~\ref{p:projectW},
and the set of ordinary double points of $\check B_0$.
One computes the degrees using Pl\"ucker formulae.
\end{proof}

\subsection{The linear systems $\fL^k$}
\label{s:cubics}

Here we study  some  geometric properties of the
linear systems $\fL^ k$ appearing in the third step of   \S \ref {s:4planes-limlin}.

Consider a triangle $L_1,L_2,L_3$ in $\P^ 2$, with vertices
$a_1,a_2,a_3$, where $a_1$ is opposite to $L_1$, etc. 
Consider the linear system $\mathcal T$
of cubics through $a_1,a_2,a_3$ and tangent there to $L_3, L_1, L_2$
respectively. By Proposition \ref{prop:descr2}, there is a birational
transformation of $T^ k$ to the plane (see Figure \ref {f:WT}) mapping
$\fL^ k$  to $\mathcal T$.  We consider the rational map $\phi_k: T^k
\dasharrow  \P^ 3$ (or simply $\phi$) determined by the linear system
${\fL}^k$, or,  alternatively,  the rational map, with the same image
$T$  (up to projective transformations), determined by the planar
linear system $\mathcal T$. The usual notation is $\{1,\ldots,
4\}=\{i,j,s,k\}$.

\begin{proposition}
\label{p:cubics}
The map $\phi: T^ k\to T\subset \P^ 3$ is a birational morphism, and
$T$ is a cubic surface  with three double points of type $A_2$
as its only singularities.  The minimal resolution of $T$ is the
blow--down of $T^ k$ contracting the $(-1)$--curves 
$D^k_{ij}, D^k_{is},D^k_{js}$.  
This cubic contains exactly three lines, each of them containing two
of the double points.
\end{proposition}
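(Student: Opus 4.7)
The plan is to work in the plane model provided by Proposition \ref{prop:descr2}: identify $T^k$ with $\P^2$ blown up first at the three vertices $a_1,a_2,a_3$ of a triangle $L_1+L_2+L_3$ (with exceptional divisors $E_s$ over $a_s$) and then at the six infinitely near points $L_r\cap E_s$, $r\neq s$ (with exceptional divisors $E_{r,s}$); under this identification $\fL^k$ corresponds to the $3$-dimensional system $\mathcal{T}$ of plane cubics through each $a_i$ and tangent there to the prescribed line, and we denote by $L$ the corresponding divisor class on $T^k$. Since all base points of $\mathcal T$ have been resolved by the nine blow-ups, $|L|$ is base-point free and $\phi$ is a morphism, and $L^2 = 9-6 = 3$ (one unit subtracted per simple base point, six base points in total). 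Non-degeneracy of the image $T\subset\P^3$ (forced by $h^0(L)=4$) excludes $\deg T=1$, and from $(\deg\phi)(\deg T)=L^2=3$ we conclude $\deg\phi=1$ and $\deg T=3$: thus $\phi$ is birational onto a cubic surface.

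The heart of the proof is the analysis of the exceptional locus of $\phi$. Direct computation of $L\cdot C$ for each of the $12$ curves in the anticanonical cycle of $T^k$ shows that exactly $9$ of them are contracted: the six $(-3)$-curves $L_1,L_2,L_3,E_1,E_2,E_3$ (proper transforms of the sides of the triangle and of the first exceptional divisors), together with three of the six $(-1)$-curves $E_{r,s}$, namely the ones corresponding under the identification \eqref{Tantican}--\eqref{plane-antican} to $D_{ij}^k$, $D_{is}^k$, $D_{js}^k$. The $12$-cycle structure (alternating between $(-3)$- and $(-1)$-curves) organizes these $9$ curves into three disjoint chains, each consisting of a $(-3)$-curve, a $(-1)$-curve, and a $(-3)$-curve meeting consecutively (for instance $L_1$, $E_{1,3}$, $E_3$). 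Blowing down the middle $(-1)$-curve in each chain turns it into an $A_2$-chain of two $(-2)$-curves; contracting these three $A_2$-chains then yields the three $A_2$ singularities of $T$. This factorization exhibits the minimal resolution of $T$ as obtained from $T^k$ precisely by contracting the three $D$-curves, proving both the second and third assertions.

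Finally, the remaining three $(-1)$-curves $E_{r,s}$ on $T^k$ satisfy $L\cdot E_{r,s}=1$, hence map to lines on $T$; reading the $12$-cycle, each such line joins two of the three singular points. That $T$ contains no further line follows from the classical fact that a cubic surface with three $A_2$ singularities (whose minimal resolution is a weak del Pezzo with six $(-2)$-curves organized into three $A_2$-configurations) carries exactly three lines; equivalently, every other $(-1)$-class on the minimal resolution is represented by a reducible divisor involving one of the six $(-2)$-curves. The main bookkeeping challenge is matching the paper's notation for the $12$ anticanonical curves on $T^k$ (the $F_k^{ij}$, $\Gamma_i^k$, $D_{ij}^k$) with the plane-model notation; once this dictionary is fixed, all intersection-theoretic computations are routine.
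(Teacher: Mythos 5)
Your proposal is correct and follows essentially the same route as the paper's proof: both establish birationality onto a cubic from the plane model of cubics with six simple base points, both identify the contracted locus as three chains that become $A_2$-configurations of $(-2)$-curves once the three $(-1)$-curves $D^k_{ij}, D^k_{is}, D^k_{js}$ are blown down, and both obtain the three lines as images of the remaining $(-1)$-curves (the $\Gamma^k_\bullet$) together with an appeal to the classification of singular cubic surfaces. The only cosmetic difference is that you derive birationality from $L^2=3$ and non-degeneracy of the image, where the paper invokes irreducibility of the general member of $\mathcal T$.
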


\begin{proof} The linear system $\mathcal T$ is a system of plane cubics with six simple base points,
whose general member is clearly irreducible. This  implies that 
$\phi: T^ k\to T\subset \P^ 3$ is a birational morphism and $T$ is a cubic
surface.  The linear system $\fL^ k$ contracts the three chains of rational curves
\[
C_1=F_k^{js}+2D^k_{sj}+F_k^{sj}, 
\quad C_2= F_k^{si}+2D^k_{is}+F_k^{is},
\quad C_3=F_k^{ij}+2D^k_{ij}+F_k^{ji},\] 
which map in the plane to the sides of the triangle $L_1,L_2,L_3$.
By contracting the  $(-1)$--curves $D^k_{ij}$, $D^k_{is}$, $D^k_{js}$,
the three curves $C_1,C_2,C_3$ are mapped to three  $(-2)$--cycles 
contracted by $\phi$ to double points of type $A_2$.

The rest follows from the classification of cubic
hypersurfaces in $\P^3$ (see, e.g., \cite{bruce-wall}). The three
lines on $T$ are the images via $\phi$ of the three 
exceptional divisors $\Gamma^ k_i, \Gamma^ k_j, \Gamma^ k_s,$.
\end{proof}

\begin{remark}
We now see that the image of $\bar{X}$ by the complete linear system
$|\L(-M^k)|$ provides a model $f':S'\to \Delta$ of the initial family
$f:S\to \Delta$, such that the corresponding flat limit of $S'_t\cong
S_{t^6}$ with $t\neq 0$, is  $S'_0=T+P$,
where $T$ is the image of $T^k$ via $\phi$,
and $P$ is the plane in $\P^3$ through the three lines contained in
$T$, image of $P_k$ by the map associated to $\fL^k$. The three other
faces of the initial tetrahedron $S_0$ are contracted to the three
lines in $T$ respectively.
\end{remark}

\begin{proposition}
\label{p:cubics-gauss}
The dual surface $\check{T}\subset \check{\P}^3$ to $T$ is itself a
cubic hypersurface with three double points of type $A_2$ as its only
singularities. 
Indeed, the Gauss map $\gamma_T$ fits into the commutative diagram
\[\xymatrix@=15pt{
& T^k \ar[dl]_{\phi} \ar[dr]^{\check{\phi}} & \\
T \ar@{-->}[rr]_{\gamma_T} && \check{T}
}\]
where $\check{\phi}$ is the morphism associated to the 
linear system
\begin{equation*}
\bigl| 3H-(F_k^{sj}+\Gamma_s^k+2D^k_{sj})
-(F_k^{ji}+\Gamma_j^k+2D_{ij}^k)
-(F_k^{is}+\Gamma_i^k+2D_{is}^k) \bigr|,
\end{equation*}
which is mapped to the linear system $\mathcal T'$  
of cubics  through $a_1,a_2,a_3$ and tangent there to $L_2, L_3, L_1$
respectively,
by the birational map $T^ k\dasharrow \P^2$ identifying $\fL^k$ with
$\mathcal T$.
\end{proposition}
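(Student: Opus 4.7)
The plan is to establish the proposition in three steps. First, I would observe that the relation \eqref{acca-T} expressing $H$ on $T^k$ in terms of the anticanonical cycle is invariant under the cyclic permutation $(D^k_{sj}, D^k_{is}, D^k_{ij}) \leftrightarrow (\Gamma^k_s, \Gamma^k_i, \Gamma^k_j)$. A Picard group computation on $T^k$, entirely parallel to the identification of $\fL^k|_{T^k}$ with $\mathcal{T}$ in Proposition~\ref{prop:descr2}, then shows that the divisor class appearing in the statement of the proposition corresponds, under the birational identification of $T^k$ with the suitably blown--up $\P^2$, to the planar linear system $\mathcal{T}'$ of cubics through $a_1, a_2, a_3$ tangent there to $L_2, L_3, L_1$ respectively.

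Second, since $\mathcal{T}'$ has the same numerical type as $\mathcal{T}$ (being a $3$-dimensional linear system of plane cubics through three simple base points with three prescribed tangent lines), the argument of Proposition~\ref{p:cubics} applies verbatim with $\mathcal{T}'$ in place of $\mathcal{T}$. This yields that $\check\phi$ is a birational morphism onto its image $\check T \subset \P^3$, which is a cubic surface having three $A_2$ double points as its only singularities; the only difference is that the three chains of $(-2)$-configurations are now obtained by contracting the $(-1)$-curves $\Gamma^k_s, \Gamma^k_i, \Gamma^k_j$ instead of $D^k_{sj}, D^k_{is}, D^k_{ij}$, via the chains $F_k^{sj} + 2\Gamma_s^k + F_k^{si}$ and its cyclic analogues. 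In particular, the three lines on $\check T$ are then the images of $D^k_{sj}, D^k_{is}, D^k_{ij}$.

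To complete the proof of the commutativity $\check\phi = \gamma_T \circ \phi$, it suffices to verify it on a Zariski dense subset of $T^k$, which I would do via an explicit coordinate computation. Choose coordinates on $\P^2$ with $a_r = e_r$ and $L_r = \{x_r = 0\}$, so that $\mathcal{T}$ admits the monomial basis $\{x_1^2 x_3,\, x_1 x_2^2,\, x_1 x_2 x_3,\, x_2 x_3^2\}$ and $\phi$ realizes $T$ as the cubic $\{y_0 y_1 y_3 - y_2^3 = 0\} \subset \P^3$ (whose three $A_2$ singularities at $(1\!:\!0\!:\!0\!:\!0)$, $(0\!:\!1\!:\!0\!:\!0)$, $(0\!:\!0\!:\!0\!:\!1)$ are immediate). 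The partial derivatives of this equation are $(y_1 y_3,\, y_0 y_3,\, -3 y_2^2,\, y_0 y_1)$, and a direct substitution shows that the four cubics thereby obtained on $\P^2$ share the common factor $x_1 x_2 x_3$; after removing it, one finds
\[
\gamma_T \circ \phi \,=\, \bigl[\, x_2^2 x_3\, :\, x_1 x_3^2\, :\, -3\, x_1 x_2 x_3\, :\, x_1^2 x_2\, \bigr],
\]
which coincides with the map given by the monomial basis $\{x_1^2 x_2,\, x_1 x_2 x_3,\, x_1 x_3^2,\, x_2^2 x_3\}$ of $\mathcal{T}'$ computed directly from its definition. The principal obstacle lies in the Picard bookkeeping of the first step, but it is routine once set up in analogy with Proposition~\ref{prop:descr2}.
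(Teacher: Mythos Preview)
Your argument is correct, and the explicit coordinate computation in the third step is clean and decisive: the cubic $y_0 y_1 y_3 = y_2^3$ is indeed the image of $\phi$ via the monomial basis you chose, and the substitution of $\phi$ into the gradient does produce, after cancelling $x_1 x_2 x_3$, exactly a basis of $\mathcal T'$. This simultaneously establishes the commutativity of the diagram and, via biduality together with your second step, the description of $\check T$.

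The paper's proof takes a different, more geometric route. Rather than computing in coordinates, it first invokes Proposition~\ref{p:deg-dual-sing} to get $\deg(\check T)=3$ directly from the singularity data of $T$. It then argues from the local structure of an $A_2$ point (rank~$2$ tangent cone with vertex a line) that $\gamma_T$ is indeterminate on the minimal resolution but resolves on $T^k$, and uses the three non-reduced members $2L_1+L_3,\,2L_2+L_1,\,2L_3+L_2$ of $\mathcal T$ to see that each line $\ell\subset T$ is contracted by $\gamma_T$ (since there is a plane with $\Pi_\ell\cap T=3\ell$); the identification of the correct chains $F_k^{sj}+2\Gamma_s^k+F_k^{si}$ etc.\ to be contracted then follows from the weak Del~Pezzo structure of $\check T$. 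Your approach trades this geometric analysis of the Gauss map for a direct verification, which is shorter and entirely self-contained, at the cost of being less illuminating about \emph{why} $\gamma_T$ exchanges the roles of the $D$'s and the $\Gamma$'s. Your first step (the Picard bookkeeping matching the stated divisor class on $T^k$ with $\mathcal T'$) is the only part left implicit, but it is indeed routine and in any case your coordinate computation already pins down the planar linear system independently.
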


\begin{proof}
The dual hypersurface $\check{T}$ has degree $3$ by Proposition
\ref{p:deg-dual-sing}. Let $p$ be a double point of $T$. The tangent cone to $T$ at $p$
is a rank 2 quadric, with vertex a line $L_p$. A local computation shows that the limits of all tangent planes to $T$ at smooth points 
tending to $p$ are planes through $L_p$. This means that $\gamma_T$ is not well defined on the minimal
resolution of $T$, which is the blow--down of 
$T^ k$ contracting the $(-1)$--curves $D^k_{ij}, D^k_{is},D^k_{js}$,
its indeterminacy points being exactly
the three points images of these curves. The same local computation also shows that $\gamma_T$ is well defined on $T^ k$, hence $\gamma_T$ fits in the diagram as stated. 

In $\mathcal T$ there are the three curves $2L_1+L_3, 2L_2+L_1,
2L_3+L_2$, which implies that  
for any given line $\ell \subset T$ there is a plane $\Pi_\ell$ in
$\P^3$ tangent to $T$ at the general point of  $\ell$
(actually one has $\Pi_\ell \cap T = 3\ell$).
Then $\gamma_T$ contracts  each  of the three lines contained in $T$
to three different points, 
equivalently $\check{\phi}$ contracts to three different points the
three curves $\Gamma^ k_i, \Gamma^ k_j, \Gamma^ k_s$.
Being $\check T$ a (weak) Del Pezzo surface, this implies that 
$\check{\phi}$ must contract the three
chains of rational curves
$F_k^{sj}+2\Gamma_s^k+F_k^{si}$,
$F_k^{is}+2\Gamma_i^k+F_k^{ij}$,
and $F_k^{ji}+2\Gamma^k_j+F_j^{js}$,
because they have $0$ intersection with
the anticanonical system, and the rest of the assertion follows.
\end{proof}

Recalling the description of $\fP'''\to \fP$, one can 
realize $\tilde{\fL}^k$ as a birational modification of 
$\fL^k\cong |\O_T(1)|$:
first blow--up the point corresponding to the plane containing the
three lines of $T$,
then blow--up the strict transforms of the three lines in 
$|\O_T(1)|$ corresponding to the three pencils of planes respectively
containing the three lines of $T$.
Notice that $\tilde{\fL}^k$ has a structure of $\P^1$--bundle on the
blow--up of  $\P^ 2$ at three non--coplanar points, as required.

Alternatively, we have in $\mathcal T$ the four curves $C_0=L_1+L_2+L_3,
C_1=2L_1+L_3, C_2=2L_2+L_1,
C_3=2L_3+L_2$, corresponding to four independent
points $c_0\ldots, c_3$ of $\mathcal T$.  
Then $\tilde{\fL}^k$ is the blow--up of $\mathcal T$ at $c_0$,
further blown--up along the proper transforms of the lines
$\left<c_0,c_1\right>$, $\left<c_0,c_2\right>$, and
$\left<c_0,c_3\right>$.
Via the map $\tilde{\fL}^k \to \mathcal{T}$, the projection of the
$\P^ 1$--bundle structure corresponds to the projection of
$\mathcal T$ from $c_0$ to the plane spanned by $c_1,c_2,c_3$. 

This will be interpreted using Lemma \ref{l:tg-id} in \S\ref{s:good}
below.

\subsection{The limit linear system, II: description}
\label{s:good}

We are now ready to prove:
\begin{proposition}
\label{prop:limlin-tetra}
The limit linear system of $|\L_t|=|\O_{\bar X_t}(1)|$ as $t\in
\Delta^*$ tends to $0$ is $\fP'''_0$.
\end{proposition}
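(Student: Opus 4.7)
The strategy is to apply Lemma \ref{l:lim-lin}. By construction, $\fP'''$ is an iterated blow-up of $\fP=\P(\varpi_*\L)$ along smooth centers contained in the central fibre, hence flat and proper over $\Delta$ and isomorphic to $\P(\varpi_*\L)$ over $\Delta^*$. It suffices therefore to exhibit $\fP'''$ as a closed subscheme of the relative Hilbert scheme $\Hilb(\L)$: the natural rational map $\fP\dashrightarrow\Hilb(\L)$ has to extend to a morphism $\fP'''\to\Hilb(\L)$, which by irreducibility, properness and the identification over $\Delta^*$ must then be an isomorphism.

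The rational map $\fP\dashrightarrow\Hilb(\L)$ is undefined exactly at points $[\sigma]\in\fP_0=|\L_0|$ whose corresponding section $\sigma$ vanishes on an entire irreducible component of $\bar{X}_0$. Up to the twisting of Remark \ref{r:twist}, an inspection of $\bar{X}_0$ (cf.\ Figure \ref{f:granchio}) identifies these indeterminacies as living in a stratified locus whose strata are the cells of the tetrahedron dual to $S_0$: the four vertices $\pi_i$ (sections vanishing on $P_i$ and the attached chain of $W$-surfaces), the six edges $\ell_{ij}$ (sections vanishing on the strip of components separating $P_i$ from $P_j$), and the four plane faces $\Pi_k$ (sections vanishing on the cap of $\bar{X}_0$ above a vertex $p_k$). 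The construction of $\fP'''$ blows up precisely these strata in order of increasing dimension, so the centers at each stage are smooth and transverse to the proper transforms of the previously blown-up ones.

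At each of the three stages, the exceptional divisor of the corresponding blow-up is identified by Lemma \ref{l:tg-id} with a birational modification of a twisted linear system on $\bar{X}_0$ whose general element is an honest curve; this is exactly the content of Propositions \ref{prop:descrL_i}, \ref{prop:descrL_ij}, and \ref{prop:descr2}, relying in turn on the restriction-class calculations of Lemmas \ref{lem:comput}--\ref{lem:comput3}, and on the chain-matching pencils $\fN^{kh}$ and $\fN^k_i$, which provide, given a curve on the ``main'' component $P_i$, $W_{ij}$ or $T^k$, the unique compatible continuation on the remaining components of $\bar{X}_0$. Moreover, the further internal blow-ups of each exceptional divisor described at the ends of \S\S\ref{s:planes}, \ref{s:rat4ics}, \ref{s:cubics} -- which resolve the remaining indeterminacies of the maps $\sigma_i$, $\varphi_{ij}$, $\phi_k$ onto the images $Y$, $W$, $T$ -- correspond, under the tetrahedron-cell dictionary, exactly to the subsequent blow-ups of edges and vertices performed in the passage $\fP'\to\fP''\to\fP'''$.

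The subtle point, and the main obstacle, is precisely this compatibility: one must verify that each blow-up performed in $\fP$ simultaneously resolves the main component $|\L_0|$ \emph{and} the indeterminacies carried by the previously introduced exceptional divisors. Concretely, this reduces to checking that the proper transform of the next tetrahedral cell to be blown up meets every earlier exceptional divisor exactly along the internal blow-up locus described in the relevant subsection (e.g.\ the three points and three lines appearing in the description of $\tilde{\fL}_i$ in \S\ref{s:planes}, or the subpencil in $\tilde{\fL}_{ij}$ of Corollary \ref{cor:excep}). Given the dictionary already established between the twisted linear systems and the geometry of $\bar{X}_0$, this is a bookkeeping verification; once completed, every point of $\fP'''_0$ unambiguously corresponds to a curve in $\bar{X}_0$, yielding the required morphism $\fP'''\to\Hilb(\L)$ and hence the proposition.
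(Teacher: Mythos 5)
Your strategy is the same as the paper's: reduce everything to Lemma \ref{l:lim-lin} by showing that every point of $\fP'''_0$ corresponds to an actual curve in $\bar X_0$, using Lemma \ref{l:tg-id} together with Propositions \ref{prop:descrL_i}, \ref{prop:descrL_ij} and \ref{prop:descr2} for the interiors of the exceptional components, and the tetrahedral-cell dictionary for the order of the blow--ups. Your identification of the remaining obstacle — that each successive blow--up must be seen to induce, on every previously created exceptional divisor, exactly the internal modification described at the ends of \S\S\ref{s:planes}, \ref{s:rat4ics}, \ref{s:cubics} — is also exactly where the paper locates the work.

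The issue is that this "bookkeeping verification" is not an afterthought: it \emph{is} the body of the paper's proof, and you have not carried it out. The paper goes through each pairwise intersection ($\fP_0\cap\tilde\fL_i$, $\fP_0\cap\tilde\fL_{ij}$, $\tilde\fL_{ij}\cap\tilde\fL_i$, $\tilde\fL^k\cap\fP_0$, $\tilde\fL^k\cap\tilde\fL_i$, $\tilde\fL^k\cap\tilde\fL_{ij}$), identifies it as a concrete subvariety of \emph{each} of the two components it lies on (e.g.\ matching the exceptional $\P^1\times\P^1$ of the blow--up of $\fP_0$ along $\ell_{ij}$ with the exceptional divisor of $\tilde\fL_{ij}\to\bar\fL_{ij}$ of Corollary \ref{cor:excep}), and then exhibits the curve of $\bar X_0$ attached to each of its points — including the rational tails joining $\plm{E}_{ji}$ to $\plm{z}_{ji}$, the special members of $\fN^{kh}$ and $\fN^k_j$ consisting of double curves of $\bar X_0$, and the fixed parts appearing in \eqref{fL^k_i}. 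Without these explicit matchings one has not shown that the blow--up centres on the two sides agree, nor that the boundary points of the exceptional divisors (where the general description of \S\S\ref{s:planes}--\ref{s:cubics} degenerates) still parametrize well--defined curves; and it is only after this that Lemma \ref{l:lim-lin} applies. So your proposal is a faithful outline of the paper's argument rather than a complete proof: the approach is right and nothing in it would fail, but the substantive content has been deferred.
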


\begin{proof}
The identification of $\fP'''$ as $\Hilb(\L)$ will follow from the
fact that every point in $\fP'''_0$ corresponds to a curve in
$\bar{X}_0$ (see Lemma \ref{l:lim-lin}).
Having the results of \S\S\ref{s:4planes-limlin}---\ref{s:cubics} at
hand, we are thus left with the task of describing how the various
components of the limit linear system intersect each other.
We carry this out by analyzing, with Lemma \ref{l:tg-id}, the
birational modifications operated on 
the components $\fP_0$, $\tilde\fL_i$, $\tilde\fL_{ij}$, and
$\tilde\fL^k$, during the various steps of the construction of
$\fP'''$ (see \S\ref{s:4planes-limlin}).

\medskip
\eqref{Tetra:faces}
In $\fP'_0$, the strict transform of $\fP_0$
(which we shall go on calling $\fP_0$, according to the
conventions set in \S\ref{conv}) is the blow up of 
$|\L_0| \cong |\O_{S_0}(1)|$ at the
four points corresponding to the faces of $S_0$.
For each $i\in\{1,\ldots,4\}$, the corresponding
exceptional plane is the intersection $\fP_0\cap \tilde\fL_i$,
and it identifies with the subsystem of $\fL_i$ consisting of curves
\[
L+\sum_{\substack{j\neq i \\ \{i,j,k,h\}=\{1,\ldots,4\}}}
(L_{ij}+G_i^h+G_i^k),
\quad L\in |\O_{P_i}(H)|,
\]
together with six rational tails respectively joining $\plm{E}_{ji}$
to $\plm{z}_{ji}$, $j\neq i$.

\medskip
\eqref{Tetra:edges}
For each $\{i\neq j\}\subset \{1,\ldots,4\}$, the intersection
$\fP_0\cap \tilde\fL_{ij} \subset \fP_0''$ identifies as the
exceptional $\P^1\times \P^1$ of both the blow--up of $\fP_0\subset
\fP_0'$ along the line $\ell_{ij}$, and the blow--up $\tilde\fL_{ij}\to
\bar{\fL}_{ij}$ described in Corollary \ref{cor:excep}.
As a consequence, it parametrizes the curves
\begin{equation}
\label{D+conic}
C+\Phi+D_{ij}^k+D_{ij}^h+C_{ji}+C_{ij},
\quad \{i,j,k,h\}=\{1,\ldots,4\},\ 
C_{ji}\ \text{and}\ C_{ij}\ \text{as in \eqref{C_ij}},
\end{equation}
where $C$ is a chain in $\fN^{kh}$, and $\Phi \in |F|_{W_{ij}}$ is the
proper transform by $\varphi_{ij}$ of a conic through the two
triple points of $W$ (cf. Proposition \ref{p:W-quartic}),
together with four rational tails respectively
joining $\plm{E}_{ij}$ and $\plm{E}_{ji}$ to $\plm{w}_{ij}$ and
$\plm{w}_{ji}$. 
The two components $C$ and $\Phi$ are independent one from another,
and respectively move in a $1$--dimensional linear system.

The intersection $\tilde\fL_{ij}\cap \tilde\fL_i \subset \fP_0''$ is a
$\P^2$. In $\tilde\fL_{ij}$, it identifies as the proper transform via
$\tilde{\fL}_{ij}\to \bar{\fL}_{ij}$ of the linear system of
curves 
\begin{equation}
\label{W-triple-point}
C_{ij}+C,
\quad C \in |\L_0(-M_{ij})\otimes \O_{W_{ij}}(-C_{ij})|,\quad
\text{and}\ C_{ij}\ \text{as in \eqref{C_ij}},
\end{equation}
while in $\tilde\fL_i$ it is the exceptional divisor of the blow--up
of $\tilde\fL_i \subset \fP_0'$ at the point corresponding to the
curve 
\[
2(L_{ij}+G_i^h+G_i^k)
+(L_{ih}+G_i^j+G_i^k)
+(L_{ik}+G_i^j+G_i^h),
\quad \{i,j,k,h\}=\{1,\ldots,4\}.
\]
It follows that it parametrizes sums of a curve as in
\eqref{W-triple-point}, 
plus the special member of $\fN^{kh}$ consisting of double curves of 
$\bar{X}_0$ 
and joining the two points
$D^k_{ij}\cap F_{ij}^k$ and $D^h_{ij}\cap F_{ij}^h$.

\medskip
\eqref{Tetra:vertices}
For each $k\in \{1,\ldots,4\}$, the intersection 
$\Pi_k=\tilde\fL^k\cap \fP_0$ is a $\P^2$ blown up at three non
colinear points.
Seen in $\fP_0$, it identifies as the blow--up of the web of planes in
$|\L_0|\cong |\O_{S_0}(1)|$ passing through the vertex $k$ of $S_0$,
at the three points 
corresponding to the faces of $S_0$ containing this very vertex.
In $\tilde\fL^k$ on the other hand, it is the strict transform of the
exceptional $\P^2$ of the blow--up $\tilde{\mathcal{T}}\to
\mathcal{T}\cong \fL^k$ at the point $[a+b+c]$.
It therefore parametrizes the curves
\begin{equation}
\label{antican-triangle+line}
L+\sum_{i\neq k} \Bigl( \Gamma_i^k+
\sum_{j\not\in \{i,k\}}(F_k^{ij}+D_{ij}^k) \Bigr),
\end{equation}
where $L$ is a line in $P_k$, together with three rational tails
joining respectively $L\cap L_{ki}$ to $\Gamma_i^k$, $i\neq k$.

For $i\neq k$, $\tilde\fL^k\cap \tilde\fL_i$ is a $\P^1\times
\P^1$, identified as the exceptional divisor of both the blow--up
$\tilde\fL^k\to \tilde{\mathcal{T}}$ along the strict transform of the
line parametrizing planes in $\mathcal{T} \cong |\O_T(1)|$ containing
the line
$\phi^k(\Gamma_i^k)$, and the blow--up of $\tilde\fL_i\subset \fP_0''$
along the strict transform of the line parametrizing curves 
\begin{equation}
\label{triple-line}
L+G_i^k+\sum_{\substack{j\neq i \\ \{i,j,k,h\}=\{1,\ldots,4\}}}
(L_{ij}+G_i^h+G_i^k),
\quad L\in |\O_{P_i}(H-G_i^k)|.
\end{equation}
It therefore parametrizes sums of
\begin{equation}
\label{fL^k_i}
\Phi
+\Bigl(\Gamma_i^k+ \sum_{j\not\in
  \{i,k\}}\bigl(F_k^{ij}+2D_{ij}^k+F_k^{ji}\bigr) 
+C\Bigr)
\end{equation}
(where $\Phi\in \fN^k_i$, and the second summand is a member of 
$\fL^k\bigr|_{T^k}$),
plus the fixed part
$
L_{ki}+\p{E}_{ki}+\m{E}_{ki}+\sum_{j\not\in \{i,k\}}(G_k^j+\Phi_j)
$,
where $\Phi_j$ is the special member of $\fN_j^k$ consisting of double
curves of $\bar{X}_0$ and joining the two points $G_k^j\cap L_{k\bj}$
on $P_k$
and $F_k^{\bj j}\cap \Gamma^k_{\bj}$ on $T^k$,
for each $j\not\in \{i,k\}$, with $\bj$ such that
$\{i,k,j,\bj\}=\{1,\ldots,4\}$. 
The two curves $\Phi$ and $C$ are independent one from another, and
respectively move in a $1$--dimensional linear system.

For each $j\not\in \{k,i\}$, $\tilde\fL^k\cap \tilde\fL_{ij}$ is an
$\F_1$, and identifies as the blow--up of the plane in $\fL^k$
corresponding to divisors in $|\O_T(1)|$ passing through the double
point $\phi^k(\Gamma_i^k)\cap \phi^k(\Gamma_j^k)$, at the point
$\bigl[\sum_{i\neq k}\phi^k(\Gamma_i^k)\bigr]$;
it also identifies as the exceptional divisor of the blow--up of
$\tilde\fL_{ij}\subset \fP_0''$ along the $\P^1$ corresponding to the
curves as in \eqref{D+conic}, with $\Phi$ the only member of
$|F|_{W_{ij}}$ containing $D_{ij}^k$.
We only need to identify the curves parametrized by the exceptional
curve of this $\F_1$; they are as in \eqref{antican-triangle+line},
with $L$ corresponding to a line in the pencil $|\O_{P_k}(H-G_k^s)|$,
$s\not\in \{i,j,k\}$.

\smallskip
In conclusion, $\fP'''$ is an irreducible Zariski closed
subset of the relative Hilbert scheme of $\bar X$ over $\Delta$,
and this proves the assertion.
\end{proof}

\subsection{The limit Severi varieties}
\label{s:tetra-concl}

We shall now identify the regular parts of the limit Severi varieties
$\fV_{1,\delta}(\bar X)=
\fV_{\delta}(\bar X,\L)$ for $1\leqslant \delta\leqslant 3$ (see Definition \ref {def:reg-comps}).
To formulate the subsequent statements, we use Notation \ref {not:1} and
the notion of 
$\bf n$--degree introduced in \S\ref{s:enumeration}.

We will be interested in those $\bf n$ that correspond to a
choice of $3-\delta$ general base points on the faces $P_i$ of
$S_0$, with $1\le i\le 4$.  These choices can be 
identified  with $4$--tuples
${\bf n}=(n_1,n_2,n_3,n_4)\in \N^4$ with
$\vert {\bf n}\vert=3-\delta$
(by choosing $n_i$ general points on $P_i$).
The vector $\bf n$ is non--zero only if $1\leqslant \delta\leqslant 2$.
For $\delta=1$ (resp. for $\delta=2$), to give $\bf n$ is equivalent
to give two indices $i,j\in \{1,\ldots,4\}^2$ (resp.  an $i\in
\{1,\ldots,4\}$): 
we let ${\bf n}_{i,j}$ (resp. ${\bf n}_i$)
be the $4$--tuple corresponding to the choice of  
general base points on $P_i$ and $P_{j}$ respectively if $i\neq j$,
and of two general base points on $P_i$ if $i=j$ 
(resp. a general base point on $P_i$).

\begin{proposition} [Limits of $1$-nodal curves]
\label{p:1-4planes}
The regular components of the limit Severi variety \linebreak
 $\fV_{1,1}(\bar X)$
are the following
(they all appear with multiplicity $1$):\\
\begin{inparaenum}[\normalfont (i)]
\item\label{4planes-sing1}   the proper transforms of the 24 planes 
$V(E)\subset \vert \O_{S_0}(1)\vert$,
where $E$ is any one of the $(-1)$--curves $\plm{E}_{ij}$,
for $1\leqslant i,j \leqslant 4$ and $i \neq j$. 
The ${\bf n}_{hk}$--degree is 1 if $h\neq k$;
when $h=k$, it is $1$ if $h\not\in \{i,j\}$, and $0$ otherwise; \\ 
\item\label{4planes-triple1} the proper transforms of  the four degree
  $3$ surfaces $V(M^k,\delta_{T^k}=1)\subset {\fL}^k$,
 $1\leqslant k\leqslant 4$. 
The ${\bf n}_{ij}$--degree is $3$ if $i\neq j$; when $i=j$, it is $3$
if $k=i$, and $0$ otherwise; \\ 
\item  \label{4planes-planes} the proper transforms of the four degree
$21$ surfaces $V(M_i,\delta_{P_i}=1)\subset {\fL}_i$, 
$1\leqslant  i\leqslant 4$. 
The ${\bf n}_{hk}$--degree is $21$ if $h=k=i$, and 0
otherwise; \\ 
\item  \label{4planesij} the proper transforms of  the six 
surfaces in $V(M_{ij},\delta_{W_{ij}}=1) \subset {\fL}_{ij}$,
$1\leqslant i<j \leqslant 4$. 
They have ${\bf n}_{hk}$--degree $0$ for every
$h,k \in \{1,\ldots,4\}^2$. 
\end{inparaenum}
\end{proposition}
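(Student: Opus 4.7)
The strategy is to apply Definition \ref{def:reg-comps} directly: I would enumerate all codimension $1$ components of the varieties $V(W,\bdelta,I,\ubtau)$ with $|\bdelta|+|I|+\nu(\ubtau)=1$ entering the sum \eqref{reg-Severi}, and then compute their $\bf n$--degrees from the geometric descriptions of \S\S\ref{s:planes}--\ref{s:cubics}. By Proposition \ref{prop:limlin-tetra}, the only twists $W$ satisfying condition \eqref{def:2} of Definition \ref{def:reg-comps} are $W=0,\ M_i,\ M_{ij}$, and $M^k$, corresponding to the four types of irreducible components $\fP_0,\fL_i,\fL_{ij},\fL^k$ of the limit linear system.

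For each such $W$ I go through the possibilities for $(\bdelta,I,\ubtau)$. When $W=0$, the restriction of $|\L_0|$ to each face $P_s$ is a pencil of smooth lines, so nodes at smooth points of $S_0$ cannot arise; moreover the restrictions to the two faces adjacent to a double curve meet transversely at each edge point, so tacnodes along double curves do not produce codimension $1$ components; only the containment of one of the $24$ exceptional curves $E=\plm{E}_{ij}$ yields codimension $1$ loci $V(E)$, giving (\ref{4planes-sing1}). For $W\in\{M_i,M_{ij},M^k\}$, Lemmas \ref{lem:comput}, \ref{lem:comput2}, \ref{lem:comput3} show that the twisted system restricts non-trivially to a single dominant face ($P_i$, $W_{ij}$, $T^k$ respectively) and is either trivial or consists of fixed chains of tail curves on the other components. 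Requiring further containment of an exceptional curve or a tacnode along a double curve of $\bar X_0$ would then impose additional genuine linear conditions on $\fL_\bullet$ restricted to the dominant face, so no such codimension $1$ components arise; only the loci $V(W,\delta_{\mathrm{dom}}=1)$ contribute, yielding (\ref{4planes-planes}), (\ref{4planesij}), (\ref{4planes-triple1}). Since $\ubtau=0$ throughout, all multiplicities $\mu(\ubtau)$ equal $1$.

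By Propositions \ref{prop:monoid}, \ref{p:W-quartic}, \ref{p:cubics}, the rational maps defined by $\fL_i,\fL_{ij},\fL^k$ have as images the quartic monoid $Y$, the double-line quartic $W$, and the cubic $T$ with three $A_2$-singularities; in each case $V(W,\delta_{\mathrm{dom}}=1)$ identifies with the dual surface of the image. Hence in (\ref{4planes-triple1}) the degree is $\deg\check T=3(3-1)^2-3\cdot 3=3$ by Proposition \ref{p:deg-dual-sing}, and in (\ref{4planes-planes}) the degree is $\deg\check Y=21$ --- this is exactly (a preliminary form of) Theorem \ref{T:triangle}, which I would prove independently in \S\ref{S:triangle}.

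The $\bf n$--degree computations reduce to local counts via the restriction lemmas. For (\ref{4planes-sing1}), the plane $V(E)$ with $E\subset P_i$ meeting $P_j$ has generic restriction to the faces other than $P_i,P_j$, so two point conditions placed away from $\{i,j\}$ cut out one curve; two points lying on $P_i$ (or on $P_j$) instead force the residual curve on that face to be constrained incompatibly with generic position, giving $0$. For (\ref{4planes-triple1}), the restriction $\fL^k|_{P_k}\cong|\O_{P_k}(H)|$ is the full net of lines, so two points on $P_k$ cut $\fL^k\cong\P^3$ in a generic line meeting $\check T$ in $3$ points; two points on distinct $P_i,P_j$ with $i,j\neq k$ still impose independent linear conditions and again give $3$; two points on the same $P_i$, $i\neq k$, force the hyperplane section of $T$ to contain the line $\phi_k(\Gamma_i^k)\subset T$, and one checks no such section is $1$-nodal (the residual conic joining two $A_2$-points is generically smooth). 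For (\ref{4planes-planes}) and (\ref{4planesij}), the restrictions of $\fL_i$ (resp.\ $\fL_{ij}$) to faces other than the dominant one are fixed, so point conditions on those faces cannot be satisfied generically; only two points on $P_i$ contribute in (\ref{4planes-planes}), yielding $\deg\check Y=21$, whereas (\ref{4planesij}) has $\bf n$--degree $0$ for every $\bf n$ supported on the faces --- the invisibility phenomenon of Remark \ref{r:other-lim}, where the ``correct'' degree would be detected by points on $W_{ij}$ in the model corresponding to the flat limit $W$. The hardest step will be the systematic exclusion of tacnode and exceptional-containment components for $W\in\{M_i,M_{ij},M^k\}$, together with the careful case analysis underlying the various vanishing $\bf n$--degrees.
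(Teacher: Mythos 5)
Your overall strategy is the same as the paper's: enumerate the admissible twists $W\in\{0,M_i,M_{ij},M^k\}$ via Definition \ref{def:reg-comps}, rule out tacnodal and exceptional-containment loci in codimension $1$, and read off the degrees from the geometry of the images $Y$, $W$, $T$ of the exceptional components. The identification of item \eqref{4planes-triple1} with $\check T$ and the degree $3$ via Proposition \ref{p:deg-dual-sing} is exactly what Proposition \ref{p:cubics-gauss} provides, and your $\mathbf{n}$--degree bookkeeping is essentially the content of Remark \ref{rem:dual}.

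There is, however, one genuine gap: the claim that the degree $21$ in item \eqref{4planes-planes} follows from an \emph{independent} proof of $\deg\check Y=21$ in \S\ref{S:triangle}. No such independent proof is available (nor is one given in the paper): the analysis of the triangle degeneration only yields the \emph{lower} bound $\deg\check Y\geqslant 9+4+2\cdot 4=21$ (Proposition \ref{p:1-triangle} together with inequality \eqref{degree}, i.e.\ Corollary \ref{coro:D}), because one cannot a priori exclude non--regular components of that auxiliary limit Severi variety. Proposition \ref{p:deg-dual-sing} does not apply to $Y$, whose singularity is a triple point with reducible tangent cone, so you cannot get the exact class of $Y$ by a Pl\"ucker-type formula either. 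The equality $\deg\check Y=21$ must be closed from above using the global enumerative input $d_{1,4}=36$ of Proposition \ref{p:deg-dual}: placing both base points on $P_i$, the components in \eqref{4planes-sing1} and \eqref{4planes-triple1} already account for $12\cdot 1+1\cdot 3=15$ of the $36$ limits, forcing $\deg_{\mathbf{n}_{ii}}V(M_i,\delta_{P_i}=1)\leqslant 21$ and hence equality. This is precisely why the paper states that Theorem \ref{T:triangle} is a \emph{corollary} of Theorem \ref{T:tetrahedron} rather than an input to it; as written, your argument either leaves the upper bound unproved or risks circularity. You have all the ingredients (you computed the relevant $\mathbf{n}_{ii}$--degrees of the other components), so the fix is just to make this two-sided squeeze explicit. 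A secondary, minor point: your justification of the vanishing $\mathbf{n}_{ii}$--degrees for $i\neq k$ in \eqref{4planes-triple1} should simply invoke that $\fL^k$ restricts to the pencil $|H-G_i^k|$ on $P_i$ (Lemma \ref{lem:comput3}), which no curve through two general points of $P_i$ can belong to; the detour through nodal hyperplane sections of $T$ containing a line is unnecessary.
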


\begin{proof}
This follows from \eqref{reg-Severi},
and from Propositions \ref {p:cubics-gauss}
and \ref{p:1-triangle}.
Proposition \ref{p:1-triangle}
tells us that $V(M_i,\delta_{P_i}=1)$ has degree at least $21$ in
${\fL}_i$ for $1\leqslant  i\leqslant 4$;
the computations in Remark \ref{rem:dual} \eqref{tetra1-count} below yield
that it cannot be strictly larger than $21$
(see also the proof of Corollary \ref{cor:thmDi}),
which proves Theorem \ref{T:triangle} for $\delta=1$.
The ${\bf n}_{hk}$--degree computation is  straightforward. 
\end{proof}

\begin{remark}\label{rem:dual} 
\begin{inparaenum}[\bf (a)]
\item \label{tetra1-count}
The degree of the dual of a smooth surface of degree 4 in $\P^ 3$ is 36. 
It is instructive to identify, in the above setting,  the $36$ limiting curves passing through
two general  points on the proper transform of $S_0$ in $\bar{X}$.
This requires the ${\bf n}_{hk}$--degree information in Proposition
\ref {p:1-4planes}. 
If we choose the two points on different planes,  24 of the 36 limiting
curves through them  
come from \eqref {4planes-sing1}, and 4 more, each with multiplicity
3, come from \eqref {4planes-triple1}. 
If the two points are chosen in the same plane, then we have 12
contributions from  \eqref {4planes-sing1}, only one contribution, with
multiplicity 3, from \eqref {4planes-triple1}, and 21 more
contributions form \eqref {4planes-planes}. No contribution ever comes
from \eqref {4planesij} if we choose points on the faces of the
tetrahedron.

\item \label{r:2-tetra}
We have here an illustration of Remark \ref{r:other-lim}:
the components $V(M_i,\delta_{P_i}=1)$ are mapped to
points in $\vert \O_{S_0}(1) \vert$, hence they do not appear in
the crude limit $\cru \fV_{1,1}(S)$ (see Corollary \ref{cor:thmBi}
below);
they are however visible in the crude limit Severi variety of the
degeneration to the quartic monoid corresponding to the face $P_i$.
In a similar fashion, to see the component
$V(M_{ij},\delta_{W_{ij}}=1)$ one should consider the flat limit
of the $S_t$, $t\in \Delta^*$, given by the surface $W$ described in
Proposition \ref{p:W-quartic}.
\end{inparaenum}
\end{remark}

\begin {corollary} [Theorem \ref{T:tetrahedron} for $\delta=1$]
 \label{cor:thmBi}
Consider a family $f:S\to \Delta$ of general quartic surfaces
in $\P^ 3$ degenerating to a tetrahedron $S_0$. The singularities of
the total space $S$ consist in 24 ordinary double points,  
four on each edge of $S_0$ (see \S \ref {s:setting}).
It is $1$--well behaved, with good model $\varpi:\bar X\to \Delta$.
The limit in $|\O_{S_0}(1)|$ of the dual surfaces
$\check S_t$, $t\in \Delta^*$ (which is the crude limit Severi variety
$\cru \fV_{1,1}(S)$), consists in the union of the $24$ webs of planes
passing through a singular point of $S$, and 
of the $4$ webs of planes passing through a vertex of $S_0$, each
counted with multiplicity $3$.
\end{corollary}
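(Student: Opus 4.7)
The plan is to apply Proposition~\ref{prop:deg}\,(ii): if for every $\tilde{\bf n}=(n_1,\ldots,n_{\tilde r})\in\N^{\tilde r}$ with $|\tilde{\bf n}|=m_1=2$ equality holds in the inequality \eqref{degree}, then $\varpi:\bar X\to \Delta$ is a $1$--good model of $f:S\to \Delta$, which in particular makes the latter $1$--well behaved. Having established this, the description of the crude limit $\cru\fV_{1,1}(S)$ will follow by pushing forward the cycle $\reg\fV_{1,1}(\bar X,\L)$ (whose regular components are listed in Proposition~\ref{p:1-4planes}) via the natural morphism $\phi:\tilde\fL\to |\L_0|\cong |\O_{S_0}(1)|$, and observing which components are contracted.

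For the first step, it is enough to choose the $m_1=2$ general points on faces of $S_0$, i.e.~to consider only the vectors $\tilde{\bf n}$ supported on the first four coordinates, namely the $\bf n_{hk}$ introduced just before Proposition~\ref{p:1-4planes}. By Proposition~\ref{p:deg-dual} the left-hand side of \eqref{degree} equals $d_{1,4}=36$, and all multiplicities $\mu(V)$ on the right-hand side are $1$. Summing the $\bf n_{hk}$--degrees listed in Proposition~\ref{p:1-4planes}, I distinguish two cases. If $h\neq k$, the 24 components of type \eqref{4planes-sing1} each contribute $1$ and the 4 components of type \eqref{4planes-triple1} each contribute $3$, for a total of $24+12=36$. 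If $h=k$, the components of type \eqref{4planes-sing1} contributing are those $V(\plm{E}_{ij})$ with $h\notin\{i,j\}$, giving $2\cdot\binom{3}{2}\cdot 2=12$; the only component of type \eqref{4planes-triple1} that contributes is the one with $k=h$, giving $3$; and the only component of type \eqref{4planes-planes} that contributes is the one with $i=h$, giving $21$ (this last value is supplied by Proposition~\ref{p:1-triangle} together with the check in Remark~\ref{rem:dual}\eqref{tetra1-count}). The total is $12+3+21=36$. In both cases equality holds, so Proposition~\ref{prop:deg}\,(ii) applies.

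It remains to identify $\phi_*\bigl(\reg\fV_{1,1}(\bar X,\L)\bigr)$ in $|\O_{S_0}(1)|$. The components of type \eqref{4planes-sing1} live in the strict transform of $\fP_0$, and the $(-1)$--curve $\plm E_{ij}$ on $P_i$ is by construction the exceptional curve of the small resolution at one of the 24 ordinary double points of $S$ lying on the edge $C_{ij}$; a plane section of $\bar X_0$ contains $\plm E_{ij}$ iff the corresponding plane in $\P^3$ passes through that double point of $S$. Thus each of the 24 components maps birationally onto a web of planes through a distinct node of $S$, with multiplicity $1$. The components of type \eqref{4planes-triple1} live in $\tilde\fL^k\subset\fP'''_0$, and by Proposition~\ref{prop:descr2} and Proposition~\ref{p:cubics} they map to the web of planes through the vertex $p_k$ of $S_0$; the generic fibre of this projection is zero--dimensional (a plane through $p_k$ has only finitely many tangencies to the cubic $T$), hence the image is the full web, and the contribution is three times it. Finally, both $\fL_i$ and $\fL_{ij}$ are contracted to single points of $|\O_{S_0}(1)|$ (namely $[P_i]$ and the member $P_i+P_j+(\text{residual})$ of the pencil through $\ell_{ij}$), as already noted in Remark~\ref{rem:dual}\eqref{r:2-tetra}; therefore the components of types \eqref{4planes-planes} and \eqref{4planesij} are pushed forward to zero. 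Putting these contributions together yields exactly the claimed expression for $\cru\fV_{1,1}(S)$.

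The only step that is not purely bookkeeping is the enumerative check that $V(M_i,\delta_{P_i}=1)$ has degree at most $21$ in $\fL_i$, since Proposition~\ref{p:1-triangle} only provides a lower bound; the matching of the total count with $d_{1,4}=36$ in the second case $h=k$ above is what forces this bound to be sharp, and is the one place where the argument is not mechanical. Everything else reduces to applying Proposition~\ref{prop:deg} and keeping track of which components in $\reg\fV_{1,1}(\bar X,\L)$ survive the map $\phi$.
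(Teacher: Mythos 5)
Your argument is correct and follows essentially the same route as the paper: the paper's own proof of Corollary \ref{cor:thmBi} likewise rests on Proposition \ref{p:1-4planes} together with the point-counting of Remark \ref{rem:dual}\,\eqref{tetra1-count} (which is exactly your case distinction $h\neq k$ versus $h=k$ summing to $36=d_{1,4}$), and then observes that only the components \eqref{4planes-sing1} and \eqref{4planes-triple1} survive the push-forward to $|\O_{S_0}(1)|$. Your identification of the one non-mechanical step — that matching the total against $36$ is what forces $\deg V(M_i,\delta_{P_i}=1)=21$ — is precisely how the paper closes that gap as well.
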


\begin{proof} The only components of $\reg\fV_{1,1}(\bar X)$ which are not
  contracted to lower dimensional varieties by the morphism 
$\fP'''\to \fP$
are the ones in  \eqref {4planes-sing1} and in \eqref
{4planes-triple1} of Proposition \ref{p:1-4planes}. 
Their push--forward in $\fP_0\cong |\O_{S_0}(1)|$ has total degree
$36$. The assertion follows.
\end{proof}

\begin{corollary}
\label{cor:thmDi}
Consider a family $f':S'\to \Delta$ of general quartic surfaces
in $\P^ 3$, degenerating to a monoid quartic surface $Y$ with tangent
cone at its triple point $p$ consisting of a triple of independent planes
(see Remark \ref{rem:monoid}).
This family  is $1$--well behaved, with good model $\varpi: \bar X\to \Delta$.
The crude limit Severi variety
$\cru \fV_{1,1}(S')$ consists in the surface $\check Y$
(which has degree $21$), 
plus the plane $\check p$ counted with multiplicity $15$.
\end{corollary}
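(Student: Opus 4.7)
The plan is to show that $\varpi:\bar X\to\Delta$, the good model for the tetrahedron degeneration constructed in Corollary~\ref{cor:thmBi}, is equally a $1$-good model for the monoid family $f':S'\to\Delta$, and then to decompose the resulting crude limit Severi variety by a degree count. The semistable reduction $\bar X\to\Delta$ depends only on the initial total space rather than on the particular twist of $\mathcal L$, so $\varpi$ remains a semistable model for $f'$.

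\textbf{Well-behavedness.} I verify the hypothesis of Proposition~\ref{prop:deg}(ii). Since $Y=S'_0$ is irreducible with proper transform $P_1$ in $\bar X_0$, the only ${\bf \tilde n}$ to consider is ${\bf \tilde n}=(2,0,\ldots,0)$, placing two general points on $P_1$, equivalently on $Y$. Using the ${\bf n}_{11}$-degrees given in Proposition~\ref{p:1-4planes}, the only non-zero contributions come from the twelve components $V(\plm{E}_{ij})$ with $i,j\in\{2,3,4\}$ and $i\neq j$ (subtotal $12$), from $V(M^1,\delta_{T^1}=1)$ (subtotal $3$), and from $V(M_1,\delta_{P_1}=1)$ (subtotal $21$). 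These sum to $12+3+21=36=d_1$ by Proposition~\ref{p:deg-dual}, yielding the required equality in \eqref{degree}. Hence $\varpi$ is a $1$-good model for $f'$ and the family is $1$-well behaved.

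\textbf{Decomposition of $\cru\fV_{1,1}(S')$.} The component $V(M_1,\delta_{P_1}=1)$ sits inside $\tilde\fL_1$; under the birational morphism $\tilde\fL_1\to|\O_Y(1)|$ of \S\ref{s:planes}, it pushes forward with multiplicity~$1$ onto the dual surface $\check Y$, of degree~$21$ by Theorem~\ref{T:triangle}. Every other regular component of $\reg\fV_{1,1}(\bar X)$ parametrizes curves whose distinguished feature---an exceptional curve $\plm{E}_{ij}$, or a node on some $P_l$ with $l\neq 1$, on some $T^k$, or on some $W_{ij}$---is contained in the locus of $\bar X_0$ contracted to the triple point $p$ under the morphism $\bar X_0\to Y$ induced by $\sigma_1$. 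Consequently the corresponding plane sections of $Y$ all pass through $p$, so each such pushforward is supported in the plane $\check p\subset|\O_Y(1)|$ of planes through $p$. One may thus write $\cru\fV_{1,1}(S')=\check Y+m\cdot\check p$ for some integer $m\geqslant 0$.

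\textbf{Conclusion and main obstacle.} Comparing degrees in $|\O_Y(1)|\cong\P^3$, one has $\deg\cru\fV_{1,1}(S')=d_1=36$, $\deg\check Y=21$, and $\deg\check p=1$, which forces $m=15$. The principal difficulty lies in the systematic justification that every non-$\check Y$ regular component pushes forward into $\check p$: this requires, for each type of component listed in Proposition~\ref{p:1-4planes}, tracing the associated curves in $\bar X_0$ through the morphism $\bar X_0\to Y$ attached to the monoid twist, based on the construction of the good model in \S\ref{s:4planes-limlin} and on the description of $\sigma_1:P_1\dashrightarrow Y$ in \S\ref{s:planes}; components whose ${\bf n}_{11}$-degree vanishes have lower-dimensional image and contribute nothing to the $2$-cycle, while the remaining components account altogether for the multiplicity $m=15$ on $\check p$.
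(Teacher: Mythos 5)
Your proof is correct and follows essentially the same route as the paper's: push the regular components of $\fV_{1,1}(\bar X)$ forward under $\fP'''\to\P\bigl(f'_*(\O_{S'}(1))\bigr)$, identify $V(M_1,\delta_{P_1}=1)$ with $\check Y$ of degree $21$, observe that every other regular component has image contained in $\check p$, and close with the degree count $36=21+15$ (your verification of Proposition \ref{prop:deg}(ii) via the ${\bf n}_{11}$-degrees is exactly the equality the paper's "the assertion follows" relies on). The only, immaterial, difference is that you obtain the multiplicity $15$ of $\check p$ by subtraction, whereas the paper assembles it as $3\cdot\check p$ from $V(M^1,\delta_{T^1}=1)$ plus $\check p$ from each of twelve components $V(\plm{E}_{hk})$.
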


\begin{proof}
We have a morphism $\fP'''\to 
\P\bigl(\varpi_*\bigl(\L(-M_i)\bigr)\bigr)\cong
\P\bigl(f'_*\bigl(\O_{S'}(1)\bigr)\bigr)$.
The push--forward by this map of the regular components of 
$\fV_{1,1}(\bar X)$
are $\check Y$ for $V(M_i,\delta_{P_i}=1)$,
$3\cdot \check p$ for $V(M^i,\delta_{T^i}=1)$,
$\check p$ for each of the twelve $V(E)$ corresponding to a
$(-1)$--curve $\plm{E}_{hk}$ with $i\in \{h,k\}$,
and $0$ otherwise.
The degree of $V(M_i,\delta_{P_i}=1)$ in $\fL_i$ is at least $21$ by
Proposition \ref{p:1-triangle}, so the total degree of the
push--forward in $|\O_{S_0'}(1)|$ of the regular components of
$\fV_{1,1}(\bar X)$ is at least $36$. The assertion follows.
\end{proof}

\begin{proposition} [Limits of $2$-nodal curves]
\label{p:2-4planes}
The regular components of the limit Severi variety \linebreak
$\fV_{1,2}(\bar X)$
are the following
(they all appear with multiplicity $1$):\\
\begin{inparaenum}[\normalfont (i)]
\item\label{4planes-sing2}
$V(E,E')$ for each set of two curves 
$E,E'\in \{\plm{E}_{ij}, 1\leqslant i,j\leqslant 4, i\neq j\}$
that do not meet the same edge of the tetrahedron $S_0$. 
The ${\bf n}_h$--degree is $1$ if $P_h\subset S_0$ does not contain
the two edges
met by $E,E'$, and $0$ otherwise;\\
\item\label{4planes-vertex2}
$V(M^k,\delta_{T^k}=1,E)$ for $k=1,\ldots,4$ and 
$E\in \bigl\{\plm{E}_{ij}, 1\leqslant i,j\leqslant 4, i\neq j,
k\in\{i,j\}\bigr\}$, which is a degree $3$ curve in $\fL^k$.
The ${\bf n}_h$--degree is $3$ if $P_h$ does not contain both the edge 
met by $E$ and the vertex corresponding to $T^ k$, it is $0$ otherwise;\\
\item\label{4planes-edge2}
$V(M_{ij},\delta_{W_{ij}}=2)$ for $1\leqslant i<j \leqslant 4$, 
which has ${\bf n}_{h}$-degree $16$ for 
$h\not\in \{i,j\}$, and $0$ otherwise;\\
\item \label{4planes-0}
$V(M_i,\delta_{P_i}=2)$ for $1\leqslant i\leqslant  4$, which which
has ${\bf n}_{h}$-degree $132$ for  
$h=i$, and $0$ otherwise;\\
\item \label{4planes-1}
$V(M_{ij},\delta_{W_{ij}}=1,E)$ for $1\leqslant i<j \leqslant 4$,
and 
$E\in \bigl\{\plm{E}_{\bar{\imath}\bar{\jmath}},
\{\bar{\imath},\bar{\jmath}\}\cup \{i,j\}=\{1,\ldots,4\} \bigr\}$,
which is a curve of ${\bf n}_{h}$-degree $0$ for $1\leqslant
h\leqslant 4$.
\end{inparaenum}
\end{proposition}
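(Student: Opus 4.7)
My plan is to apply formula \eqref{reg-Severi} from Definition \ref{def:reg-comps}, enumerating the twists $W$ that correspond to irreducible components of the limit linear system $\fP'''_0$ (Proposition \ref{prop:limlin-tetra}), namely $W \in \{0, M_i, M_{ij}, M^k\}$. For each such $W$ I would run through decompositions $|\bdelta| + |I| + \nu(\ubtau) = 2$. A case-by-case inspection of the twisted restrictions described in Lemmas \ref{lem:comput}--\ref{lem:comput3} should show that the only loci in $V(W, \bdelta, I, \ubtau)$ of the expected codimension $2$ are those listed in (i)--(v), all with $\ubtau = 0$; tacnode loci with $\nu(\ubtau) > 0$ are excluded on codimension grounds. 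In particular every listed component has multiplicity $\mu(\ubtau) = 1$ by Proposition \ref{p:zrr}.

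The analysis for each $W$ then goes as follows. For $W = 0$: plane sections of $S_0$ are smooth off the double curves, so nodes arise only from containment of a $(-1)$-curve $\plm{E}_{ij}$; this forces $|I| = 2$, and requiring $E, E'$ to meet distinct edges guarantees codimension $2$ in $\fP_0$, yielding (i). For $W = M^k$: by Lemma \ref{lem:comput3}, the twisted system is nontrivial on $T^k$ and on the chains meeting those $\plm{E}_{ji}$ with $k \in \{i,j\}$; combining one node on $T^k$ with such an exceptional curve yields (ii), whose degree equals $\deg \check T = 3$ by Proposition \ref{p:cubics-gauss}. For $W = M_{ij}$: two nodes on $W_{ij}$ give $16$ points by Corollary \ref{cor:projectW}, producing (iii), while one node on $W_{ij}$ combined with a compatible $\plm{E}_{\bar{\imath}\bar{\jmath}}$ (with $\{\bar{\imath},\bar{\jmath}\} \cup \{i,j\} = \{1,\ldots,4\}$) gives (v). For $W = M_i$: Proposition \ref{prop:descrL_i} identifies $\fL_i$ with the planar system of Theorem \ref{T:triangle}, so $V(M_i, \delta_{P_i} = 2)$ has degree $132$ by the preliminary $\delta = 2$ version of that theorem, proved independently in \S\ref{S:triangle}.

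The ${\bf n}_h$-degree of each component is then computed as the intersection number with the linear condition of passage through a general point on $P_h$. By Lemmas \ref{lem:comput}--\ref{lem:comput3}, this condition is vacuous exactly when the relevant twisted line bundle restricts trivially to $P_h$, which accounts for all the zero entries; otherwise the intersection is transverse of the expected size, yielding the stated degrees by direct enumeration on the relevant surface. The main obstacle is twofold: one must verify that no other decomposition $(W, \bdelta, I, \ubtau)$ produces a codimension-$2$ component (the most delicate situations being the mixed tacnode cases, which must be ruled out on dimensional grounds), and one must import the value $132$ from the $\delta = 2$ preliminary version of Theorem \ref{T:triangle}, whose proof is the technical core of \S\ref{S:triangle}.
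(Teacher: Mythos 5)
Your overall strategy coincides with the paper's: enumerate the regular components via \eqref{reg-Severi} over the twists $0, M_i, M_{ij}, M^k$ of Proposition \ref{prop:limlin-tetra}, rule out the tacnodal and extra loci on codimension grounds, and compute the ${\bf n}_h$--degrees from the restriction data of Lemmas \ref{lem:comput}--\ref{lem:comput3}. The items (i), (ii), (iii), (v) and their degrees are handled essentially as in the paper.

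The one genuine gap is your treatment of the degree of $V(M_i,\delta_{P_i}=2)$ in item \eqref{4planes-0}. What \S\ref{S:triangle} actually establishes (Proposition \ref{p:2-triangle} fed into the inequality \eqref{degree}, i.e.\ Corollary \ref{coro:D}) is only the \emph{lower bound} $\deg\mathcal{V}_2\geqslant 132$: it exhibits regular components of the triangle degeneration whose weighted $P$--degrees sum to $132$, but at that stage nothing excludes further contributions. So you cannot simply "import the value $132$" from \S\ref{S:triangle}; the logical direction is in fact the reverse of what you assume, since the exact value $132$ in Theorem \ref{T:triangle} is \emph{deduced from} the present proposition, not an input to it. The paper closes the gap by an enumerative pincer: through a general point of $P_h$ with $h=i$, the components of types \eqref{4planes-sing2}, \eqref{4planes-vertex2}, \eqref{4planes-edge2} account for $192+108+48=348$ curves, formula \eqref{degree} caps the total at $d_2=480$ (Proposition \ref{p:deg-dual}), and the lower bound $132\leqslant 480-348$ then forces equality (Remark \ref{rem:480}\,\eqref{tetra2-count}). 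Without this step your argument only shows that the ${\bf n}_i$--degree of $V(M_i,\delta_{P_i}=2)$ is at least $132$, and it does not yet certify that the list (i)--(v) is exhaustive, since that exhaustiveness is likewise confirmed by the same count adding up to exactly $480$.
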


\begin{proof}
It goes as the proof of Proposition
\ref{p:1-4planes}. 
Again, Proposition \ref{p:2-triangle} asserts that
$V(M_i,\delta_{P_i}=2)$ has degree at least $132$ in $\fL_i$, but it
follows from the computations in
Remark \ref{rem:480} \eqref{tetra2-count} below that it is 
exactly $132$, which  proves Theorem \ref{T:triangle} for
$\delta=2$.
\end{proof}

\begin{remark}\label{rem:480}
\begin{inparaenum}[\bf (a)]
\item \label{tetra2-count}
The degree of the Severi variety $V_2(\Sigma, \O_\Sigma(1))$
for a general quartic surface $\Sigma$ is $480$ (see Proposition \ref {p:deg-dual}). 
Hence if we fix a general point $x$ on one of the components $P_h$ of $S_0$ we should be able to see the $480$ 
points of the limit Severi variety $\fV_{1,2}$ through $x$. The ${\bf n}_h$--degree information in Proposition 
\ref {p:2-4planes} tells us this.

For each choice of two distinct edges of $S_0$ spanning a plane
distinct from $P_h$, and of two 
$(-1)$-curves $E$ and $E'$ meeting these edges, we have a curve
containing $x$ in each of the  
items of type \eqref {4planes-sing2} . This amounts to a total of $192$ 
such curves.

For each choice of a vertex and an edge of $S_0$, such that
they span a plane distinct from $P_{h}$,
there are $3$ curves containing $x$ in each of the four corresponding
items \eqref {4planes-vertex2}.
This amounts to a total of $108$ such curves.

For each choice of an edge of $S_0$ not contained in $P_{h}$, there
are $16$  curves containing $x$  in the corresponding item \eqref {4planes-0}. This gives a
contribution of $48$ curves.

Finally, there are $132$ plane quartics containing $x$ in the item \eqref  {4planes-0} for $i=h$.
Adding up, one finds  the right number $480$.

\item
Considerations similar to the ones in Remark \ref{rem:dual}
\eqref{r:2-tetra} 
 could be made here, but we do not dwell on this.
\end{inparaenum}
\end{remark}

\begin {corollary} [Theorem\,\ref{T:tetrahedron} for $\delta=2$]  
\label{cor:thmBii} 
Same setting as in Corollary \ref {cor:thmBi}. The family $f:S\to
\Delta$ is $2$--well behaved, with good model $\varpi:\bar X\to
\Delta$. 
The crude limit Severi variety $\cru \fV_{1,2}(S)$ consists of the image
in $\vert \O_{S_0}(1)\vert$  
of:\\
\begin{inparaenum}[\normalfont (i)]
\item the $240$ components in  \eqref{4planes-sing2} of Proposition \ref {p:2-4planes}, which map to as many lines in  $\vert \O_{S_0}(1)\vert$;\\
\item the $48$ components in \eqref {4planes-vertex2} of Proposition \ref {p:2-4planes}, each mapping $3:1$ to as many lines in  $\vert \O_{S_0}(1)\vert$;\\
\item the $6$ components in \eqref{4planes-edge2} of Proposition \ref
  {p:2-4planes}, respectively mapping $16:1$ to the dual lines of the
  edges of $S_0$.
\end{inparaenum}
\end{corollary}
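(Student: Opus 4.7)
The plan is to combine Proposition \ref{p:2-4planes}, which enumerates the regular components of $\fV_{1,2}(\bar X)$ together with their $\bf n$-degrees, with Proposition \ref{prop:deg}, which gives the good-model property as soon as one checks equality of total enumerative degree.

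I would first analyse, family by family, how the regular components of $\fV_{1,2}(\bar X)$ push forward under the morphism $\phi:\fL\to |\O_{S_0}(1)|$, using the description of the limit linear system in Proposition \ref{prop:limlin-tetra}. The components $V(M_i,\delta_{P_i}=2)$ of type \eqref{4planes-0} sit in the exceptional $\tilde\fL_i$, which is contracted to the point $\pi_i\in |\O_{S_0}(1)|$, hence contribute $0$ to the crude limit. The components $V(M_{ij},\delta_{W_{ij}}=1,E)$ of type \eqref{4planes-1} are contracted as well: they sit inside the special locus of the $\P^2$-bundle $\tilde\fL_{ij}\to \ell_{ij}$ where the extra condition of containing the $(-1)$-curve $E$ in a distant face forces the image in $\ell_{ij}$ to be constant. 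The three remaining families map onto lines in $|\O_{S_0}(1)|$: the 240 components $V(E,E')$ of \eqref{4planes-sing2} map birationally onto the lines parametrizing pencils of planes through pairs of nodes of $S$; the 48 components $V(M^k,\delta_{T^k}=1,E)$ of \eqref{4planes-vertex2} lie in $\tilde\fL^k$ and, being of degree $3$ in the $\P^1$-bundle direction over the plane $\Pi_k$, cover $3{:}1$ lines in $\Pi_k$; and the 6 components $V(M_{ij},\delta_{W_{ij}}=2)$ of \eqref{4planes-edge2} restrict by Corollary \ref{cor:projectW} to $16$ distinct points in each general $\P^2$-fiber of $\tilde\fL_{ij}\to \ell_{ij}$, hence cover $16{:}1$ the edge-line $\ell_{ij}$.

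The second step is the enumerative verification. Fix a general point $x\in P_h$ on any face of $S_0$: Proposition \ref{p:deg-dual} gives $d_2=480$, and Remark \ref{rem:480}\,\eqref{tetra2-count} adds up the $\bf n_h$-degrees of the regular components listed in Proposition \ref{p:2-4planes} to get $192+108+48+132=480$. Since Proposition \ref{prop:deg}(i) provides the opposite inequality $d_2\geqslant \sum \mu(V)\deg_{\bf n_h}(V)$, equality is forced, so hypothesis (ii) of Proposition \ref{prop:deg} is met and $\varpi:\bar X\to \Delta$ is a $2$-good model of $f:S\to \Delta$. In particular $f$ is $2$-well behaved; as a byproduct, the inequality $\deg V(M_i,\delta_{P_i}=2)\geqslant 132$ of Proposition \ref{p:2-triangle} must be an equality, yielding Theorem \ref{T:triangle} for $\delta=2$. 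The stated description of $\cru\fV_{1,2}(S)$ then follows from Definition \ref{def:good-model}, namely $\phi_*(\reg\fV_{1,2}(\bar X,\L))=\cru\fV_{1,2}(S)$, combined with the component-by-component push-forward computation of step one.

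The main obstacle is the delicate interplay between the two enumerative inputs: the lower bound $\deg V(M_i,\delta_{P_i}=2)\geqslant 132$ from Proposition \ref{p:2-triangle} (the preliminary version of Theorem \ref{T:triangle} proved in \S \ref{S:triangle}) and the upper bound $d_2=480$ from classical projective geometry. Neither alone is enough: without the lower bound on $V(M_i,\delta_{P_i}=2)$ the sum of regular contributions could fall short of 480 and extra non-regular components (for instance curves through triple points of $\bar X_0$, or non-reduced limits, which Proposition \ref{prop:criterion} does not rule out here) could a priori absorb the deficit; without the classical upper bound one could not exclude that $V(M_i,\delta_{P_i}=2)$ is strictly larger than 132. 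The coincidence of both bounds is precisely what closes the argument and makes the good-model property accessible in this case.
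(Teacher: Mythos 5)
Your proposal is correct and follows essentially the same route as the paper: the paper's own (very terse) proof of this corollary likewise observes that the components of types \eqref{4planes-sing2}, \eqref{4planes-vertex2}, \eqref{4planes-edge2} are the only regular ones not contracted by $\fP'''_0\to|\O_{S_0}(1)|$, and that their push-forwards sum to a degree $480$ curve, the enumerative bookkeeping ($192+108+48+132=480$, the interplay between the lower bound $132$ from Proposition \ref{p:2-triangle} and the classical value $d_2=480$, and the appeal to Proposition \ref{prop:deg}) being carried out in the proof of Proposition \ref{p:2-4planes} and Remark \ref{rem:480}\,\eqref{tetra2-count} exactly as you describe. Your component-by-component justification of which families are contracted is more explicit than what the paper writes, but it is the same argument.
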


\begin{proof} The components in question are the only ones  not
  contracted to points by the morphism $\fP'''_0\to
  \vert\O_{S_0}(1)\vert$, and their push--forward sum up to a degree
  $480$ curve.
 \end{proof}

\begin{corollary}
\label{cor:thmDii}
Same setting as in Corollary \ref{cor:thmDi}; the family $f':S'\to
\Delta$ is $2$--well behaved, with good model $\varpi: \bar X\to \Delta$.
The crude limit Severi variety
$\cru \fV_{1,2}(S')$ consists of the ordinary double curve of the surface
$\check Y$, which has degree $132$,
plus a sum (with multiplicities) of lines contained in the
dual plane $\check p$ of the vertex of $Y$.
\end{corollary}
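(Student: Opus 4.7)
The plan is to adapt the argument of Corollary \ref{cor:thmDi} to the $2$-nodal case. First I would consider the natural morphism
\[
\fP''' \to \P\bigl(\varpi_*(\L(-M_i))\bigr)\cong \P\bigl(f'_*(\O_{S'}(1))\bigr),
\]
whose restriction to central fibres identifies with a morphism onto $|\O_{S'_0}(1)|\cong |\O_Y(1)|$, and push forward through it each regular component of $\fV_{1,2}(\bar X)$ listed in Proposition \ref{p:2-4planes}.

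The key component will be $V(M_i,\delta_{P_i}=2)$, which already sits inside $\fL_i \cong |\O_Y(1)|$ and, via the birational map $\sigma_i$, maps onto the locus of $2$-nodal plane sections of the monoid $Y$, i.e.\ the ordinary double curve of $\check Y$; Proposition \ref{p:2-triangle} will then provide the lower bound $132$ for its degree. Next I would argue that all remaining regular components of $\fV_{1,2}(\bar X)$ have image contained in the plane $\check p \subset |\O_Y(1)|$ dual to the triple point $p$ of $Y$: indeed, by Remark \ref{rem:monoid}, every irreducible component of $\bar X_0$ distinct from $P_i$ is contracted by $\bar X \to S'$ to $p$, so that curves supported (wholly or partly) on such components correspond to hyperplane sections of $Y$ through $p$. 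Since each of these components is one-dimensional, it will either contract to a point (contributing zero to $\cru \fV_{1,2}(S')$) or map onto a line of $\check p$ with some multiplicity.

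To conclude, I would invoke flatness of $f':S'\to \Delta$, which forces the total degree of $\cru \fV_{1,2}(S')$ in $|\O_{S'_0}(1)|$ to equal $d_{2,4}=480$ by Proposition \ref{p:deg-dual}. Combined with the lower bound $132$ for the push-forward of $V(M_i,\delta_{P_i}=2)$ and the fact that all other contributions are multiples of lines inside $\check p$, this will simultaneously pin down the degree of the ordinary double curve of $\check Y$ as exactly $132$ and produce the claimed decomposition, whence the $2$-well-behavedness of $f':S'\to \Delta$ with good model $\varpi:\bar X\to \Delta$ follows. The delicate step will be the bookkeeping of which of the remaining components actually contribute (nonzero-multiplicity) lines to $\check p$ versus contracting to points; however, for the statement as phrased — lines with unspecified multiplicities — the degree argument alone suffices.
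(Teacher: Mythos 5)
Your overall strategy is the same as the paper's: push forward the regular components of $\fV_{1,2}(\bar X)$ through $\fP'''\to \P\bigl(f'_*(\O_{S'}(1))\bigr)$, identify $V(M_i,\delta_{P_i}=2)$ with the double curve of $\check Y$ via $\fL_i\cong |\O_Y(1)|$, observe that everything else lands in $\check p$, and close the argument with the total degree $480$.

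There is, however, a genuine gap in your last step, precisely at the point you wave away as "bookkeeping". Knowing that the total degree is $480$, that the double curve has degree $\geqslant 132$ (the lower bound from Proposition \ref{p:2-triangle} via Corollary \ref{coro:D}), and that the remaining contributions are lines in $\check p$ does \emph{not} pin down the degree of the double curve: it only bounds the line part by $348$ from above, leaving the split between the two parts undetermined. To conclude you must actually compute that the other regular components push forward to a sum of lines of total degree (with multiplicities) equal to $348$. This is exactly the ${\bf n}_i$-degree count of Remark \ref{rem:480}: $192$ from the components of type \eqref{4planes-sing2}, $108$ from type \eqref{4planes-vertex2}, and $48$ from type \eqref{4planes-edge2}, the components of type \eqref{4planes-1} contributing $0$. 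Only then does $480-348=132$ give the upper bound on the degree of the double curve, and simultaneously force equality in Proposition \ref{prop:deg}, which is what establishes $2$--well--behavedness. (Alternatively, you could invoke the exact value $\deg \mathcal V_2=132$ already secured in the proof of Proposition \ref{p:2-4planes}, i.e.\ Theorem \ref{T:triangle} for $\delta=2$ — but as written you cite only the lower bound, so the argument does not close.)
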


\begin{proof}
It is similar to that of Corollary \ref{cor:thmDi}.
The lines of $\cru\fV_{1,2}(S')$ contained in $\check p$ are the
push--forward by $\fP'''_0\to |\O_Y(1)|$ of the regular components of
$\fV_{1,2}(\bar X)$ listed in Remark \ref{rem:480}
\eqref{tetra2-count}, with the exception of $V(M_i,\delta_{P_i}=2)$.
They sum up (with their respective multiplicities) to a degree $348$
curve, while $V(M_i,\delta_{P_i}=2)$ has degree at least $132$ in
$\fL_i$ by Proposition \ref{p:2-triangle}.
\end{proof}

\begin{proposition} [Limits of $3$-nodal curves]
\label{p:3-4planes}
The family $\varpi:\bar X\to \Delta$ is absolutely $3$--good,
and the limit Severi variety $\fV_{1,3}(\bar{X})$ is reduced,
consisting of:\\ 
\begin{inparaenum}[\normalfont (i)]
\item
the $1024$ points $V(E,E',E'')$, for $E,E',E'' \in
\{\plm{E}_{ij},1\leqslant i<j\leqslant 4\}$ such that the
span of the three corresponding double points of $S$ is not contained
in a face of $S_0$;\\
\item
the $192$ schemes $V(M^k,\delta_{T^k}=1,E,E')$,
for $1\leqslant k\leqslant 4$ and $E,E'\in
\{\plm{E}_{ij},1\leqslant i<j\leqslant 4\}$, such that
the two double points of $S$ corresponding to $E$ and $E'$ and 
the vertex with index $k$ span a plane which is not a
face of $S_0$.
They each consist of $3$ points;\\
\item
the $24$ schemes $V(M_{ij},\delta_{W_{ij}}=2,E)$, for $1\leqslant i<
j\leqslant 4$, and 
$E\in\{\plm{E}_{ij},1\leqslant i< j\leqslant 4\}$, 
such that the double point of $S$ corresponding to $E$ does not lie on
the edge $P_i\cap P_j$ of $S_0$, and that these two together do not
span a face of $S_0$.
They each consist of $16$ points;\\
\item \label{3nodal-plane4ics}
the $4$ schemes $V(M_i,\delta_{P_i}=3)$, each consisting of $304$
points.
\end{inparaenum}
\end{proposition}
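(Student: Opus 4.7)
The plan is to enumerate the regular irreducible components of the limit Severi variety $\fV_{1,3}(\bar X)$ via Proposition \ref{p:zrr} applied to the four types of components of the limit linear system described in Proposition \ref{prop:limlin-tetra}, to show that their weighted lengths sum exactly to $d_{3,4}=3200$ given by Proposition \ref{p:deg-dual}, and to deduce absolute $3$-goodness from the effectivity of $Z(\fV_{1,3})-\reg\fV_{1,3}$ (Proposition \ref{p:zrr}) combined with the length match.

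For each twist $W\in\{0,M_i,M_{ij},M^k\}$ and each configuration $(\bdelta,I,\ubtau)$ with $|\bdelta|+|I|+\nu(\ubtau)=3$, I would check whether $V(W,\bdelta,I,\ubtau)$ has the expected codimension $3$ in the corresponding $\P^3$ component of $\fL=\fP_0'''$. A careful case-by-case analysis rules out all configurations with $\ubtau\neq 0$, leaving the four tacnode-free families (i)--(iv) of the statement, each appearing with multiplicity $\mu(\ubtau)=1$. For (i), if two of the $(-1)$--curves $E,E',E''$ correspond to double points on the same edge $P_i\cap P_j$ of $S_0$, then $V(E,E')$ enforces containment of that edge, so the relevant locus escapes $\fP_0$ and lands in $\fL_{ij}$; thus the three double points must lie on three distinct edges of $S_0$, and to avoid the spanned plane being a face of $S_0$, these three edges must not be coplanar. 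The combinatorial tally of triples of edges not all contained in a face of the tetrahedron is $\binom{6}{3}-4=16$, so the total count is $16\cdot 4^3=1024$, each configuration contributing one point. For (ii), the dual surface $\check T$ to the cubic $T$ with three $A_2$-singularities has degree $3$ by Proposition \ref{p:cubics-gauss}, so the two linear conditions coming from $E$ and $E'$ cut out $3$ points generically; a combinatorial count of admissible $(k,\{E,E'\})$ triples yields $4\cdot(\binom{12}{2}-3\cdot\binom{4}{2})=4\cdot 48=192$. For (iii), Corollary \ref{cor:projectW} provides $16$ two-nodal configurations in each generic member of the $\P^1$--pencil of hyperplane sections of $W_{ij}$, and the admissible pairs $(\{i,j\},E)$, with $E$ on the edge of $S_0$ opposite to $P_i\cap P_j$, number $6\cdot 4=24$. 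For (iv), the preliminary form of Theorem \ref{T:triangle} established in Section \ref{S:triangle} yields the lower bound $\deg V(M_i,\delta_{P_i}=3)\geqslant 304$.

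Summing with multiplicities gives $\deg\reg\fV_{1,3}\geqslant 1024+192\cdot 3+24\cdot 16+4\cdot 304=3200$, while Proposition \ref{prop:deg}(i) provides the upper bound $\deg\reg\fV_{1,3}\leqslant d_{3,4}=3200$. Equality therefore holds throughout: each $V(M_i,\delta_{P_i}=3)$ has exactly $304$ points (thereby settling Theorem \ref{T:triangle} for $\delta=3$ as a by-product), and the effective cycle $Z(\fV_{1,3})-\reg\fV_{1,3}$ has vanishing total degree and hence vanishes. By Proposition \ref{prop:deg}(ii), $\varpi:\bar X\to\Delta$ is a $3$-good model; the exact length match upgrades this to absolute $3$-goodness, and the reducedness of $\fV_{1,3}(\bar X)$ follows from all multiplicities being $1$.

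The main obstacle is the codimension analysis in the second step: since $\fL$ contains curves passing through triple points of $\bar X_0$ and non-reduced curves (cf.\ Proposition \ref{prop:limlin-tetra}), Proposition \ref{prop:criterion} does not apply directly, so one has to verify by hand that all tacnode configurations and all node configurations other than (i)--(iv) either are empty or fail the expected-codimension requirement. The argument also hinges on Theorem \ref{T:triangle} being available in preliminary form from Section \ref{S:triangle}: without the independent bound $\geqslant 304$, the numerical loop cannot be closed, and absolute goodness cannot be deduced.
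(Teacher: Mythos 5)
Your proposal is correct and follows essentially the same route as the paper: list the regular components (i)--(iv) with their multiplicities $\mu(\ubtau)=1$, bound their total degree below by $1024+576+384+4\cdot 304=3200$ using the preliminary bound of Corollary \ref{coro:D}, and match this against $d_{3,4}=3200$ from Proposition \ref{p:deg-dual} to force equality, absolute goodness, the exact value $304$, and reducedness. One remark: the ``main obstacle'' you describe --- ruling out by hand all tacnodal and other nodal configurations --- is not actually needed, since for $\delta=3$ every component of $\fV_{1,3}(\bar X)$ is $0$--dimensional of positive degree, so the equality of degrees already excludes any component beyond (i)--(iv) without a case-by-case codimension analysis.
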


\begin{proof}
The list in the statement enumerates all regular components of the limit Severi
variety $\fV_{1,3}(\bar{X})$ with their degrees
(as before, Corollary \ref{coro:D} only gives $304$ as a lower bound
for the degree of \eqref{3nodal-plane4ics}).
They therefore add up to a total of at least $3200$ points,
which implies, by Proposition \ref{p:3-4planes}, that
$\fV_{1,3}(\bar{X})$ has no component besides the regular ones, 
and that those in \eqref{3nodal-plane4ics} have degree exactly $304$.
Reducedness then follows from Remark \ref{rem:D}, \eqref{rem:D-red}.
\end{proof}

In conclusion, all the above degenerations of
quartic surfaces constructed from $\bar X\to \Delta$ with a twist of
$\L$ are $3$--well behaved, with $\bar X$ as a good model.
In particular:

\begin {corollary} [Theorem \ref{T:tetrahedron} for $\delta=3$]
\label{cor:thmBiii} 
Same setting as in Corollary \ref {cor:thmBi}. 
The limits in $|\O_{S_0}(1)|$ of $3$-tangent planes to 
$S_t$, for $t\in \Delta^ *$, consist of:\\
\begin{inparaenum}[\normalfont (i)]
\item the $1024$ planes (each with multiplicity 1) containing three double points of $S$ but no edge of
$S_0$;\\
\item the $192$ planes  (each with multiplicity $3$) containing a vertex of $S_0$ and two double points of
$S$, but no edge of $S_0$;\\
\item the $24$ planes (each with multiplicity $16$)  containing an edge of $S_0$ and a double point of $S$
on the opposite  edge;\\
\item the $4$ faces of $S_0$ (each with multiplicity $304$).\end{inparaenum}
\end{corollary}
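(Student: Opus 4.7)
The plan is to deduce the corollary from Proposition~\ref{p:3-4planes} by a push-forward computation along the natural contraction $\phi:\fP_0''' \to \fP_0=|\O_{S_0}(1)|$. By Proposition~\ref{p:3-4planes}, the good model $\varpi:\bar X \to \Delta$ is absolutely $3$-good, so $\fV_{1,3}(\bar X) = \reg\fV_{1,3}(\bar X)$ is the reduced union of the four listed classes of $0$-dimensional components, and consequently
\[
\cru\fV_{1,3}(S) \;=\; \phi_*\bigl(\fV_{1,3}(\bar X)\bigr).
\]
Thus the only task is to identify, for each regular component, its scheme-theoretic image in $|\O_{S_0}(1)|$.

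I would recall from \S\ref{s:4planes-limlin} that $\phi$ is an isomorphism on the strict transform of $\fP_0$; contracts each $\tilde\fL_i$ to the face-point $\pi_i \in |\O_{S_0}(1)|$; contracts the fibres of the $\P^2$-bundle $\tilde\fL_{ij} \to \ell_{ij}$, where $\ell_{ij}$ is the pencil of planes through the edge $P_i\cap P_j$; and contracts the fibres of the $\P^1$-bundle $\tilde\fL^k \to \Pi_k$, where $\Pi_k$ is the web of planes through the vertex $p_k$. With these maps in hand, one handles the four component types in turn. The $1024$ reduced points $V(E,E',E'')$ lie on the strict transform of $\fP_0$ and each maps isomorphically to the unique plane through the three double points of $S$ labelled by $E,E',E''$, giving item~(3.i) with multiplicity one. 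The $4$ length-$304$ schemes $V(M_i,\delta_{P_i}=3)$ lie entirely inside $\tilde\fL_i$, which $\phi$ collapses to $\pi_i$, i.e.~the face $P_i$, producing item~(3.iv) with multiplicity $304$. For items (3.ii) and (3.iii) one must further verify that each length-$3$ component $V(M^k,\delta_{T^k}=1,E,E')$ lies in a single fibre of $\tilde\fL^k \to \Pi_k$, and that each length-$16$ component $V(M_{ij},\delta_{W_{ij}}=2,E)$ lies in a single fibre of $\tilde\fL_{ij}\to \ell_{ij}$; granted this, their push-forwards are respectively three times the plane through $p_k$ and the two nodes of $S$ labelled by $E,E'$, and sixteen times the plane through the edge $P_i\cap P_j$ and the node labelled by $E$.

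The sole non-trivial technical point — and the only real obstacle, the rest being indexing and combinatorics — is the claim that the length-$3$ and length-$16$ components just mentioned are supported in a single fibre of $\phi$. This should follow from the explicit geometric descriptions given in \S\ref{s:good} of the divisors parametrized by the various strata of $\fP_0'''$: the combinatorial data $(k, E, E')$ (respectively $((ij), E)$) determines the components of $\bar X_0$ on which the limit curve is supported, and hence its ``visible trace'' on $S_0 \subset \bar X_0$, which is precisely the plane section one contracts to. The remaining $\P^1$-factor (respectively $\P^2$-factor) in the fibre parametrizes the matching rational tails in the chains $\fN_*$ of \S\ref{s:rat4ics}-\S\ref{s:cubics}, which are invisible in the plane section; so all $3$ (resp.\ $16$) scheme-theoretic points indeed share the same image.

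As a consistency check, the total $1024\cdot 1 + 192\cdot 3 + 24\cdot 16 + 4\cdot 304 = 3200$ matches $d_{3,4}$ from Proposition~\ref{p:deg-dual}, confirming there is no further hidden contribution and that the $304$ in item~(3.iv) (provided by Theorem~\ref{T:triangle} for $\delta=3$) is sharp.
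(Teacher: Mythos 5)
Your proposal is correct and follows essentially the same route as the paper, which deduces the corollary directly from Proposition \ref{p:3-4planes} (absolute $3$--goodness plus reducedness) by pushing forward the regular components along $\fP_0'''\to|\O_{S_0}(1)|$; the paper leaves this push--forward implicit, and your elaboration of why each length-$3$ (resp.\ length-$16$) scheme sits in a single fibre of $\tilde\fL^k\to\Pi_k$ (resp.\ $\tilde\fL_{ij}\to\ell_{ij}$) is exactly the verification needed, which indeed follows from the fibre descriptions in \S\ref{s:good} (the pencil in $\fL^k$ cut out by the two containment conditions passes through the point $c_0$, hence is contracted by the projection to $\Pi_k$, and similarly the containment condition in $\fL_{ij}$ fixes the intersection with the double line $D$). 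The consistency check against $d_{3,4}=3200$ is also the mechanism the paper uses to pin down the degree $304$.
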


\section{Other degenerations}\label{sec:pair of quadrics}

The degeneration of a general quartic we considered in \S \ref {S:4planes} is, in a sense, one of the most intricate.
There are \emph{milder} ones, e.g. to:\\   
\begin{inparaenum}[(i)]
\item \label {cp}a general union of a cubic and a plane;\\
\item  \label {qq} a general union of two quadrics (this is an
  incarnation of a well known degeneration of $K3$ surfaces  described
  in \cite {clm}).\\
\end{inparaenum}
Though we encourage the reader to  study  in detail the instructive
cases of   degenerations \eqref {cp} and \eqref {qq}, we will not
dwell on this here,
and only make the following observation about degeneration \eqref
{qq}. Let $f: S\to \Delta$ be such a degeneration, with central fibre
$S_0=Q_1\cup Q_2$, where $Q_1,Q_2$ are two general quadrics meeting
along a smooth quartic elliptic curve $R$.
Then the limit linear
system of $|\O_{S_t}(1)|$ as $t\in \Delta^*$ tends to $0$ is 
just $|\O_{S_0}(1)|$, so that 
$f:S\to \Delta$ endowed with $\O_S(1)$ is absolutely good.

On the other hand, there are also  degenerations to special
singular irreducible surfaces, as the one we will consider  in \S
\ref {S:Kummer-descr} below.
In the subsequent sub--section, we will consider for further purposes
another degeneration, the central fibre of which is still a (smooth)
$K3$ surface.

\subsection{Degeneration to a double quadric}\label{ssec:doublequad}

Let $Q\subset \P^ 3$ be a smooth quadric and let $B$ be a general curve of type $(4,4)$ on $Q$. We  consider the double cover $p: S_0\to Q$ branched along $B$. This is a $K3$ surface and  there is a smooth family $f: S\to \Delta$ with general fibre a general quartic surface
and central fibre $S_0$.  The pull--back to $S_0$ of plane sections of $Q$ which are bitangent to $B$ fill up a component $\mathfrak V$ of multiplicity 1 of the crude limit Severi variety $\mathfrak V^ {\rm {cr}}_2$. Note that  $\mathfrak V^ {\rm {cr}}_2$ naturally sits in $\vert \O_{S_0}(1)\vert\cong \check {\P}^ 3$ in this case, hence one can unambiguously talk  about its degree.
Although it makes sense to conjecture that $\mathfrak V$ is
irreducible,  we will only prove the
following weaker statement: 

\begin{proposition}\label {prop:irrcomp} The curve $\mathfrak V$ contains an irreducible component of degree at least 36. 
\end{proposition}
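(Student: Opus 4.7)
The plan is to identify $\mathfrak V$ with the bitangent curve of the space curve $B\subset\P^3$, viewed in $\check\P^3$, and then to compute its total degree via Proposition~\ref{p:dJ} (de Jonqui\`eres' formula).

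First, I would observe that, under the canonical isomorphism $|\O_{S_0}(1)|\cong \check\P^3$ induced by $p$, the component $\mathfrak V$ parametrizes those planes $H\subset\P^3$ such that $H\cap Q$ is a conic bitangent to $B$. Since $H\cap B=(H\cap Q)\cap B$ scheme-theoretically, this is equivalent to $H$ being a bitangent plane to $B$ viewed as a space curve in $\P^3$; moreover, for $H$ generic with this property, the pull-back $p^*(H\cap Q)$ is indeed an irreducible $2$-nodal curve on $S_0$, whose normalization is a smooth genus $1$ double cover of the conic branched at the four residual intersection points with $B$. Hence $\mathfrak V$ is supported on the bitangent curve of $B$ in $\check\P^3$.

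Next, I would apply Proposition~\ref{p:dJ} with $N=3$ and $\tau=2$ to $B\subset\P^3$. By adjunction on $Q\cong\P^1\times\P^1$, $K_B=\O_Q(2,2)|_B$ has degree $16$, hence $B$ has degree $d=8$ and genus $g=9$. For general $B$ the embedding $B\hookrightarrow\P^3$ is complete (one easily verifies $h^0(B,\O_B(1))=4$ via the short exact sequence $0\to\O_Q(-3,-3)\to\O_Q(1,1)\to\O_B(1)\to 0$), hence its inclusion is unramified; the numerical hypotheses $\tau\leqslant N$ and $2\tau<d$ are trivially satisfied. The degree of the bitangent curve of $B$ in $\check\P^3$ is then the coefficient of $u^2v^4$ in $(1+4u+v)^9(1+2u+v)^{-3}$: expanding $(1+2u+v)^{-3}=\sum_{n\geqslant 0}(-1)^n\binom{n+2}{2}(2u+v)^n$ as a formal power series and collecting terms, a direct computation gives the value $384$.

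It follows that $\mathfrak V$ has total degree $384$ in $\check\P^3$. Since $384>10\cdot 36$, to conclude it suffices to bound the number of irreducible components of $\mathfrak V$ by $10$; this is where the main obstacle lies. A natural approach is to realize $\mathfrak V$ as the image, under the generically injective rational map $(p,q)\mapsto\langle T_pB,T_qB\rangle$, of the incidence curve $\Sigma=\{(p,q)\in B^{(2)} : p\neq q,\ T_pB\cap T_qB\neq\emptyset\}$ in $B^{(2)}$, and then to bound the number of components of $\Sigma$ via a monodromy argument on the universal family of $(4,4)$-curves on $Q$. The conjectured irreducibility of $\mathfrak V$ corresponds to the irreducibility of $\Sigma$ and would give the sharper conclusion $\deg\mathfrak V=384$.
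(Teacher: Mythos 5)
Your reduction of $\mathfrak V$ to the curve of bitangent planes of the space curve $B\subset\P^3$ is correct, and so is the de Jonqui\`eres computation: with $d=8$, $g=9$, $\tau=2$ the coefficient of $u^2v^4$ in $(1+4u+v)^9(1+2u+v)^{-3}$ is indeed $384$ (one can cross--check by projecting $B$ from a general point and applying the Pl\"ucker formulae to the resulting $12$--nodal plane octic: its dual curve has degree $32$, has $72$ cusps, and $465-9-72=384$ nodes). This is a genuinely different starting point from the paper's, and the number $384$ is of independent interest, being the degree $\mathfrak V$ would have if the irreducibility conjecture holds.

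There is however a genuine gap, and it sits exactly where the difficulty of the statement lies: the total degree of $\mathfrak V$ says nothing about the degree of any single irreducible component unless you bound the number of components by $10$, and you do not do this --- you only indicate that one might try a monodromy argument on the incidence curve $\Sigma\subset B^{(2)}$. Note moreover that transitivity of the monodromy on the set of components (which is what such an argument most naturally yields) only shows that all components have the same degree $384/k$; an upper bound on $k$ still requires a separate input. The paper avoids the global degree count altogether: it degenerates $B$ to $D+H$ with $D$ of type $(3,3)$ and $H$ a plane section, observes that the limit of $\mathfrak V$ then contains $\mathfrak W=\check D\cap\check H$, a curve of degree $2\cdot 18=36$ by Proposition \ref{p:dJ}, and proves directly that $\mathfrak W$ is irreducible (Lemma \ref{lem:irrcomp}); since an irreducible curve contained in the limit of $\mathfrak V$ must lie in the limit of a single component, the bound follows. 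Even there the irreducibility is delicate: the number of components of $\mathfrak W$ is first bounded by $2$ via a further degeneration of $H$ to a tangent plane section, and a splitting into two unisecants of the scroll $\check D$ is then excluded by degenerating $D$ into a $(2,1)$ plus a $(1,2)$ curve and an explicit coordinate computation. To complete your approach you would need an argument of comparable strength for $\Sigma$, which is not easier than the problem you started with.
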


We point out the following immediate consequence, which will be needed in \S \ref  {ssec:irred} below:

\begin{corollary}\label {cor:irrcomp} If $X$ is a general quartic surface in $\P^ 3$, then the Severi variety $V_2(X, \O_X(1))$ (which naturally sits in  $\vert \O_{X}(1)\vert\cong \check {\P}^ 3$) has an irreducible component of degree at least 36.
\end{corollary}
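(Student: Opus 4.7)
The plan is to identify $\mathfrak V$, via the natural pull-back isomorphism $p^*\colon |\O_Q(1,1)|\overset{\sim}{\to}|\O_{S_0}(1)|$, with the bitangent plane curve of $B\subset\P^3$ inside $\check\P^3$, and then to compute its degree by de Jonqui\`eres' formula (Proposition~\ref{p:dJ}).

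First, I would note that $\O_{S_0}(1)\cong p^*\O_Q(1,1)$ (it is the unique primitive polarization of self-intersection $4$ on the $K3$ double cover $S_0$), and that $p^*$ is therefore an isomorphism of $\P^3$'s. Under this isomorphism, $\mathfrak V$ corresponds to the curve $\mathfrak V_0\subset|\O_Q(1,1)|\cong\check\P^3$ parametrizing plane sections $H\cap Q$ bitangent to $B$ on $Q$. Writing a plane section as $H\cap Q$ for a hyperplane $H\subset\P^3$, and using $B\subset Q$, one checks that $H\cap Q$ is bitangent to $B$ on $Q$ if and only if $H$ is tangent to $B\subset\P^3$ at two distinct points. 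Hence $\mathfrak V_0$ is precisely the bitangent plane curve of $B$ in $\check\P^3$.

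Next I would apply Proposition~\ref{p:dJ} to $B$, which is smooth of degree $d=8$ and genus $g=(4-1)^2=9$ (by adjunction on $Q$), and whose embedding in $\P^3$ is a closed immersion, hence unramified; taking $N=3$ and $\tau=2$ (so $2\tau=4<d$), the degree of $\mathfrak V_0$ is the coefficient of $u^2v^4$ in $(1+4u+v)^9(1+2u+v)^{-3}$. Writing $1+4u+v=(1+2u+v)+2u$ and applying the binomial theorem gives
\[
(1+4u+v)^9(1+2u+v)^{-3}=\sum_{k\geqslant 0}\binom{9}{k}(2u)^k(1+2u+v)^{6-k},
\]
so that the coefficient of $u^2v^4$ receives contributions only for $k=0,1,2$, evaluating respectively to $60$, $180$, and $144$, and summing to $384$.

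Since $\mathfrak V$ is an irreducible component of the crude limit Severi variety by the setup, one obtains $\deg(\mathfrak V)=\deg(\mathfrak V_0)=384\geqslant 36$, and the claim follows at once. The main technical point is the identification in the second paragraph, which relies on fixing the correct polarization $\O_{S_0}(1)=p^*\O_Q(1,1)$ on the double cover; the degree computation is then a routine application of Proposition~\ref{p:dJ}. If one is reluctant to rely on the irreducibility of $\mathfrak V_0$ implicit in the setup, the same computation still shows that $\mathfrak V$ has total degree $384$, so one concludes as soon as the number of irreducible components of $\mathfrak V_0$ is bounded by $10$, which is an easy separate check for $B$ general.
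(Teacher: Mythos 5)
Your route is genuinely different from the paper's: you compute the total degree of $\mathfrak V$ directly as the degree of the bitangent-plane curve of the smooth space curve $B$ via Proposition~\ref{p:dJ} (your arithmetic is correct: with $d=8$, $g=9$, $\tau=2$ the coefficient of $u^2v^4$ in $(1+4u+v)^9(1+2u+v)^{-3}$ is $60+180+144=384$), and your identification of $\mathfrak V$ with the bitangent curve of $B\subset\P^3$ is fine away from $\check Q$, which only affects a finite set. The passage from a component of the limit to a component of $V_2(X,\O_X(1))$ of at least the same degree is also the same semicontinuity argument the paper uses. The paper instead degenerates $B$ further to $D+H$ with $D$ of type $(3,3)$ and $H$ a plane section, and exhibits inside the limit of $\mathfrak V$ the single curve $\mathfrak W=\check D\cap\check H$ of degree $2\cdot 18=36$, whose irreducibility is the whole point of Lemmas~\ref{lem:irrcomp} and~\ref{lem:irrcomp2}.

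The gap in your argument is precisely where you write that bounding the number of irreducible components of $\mathfrak V_0$ by $10$ ``is an easy separate check for $B$ general.'' It is not, and no argument is offered; this bound is the entire difficulty of the problem, of the same nature as the irreducibility statement the paper establishes. A total degree of $384$ gives no lower bound whatsoever on the degree of any single component without such a bound. Note that the natural parameter space for bitangent planes is the curve $\Sigma\subset B\times B$ of pairs $(x,y)$ with $T_xB$ and $T_yB$ coplanar, and the only cheap a priori bound on its number of components is the degree of the projection $\Sigma\to B$, which is of the order of $2d+2g-2=32$, far above $10$. Any honest bound would have to come from a monodromy or degeneration argument — which is exactly what the paper supplies (irreducibility of the incidence variety $\mathcal J$ in Lemma~\ref{lem:irrcomp2}, transitivity of the monodromy on the components of $\mathfrak W$, and the explicit splitting $D\rightsquigarrow D_1+D_2$ to exclude $h=2$). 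Until you replace the parenthetical claim by such an argument, the proof does not establish the corollary; the de Jonqui\`eres computation, while correct and a nice complement to the paper's count, carries none of the load.
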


To prove Proposition \ref {prop:irrcomp} we make a further
degeneration to the case in which $B$ splits as $B=D+H$, where $D$ is
a general curve of type $(3,3)$ on $Q$, and $H$ is a general curve of
type $(1,1)$, i.e. a general plane section of
$Q \subset \P^ 3$. Then
the limit of $\mathfrak V$ contains the curve $\mathfrak W:=\mathfrak
W_{D,H}$ in $\check {\P}^ 3$ parametrizing those planes in $\P^3$ tangent to
both $H$ and $D$ 
(i.e., ${\mathfrak W}$ is the intersection curve of
the dual surfaces $\check H$ and $\check D$).
Note that  $\check H$ is the quadric cone
circumscribed to the quadric $\check Q$ and
with vertex the point $\check P$ orthogonal to the plane $P$ cutting
out $H$ on $Q$, 
while $\check D$ is a surface scroll, the degree of which is $18$ by
Proposition \ref{p:dJ}, hence $\deg(\mathfrak W)=36$.
To prove Proposition \ref {prop:irrcomp}, it 
suffices to prove that: 

\begin{lemma}\label {lem:irrcomp} The curve $\mathfrak W$  is irreducible.
\end{lemma}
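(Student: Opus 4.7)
First I would exhibit $\mathfrak{W}$ as the birational image of a double cover of the smooth curve $D$, exploiting the natural ruled structure of the dual surface $\check{D}$. For each $y \in D$, the pencil of planes containing the tangent line $T_y D$ is a line $\lambda_y \subset \check{\P}^3$, and $\check{D}$ is swept by the $\lambda_y$'s as $y$ varies on $D$, providing a natural map from a $\P^1$-bundle $N \to D$ onto $\check{D}$. Since $\check{H}$ is a quadric cone, for general $y$ (so that $T_y D \not\subset P$, i.e.\ $\lambda_y$ avoids the vertex $\check{P}$), the line $\lambda_y$ meets $\check{H}$ in two distinct points. The preimage $\tilde{\mathfrak{W}} \subset N$ of $\mathfrak{W}$ is therefore a degree $2$ cover of $D$, and the induced map $\tilde{\mathfrak{W}} \to \mathfrak{W}$ is birational, since a general plane in $\mathfrak{W}$ has a unique tangency point with $D$.

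Next, I would identify the branch locus of $\tilde{\mathfrak{W}} \to D$ explicitly. For $y \in D$ general, $T_y D$ meets $P$ in a single point $p_y$; a plane $\Pi \neq P$ in $\lambda_y$ cuts $P$ along a line through $p_y$, and $\Pi$ is tangent to $H$ if and only if this line is tangent to the conic $H \subset P$. Exactly two tangent lines to $H$ pass through $p_y$, and they coincide if and only if $p_y \in H$. Hence the branch locus of $\tilde{\mathfrak{W}} \to D$ is $B = \phi^{-1}(H) \subset D$, where $\phi : D \dashrightarrow P$ is the morphism $y \mapsto T_y D \cap P$.

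To conclude, I would invoke the classical fact that a degree $2$ cover of a smooth irreducible curve is irreducible if and only if its discriminant section is not a non-zero perfect square; in particular, the existence of a single simple branch point already forces irreducibility. The morphism $\phi$ is non-constant, since its image is the trace on $P$ of the tangent developable of $D$, which is a positive-dimensional surface in $\P^3$; thus $\phi(D) \subset P$ is an irreducible curve. As $P$ varies freely (with $Q$ fixed), the conic $H = P \cap Q$ moves sufficiently that, for $P$ general, $\phi(D)$ meets $H$ transversely at at least one point, yielding a simple branch point of $\tilde{\mathfrak{W}} \to D$. Then $\tilde{\mathfrak{W}}$ is irreducible, and therefore so is its birational image $\mathfrak{W}$.

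The hard part will be making this last step rigorous, i.e.\ verifying that for general $P$ the branch scheme $B$ is non-empty and reduced. A clean approach is to compute the degree of $\phi(D) \subset P$ via the classical formulas for the tangent developable of a smooth curve of type $(3,3)$ on a quadric, and then apply Bezout together with Bertini to secure transversality. Alternatively, one could further degenerate $D$ to a union of three general $(1,1)$-curves on $Q$ and check by hand that the corresponding limit of $B$ contains a reduced point.
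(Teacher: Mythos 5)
Your setup is correct and takes a genuinely different route from the paper: instead of establishing transitivity of the monodromy on the components of $\mathfrak W$ (via the irreducibility of the incidence variety $\mathcal J$) and then excluding a splitting into two unisecant curves by degenerating $D$ to a union of two twisted cubics and computing explicitly, you propose to prove irreducibility directly by viewing $\mathfrak W$ as a double cover of $D$ through the scroll structure of $\check D$, and exhibiting an odd-order zero of its discriminant. The criterion you invoke is the right one — in particular you are correct that a set-theoretic coincidence of the two sheets would not suffice, since two components could simply cross there, so one really needs an odd-order zero of the discriminant section.

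The gap is in the production of that simple branch point, which is the entire content of the lemma. One computes $\phi^{-1}(H)=(D\cap P)\cup\{y\in D:\ T_yD\subset Q\}$: if $T_yD\not\subset Q$ then $T_yD\cap Q=2y$ as a divisor, so $p_y=T_yD\cap P$ can lie on $H=P\cap Q$ only if $p_y=y$, i.e. $y\in D\cap P$. Hence $\phi(D)\cap H$ is never a movable, generic intersection: the part that varies with $P$, namely the six points of $D\cap P$, always appears in $\phi^*H$ with multiplicity at least $2$ (writing $P=\{f=0\}$ and $\phi(\gamma(t))=\gamma(t)-\frac{f(\gamma(t))}{df(\gamma'(t))}\gamma'(t)$, the derivative vanishes wherever $f(\gamma)=0$), while the remaining $24$ points — those where $T_yD$ is a ruling of $Q$, i.e. the ramification of the two degree‑$3$ projections $D\to\P^1$ — do not move at all as $P$ varies. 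So neither Bertini nor genericity of $P$ can produce the transverse point you want, your claim that ``the branch scheme $B$ is reduced'' is in fact false for every $P$, and both completions you sketch would miss the point. The argument is nonetheless salvageable by a degree count: $\deg\phi^*H=2\deg\check D=36=2\cdot 6+24$, which forces each of the $24$ ruling-tangency points to be a simple zero of the discriminant, and any one of them gives irreducibility. That computation (or an equivalent local analysis at such a point, which is in effect what the paper's explicit coordinate calculation with the twisted cubic achieves) is what must replace your last paragraph.
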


To show this, we need a preliminary information. Let us consider the
irreducible, locally closed subvariety $\mathcal U\subset \vert
\O_Q(4)\vert$ of dimension $18$, consisting of all curves $B=D+H$,
where $D$ is a smooth, irreducible curve of type $(3,3)$, and $H$ is a
plane section of $Q$ which is not tangent to $D$. Consider   $\mathcal
I\subset \mathcal U\times \check {\P}^ 3$  the Zariski closure of the
set of all pairs $(D+H,\Pi)$ such that the plane $\Pi$ is tangent to both
$D$ and $H$, i.e. $\check \Pi\in \check H\cap \check D$. We have the
projections $p_1: \mathcal I \to \mathcal U$ and  
$p_2: \mathcal I\to  \check {\P}^ 3$.
The curve $\mathfrak W$ is a general fibre of $p_1$.

\begin{lemma}\label {lem:irrcomp2} The variety $\mathcal I$ contains a unique irreducible component $\mathcal J$ of dimension 19 which dominates $\check {\P}^ 3$ via the map $p_2$.
\end{lemma}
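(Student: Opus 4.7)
The plan is to study $\mathcal{I}$ via the second projection $p_2\colon \mathcal{I}\to \check{\P}^3$ and show that its general fibre is irreducible of dimension $16$, so that the dimension count $3+16=19$ singles out a unique component $\mathcal{J}$. For $\Pi \in \check{\P}^3$ general, the fibre $p_2^{-1}(\Pi)$ splits as a product $\mathcal{D}_\Pi \times \mathcal{H}_\Pi$, where $\mathcal{D}_\Pi \subset |\O_Q(3,3)| \cong \P^{15}$ and $\mathcal{H}_\Pi \subset |\O_Q(1,1)| \cong \P^3$ are the loci of curves tangent to $\Pi$, since the two tangency conditions are independent of each other.

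I would first analyse $\mathcal{H}_\Pi$: a plane section $H = Q \cap P$ is tangent to $\Pi$ if and only if the line $\ell = P \cap \Pi$ is tangent to $Q$ at some point $x \in Q \cap \Pi$; then $\ell = T_x Q \cap \Pi$ and $P$ is any plane through $\ell$. Parametrising by $(x, P)$ I get a $\P^1$-bundle over the irreducible conic $Q \cap \Pi$, generically one-to-one onto its image, so $\mathcal{H}_\Pi$ is irreducible of dimension $2$. Next I would handle $\mathcal{D}_\Pi$ through the incidence
\[
\Sigma := \{(x, D) \in (Q \cap \Pi) \times |\O_Q(3,3)| : x \in D,\ T_x D \subset \Pi\}.
\]
For each fixed $x \in Q \cap \Pi$, the conditions on $D$ express that the restriction to $Q \cap \Pi$ of the defining section of $D$ vanishes at $x$ with multiplicity $\geqslant 2$, which amounts to two independent linear conditions on $\P^{15}$ (independence following from the very ampleness of $\O_Q(3,3)|_{Q \cap \Pi}$). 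So the fibre of $\Sigma \to Q \cap \Pi$ over $x$ is an irreducible $\P^{13}$, giving $\Sigma$ irreducible of dimension $14$, and $\mathcal{D}_\Pi$ is the image of $\Sigma$ under the generically one-to-one projection to $|\O_Q(3,3)|$ (a general tangent curve has a single tangency point with $\Pi$), hence irreducible of dimension $14$.

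Putting this together, $p_2^{-1}(\Pi)$ is irreducible of dimension $16$ for $\Pi$ general, so the Zariski closure of $p_2^{-1}(V)$ over a suitable dense open $V \subset \check{\P}^3$ is an irreducible component $\mathcal{J}$ of $\mathcal{I}$ of dimension $19$ dominating $\check{\P}^3$. Uniqueness follows since any other $19$-dimensional component dominating $\check{\P}^3$ would meet a general fibre $p_2^{-1}(\Pi)$ in a $16$-dimensional subvariety, which must be the full irreducible fibre, forcing coincidence with $\mathcal{J}$. The main delicate point I expect is verifying that $\O_Q(3,3)$ separates $2$-jets along $Q \cap \Pi$ (securing the independence of the two tangency conditions), and checking that a general pair $(D, H)$ in $p_2^{-1}(\Pi)$ actually lies in $\mathcal{U}$, i.e., that $H$ is not tangent to $D$—a standard open-ness check.
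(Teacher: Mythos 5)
Your proof is correct and follows essentially the same route as the paper's: both compute the general fibre of $p_2$ over a plane $\Pi$ by translating the two tangency conditions into conditions along the conic $\Gamma=Q\cap\Pi$, obtaining an irreducible $16$-dimensional fibre and hence a unique $19$-dimensional component dominating $\check{\P}^3$. The only (immaterial) difference is bookkeeping: the paper fibres $p_2^{-1}(\Pi)$ over $\Gamma^2\times\mathrm{Sym}^4(\Gamma)$, recording also the residual divisor of $D\cap\Gamma$, with fibre an open subset of $\P^1\times\P^9$, whereas you use the product decomposition $\mathcal{D}_\Pi\times\mathcal{H}_\Pi$ together with incidence varieties over $\Gamma$ recording only the tangency points; both yield $2+14=6+10=16$.
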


\begin{proof} 
Let $\Pi$ be a general plane of $\P^ 3$.
Consider the conic $\Gamma:=\Pi\cap Q$, and fix distinct points
$q_1,\ldots, q_6$ on $\Gamma$.  There is a plane $P$ tangent to  
$\Gamma$ at  $q_1$, and a cubic surface $F$ passing through
$q_3,\ldots, q_6$ and tangent to $\Gamma$ at  $q_2$; moreover $P$
and $F$ can be chosen general enough for $D+H$ to belong to $\mathcal U$,
where $H=P\cap Q$ and $D=F\cap Q$.
Then $(D+H, \Pi) \in \mathcal I$, which proves
 that $p_2$ is dominant.
 
 Let  $\mathcal F_\Pi$ be the fibre of $p_2$ over $\Pi$. The above
 argument shows that there is a dominant map $\mathcal
 F_\Pi\dasharrow \Gamma^ 2\times \rm {Sym}^4(\Gamma)$ whose general
 fibre is an open subset of $\P^ 1\times \P^ 9$: precisely, if
 $((q_1,q_2), q_3+\ldots+q_6)\in  \Gamma^ 2\times {\rm
   {Sym}}^4(\Gamma)$ is a general point, the $\P^ 1$ is the linear
 system of plane sections of $Q$ tangent to $\Gamma$ at $q_1$, and the
 $\P^ 9$ is the linear subsystem of $\vert \O_Q(3)\vert$ consisting of
 curves  passing through $q_3,\ldots, q_6$ and tangent to $\Gamma$ at
 $q_2$. The existence and unicity of $\mathcal J$ follow. \end{proof} 
 
 Now we consider the commutative diagram
 \begin{equation}\label{eq:stein}
 \xymatrix{
{ \mathcal J}' \ar[r]^\nu \ar[dr] \ar[d]_{p'}
& {\mathcal J}  \ar[d]^{p_1}  \\
\mathcal U'  \ar[r]_{f} &  \hil U
}\end{equation}
 where $\nu$ is the normalization of $\mathcal J$, and $f\circ p'$ is the
Stein factorization of $p_1\circ \nu: \mathcal J'\to \hil U$. The
morphism $f: \mathcal U'\to \mathcal U$ is finite of degree $h$, equal
to the number of irreducible components of the general fibre of $p_1$,
which is  $\mathfrak W$. The irreduciblity of $\mathcal J$ implies that
the monodromy group of $f: \mathcal U'\to \mathcal U$ acts
transitively on the set of components of $\mathfrak W$. 
\medskip

\begin{proof}[Proof of Lemma \ref  {lem:irrcomp}]
We need to prove that $h=1$.
To do this, fix  a general $D\in \vert\O_Q(3)\vert$, and
consider the curve $\mathfrak W=\mathfrak W_{D,H}$, with $H$ general,
which  
consists of $h$ components. 
We can move $H$ to be a section of $Q$ by a general tangent plane $Z$.
Then the quadric cone $\check H$
degenerates to the tangent plane 
$T_{\check Q,z}$ to $\check Q$ at $z:=\check Z$, counted with
multiplicity $2$.

We claim that, for $z\in \check Q$ general, the intersection of $T_{\check Q,z}$ with $\check D$ is irreducible. Indeed, since $\check D$ is a scroll, a plane section of $\check D$ is reducible if and only if it contains a ruling, i.e. if and only if it is a tangent plane section of $\check D$. 
Since $\check D\neq \check Q$, the biduality theorem implies the claim.

The above assertion implies  $h\leqslant 2$. If equality holds,
the general curve $\mathfrak W$ consists of two curves
which, by transitivity of the monodromy action of $f$,
are both unisecant  to the lines of the
ruling of $\check D$. 

To see that this is impossible, let us degenerate $D$ as $D_1+D_2$, where $D_1$ is a general curve of type
$(2,1)$  and $D_2$ is a general curve of type $(1,2)$ on $Q$. Then $\check D$ accordingly degenerates  and its limit contains
as irreducible components $\check D_1$ and $\check D_2$, which  are
both scrolls of degree $4$
(though we will not use it, we note that
$D_1\cdot D_2=5$ and the (crude) limit of $\check D$ in the above
degeneration 
consists of the union of $\check D_1$, $\check D_2$, and of the five
planes dual to the points of $D_1\cap D_2$, each of the
latter counted with multiplicity $2$).
We denote by $\mathfrak D$ either one of the curves $D_1,D_2$.

Let again $H$ be a general plane section of $Q$. We claim that the intersection of $\check {\mathfrak D}$ with $\check H$ does not contain any unisecant curve to the lines of the ruling of $\check {\mathfrak D}$. This clearly implies that the general curve $\mathfrak W$ cannot split into two unisecant curves to the lines of the ruling of $\check D$, thus proving that $h=1$. 

To prove the  claim,  it suffices to do it for specific ${\mathfrak
  D}$, $Q$ and $H$. For ${\mathfrak D}$ we take the rational normal
cubic with affine parametric equations $x=t, y=t^ 2, z=t^ 3$, with
$t\in \C$.  For $Q$ we take the quadric with affine equation $x^ 2+y^
2-xz-y=0$, and for $H$ the intersection of $Q$ with the plane
$z=0$. Let $(p,q,r)$ be affine coordinates in the dual space, so that
$(p,q,r)$ corresponds to the plane with equation $px+qy+rz+1=0$ (i.e.,
we take as plane at infinity  in the dual space the orthogonal to the
origin).  Then the affine equation of $\check {\mathfrak D}$  is
gotten by eliminating $t$ in the system
\begin{equation}\label{eq:dual}
pt+qt^ 2+rt^ 3+1=0,\quad p+2qt+3rt^ 2=0,
\end{equation}
which defines the ruling $\rho_t$ of  $\check{\mathfrak D}$ orthogonal to the tangent line to ${\mathfrak D}$ at the point with coordinates $(t,t^ 2,t^ 3)$, $t\in \C$. The affine equation of $\check H$ is gotten by imposing that the system
\[
px+qy+qz+1=0,\quad  x^ 2+y^ 2-xz-y=0, \quad z=0,
\]
 has  one solution with multiplicity $2$;
the resulting equation is $p^ 2-4q-4=0$. Adding this  to 
\eqref {eq:dual} means intersecting  $\check H$ with $\rho_t$;
for $t\neq 0$, the
resulting  system can be written as 
  \[
  p^ 2t^ 2+8pt-4(t^ 2-3)=0,   \quad q=\frac {p^ 2}4-1,\quad  r=\frac {4- p^ 2}{6t}-\frac p{3t^ 2}.
  \] 
 For a general $t\in \C$, the first equation gives  two values of $p$
 and the remaining equations the corresponding values of $q$ and $r$,
 i.e., we get the coordinates $(p,q,r)$ of the two intersection points
 of $\check H$ and $\rho_t$. Now we note that the discriminant of   
 $p^ 2t^ 2+8pt-4(t^ 2-3)$ as a polynomial in $p$  is $16t^ 2(t^ 2+1)$,
 which has the two simple solutions $\pm i$. This implies that the
 projection on  $\mathfrak D\cong \mathbf P^ 1$ of the curve cut out
 by $\check H$ on $\check{\mathfrak D}$ has two simple ramification
 points. 
In particular $\check H \cap \check{\mathfrak D}$ is locally
 irreducible at these points, and
it cannot split as two unisecant curves to the lines of
 the ruling. This proves the claim and ends the proof of the Lemma. 
\end{proof}

\section{Kummer quartic surfaces in $\P^3$}
\label{S:Kummer-descr}

This section is devoted to the description of some properties of
quartic \emph{Kummer surfaces} in $\P^3$.
They are quartic surfaces with 16 ordinary double points $p_1,\ldots, p_{16}$ as their
only singularities. Alternatively a Kummer surface $X$ is the  image of the Jacobian $J(C)$ of  a smooth 
genus $2$ curve $C$, via the degree $2$ morphism  $\vartheta: J(C)\to
X\subset \P^ 3$  
determined by the complete linear system
$|2C|$, where $C\subset J(C)$ is the Abel--Jacobi embedding, so that $(J(C),C)$ is a principally polarised abelian surface (see, e.g.,  
\cite[Chap.10]{birkenhake-lange}). Since $\vartheta$ is composed with the $\pm$ involution on $J(C)$, the 16 nodes of $X$ are the images of the 16 points of order $2$ of $J(C)$.
By projecting from a node,  Kummer surfaces can be realised as  
double covers of the plane, branched along the union of six distinct
lines tangent to one single conic
(see, e.g., \cite[Chap.VIII, Exercises]{beauville}).
We refer to the classical book \cite{hudson} for a thorough
description of these surfaces
(see also \cite[Chap.10]{topics}).

\subsection{The $16_6$ configuration and self--duality}
\label{s:16_6}

An important feature of Kummer surfaces is that they carry
a so-called \emph{$16_6$--configuration} (see 
\cite{gonzalez}, as well as the above listed references).
Let $X$ be such a surface. There are
exactly $16$ distinct planes $\Pi_i$  tangent to $X$ along a \emph{contact conic} $\Gamma_i$,
for $1\leqslant i\leqslant 16$. The contact conics are the images  of the 16 symmetric 
theta divisors $C_1,\ldots, C_{16}$ on $J(C)$.  Each of them contains exactly $6$ nodes of $X$,  
coinciding with the branch points of the map $\left.\vartheta\right|_{C_i}: C_i\cong C\to \Gamma_i\cong \P^ 1$  determined by the 
canonical $g^ 1_2$ on $C$. 

Two  conics $\Gamma_i,\Gamma_j$, $i\neq j$,
intersect at exactly two points, which are double points of $X$:
they are the nodes corresponding to the two order $2$ points of $J(C)$
where $C_i$ and $C_j$ meet. 
Since the restriction map ${\rm Pic}^ 0(J(C))\to {\rm Pic}^ 0(C)$ is  an isomorphism,  
there is no pair of points of $J(C)$ contained in three different theta divisors. This implies that, given a pair of nodes of $X$,
there are exactly two contact conics containing both of them.
In other words, if we fix an $i\in \{1,\ldots,16\}$,
the map from $\{1,\ldots,16\}-\{i\}$ to the set of pairs of distinct
nodes of $X$ on $\Gamma_i$,
which maps $j$ to $\Gamma_i\cap \Gamma_j$, is bijective.
This yields  that each node of
$X$ is contained in exactly $6$ conics $\Gamma_i$. The configuration
of 16 nodes and 16 conics with the above described incidence property
is called a $16_6$--configuration. 

Let $\tilde{X}$ be the minimal smooth model of $X$, 
$E_1,\ldots,E_{16}$  the $(-2)$-curves over the nodes $p_1,\ldots,
p_{16}$ of $X$ respectively,
and $D_i$ the proper transform of the conic $\Gamma_i$, for
$1\leqslant i\leqslant 16$.
Since $\tilde X$ is a $K3$ surface and the $D_i$'s are rational
curves, the latter are $(-2)$-curves. The  $16_6$--configuration
can be described in terms of the existence of the two
sets 
\[\mathcal E =\{E_1,\ldots,E_{16}\} \quad {\rm and}\quad\mathcal D=  \{D_1,\ldots,D_{16}\} \]
of $16$ pairwise disjoint $(-2)$--curves, enjoying the further property
that each curve of a given set meets exactly six curves of the
other set, transversely  at a single point.

\begin{proposition}
\label{p:dual-kummer}
Let $X$ be a Kummer surface. Then its dual $\check{X} \subset \check{\P}^3$ is also
a Kummer surface.
\end{proposition}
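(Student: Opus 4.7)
The plan is to check that $\check X$ matches the definition of Kummer surface given at the start of this section: an irreducible quartic surface in $\check{\P}^3$ having $16$ ordinary double points as its only singularities.

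Step one: compute $\deg(\check X) = 4$ by applying Proposition \ref{p:deg-dual-sing} to $X$, a quartic with $16$ nodes and no other singularities, which gives $\deg(\check X) = 4 \cdot 3^2 - 2 \cdot 16 = 4$. Irreducibility of $\check X$ is inherited from that of $X$.

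Step two: exhibit $16$ singular points on $\check X$, namely the tritangent planes $\Pi_1, \ldots, \Pi_{16}$, and verify that each is a node. By biduality (cf.\ the proof of Lemma \ref{l:tangency}), at a smooth point $H$ of $\check X$ the tangent plane to $\check X$ at $H$ consists of all hyperplanes through the unique tangency point of $H$ with $X$; since $\Pi_i$ is tangent to $X$ along the whole conic $\Gamma_i$, no such unique tangency point exists, so $\Pi_i$ must be singular on $\check X$. To see that each $\Pi_i$ is a node, I pass to the minimal desingularization $\tilde X$: the Gauss map extends there to a morphism which contracts each $(-2)$-curve $D_i$ (the proper transform of $\Gamma_i$) to the point $\Pi_i$, and contracting a $(-2)$-curve on a smooth surface yields an $A_1$ double point.

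Step three: conclude via the biduality theorem $\check{\check X} = X$. If $\check X$ has only $A_1$ and $A_2$ singularities, Proposition \ref{p:deg-dual-sing} applied to $\check X$ gives $4 = \deg(X) = 36 - 2\nu - 3\kappa$ with $\nu \geq 16$, $\kappa \geq 0$, forcing $\nu = 16$, $\kappa = 0$. To rule out worse singularities on $\check X$, one uses that any isolated surface singularity reduces the dual degree by at least $2$, so $\check{\check X}$ would have degree strictly less than $4$ were $\check X$ to carry any singularity besides the $16$ nodes identified above. Hence $\check X$ is a Kummer surface.

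The main obstacle lies in step two: verifying the local structure of $\check X$ at each $\Pi_i$. This requires control over the extended Gauss map on $\tilde X$, in particular that it is everywhere defined (no further blow--ups along the exceptional curves $E_j$ over the nodes of $X$ are needed), and that it contracts exactly the curves $D_i$. This amounts to a direct local computation, which can alternatively be bypassed by invoking the classical self-duality of the Kummer surface via its $16_6$ configuration, in which nodes and contact planes are interchanged.
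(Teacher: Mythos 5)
Your proof is correct and follows essentially the same route as the paper's: compute $\deg(\check X)=4$ from Proposition \ref{p:deg-dual-sing}, resolve the Gauss map on the minimal model $\tilde X$, and observe that the resolved map contracts the sixteen $(-2)$-curves $D_i$ to ordinary double points. Your additional step three (using $\check{\check X}=X$ to exclude further singularities) is a reasonable supplement that the paper leaves implicit, and the local verification you flag as the main obstacle is exactly the point the paper also asserts without detailed computation.
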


\begin{proof} By Proposition \ref {p:deg-dual-sing}, we have  $\deg(\check{X})=4$. 
 Because of the singularities on $X$, the Gauss map 
$\gamma_X:X \dashrightarrow \check{X}$
is not a morphism.  However we get an elimination of indeterminacies
$$\xymatrix@=15pt{
& \tilde{X} \ar[dl]_f \ar[dr]^g & \\
X \ar@{-->}[rr]_{\gamma_X} && \check{X}
}$$
by considering the minimal smooth model $\tilde{X}$ of $X$.
The morphism $f$ is the contraction of the sixteen curves in $\mathcal E$, and $g$ maps each $E_i$ to a conic which is the 
dual of the tangent cone to $X$ at the node corresponding to $E_i$.
On the other hand, since $\gamma_X$ contracts each of the curves $\Gamma_1,\ldots, \Gamma_{16}$ to a point,
then $g$  contracts the curves in $\mathcal D$ to as many ordinary double points of $\check{X}$.
The assertion follows. 
\end{proof}

\subsection{The monodromy action on the nodes}
\label{s:monodromy}

Let $\ring{\mathcal K}$ be the locally closed subset of $\vert \O_{\P^ 3}(4)\vert$ whose points correspond to Kummer surfaces and let
$\pi: \mathcal X\to \ring{\mathcal K}$ be the  universal family:
over $x\in \ring{\hil K}$, we have the corresponding Kummer surface $X=\pi^{-1}(x)$.
We have a subscheme $\mathcal N\subset \mathcal X$ such that 
$p:=\left.\pi \right|_{\mathcal N}: \mathcal N\to \ring{\hil K}$ is a finite morphism of degree 16: 
the fibre $p^ {-1}(x)$ over $x\in \ring{\hil K}$ consists of the nodes of
$X$.  We denote by  $G_{16,6}\subset \mathfrak S_{16}$ the monodromy
group of $p: \mathcal N\to \ring{\hil K}$  

There is in addition another degree 16 finite covering $q: \mathcal
G\to \ring{\hil K}$: for $x\in \ring{\hil K}$, the fibre $q^ {-1}(x)$
consists of the set of the contact conics on $X$. Proposition \ref
{p:dual-kummer} implies that the monodromy group of this covering is
isomorphic to  $G_{16,6}$. Then we can consider the commutative
square 
\begin{equation}\label{eq:square}
\xymatrix@=15pt{
\mathcal J \ar[d]_{p'}  \ar[rr]^{q'} && \mathcal N \ar[d]^{p} \\
\mathcal G \ar[rr]_{q} && \ring{\hil K}  
}
\end{equation}
where $\mathcal J$ is the incidence correspondence between nodes and conics. Note that 
$p', q'$ are both finite of degree 6, with isomorphic monodromy groups (see again Proposition  \ref {p:dual-kummer}). 

Here, we collect some results on the  monodromy groups  of the
coverings appearing in \eqref {eq:square}. They are probably well
known to the experts, but we could not find any reference for them.

\begin{lemma}
\label{l:S_6} The monodromy group of $q': \mathcal J\to \mathcal N$
and of $p': \mathcal J\to \mathcal G$  is  the
full symmetric group $\mathfrak{S}_6$.
\end{lemma}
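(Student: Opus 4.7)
By the self-duality of Kummer quartic surfaces (Proposition \ref{p:dual-kummer}), the covering $p': \mathcal{J} \to \mathcal{G}$ is identified with $q': \mathcal{J} \to \mathcal{N}$ via the dual Kummer family, so both coverings have isomorphic monodromy groups and it suffices to treat $q'$.

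The key idea is to reduce to the classical monodromy on Weierstrass points of genus $2$ curves. Fix a pair $(X, n_0) \in \mathcal{N}$ and realize $X = J(C)/\pm$ for a smooth genus $2$ curve $C$, with $n_0$ the image of the origin $0 \in J(C)$. Under the correspondence between contact conics of $X$ and symmetric theta divisors of $J(C)$ recalled in \S\ref{S:Kummer-descr}, a contact conic passes through $n_0$ iff the corresponding symmetric theta divisor contains $0$; writing $\Theta_\eta = \Theta_0 + \eta$ for $\eta \in J(C)[2]$, where $\Theta_0$ is the Abel--Jacobi image of $C$ with a Weierstrass basepoint, this condition selects exactly the theta divisors attached to the $6$ odd theta characteristics of $C$, which are canonically in bijection with the $6$ Weierstrass points of $C$. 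This identification is algebraic in $(X, n_0)$, so the monodromy of $q'$ is computed by pulling the universal Weierstrass covering over $\mathcal{M}_2$ back along the natural morphism $\varphi: \mathcal{N} \to \mathcal{M}_2$ which sends $(X, n_0)$ to the isomorphism class of $C$.

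It is classical that the $6$ Weierstrass points admit full symmetric monodromy $\mathfrak{S}_6$ over $\mathcal{M}_2$, as one sees directly from the equation $y^2 = f(x)$ with $\deg f = 6$: by varying the coefficients of $f$ one realizes every permutation of its roots. Because $\varphi$ is dominant (every principally polarized $J(C)$ embeds as a Kummer in $\P^3$ via $|2\Theta|$, and any of its $16$ nodes can be taken as $n_0$), any loop in $\mathcal{M}_2$ inducing a transposition of two Weierstrass points lifts, after a possibly finite cover, to a loop in $\mathcal{N}$ realizing the corresponding transposition of contact conics through $n_0$; transitivity comes from the same pull-back argument, so the monodromy of $q'$ is the full $\mathfrak{S}_6$. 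The main technical obstacle lies in the canonical identification, varying algebraically in families, of the $6$ contact conics through $n_0$ with the $6$ Weierstrass points of the associated genus $2$ curve; once this translation from the $16_6$ configuration to symmetric theta divisors and odd theta characteristics on $J(C)$ is established, the monodromy computation reduces cleanly to the known result on Weierstrass points.
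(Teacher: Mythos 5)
Your translation of the fibre of $q'$ into theta-characteristic language is correct: the six contact conics through a node correspond to the six symmetric theta divisors through the corresponding $2$-torsion point, hence to the six odd theta characteristics, hence to the six Weierstrass points of $C$. But the final step has a genuine gap. If $\varphi:\mathcal N\to \mathcal M_2$ is merely \emph{dominant}, the monodromy of the pulled-back Weierstrass covering is only the image of $\pi_1$ of (an open subset of) $\mathcal N$ inside the monodromy group $\mathfrak S_6$ over $\mathcal M_2$, and this image can a priori be a proper subgroup: nothing in your argument excludes, say, that $\mathcal N$ factors through the double cover of $\mathcal M_2$ given by the sign of the permutation of Weierstrass points, in which case the monodromy would land in $\mathfrak A_6$. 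The phrase ``lifts, after a possibly finite cover, to a loop in $\mathcal N$'' does not repair this: passing to a finite cover is exactly the operation that can kill monodromy. What you would need is that the general fibre of $\varphi$ is irreducible (e.g.\ because it is a single $\PGL_4$-orbit of pairs $(X,n_0)$, the projective stabilizer of $X$ acting transitively on the nodes via translations by $2$-torsion), so that the pullback of any connected cover of $\mathcal M_2$ to $\mathcal N$ stays connected; this is not in your write-up. Moreover, the step you defer as ``the main technical obstacle'' --- the identification, algebraic in families, of the six contact conics through $n_0$ with the six Weierstrass points --- is really the substance of the proof and should not be left unestablished.

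For comparison, the paper's proof is shorter and sidesteps both issues: projecting $X$ from the node $e$ exhibits $X$ as a double plane branched along six lines tangent to a conic, and these six lines \emph{are} the images of the six contact conics through $e$; for $X$ general the six tangent lines are general, and six general points on a conic (equivalently on $\P^1$) visibly have full $\mathfrak S_6$ monodromy. Note that this projection is also the natural way to make your deferred identification concrete, since the double cover of the conic branched at the six tangency points recovers $C$ with its Weierstrass points. If you supply the irreducibility of the fibres of $\varphi$ and the family version of the theta-characteristic dictionary, your route goes through, but as written it does not yet prove the lemma.
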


\begin{proof} It suffices to prove only one of the two assertions, e.g. the one about $p'$.
Let $X$ be a general Kummer surface and let $e$ be a node of $X$. As we noticed, by projecting 
from $e$, we realise $X$ as a double cover of $\P^ 2$ branched along $6$ lines tangent to a conic $E$, which is the image of the $(-2)$--curve over $e$. These 6 lines are  the images of the six contact conics through $e$, i.e. the fibre over $q'$. Since $X$ is general, these 6 tangent lines are general. The assertion follows. \end{proof}

\begin{corollary}\label{cor:geer}
The group $G_{16,6}$ acts transitively, 
so $\mathcal G$ and $\mathcal N$ are irreducible.
\end{corollary}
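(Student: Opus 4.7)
The core is to show that $G_{16,6}$ acts transitively on the $16$ nodes. Given this, the irreducibility of $\mathcal N$ is immediate from the definition of monodromy, and the irreducibility of $\mathcal G$ follows from the fact---noted at the beginning of \S\ref{s:monodromy}---that by Proposition \ref{p:dual-kummer} the monodromy of $q: \mathcal G \to \ring{\hil K}$ is also isomorphic to $G_{16,6}$. The strategy for transitivity is to realize the natural simply transitive action of $J(C)[2]$ on the nodes of a single Kummer surface as a subgroup of $G_{16,6}$.

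Fix a general Kummer surface $X = J(C)/\{\pm 1\} \subset \P^3$, embedded by $|2C|$. For each $\sigma \in J(C)[2]$, the translation $t_\sigma: J(C) \to J(C)$ commutes with the $\pm$-involution (since $2\sigma = 0$) and satisfies $t_\sigma^*\mathcal O(2C) \cong \mathcal O(2C)$ by the theorem of the square. It therefore descends to an automorphism $\bar t_\sigma$ of $X$, induced by a linear transformation in $\PGL(4,\C)$ (the lift being unique up to a scalar coming from the theta group, which is irrelevant here). Since the $16$ nodes of $X$ are the images under $\vartheta$ of the $16$ points of $J(C)[2]$, the induced action of $J(C)[2]$ on the nodes is by translation on itself, hence simply transitive.

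To turn these automorphisms into monodromy, pick for each $\sigma \in J(C)[2]$ a continuous path $\gamma_\sigma: [0,1] \to \PGL(4,\C)$ from $\id$ to $\bar t_\sigma$. Since $\bar t_\sigma$ fixes $[X] \in \ring{\hil K}$, the map $\Gamma_\sigma(t) := \gamma_\sigma(t) \cdot [X]$ is a loop at $[X]$; the horizontal lift of $\Gamma_\sigma$ to $\mathcal N$ starting at a node $p$ is $t \mapsto \gamma_\sigma(t)(p)$, which terminates at $\bar t_\sigma(p)$. Thus $\Gamma_\sigma$ realizes $\bar t_\sigma$ as a monodromy element, and letting $\sigma$ vary over $J(C)[2]$ produces the desired transitive subgroup of $G_{16,6}$. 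No serious obstacle is present: the argument reduces to classical facts about abelian surfaces, the only subtle point being the verification that $2$-torsion translations on $J(C)$ descend to projective automorphisms of the embedded Kummer surface in $\P^3$.
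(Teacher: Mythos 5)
Your proof is correct, but it takes a genuinely different route from the paper's. The paper deduces transitivity from Lemma \ref{l:S_6} --- the monodromy of $p':\mathcal J\to\mathcal G$ on the six nodes lying on a fixed contact conic is the full $\mathfrak S_6$, proved by projecting from a node and moving the six general tangent lines to the branch conic --- combined with the combinatorial fact that any two nodes of a Kummer surface lie on a common contact conic; given the lemma this is a two-line argument, and the lemma is needed anyway for the later results (two-transitivity in Proposition \ref{p:2-transitivity}, and the proof of Theorem \ref{thm:V2}). You instead realize the simply transitive translation action of $J(C)[2]$ on the nodes as monodromy: two-torsion translations preserve $|2C|$ by the theorem of the square, hence descend to well-defined projective automorphisms of $X\subset\P^3$, and since $\PGL(4,\C)$ is connected and $\ring{\hil K}$ is $\PGL(4,\C)$-invariant, a path from $\id$ to $\bar t_\sigma$ traces a loop in $\ring{\hil K}$ whose unique lift to $\mathcal N$ implements $\bar t_\sigma$ on the fibre (calling this lift ``horizontal'' is a slight abuse --- it is just path lifting for a finite covering --- but the argument is sound). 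This is a clean and standard technique, and it buys something the paper's proof does not give at this stage, namely a distinguished subgroup $(\Z/2\Z)^4\sg G_{16,6}$ acting simply transitively on the nodes; on the other hand it does not by itself yield the $\mathfrak S_6$-statement that the rest of the section relies on. Both arguments share the implicit hypothesis that $\ring{\hil K}$ is irreducible, and your reduction of the irreducibility of $\mathcal G$ to that of $\mathcal N$ via Proposition \ref{p:dual-kummer} is exactly the paper's.
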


\begin{proof} It suffices to prove that the monodromy of $p: \mathcal N\to \ring{\hil K}$ is transitive.
This follows from Lemma \ref {l:S_6} and from the fact that any two nodes of a Kummer surface lie on some contact conic.
\end{proof}

It is also possible to deduce the transitivity of the monodromy action
of $p$ and $q$ from the irreducibility of
the Igusa quartic solid, which parametrizes quartic Kummer surfaces
with one marked node (see, e.g., \cite[Chap.10]{topics}).
The following is stronger:

\begin{proposition}
\label{p:2-transitivity} The group $G_{16,6}$ acts $2$--transitively.
\end{proposition}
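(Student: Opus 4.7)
The plan is to leverage Corollary \ref{cor:geer}, which already yields transitivity of $G_{16,6}$, together with Lemma \ref{l:S_6}. By the orbit--stabilizer principle, it will suffice to fix a general $X \in \ring{\hil K}$ and a node $e \in X$, and show that the stabilizer $(G_{16,6})_e$ acts transitively on the fifteen remaining nodes of $X$.

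The core combinatorial input is the $(16_6)$--configuration of \S\ref{s:16_6}, which I would use to set up a natural bijection
\[
\left\{ \text{nodes of } X \text{ distinct from } e \right\}
\;\longleftrightarrow\;
\left\{ \text{unordered pairs of distinct contact conics through } e \right\}.
\]
The map sends a node $e' \neq e$ to the pair of contact conics passing through both $e$ and $e'$: exactly two such conics exist, and both automatically belong to the six conics through $e$. Conversely, two distinct contact conics through $e$ meet in exactly two nodes, one of which is $e$, and the other is the preimage of the pair. The counts agree since $\binom{6}{2}=15$.

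Next, I would invoke Lemma \ref{l:S_6}: the monodromy group of $q': \mathcal J \to \mathcal N$ is the full symmetric group $\mathfrak S_6$. Interpreted in terms of loops in $\ring{\hil K}$, this says exactly that $(G_{16,6})_e$ surjects onto the $\mathfrak S_6$ permuting the six contact conics through $e$. Transporting along the bijection above, the induced action of $(G_{16,6})_e$ on the fifteen remaining nodes contains the natural action of $\mathfrak S_6$ on the $2$-element subsets of a $6$-element set, which is transitive. Combined with Corollary \ref{cor:geer}, this gives the desired $2$-transitivity.

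I do not anticipate a serious obstacle; the argument is essentially a repackaging of Lemma \ref{l:S_6} through the combinatorics of the $(16_6)$--configuration. The only mildly subtle point to articulate carefully is the identification between $\pi_1$--loops in $\mathcal N$ based at $(X,e)$ and loops in $\ring{\hil K}$ based at $X$ whose $G_{16,6}$--monodromy fixes $e$, i.e., the standard translation between the monodromy of $q'$ and the stabilizer in $G_{16,6}$ of the basepoint node.
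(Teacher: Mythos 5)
Your proof is correct and is essentially the paper's argument: the paper also reduces to showing the stabilizer of a node $e$ acts transitively on the remaining fifteen, and realizes them as the pairwise intersections of the six branch lines of the projection from $e$ (the images of the six contact conics through $e$), on which the monodromy acts as $\mathfrak{S}_6$. Your pairs-of-conics bijection is just the abstract $16_6$ restatement of that geometric picture, with Lemma \ref{l:S_6} quoted in place of redoing the projection argument.
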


\begin{proof} Again, it suffices to prove the assertion for $p: \mathcal N\to \ring{\hil K}$.  By Corollary \ref {cor:geer},
proving that the monodromy is 2--transitive is equivalent to showing
that the stabilizer of a point in the general fibre of $p$ acts
transitively on the remaining points of the fibre.
Let $X$ be a general Kummer surface and $e\in X$ a node. Consider the
projection from $e$, which realizes $X$ as a double cover of $\P^ 2$
branched along 6 lines tangent to a conic $E$. The 15 nodes on $X$
different from $e$ correspond to the pairwise intersections of the 6
lines. Moving the tangent lines to $E$ one leaves the node $e$ fixed,
while acting transitively on the others. \end{proof} 

Look now at the pull back $q^ *(\mathcal N)$. Of course $\mathcal J$ is a component of $q^ *(\mathcal N)$. We set 
$\mathcal W=q^ *(\mathcal N)-\mathcal J$, and the morphism $p':
\mathcal W\to \mathcal G$ which is finite of degree $10$. We let
$H_{16,6}\subseteq \mathfrak S_{10}$ be the monodromy of this covering.  

\begin{lemma}
\label{l:preparation} The group $H_{16,6}$ acts transitively, i.e.
$\mathcal W$ is irreducible. 
\end{lemma}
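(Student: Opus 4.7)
The plan is to use the self-duality of Kummer surfaces to reduce to a statement about the stabilizer of a node, and then combine the realisation of the Kummer as a double cover of $\P^2$ with a classical combinatorial identification.

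By Proposition~\ref{p:dual-kummer}, there is an involution of $\ring{\mathcal K}$ sending $X$ to $\check X$, exchanging the coverings $\mathcal N$ and $\mathcal G$ while preserving the $16_6$-incidence. Under this identification, transitivity of $H_{16,6}$ on the fibre of $p':\mathcal W\to \mathcal G$ over a general conic $\Gamma$ (which consists of the $10$ nodes of $X$ not lying on $\Gamma$) is equivalent to the dual assertion: for a general node $e$ on a general Kummer surface $X$, the stabilizer of $e$ in $G_{16,6}$ acts transitively on the $10$ contact conics of $X$ not passing through~$e$.

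To prove this dual statement, one projects $X$ from~$e$, realising $X$ as a double cover of $\P^2$ branched along six lines $\ell_1,\ldots,\ell_6$ tangent to a common conic~$E$. The six contact conics of $X$ through $e$ then correspond to the six branch lines, and by the proof of Lemma~\ref{l:S_6}, varying the configuration of tangent lines yields the full symmetric group $\mathfrak S_6$ as monodromy on $\{\ell_1,\ldots,\ell_6\}$, hence as a subgroup of the stabilizer of $e$ in $G_{16,6}$ acting on the six contact conics through~$e$. The key remaining ingredient, classical in Kummer surface theory (cf.\ e.g.\ \cite{hudson}), is that the $10$ contact conics of $X$ not passing through $e$ are in natural $\mathfrak S_6$-equivariant bijection with the $10$ unordered partitions of $\{\ell_1,\ldots,\ell_6\}$ into two triples $\{T,T'\}$: the contact conic associated to such a partition projects from~$e$ onto the unique conic in $\P^2$ through the six pairwise intersection points of the three lines in~$T$ and of the three lines in~$T'$.

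Since $\mathfrak S_6$ acts transitively on unordered partitions of a $6$-element set into two triples, the required transitivity of the stabilizer of~$e$ on the $10$ off-$e$ contact conics follows, and hence $\mathcal W$ is irreducible. The main obstacle in a fully self-contained proof is this classical bijection between off-$e$ contact conics and partitions into triples; it could alternatively be derived from the parametrisation of Kummer surfaces by genus-$2$ Jacobians together with the theta-characteristic indexing of the sixteen symmetric theta divisors on a principally polarised abelian surface.
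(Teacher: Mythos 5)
Your argument is correct, but it follows a genuinely different route from the paper's. The paper works directly on the fibre of $p':\mathcal W\to\mathcal G$ over a conic $\Gamma$: given two nodes $a,b\notin\Gamma$, it picks the contact conic $\Gamma'$ through $a$ and $b$, observes that $\Gamma\cap\Gamma'=\{c,d\}$ consists of two nodes distinct from $a,b$, and uses Lemma \ref{l:S_6} on $\Gamma'$ to produce a monodromy transformation fixing $c$ and $d$ (hence fixing $\Gamma$, as $\Gamma$ and $\Gamma'$ are the only two contact conics through the pair $\{c,d\}$) and sending $a$ to $b$. This only invokes the $16_6$ incidence facts already established in \S 7.1, and needs no extra classical input. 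Your proof instead dualizes (legitimately, via Proposition \ref{p:dual-kummer}, which the paper itself uses to identify the monodromy groups of $p'$ and $q'$), reduces to the transitivity of the stabilizer of a node $e$ on the ten tropes missing $e$, and then imports the classical labelling of those ten tropes by the partitions of the six branch lines into two unordered triples, on which the $\mathfrak S_6$ supplied by the proof of Lemma \ref{l:S_6} acts transitively. That labelling is indeed correct (it is the standard indexing of nodes by $\{0\}\cup\{ij\}$ and tropes by $\{i\}\cup\{ijk\}$ coming from the theta-characteristic description you mention, with $T_{ijk}$ containing exactly the six nodes $\{ab\}$ with $\{a,b\}$ inside $\{i,j,k\}$ or its complement), so your proof is complete modulo this classical fact; it buys a more explicit combinatorial picture of the $16_6$ configuration, at the cost of an external ingredient that the paper's two-conic trick avoids entirely.
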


\begin{proof}
Let $a, b\in  X$ be two nodes not lying on the contact conic $\Gamma$. There is a contact conic
$\Gamma'$ that contains both $a$ and $b$; it
meets $\Gamma$ transversely in two points, distinct from $a$ and $b$, that
we shall call $c$ and $d$.
Now a monodromy transformation that fixes $\Gamma'$ and fixes $c$
and $d$ necessarily fixes $\Gamma$.
It therefore suffices to find a monodromy transformation fixing $\Gamma'$
which fixes $c$ and $d$, and sends $a$ to $b$. Such a transformation
exists by Lemma \ref {l:S_6}.
\end{proof}

\begin{proposition} \label{p:3-transitivity} Let $X$ be a general Kummer surface. Then:\\
\begin{inparaenum}[(i)]
\item \label{it:alpha} $G_{16,6}$ acts
transitively the set of  unordered triples of distinct nodes belonging to a contact conic;\\
\item \label{it:beta}  the action of $G_{16,6}$ on the set of unordered triples of distinct nodes not belonging to a contact conic has 
at most two orbits. \end{inparaenum}
\end{proposition}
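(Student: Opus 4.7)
For part (i), the plan is to combine two transitivity statements. First, $G_{16,6}$ acts transitively on the $16$ contact conics: by Proposition~\ref{p:dual-kummer} this action identifies with the action of $G_{16,6}$ on the nodes of the dual Kummer surface $\check X$, which is transitive by Corollary~\ref{cor:geer}. Second, by Lemma~\ref{l:S_6} the stabilizer of a contact conic $\Gamma$ acts on its six nodes as the full symmetric group $\mathfrak S_6$, hence transitively on unordered triples. Putting these together gives transitivity on the set of pairs (conic $\Gamma$, triple of nodes on $\Gamma$), and therefore on the set of triples of nodes belonging to a common contact conic.

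For part (ii), the idea is to exhibit an $\mathbb F_2$-valued invariant of unordered triples of nodes preserved by $G_{16,6}$, which will automatically bound by two the number of orbits on \emph{all} triples of distinct nodes, a fortiori on those not on a common contact conic. Identify the $16$ nodes of $X$ with $V := J(C)[2] \cong (\mathbb F_2)^4$, equipped with its non-degenerate symplectic Weil pairing $\omega$. To an unordered triple $T = \{a, b, c\}$ of distinct nodes, associate
\[
\mathrm{inv}(T) \;:=\; \omega(a+b,\,a+c) \;=\; \omega(a,b)+\omega(a,c)+\omega(b,c) \;\in\; \mathbb F_2.
\]
The second expression makes manifest the symmetry in $a,b,c$, so $\mathrm{inv}(T)$ is well defined on unordered triples, and it is visibly preserved under translations of $V$ and under symplectic transformations, that is, under the affine symplectic group $\mathrm{ASp}(V,\omega) = V \rtimes \mathrm{Sp}(V,\omega)$.

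The heart of the argument is then the inclusion $G_{16,6} \subseteq \mathrm{ASp}(V,\omega)$. The plan is to establish it by lifting the universal family of Kummer surfaces over $\ring{\hil K}$ to a family of principally polarized abelian surfaces (each Kummer being of the form $J(C)/\pm$), and invoking the classical fact that the monodromy representation on the $2$-torsion over the moduli of principally polarized abelian surfaces preserves the Weil pairing, together with the observation that the absence of a canonical origin among the $16$ nodes of a Kummer forces the action to be only affine, rather than linear. This is the main technical obstacle, and in our view the only one: once the inclusion is in place, $\mathrm{inv}$ takes at most two values and hence divides the unordered triples of distinct nodes into at most two $G_{16,6}$-orbits, proving (ii).
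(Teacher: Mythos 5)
Part (i) is fine and is essentially the paper's argument: transitivity on the $16$ contact conics plus Lemma \ref{l:S_6} for the stabilizer acting as $\sym_6$ on the six nodes of a fixed conic (note that a triple of nodes lies on at most one contact conic, since two conics share only two nodes, so transitivity on pairs (conic, triple) does give transitivity on triples).

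Part (ii) has a genuine logical gap: the implication at the end runs in the wrong direction. An invariant $\mathrm{inv}$ that is constant on orbits partitions the triples into level sets each of which is a \emph{union} of orbits; it therefore gives a \emph{lower} bound on the number of orbits (at least as many orbits as values attained), not an upper bound. Likewise, the inclusion $G_{16,6}\subseteq V\rtimes \mathrm{Sp}(V,\omega)$ that you propose to prove only makes $G_{16,6}$ smaller, hence can only \emph{increase} the number of orbits. To get the upper bound you need the opposite containment --- that $G_{16,6}$ contains a subgroup acting transitively on each level set of $\mathrm{inv}$, e.g.\ surjectivity of the monodromy onto the full affine symplectic group --- together with a Witt-type computation showing that $V\rtimes\mathrm{Sp}_4(\F_2)$ itself has exactly two orbits on unordered triples of distinct points of $\F_2^4$ (translate one point to $0$ and use transitivity of $\mathrm{Sp}_4(\F_2)$ on hyperbolic pairs and on pairs spanning an isotropic plane). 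Neither ingredient is supplied. That said, the invariant you found is the right one: a count shows the $320$ triples with $\mathrm{inv}=1$ are exactly the triples lying on a contact conic, so if one does prove $G_{16,6}= V\rtimes \mathrm{Sp}(V,\omega)$ (a classical fact), one gets the stronger conclusion that the off-conic triples form a \emph{single} orbit. The paper avoids all of this and proves the upper bound directly, by degenerating to a product Kummer surface $(E_1\times E_2)/\pm$, observing that an off-conic triple limits to one of two explicit configurations in the $4\times 4$ grid of nodes, and using that the monodromy of product Kummers contains $\sym_4\times\sym_4$ acting on the two rulings, which connects all triples within each configuration.
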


To prove this, we need to consider  degenerations of  Kummer surfaces when the principally polarised
abelian surface $(J(C),C)$ becomes non--simple, e.g., when $C$ degenerates to the union of two elliptic 
curves $E_1,E_2$ transversally meeting at a point. In this case the linear system $\vert 2(E_1+E_2)\vert$ on
the abelian surface $A=E_1\times E_2$, is still base point free, but it determines a degree 4 morphism 
$\vartheta: A\to \mathbf Q\cong \P^ 1\times \P^ 1\subset \P^ 3$ (where
$\mathbf Q\subset \P^ 3$ is a smooth quadric), factoring through the
\emph {product Kummer surface} 
$\mathsf{X}=A/ \pm$, and a double cover $\mathsf{X}\to \mathbf Q$
branched along a curve of bidegree $(4,4)$ which is a 
union of $8$ lines; the lines in question are $L_{1a}= \P^ 1\times \{a\}$ (resp. $L_{2b}= \{b\} \times \P^ 1$) where $a$ (resp. $b$) ranges among the four branch points of the morphism $E_1\to (E_1/\pm)\cong \P^ 1$ (resp. $E_2\to (E_2/\pm)\cong \P^ 1$).
We call the former \emph{horizontal lines}, and the latter \emph{vertical
lines}. 
Each of them has four marked points: on a line $L_{1a}$
(resp. $L_{2b}$), these are the four points $L_{2b}\cap L_{1a}$ where
$b$ (resp. $a$) varies as above. 
One thus gets 16 points, which are the limits on  $\mathsf{X}$
of the 16 nodes of a general Kummer surface $X$. 
The limits on  $\mathsf{X}$ of the sixteen contact conics on a general
Kummer surface $X$ are the sixteen curves $L_{1a}+L_{2b}$.
On such a curve, the limits of the six double points on a contact
conic on a general Kummer surface are the six marked points on
$L_{1a}$ and $L_{2b}$ that are distinct from $L_{1a}\cap L_{2b}$.\medskip

\begin{proof}[Proof of Proposition \ref{p:3-transitivity}]
Part (i) follows from Lemma \ref{l:S_6}.
As for part (ii), consider three distinct nodes $a$, $a'$ and $a''$ 
(resp. $b$, $b'$ and $b''$) of $X$ that
do not lie on a common conic of the $16_6$ configuration on $X$. We
look at their limits $\mathsf{a}$, $\mathsf{a}'$ and $\mathsf{a}''$
(resp. $\mathsf{b}$, $\mathsf{b}'$ and $\mathsf{b}''$)
on the product Kummer surface $\mathsf{X}$; they are  in
one of the two configurations (a) and (b) described in 
Figure \ref{f:triples}.

\begin{figure}
\begin{center}
\includegraphics[height=2.5cm]{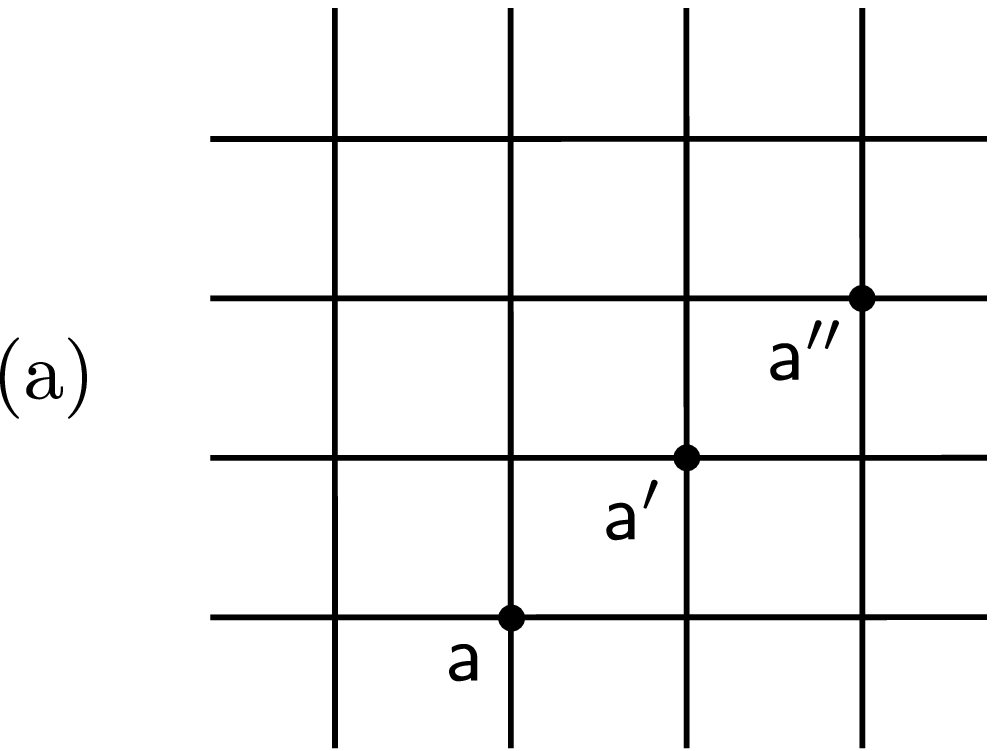}
\hspace{2cm}
\includegraphics[height=2.5cm]{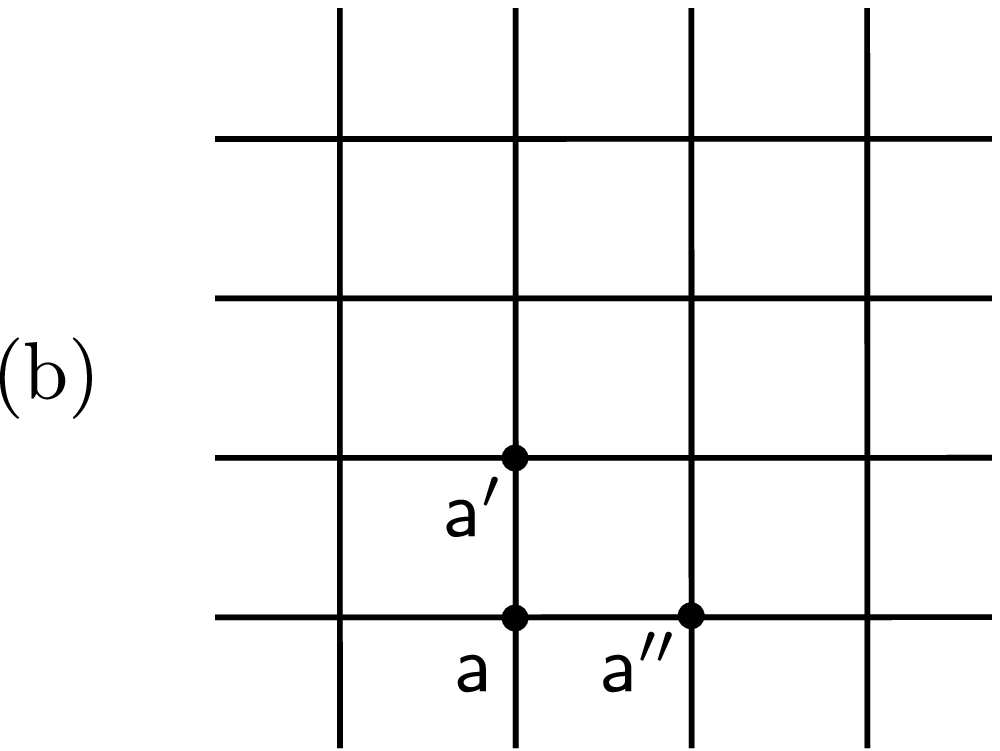}
\end{center}
\caption{Limits in a product Kummer surface
of three double points not on a double conic }
\label{f:triples}
\end{figure}

The result  follows from the fact that the monodromy of the family
of product  Kummer surfaces acts as the full
symmetric group $\mathfrak{S}_4$
on the two sets of vertical and horizontal lines respectively. Hence the triples in configuration (a) [resp. in configuration (b)]
are certainly in one and the same orbit. 
\end{proof}

\section{Degeneration to a Kummer surface}
\label{S:Kummer-degen}

We consider a family $f:S \to \Delta$ of surfaces in $\P^3$ induced
(as explained in \S\ref{s:setting}) by a pencil generated 
by a general quartic surface $S_{\infty}$ and a general  Kummer
surface $S_0$. We will  describe a related $\delta$-good model
$\varpi:\bar{X} \to \Delta$ for $1\le \delta\le 3$.

\subsection{The good model}

Our construction is as follows: \\
{\it \begin{inparaenum}[(I)] 
\item \label{Kum:bs-change}
we first perform a degree 2 base change on $f:S \to \Delta$;\\
\item \label{Kum:resolution}
then we resolve the singularities of the new family;\\
\item \label{Kum:blow-up} we blow--up the proper transforms of the sixteen contact conics
on $S_0$.\\
\end{inparaenum}}
The base change is useful to analyze the contribution of curves
passing through a node of $S_0$.

\subsubsection{Steps (\ref{Kum:bs-change}) 
and (\ref{Kum:resolution})}

The total space $S$ is smooth, analytically--locally given by the
equation 
\[ x^2+y^2+z^2=t \]
around each of the  double points of $S_0$.
We  perform a degree 2 base change on $f$, and call $\bar{f}:
\bar{S} \to \Delta$ the resulting family. The total space $\bar{S}$ has
$16$ ordinary double points at the preimages of the nodes of $S_0$.

We let $\epsilon_1: X \to \bar{S}$ be the resolution of these $16$
points, gotten by a simple  blow--up at each point. We have the new family $\pi: X\to \Delta$,
with $\pi=\bar f\circ \epsilon_1$.
The new 
central fibre $X_0$ consists of the minimal smooth model $\tilde{S}_0$ of
$S_0$, plus the  exceptional divisors $Q_1,\ldots,Q_{16}$. 
These are all isomorphic to a smooth quadric $\mathbf Q\cong \P^ 1\times \P^ 1\subset \P^ 3$.
 We let $E_1,\ldots,E_{16}$ be the exceptional divisors of 
$\tilde{S}_0 \to S_0$. Each $Q_i$ meets $\tilde{S}_0$ transversely
along the curve $E_i$, and two distinct $Q_i,Q_j$ do not meet. 

\subsubsection{Step (\ref{Kum:blow-up})}

As in \S \ref {s:16_6}, we  let $D_1,\ldots,D_{16}$ be the proper transforms of the $16$ contact
conics $\Gamma_1,\ldots, \Gamma_{16}$ on $S_0$: they are pairwise disjoint $(-2)$-curves in
$X_0$.
We consider the blow-up $\epsilon_2:\bar{X} \to X$
of $X$ along them.
The surface $\tilde{S}_0$ is isomorphic to its strict
transform on $\bar{X}_0$.
Let $W_1,\ldots,W_{16}$ be the exceptional divisors of $\epsilon_2$. 
Each $W_i$ meets $\tilde{S}_0$ transversely along the (strict transform
of the) curve $D_i$.
Note that, by the Triple Point Formula  \ref {l:triple-point}, one has
$\deg(N_{D_i\vert W_i})=-\deg(N_{D_i\vert \tilde S_0})-6=-4$, so that 
$W_i$ is an $\F_4$--Hirzebruch surface, and $D_i$ is the negative section on it.

We call $\tilde{Q}_1,\ldots,\tilde{Q}_{16}$ the strict transforms of
$Q_1,\ldots,Q_{16}$ respectively. They respectively meet $\tilde{S}_0$
transversely along (the strict transforms of) $E_1,\ldots,E_{16}$. For $1\leqslant i\leqslant 16$,
there are exactly six curves among the $D_j$'s that meet $E_i$: we
call them $D^i_{1},\ldots,D^i_{6}$.
The surface $\tilde{Q}_i$ is
the blow-up of $Q_i$ at the six intersection points of $E_i$ with
$D^i_{1},\ldots,D^i_{6}$: we call
$\pG^i_{1},\ldots,\pG^i_{6}$ respectively the six corresponding
$(-1)$-curves on $\tilde{Q}_i$. Accordingly, $\tilde{Q}_i$ meets
transversely six $W_j$'s, that we denote by $W^1_{i},\ldots,W^6_{i}$,
along $\pG^i_{1},\ldots,\pG^i_{6}$ respectively.
The surface  $\tilde{Q}_i$ is disjoint from the remaining $W_j$'s.

For $1\leqslant j\leqslant 16$,
we denote by $E^1_{j},\ldots,E^6_{j}$ the six $E_i$'s that meet $D_j$.
There are correspondingly six $\tilde{Q}_i$'s that meet $W_j$: we denote them by
 $\tilde{Q}^1_j,\ldots,\tilde{Q}^6_j$, and let
$G_j^1,\ldots,G_j^6$
be their respective intersection curves with $W_j$.
Note the equality of sets
\[
\left\{\pG^i_s,\ 1\leqslant i\leqslant 16,\ 1\leqslant s\leqslant 6\right\}
=\left\{G^s_j,\ 1\leqslant j\leqslant 16,\ 1\leqslant s\leqslant 6\right\}.
\]

We shall furthermore use the following notation (cf.
\S\ref{conv}). 
For $1\leqslant j\leqslant 16$, we let $F_{W_j}$ (or simply $F$) be the
divisor class of the  ruling on $W_j$, and $H_{W_j}$ (or simply $H$)
the divisor class $D_j+4F_{W_j}$.
Note that $G^s_j\lineq_{W_j} F$
and $\pG^s_i\lineq_{W^s_i} F$, for $1\leqslant i,j\leqslant 16$ and
$1\leqslant s\leqslant 6$. 
We write $H_0$ for the pull-back to $\tilde{S}_0$ of the
plane section class of $S_0 \subset \P^3$.
For $1\leqslant i \leqslant 16$, we let $L_i'$ and $L_i''$ be the two
rulings of $Q_i$, and $H_{Q_i}$ (or simply $H$) be the divisor class
$L_i'+L_i''$;
we use the same symbols for their respective
pull-backs in $\tilde{Q}_i$.
When designing one of these surfaces by $\tQ_j^s$, we use the 
obvious notation ${L_j^s}'$ and ${L_j^s}''$.

\subsection{The limit linear system}
\label{s:kummer-ls}

We shall now describe the limit linear system
of $|\O_{\bar X_t}(1)|$ as $t\in \Delta^*$ tends to $0$,
and from this we will see that $\bar X$ is a good model of $S$ over
$\Delta$.
We start with  ${\fP} =\P( \varpi_*(\O_{\bar X}(1)))$, 
which is a $\P^ 3$--bundle over $\Delta$, whose fibre 
at $t\in \Delta$ is $\vert \O_{\bar X_t}(1)\vert$;
we set $\L=\O_{\bar{X}}(1)$, and $\vert \O_{\bar X_t}(1)\vert=|\L_t|$
for $t\in\Delta$. Note that $|\L_0|\cong |\O_{S_0}(1)|$.

We will proceed as follows: \\
\emph{\begin{inparaenum} [(I)]
\item \label{Kum:coplanar}
we first blow--up $\fP$ at the points of $\fP_0\cong |\L_0|$
 corresponding to planes in $\P^3$ containing at least three distinct
 nodes of $S_0$  
(i.e. either planes containing exactly three nodes, or planes in the
$16_6$ configuration);\\ 
\item \label{Kum:pencils}
then we blow--up  the resulting variety along the proper transforms of the 
lines of $ |\L_0|$ corresponding to pencils of planes in
$\P^3$ containing two distinct nodes of $S_0$;\\
\item \label{Kum:webs}
finally we blow--up along the proper transforms of the planes  of $
|\L_0|$ corresponding to webs of planes 
in $\P^3$ containing a node of $S_0$.\end{inparaenum}}

The description of these steps parallels the one in \S \ref
{s:4planes-limlin}, so we will be sketchy here.

\subsubsection{Step (\ref{Kum:coplanar}a)}

The $\binom{16}{3}-16\binom{6}{3}=240$ planes in $\P^3$ containing
exactly three distinct  nodes of $S_0$ correspond to the
$0$-dimensional subsystems
\begin{equation}\label{eq:3nodes}
  \bigl|H_0-E_{s'}-E_{s''}-E_{s'''}\bigr|_{\tilde{S}_0} 
\end{equation}
of $|H_0| \cong |\L_0|$, where $\{s',s'',s'''\}$ ranges through all
subsets of cardinality 3 of $\{1,\ldots, 16\}$ 
such that the nodes $p_{s'}, p_{s''}, p_{s'''}$ corresponding to the $(-2)$--curves $E_{s'},E_{s''},E_{s'''}$
do not lie in a plane of the $16_6$ configuration of $S_0$.
We denote by $C_{s's''s'''}$ the unique curve in the system \eqref {eq:3nodes}
and we set $H_{s's''s'''}=C_{s's''s'''}+E_{s'}+E_{s''}+E_{s'''}$, which lies in $\vert H_0\vert$ . 

The exceptional component $\tilde{\fL}_{s's''s'''}$ of the blow-up of
${\fP}$ at the point corresponding to  
$H_{s's''s'''}$ can be identified with the 3--dimensional complete linear system
\begin{equation*}
\fL_{s's''s'''}:=\bigl|\L_0(-\tQ_{s'}-\tQ_{s''}-\tQ_{s'''})\bigr|,
\end{equation*}
which is isomorphic to the projectivization of the kernel of the
surjective map
\begin{multline*}
\mathfrak f:\
(\varsigma',\varsigma'',\varsigma''',\varsigma) \in
\Biggl( \bigoplus_{s\in\{s',s'',s'''\}}
  \H^0\bigl(\tQ_{s}, \O_{\tQ_{s}}(H)\bigr)
\Biggr) 
\oplus \H^0\bigl(\tilde{S}_0,\O_{\tilde{S}_0}(C_{s's''s'''})\bigr)
\\
\longmapsto\quad
(\varsigma'-\varsigma,
\varsigma''-\varsigma,
\varsigma'''-\varsigma) \in
\bigoplus_{s\in\{s',s'',s'''\}}
\H^0\big(\O_{E_{s}}(-E_s)\big)\,
\cong\, \H^ 0\big(\P^ 1, \O_{\P^ 1}(2)\big)^ {\oplus 3}.
\end{multline*}

The typical element of $\fL_{s's''s'''}$ consists of\\
\begin{inparaenum}[(i)]
\item  the curve $C_{s's''s'''}$ on $\tilde S_0$, plus\\
\item one curve in $\vert \O_{\tQ_{s}}(H)\vert$ for each
${s\in\{s',s'',s'''\}}$, matching $C_{s's''s'''}$ along $E_{s}$,
plus \\
\item two rulings in each $W_j$ 
(i.e. a member of $|\O_{W_j}(2F)|=|\L_0\otimes \O_{W_j}|$), 
$1\leqslant j\leqslant 16$, matching along
the divisor $D_j$, while \\
\item the restriction to $\tilde Q_s$ is trivial for every $s\in \{1,\ldots,16\}- \{s',s'',s'''\}$.
\end{inparaenum}

The strict transform of $\fP_0$ is isomorphic to the blow--up of
$|H_0|$ at the point corresponding to $H_{s's''s'''}$. 
By Lemma \ref{l:tg-id}, the
exceptional divisor $\mathcal H_{s's''s'''}\cong \P^ 2$ of this blow-up identifies with the pull--back linear series on  $H_{s's''s'''}$ of the
$2$-dimensional linear system of lines in the plane 
spanned by $p_{s'}, p_{s''}, p_{s'''}$ (note that in this linear series there are three linear subseries corresponding to sections vanishing on 
the curves $E_{s'}, E_{s''}, E_{s'''}$ which are components of $H_{s',s'',s'''}$).

The divisor $\mathcal H_{s's''s'''}$  is cut out on the strict transform of 
$|H_0|$ by $\tilde{\fL}_{s's''s'''}$, along
the plane  $\Pi\subset \fL_{s's''s'''}$ given by the equation $\varsigma=0$ in the above notation.
The identification of $\mathcal H_{s's''s'''}$  with $\Pi$ is not
immediate. It would become more apparent by blowing up  the curves
$C_{s's''s'''}$ in the central fibre; we will not do this here,
because we do not need it, and we leave it to the reader 
(see Step (\ref{Kum:coplanar}b) for a similar argument). 
However, we note
that ${\rm ker} (\mathfrak f)\cap \{\varsigma=0\}$ coincides with the $\C^ 3$
spanned by three non--zero sections 
$(\varsigma_{s'},0,0,0),
(0,\varsigma_{s''},0,0),
(0,0,\varsigma_{s'''},0),$
where $\varsigma_{s}$ vanishes exactly on $E_s$ for each $s\in
\{s',s'',s'''\}$. 
These three sections correspond to three points $\pi_{s'}, \pi_{s''},
\pi_{s'''}$ in $\Pi$. In  the identification of $\Pi$ with $\mathcal
H_{s's''s'''}$ the points $\pi_{s'}, \pi_{s''}, \pi_{s'''}$ are mapped
to the respective pull--backs on $H_{s's''s'''}$  of  the three lines
$\ell_{s''s'''}=\langle p_{s''}, p_{s'''}\rangle,
\ell_{s's'''}=\langle p_{s'}, p_{s'''}\rangle, \ell _{s's''}=\langle
p_{s'}, p_{s''}\rangle$.

\subsubsection{Step (\ref{Kum:coplanar}b)}
The $16$ planes of the $16_6$ configuration correspond to
the $0$-dimensional subsystems
\[
\bigl|H_0-E^1_j-\cdots-E^6_j\bigr|_{\tilde{S}_0}
\subset \bigl|H_0\bigr|\cong \bigl|\L_0\bigr| \quad \quad
(1\leqslant j\leqslant 16),
\]
consisting of the only curve $2D_j$. The
blow-up of $\fP$ at these points introduces $16$ new components 
$\tilde{\fL}^j$, $1\leqslant j\leqslant 16$, in the central fibre,
respectively isomorphic to the linear systems
\[
\fL^ j:=\bigl|\L_0(-2W_j-\tQ_j^1-\cdots-\tQ_j^6)\bigr|.
\]
The corresponding line bundles restrict to the trivial
bundle on all components of $\bar{X}_0$ but $W_j$ and $\tQ_j^s$, for 
$1\leqslant s\leqslant 6$, where the restriction is to 
$\O_{W_j}(2H)$ and to
$\O_{\tQ_j^s}(H-2G_j^s)$,  respectively.

For each $s\in \{1,\ldots, 16\}$, the complete linear system
$|H-2G_j^s|_{\tQ_j^s}$ is $0$-dimensional, its
only divisor is the strict transform in $\tQ_j^s$ of the
unique curve in $|H|_{Q_j^s}$ that is singular at the point $D_j\cap
Q_j^s$.
This is the union of the proper transforms of the two
curves in $\vert {L_j^s}'\vert_{Q_j^s}$ and $\vert
{L_j^s}''\vert_{Q_j^s}$ through $D_j\cap Q_j^s$,  
and it cuts out a  $0$-cycle $Z_j^s$  of degree 2 on $G_j^s$.
We conclude that 
\begin{equation}
\label{W_j}
\fL^j\cong \bigl|\O_{W_j}(2H)\otimes {\cal I}_{Z_j}\bigr|,
\quad \text {for}\quad 1\leqslant j\leqslant 16,
\end{equation}
where ${\cal I}_{Z_j} \subset \O_{W_j}$ is the defining sheaf of
ideals of the $0$-cycle $Z_j:=Z_j^1+\cdots+Z_j^6$ supported on the six 
fibres $G_j^1,\ldots,G_j^6$ of the ruling of $W_j$.
We shall later study the rational map determined by this linear system
on $W_j$ (see Proposition \ref{p:2:1}).

For each $j$, the glueing of $\tilde{\fL}^ j$ with the
strict transform of $|H_0|$ is as follows: the 
exceptional plane $\mathcal H^j$ on the strict transform of $|H_0|$
identifies with 
$|\O_{D_j}(H_0)|\cong \vert \O_{\P^ 1}(2)\vert$ by Lemma
\ref{l:tg-id},
and the latter naturally identifies as the $2$-dimensional linear
subsystem of
$\bigl|\O_{W_j}(2H)\otimes {\cal I}_{Z_j}\bigr|$
consisting of  divisors of the form
\[
2D_j+G_j^1+\cdots+G_j^6+\Phi,\quad 
\Phi \in |\O_{W_j}(2F)|.
\]

\subsubsection{Step (\ref{Kum:pencils})}

Let $\fP'$ be the blow-up of ${\fP}$ at the $240+16$ distinct
points described in the preceding step. The next operation is the
blow-up $\fP'' \to \fP'$ along the $\binom{16}{2}$
pairwise  disjoint respective strict transforms of the pencils
\begin{equation}\label{eq:line}
\bigl|H_0-E_{s'}-E_{s''}\bigr|_{\tilde{S}_0}, \quad
1\leqslant s' < s'' \leqslant 16.
\end{equation}

To describe the exceptional divisor  $\tilde{\fL}_{s's''}$  of $\fP''
\to \fP'$ on the proper transform of \eqref {eq:line},
consider the 3--dimensional linear system 
$\fL_{s's''}:=\bigl|\L_0(-\tQ_{s'} -\tQ_{s''})\bigr| $, 
isomorphic to the projectivization of the kernel of the surjective map
\begin{align}
\label{f_s's''}
\Biggl(\bigoplus_{s\in\{s',s''\}}
\H^0\bigl(\tQ_{s}, \O_{\tQ_{s}}(H)\bigr)
\Biggr) 
\oplus \H^0\bigl(\tilde{S}_0, \O_{\tilde{S}_0}(H_0-E_{s'}-E_{s''})\bigr)
&\rightarrow 
\bigoplus_{s\in\{s',s''\}}
\H^0\bigl(E_s,\O_{E_{s}}(-E_s)\bigr)\\
\notag
(\varsigma',\varsigma'',\varsigma)
&\mapsto
(\varsigma'-\varsigma,
\varsigma''-\varsigma).
\end{align}
Then $\tilde{\fL}_{s's''}$ identifies as the blow--up of $\fL_{s's''}$
along the line defined by  $\varsigma=0$ in the above notation;
in particular it is isomorphic to 
$\P\bigl(\O_{\P^ 1}\oplus \O_{\P^ 1}(1)^ {\oplus 2}\bigr)$,
with $\P^2$--bundle structure
\[
\rho_{s's''}: \tilde{\fL}_{s's''} \to
\bigl|H_0-E_{s'}-E_{s''}\bigr|_{\tilde S_0}
\]
induced by the projection of the left-hand side of \eqref{f_s's''} on
its last summand, as follows from Lemma \ref{l:tg-id}.
The typical element of $\tilde{\fL}_{s's''}$ consists of \\
\begin{inparaenum}[(i)]
\item \label{comp(i)}
a member $C$
of $|H_0-E_{s'}-E_{s''}|_{\tilde S_0}$, plus\\
\item two curves in $|H|_{\tilde Q_s'}$ and $|H|_{\tilde Q_s'}$
  respectively, matching $C$ along $E_{s'}$ and $E_{s''}$,
together with \\
\item rational tails on the $W_j$'s
(two on those $W_j$ meeting neither $\tilde Q_{s'}$ nor $\tilde Q_{s''}$,
one on those $W_j$ meeting exactly one component among
$\tilde Q_{s'}$ and $\tilde Q_{s''}$,
and none on the two $W_j$'s meeting both $\tilde Q_{s'}$ and $\tilde
Q_{s''}$) matching $C$ along $D_j$.
\end{inparaenum}\\
The image by $\rho_{s's''}$ of such a curve is the point corresponding
to its component \eqref{comp(i)}.

\begin{remark} \label{rem:two-nodes}
The image of $\bar{X}$ via the complete linear system
$|\L(-\tilde Q_{s'}-\tilde Q_{s''})|$ provides a model 
$f':S'\to \Delta$ of the initial family $f:S\to \Delta$,
with central fibre the transverse union of two double planes
$\Pi_{s'}$ and $\Pi_{s''}$.
For $s\in \{s',s''\}$,
the plane $\Pi_{s}$ is the projection of $\tilde Q_{s}$ from the
point $p_{\bar s}$ corresponding to the direction of the line
$\ell_{s,s'}$ in 
$|\O_{\tilde Q_{s}}(H)|^\vee \cong |\L_0(-\tilde Q{s})|^\vee$, 
where $\{s,\bar s\}=\{s',s''\}$;
there is a \emph{marked conic} on $\Pi_s$, corresponding to the branch
locus of this projection.
The restriction to $E_s$ of the morphism $\tilde Q_{s}\to \Pi_{s}$ 
is a degree $2$ covering $E_s\to \Pi_{s'}\cap \Pi_{s''}=:L_{s's''}$.
The two marked conics on $\Pi_{s'}$ and $\Pi_{s''}$ intersect at two
points on the line $L_{s's''}$, which are the two branch
points of both the double coverings $E_{s'} \to L_{s's''}$ and 
$E_{s'} \to L_{s's''}$.
These points correspond to the two points cut out on $E_{s'}$
(resp. $E_{s''}$) by the two curves $D_j$ that correspond to the two
double conics of $S_0$ passing through $p_{s'}$ and $p_{s''}$.
There are in addition six distinguished points on $L_{s's''}$,
corresponding to the six pairs of points cut out on $E_{s'}$
(resp. $E_{s''}$) by the six curves $C_{s's''s'''}$ on $\tilde S_0$.

\end{remark}

\subsubsection{Step (\ref{Kum:webs})}

The last operation is the blow-up 
$\fP''' \to \fP''$
along the 16  disjoint surfaces that are the strict transforms of the
$2$-dimensional linear systems
\[
\bigl|H_0-E_s\bigr|_{\tilde{S}_0},\quad
1\leqslant s \leqslant 16.
\]

We want to understand the exceptional divisor $\tilde{\fL}_s$.
Consider the linear system 
$
\fL_s:=\bigl|\L_0(-\tQ_s)\bigr|,
$
which identifies with the 
projectivization of the kernel of the surjective map
\begin{align*}
\mathfrak f_s:
\H^0\bigl(\tQ_{s}, \O_{\tQ_{s}}(H)\bigr)
\oplus \H^0\bigl(\tilde{S}_0,\O_{\tilde{S}_0}(H_0-E_{s})\bigr)
&\rightarrow 
\H^0\bigl(E_{s}, \O_{E_{s}}(-E_s)\bigr)\\
(\varsigma',\varsigma)
&\mapsto
(\varsigma'-\varsigma)
\end{align*}
(itself isomorphic to $\H^0(\tQ_{s}, \O_{\tQ_{s}}(H))$, by the way).
Blow--up $\fL_s$ at the point $\xi$ corresponding to $\varsigma=0$;
one thus gets a $\P^1$--bundle over the plane $|H_0-E_s|_{\tilde S_0}$.
Then $\tilde{\fL}_s$ is obtained by further blowing--up
along the proper transforms  of the lines joining $\xi$ with the 
$6+\left[ \binom{15}{2}-6\binom{5}{2} \right] =51$
points of $|H_0-E_s|$ we blew--up in Step (\ref{Kum:coplanar}).
The typical member of $\tilde\fL_s$ consists of two members of 
$|H_0-E_s|_{\tilde S_0}$ and $|H|_{\tilde Q_s}$ respectively, matching
along $E_s$, together with rational tails on the surfaces $W_j$.

\begin{remark} \label{rem:one-node}
The image of $\bar{X}$ by the complete linear system
$|\L(-\tilde Q_{s})|$ provides a model 
$f':S'\to \Delta$ of the initial family $f:S\to \Delta$,
with central fibre the transverse union of a smooth quadric
$Q$, and a double plane $\Pi$ branched along six lines tangent to the
conic $\Gamma:=\Pi\cap Q$ 
(i.e. the projection of $S_0$ from the node $p_s$).
There are fifteen \emph{marked points} on $\Pi$, namely the
intersection points of the six branch lines of the double covering
$S_0\to \Pi$.
\end{remark}

\subsubsection{Conclusion}

We shall now describe the curves parametrized by the intersections of
the various components of $\fP'''_0$, thus proving:

\begin{proposition}
\label{prop:limlin-kummer}
The central fibre $\fP'''_0$ is the limit linear system of
$|\L_t|=|\O_{\bar X_t}|$ as $t\in \Delta^*$ tends to $0$.
\end{proposition}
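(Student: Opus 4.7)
The plan is to mimic the proof of Proposition \ref{prop:limlin-tetra}: by Lemma \ref{l:lim-lin} it suffices to exhibit $\fP'''$ as an irreducible Zariski closed subset of the relative Hilbert scheme $\Hilb(\L)$ of curves of $\bar X$ over $\Delta$, flat and proper over $\Delta$ and isomorphic to $\P(\varpi_*\L)$ over $\Delta^*$. Flatness/properness is automatic from the construction of $\fP'''$ as a successive blow--up of $\fP$ along smooth centres contained in the central fibre, and the isomorphism over $\Delta^*$ is tautological. Thus the one non--trivial task is to verify that every point of the central fibre $\fP'''_0$ really does correspond to a subscheme of $\bar X_0$ lying on a flat specialization of a curve in $|\L_t|$, $t \in \Delta^*$.

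The components of $\fP'''_0$ have already been identified in \S\ref{s:kummer-ls}: the strict transform of $\fP_0$, and the exceptional divisors $\tilde\fL_{s's''s'''}$, $\tilde\fL^j$, $\tilde\fL_{s's''}$, $\tilde\fL_s$. For each of these the generic member has already been described as a bona fide curve on $\bar X_0$ (including the requisite rational tails on the quadrics $\tQ_i$ and the Hirzebruch surfaces $W_j$). So the work is to check that this assignment extends regularly to the loci where these components meet, and to identify the curves corresponding to points lying on these intersections. Concretely, I would run through the following pairwise intersections and, in each case, identify the intersection both \emph{from the $\fP_0$--side} (as the exceptional divisor of a blow--up of a proper transform of $|\L_0|$ at one of the centres) and \emph{from the twisted side} (as a sublinear subsystem of one of the $\fL_{s's''s'''}$, $\fL^j$, $\fL_{s's''}$ or $\fL_s$), via Lemma \ref{l:tg-id}:
\begin{itemize}
\item $\fP_0 \cap \tilde\fL_{s's''s'''}$: the plane $\mathcal H_{s's''s'''}$ described at the end of Step (\ref{Kum:coplanar}a), identified with the sublinear plane $\{\varsigma=0\}\subset \fL_{s's''s'''}$, parametrizing curves $C_{s's''s'''}$ together with a triangle of line pairs on the three quadrics $\tQ_{s'}, \tQ_{s''}, \tQ_{s'''}$ matching along the $E_s$;
\item $\fP_0 \cap \tilde\fL^j$: the $\P^2$ identified via Lemma \ref{l:tg-id} with $|\O_{D_j}(H_0)|$ and with the sublinear system described after \eqref{W_j};
\item $\fP_0 \cap \tilde\fL_{s's''}$ and $\tilde\fL_{s's''} \cap \tilde\fL_s$ (for $s\in\{s',s''\}$): the exceptional divisor of the blow--up along a line, realized via the $\P^2$--bundle structure $\rho_{s's''}$;
\item $\tilde\fL_{s's''s'''} \cap \tilde\fL_{s's''}$ and $\tilde\fL_{s's''s'''} \cap \tilde\fL_s$, where $s\in\{s',s'',s'''\}$;
\item $\tilde\fL^j \cap \tilde\fL_s$ when $\tQ_s$ meets $W_j$; and
\item $\tilde\fL_s \cap \fP_0$: the proper transform of $|H_0-E_s|$ itself, where the six--plus--fifteen extra blow--ups performed in Step (\ref{Kum:webs}) glue back with the blow--ups performed in Steps (\ref{Kum:coplanar}) and (\ref{Kum:pencils}).
\end{itemize}
In each case the matching between the two descriptions is a direct (if bookkeeping--heavy) application of Lemma \ref{l:tg-id} together with the identifications \eqref{W_j} and \eqref{f_s's''}, exactly as was done in the proof of Proposition \ref{prop:limlin-tetra}.

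The main obstacle is not conceptual but organizational: one must make sure that the four types of exceptional divisors are glued coherently along their common intersections, and in particular that after the successive blow--ups of Steps (\ref{Kum:coplanar})--(\ref{Kum:webs}) the centres of all subsequent blow--ups really are smooth and have the expected codimension, so that no further modifications are needed. Because the $240$ three--node planes and the $16$ contact--conic planes are pairwise disjoint in $|\L_0|$ and separate from the lines $|H_0-E_{s'}-E_{s''}|$ and planes $|H_0-E_s|$ (modulo the blown--up centres), this disjointness follows from the $16_6$ incidence combinatorics of \S\ref{s:16_6}. Once all these intersections are matched, $\fP'''_0$ is covered by families of curves on $\bar X_0$, so $\fP'''\subset \Hilb(\L)$, and the conclusion follows from Lemma \ref{l:lim-lin}.
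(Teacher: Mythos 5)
Your proposal follows the paper's proof essentially verbatim: the paper likewise reduces to Lemma \ref{l:lim-lin} and then checks, intersection by intersection and via Lemma \ref{l:tg-id}, that every point of $\fP'''_0$ corresponds canonically to a curve on $\bar X_0$. The only item missing from your explicit checklist is $\tilde\fL^j\cap\tilde\fL_{s's''}$ when $p_{s'}$ and $p_{s''}$ both lie on the contact conic $\Gamma_j$ (the paper treats this as the $16_6$ case of $\tilde\fL_{s's''}\cap\tilde\fL_{s's''s'''}$), but your general prescription of running through all pairwise intersections covers it.
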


\begin{proof}
We analyze step by step
the effect on the central fibre of the birational modifications
operated on $\fP$ in the above construction,
each time using Lemma \ref{l:tg-id} without further notification.

\medskip
\eqref{Kum:coplanar}
At this step, recall (cf. \S\ref{conv}) that $\fP_0 \subset \fP'$
denotes the proper transform of $\fP_0 \subset \fP$ in the blow--up
$\fP'\to \fP$.
For each $\{s',s'',s'''\} \subset \{1,\ldots,16\}$ such that
$\left<p', p'', p'''\right>$ is a plane that does not belong to the
$16_6$ configuration, the intersection $\tilde\fL_{s's''s'''} \cap
\fP_0 \subset \fP'$ is the exceptional $\P^2$ of the blow--up of 
$|\L_0|\cong |\O_{S_0}(1)|$ at the point corresponding to
$H_{s's''s'''}$. Its points, but those lying on one of the three
lines joining two points among $\pi_{s'},\pi_{s''},\pi_{s'''}$ which also have been blown--up 
(the notation is that of Step (\ref{Kum:coplanar}a)), correspond to the trace of
the pull--back of $|\O_{S_0}(1)|$ on $C_{s's''s'''}+E_{s'}+E_{s''}+E_{s'''}$.

For each $j\in \{1,\ldots,16\}$, the intersection $\tilde\fL^j\cap
\fP_0 \subset \fP'$ is a plane, the points of which correspond to
curves 
$2D_j+G_j^1+\cdots+G_j^6+\Phi$ of $\bar X_0$, $\Phi \in
|\O_{W_j}(2F)|$, except for those points on the six lines
corresponding to the cases when $\Phi$ contains one of the six curves
$G_j^1,\ldots,G_j^6$.

\medskip
\eqref{Kum:pencils}
Let $\{s'\neq s''\}\subset \{1,\ldots,16\}$.
The intersection
$\tilde\fL_{s's''}\cap \fP_0\subset \fP''$ is a $\P^1\times \P^1$;
the first factor is isomorphic to the proper transform of the line 
$|H_0-E_{s'}-E_{s''}|_{\tilde S_0}$ in $\fP_0$, 
while the second is isomorphic to the line $\{\varsigma=0\}\subset
\fL_{s's''}$ in the notation of Step \eqref{Kum:pencils} above.
Then the points in $\tilde\fL_{s's''}\cap \fP_0\subset
\fP''$ correspond to curves $C+E_{s'}+E_{s''}$ in $\bar X_0$, with
$C\in |H_0-E_{s'}-E_{s''}|_{\tilde S_0}$, exception made for the points
with second coordinate $[\varsigma_{s'}:0:0]$ or
$[0:\varsigma_{s''}:0]$ in $\fL_{s's''}$, where $\varsigma_{s}\in 
\H^0(\O_{\tilde Q_s}(H))$ vanishes  on $E_s$ for each $s\in
\{s',s''\}$.  

Let $s'''\not\in \{s',s''\}$ be such that $\left<p', p'', p'''\right>$
is a plane outside the $16_6$ configuration.
The intersection $\tilde\fL_{s's''}\cap \tilde\fL_{s's''s'''}\subset
\fP''$ is the $\P^2$ preimage of the point corresponding to
$C_{s's''s'''}$ in $|H_0-E_{s'}-E_{s''}|_{\tilde S_0}$ via $\rho_{s's''}$,
and parametrizes curves
$C_{s's''s'''}+E_{s'''}+C'+C''+ \text{rational tails}$, with 
$C'\in |H|_{\tilde Q_{s'}}$ and $C''\in |H|_{\tilde Q_{s''}}$ matching 
$C_{s's''s'''}$ along $E_{s'}$ and $E_{s''}$ respectively.

On the other hand, for $s'''\not\in \{s',s''\}$ such that 
$\left<p', p'', p'''\right>$ belongs to the $16_6$ configuration, 
let $j\in \{1,\ldots,16\}$ be such that $2D_j$ is cut out on $S_0$ by
$\left<p', p'', p'''\right>$, and set $\tilde Q_{s'}=\tilde Q_j^1$
and $\tilde Q_{s''}=\tilde Q_j^2$;
then $\tilde\fL_{s's''}\cap \tilde\fL_{s's''s'''}\subset \fP''$ is the
preimage by $\rho_{s's''}$ of the point corresponding to $D_j$
in $|H_0-E_{s'}-E_{s''}|_{\tilde S_0}$,
and parametrizes the curves
\[
2D_j+\bigl(G_j^1+C'\bigr)+\bigl(G_j^2+C''\bigr)
+\sum\nolimits_{s=3}^6 \bigl(G_j^s+E_j^s\bigr),
\]
where $C'\in |H-G_j^1|_{\tilde Q_{s'}}$ is the proper
transform by $\tilde Q_{s'}\to Q_{s'}$ of a member of $|H|_{Q_{s'}}$
tangent to $E_{s'}$ at $D_j\cap E_{s'}$, and similarly for $C''$.

\medskip
\eqref{Kum:webs}
Let $s\in \{1,\ldots, 16\}$. 
The intersection $\tilde\fL_s\cap \fP_0\subset \fP'''$ is isomorphic
to the plane $|H_0-E_s|_{\tilde S_0}$ blown--up at the $51$ points
 corresponding to the intersection of at least two lines among the
fifteen $|H_0-E_s-E_{s'}|$, $s'\neq s$.
Each point of the non--exceptional locus of this surface corresponds
to a curve $C+E_s \subset \bar X_0$, with 
$C\in |H_0-E_s|_{\tilde S_0}$.

Let $s'\in \{1,\ldots, 16\}-\{s\}$. 
The intersection $\tilde\fL_s\cap \tilde\fL_{ss'}\subset \fP'''$ is an
$\F_1$, isomorphic to the blow--up at $\xi$ of the plane in $\fL_s$
projectivization of the kernel of the restriction of $\mathfrak f_s$
to $\H^0\bigl(\O_{\tQ_{s}}(H)\bigr)
\oplus \H^0\bigl(\O_{\tilde{S}_0}(H_0-E_{s}-E_{s'})\bigr)$.
It has the structure of a $\P^1$--bundle over $|H_0-E_{s}-E_{s'}|$,
and its points correspond to curves $C+E_{s'}+C_s+ \text{rational
  tails}$, with $C_s\in |H|_{\tilde Q_s}$ matching with $C\in
|H_0-E_{s}-E_{s'}|$ along $E_s$;
note that the points on the exceptional section correspond to the
curves $C+E_{s'}+E_s+ \text{rational tails}$.

Let $s''\in \{1,\ldots, 16\}-\{s,s'\}$, and assume the plane 
$\left<p', p'', p'''\right>$ is outside the $16_6$ configuration.
Then $\tilde\fL_s\cap \tilde\fL_{ss's''}\subset \fP'''$ is a
$\P^1\times \P^1$, the two factors of which are respectively isomorphic
to the 
projectivization of the kernel of the restriction of $\mathfrak f_s$
to $\H^0\bigl(\O_{\tQ_{s}}(H)\bigr)
\oplus \H^0\bigl(\O_{\tilde{S}_0}(H_0-E_{s}-E_{s'}-E_{s''})\bigr)$,
and to the line $\left<\pi_{s'},\pi_{s''}\right>$ in $\fL_{ss's''}$
(with the notations of Step (\ref{Kum:coplanar}b)).
It therefore parametrizes the curves
\[
C_{ss's''}+E_{s'}+E_{s''}+C+\text{rational tails},
\]
where $C\in |H|_{\tilde Q_s}$ matches $C_{ss's''}$ along $E_s$.

Let $j\in \{1,\ldots,16\}$ be such that $W_j$ intersects $\tilde Q_s$,
and set $\tilde Q_j^1=\tilde Q_s$.
Then $\tilde\fL_s\cap \tilde\fL^j\subset \fP'''$ is a $\P^1\times
\P^1$, the two factors of which are respectively isomorphic to the
pencil of pull--backs to $\tilde Q_s$ of members of $|H|_{Q_s}$
tangent to $E_s$ at the point $D_j\cap E_s$, and to the subpencil 
$2D_j+2G_j^1+G_j^2+\cdots+G_j^6+|F|_{W_j}$ of $\fL^j$.
It parametrizes  curves
\[
2D_j+\bigl(G_j^1+C\bigr)
+\sum\nolimits_{s=2}^6 \bigl(G_j^s+E_j^s\bigr),
\]
where $C\in |H-G_j^1|_{\tilde Q_{s}}$ is the proper transform of a
curve on $Q_s$ tangent to $E_{s}$ at $D_j\cap E_{s}$.

\medskip
It follows from the above analysis that the points of $\fP'''_0$ all
correspond in a canonical way to curves on $\bar X_0$,
which implies our assertion by Lemma \ref{l:lim-lin}.
\end{proof}

\subsection{The linear system $\fL^ j$}

In this section, we study the 
rational map $\varphi_j$ (or simply $\varphi$) determined by the
linear system $\fL^ j=\bigl|\O_{W_j}(2H)\otimes {\cal I}_{Z_j}\bigr|$ 
on $W_j$, for $1\leqslant j \leqslant 16$. 

Let $u_j:\bar{W}_j\to W_j$ be the blow--up at the twelve
points in the support of ${Z_j}$.
For $1\leqslant s\leqslant 6$, we denote by $\hat{G}_j^s$ the
strict transform of the ruling $G_j^s$, and by ${I_j^s}',{I_j^s}''$
the two exceptional curves of $u_j$ meeting $\hat{G}_j^s$. 
Then the pull--back via $u_j$ induces a natural isomorphism
 \[
 \bigl|\O_{W_j}(2H)\otimes {\cal I}_{Z_j}\bigr|\cong 
\Bigl|\O_{\bar W_j}\bigl(2H-\sum_{s=1}^ 6 {(I_j^s}'+{I_j^s}'')
\bigr) \Bigr|;
 \]
we denote by $\bar \fL^ j$ the right hand side linear system.
 
\begin{proposition}
\label{p:2:1}
The linear system $\bar \fL^ j$ determines a 
 $2:1$ morphism
\[
\bar \varphi:  \bar{W}_j \to \Sigma \subset \P^3,
\]
where $\Sigma$ is a quadric cone.
The divisor
$\tilde{D}_j:=D_j+\hat{F}_j^1+\cdots+\hat{F}_j^6$ is contracted by
$\bar \varphi$ to the vertex of $\Sigma$.
The branch curve $B$ of $\bar \varphi$ is irreducible, cut out on
$\Sigma$ by a quartic surface; it is rational, with an ordinary six--fold point at the vertex
of $\Sigma$.
\end{proposition}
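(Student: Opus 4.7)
The plan has four parts: numerical preparation, identification of the contracted divisor, analysis of the map via the ruling, and the branch curve.

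First, I would collect the numerical data. One has $\bar\fL^{j2}=(2H)^2 + (\sum I_j^{s\epsilon})^2 = 16-12 = 4$ (using $H\cdot I_j^{s\epsilon}=0$), and $\bar\fL^j\cdot K_{\bar W_j}=(2H-\sum I)\cdot(-2D_j-6F+\sum I)=-12+12=0$, so by Riemann--Roch $\chi(\bar\fL^j)=1+\frac12\cdot 4 = 3$. I would also check by direct effectivity that $h^2(\bar\fL^j)=0$. The effective subsystem $2D_j+\sum_s\hat G_j^s+|2F|$ shows $h^0\geq 3$; I expect equality up to a defect $h^1=1$ coming from a Cayley--Bacharach type dependency among the twelve evaluations at $Z_j$ (the relation reflecting that the twelve points lie $2$ per ruling on the cone image of $W_j$ under $|H|$); one verifies this using the Koszul resolution for $Z_j$ as the complete intersection of $\sum_s G_j^s\in|6F|$ with a $|2H|$-curve. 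This gives $h^0(\bar\fL^j)=4$, so the map lands in a $\P^3$.

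Next, I would identify the divisor $\tilde D_j = D_j+\sum_{s=1}^6 \hat G_j^s$ as being contracted. One checks component-wise: $D_j\cdot\bar\fL^j = 2H\cdot D_j = 0$ and $\hat G_j^s\cdot\bar\fL^j = 2H\cdot G_j^s - (I_j^{s\prime}+I_j^{s\prime\prime})\cdot\hat G_j^s = 2-2 = 0$. Since $\tilde D_j$ is connected ($D_j$ meets every $\hat G_j^s$), it contracts to a single point $v\in\P^3$. Negative definiteness of the intersection form of $\tilde D_j$ (whence contractibility) is the computation $\det\begin{pmatrix} -4 & \mathbf 1^T\\ \mathbf 1 & -2 I_6\end{pmatrix} = (-2)^6(-4+6\cdot\frac12)=-64$, and all leading principal minors alternate in sign.

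The core step is showing that the morphism $\bar\varphi$ is generically $2{:}1$. For this I would use the ruling $\bar\pi:\bar W_j\to D_j\cong\P^1$. A general fiber $F$ of $\bar\pi$ satisfies $F\cdot\bar\fL^j=2$, and $F$ is not contracted by $\bar\varphi$ (it meets $\tilde D_j$ only at the single point $F\cap D_j$). Hence $\bar\varphi(F)$ has degree dividing $2$ and contains the vertex $v$. The key point is that the restriction $H^0(\bar\fL^j)\to H^0(F,\O_F(2))$ has rank $2$, not $3$: indeed the sub-system $2D_j+\sum\hat G_j^s+|2F|$ is already $3$-dimensional and restricts trivially on $F$, yielding the one-dimensional kernel of the restriction map and identifying the image as a $g^1_2$ on $F$. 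So $\bar\varphi|_F:F\to \bar\varphi(F)$ is a $2{:}1$ cover of a line through $v$. As $F$ varies in the pencil, the images of the fibers trace out a $1$-parameter family of lines through $v$ in $\P^3$: this is precisely the ruling of a cone, so $\Sigma$ is a cone with vertex $v$. From $\deg\bar\varphi\cdot\deg\Sigma = \bar\fL^{j2}=4$ and $\deg\bar\varphi=2$ one gets $\deg\Sigma=2$, i.e., $\Sigma$ is a quadric cone.

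For the last part, I would determine the branch curve $B$. The ramification divisor $R$ on $\bar W_j$ satisfies $K_{\bar W_j}=\bar\varphi^*K_\Sigma + R$; since $K_{\bar W_j}\cdot\bar\fL^j=0$ and $K_\Sigma$ is $\O_\Sigma(-1)$ modulo the vertex, one computes $R\cdot\bar\fL^j$ to identify $\bar\varphi_*R=B$ as a divisor in $|\O_\Sigma(4)|$, hence cut on $\Sigma$ by a quartic surface (degree $8$ on $\Sigma$). Irreducibility follows because $\bar W_j$ is irreducible and the Galois involution of the cover cannot split $R$; rationality follows from the adjunction computation $p_a(B)=0$ together with the count of singularities. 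The key local contribution at $v$: the six contracted curves $\hat G_j^s$ each arise as components of the ramification (their images in $\Sigma$ are the six rulings through $v$ that become the six branches of $B$ at the vertex), which forces the ordinary six-fold point at $v$.

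The hard part will be the rigorous dimension calculation $h^0(\bar\fL^j)=4$ (equivalently, the one-dimensional failure of $Z_j$ to impose independent conditions on $|2H|$), and the complementary fact that the restriction of $\bar\fL^j$ to a general ruling has rank $2$ rather than $3$. Both are essentially encoded in the Cayley--Bacharach relation associated to the complete intersection presentation $Z_j=(\sum G_j^s)\cap C$ for $C\in|2H|$; the rest of the proof is intersection-theoretic bookkeeping.
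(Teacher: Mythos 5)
Your overall strategy coincides with the paper's (dimension count, degree-two restriction to the rulings, contraction of the connected divisor $\tilde D_j$ of $\bar\fL^j$-degree zero, then the branch curve), but several individual steps as written would not survive scrutiny, the most serious being the last part. Irreducibility of $B$ does not follow from ``$\bar W_j$ is irreducible and the Galois involution cannot split $R$'': an irreducible double cover can perfectly well have a reducible branch divisor (a Kummer surface projected from a node is a double plane branched along six lines). The paper's argument is that Riemann--Hurwitz applied to $\bar\varphi$ restricted to a general fibre $\Phi\in|F|$ (a double cover of a ruling $\ell$, already ramified at $\Phi\cap D_j$ over the vertex) shows $\ell$ meets $B$ at exactly one point off $v$; hence $B$ has a unique component $B_0$ not supported on rulings, and ruling components are excluded because the corresponding fibres would map isomorphically. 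The same unisecant property gives rationality at once ($B_0$ projects birationally from $v$ onto the conic), whereas your claim ``$p_a(B)=0$'' is false for a degree-$8$ curve on $\Sigma$ and the ``count of singularities'' is not supplied. Your description of the six-fold point is also off: the curves $\hat G_j^s$ are contracted to $v$ (you computed $\hat G_j^s\cdot\bar\fL^j=0$ yourself), so they are not components of the ramification mapping onto rulings; it is the reducible fibres $\hat G_j^s+{I_j^s}'+{I_j^s}''$ that map onto the six rulings meeting $B$ only at $v$, and the multiplicity $6$ comes from intersecting $B$ with a plane section made of two general rulings ($8-2=6$).

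There are smaller gaps as well. You never establish base-point-freeness of $\bar\fL^j$, which is needed both for $\bar\varphi$ to be a morphism and for the contraction statement; the paper derives it from the absence of fixed components together with the general member being an irreducible hyperelliptic curve with $D^2=4$ and $p_a(D)=3$. In the rank computation on a general fibre $F$, the subsystem $2D_j+\sum_s\hat G_j^s+|2F|$ does not restrict trivially (its generic member cuts the fixed divisor $2(D_j\cap F)$), and the kernel of $H^0(\bar\fL^j)\to H^0(\O_F(2))$ is $2$-dimensional, not $1$-dimensional; the rank is indeed $2$, but because the extra section, being disjoint from $D_j$, restricts to a divisor missing $D_j\cap F$. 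Also $K_\Sigma=\O_\Sigma(-2)$, not $\O_\Sigma(-1)$; with your value the ramification formula would give $\deg B=4$ instead of $8$ (the paper sidesteps this by applying Riemann--Hurwitz to the general member $D$, a genus-$3$ curve mapped $2{:}1$ onto a conic). Finally, the Cayley--Bacharach route to $h^0(\bar\fL^j)=4$ is workable but left as ``the hard part''; restricting $|2H|$ to $G_j^1+\cdots+G_j^6$ as in the paper gives $h^0=1+h^0(2H-6F)=4$ in one line.
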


Before the proof, let us point out the
following corollary, which we will later need.

\begin{corollary}
\label{c:2:1}
The Severi variety of irreducible $\delta$--nodal  curves in 
$\bigl|\O_{W_j}(2H)\otimes {\cal I}_{Z_j} \bigr|$ is isomorphic
to the subvariety of $\check{\P}^3$ parametrizing 
$\delta$--tangent planes to $B$, for $\delta=1, \ldots, 3$.
They have degree $14$, $60$, and $80$, respectively.
\end{corollary}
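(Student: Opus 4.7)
The plan is to transport the Severi problem on $W_j$ to a hyperplane-tangency problem for the branch curve $B$ in the dual space $\check{\P}^3 = |\O_{\Sigma}(1)|$, using the $2:1$ morphism $\bar\varphi: \bar W_j \to \Sigma$ of Proposition \ref{p:2:1}. Since $\bar\fL^j$ is the complete linear system attached to $\bar\varphi$, pullback of plane sections yields an isomorphism $|\O_\Sigma(1)| \xrightarrow{\sim} \bar\fL^j$ sending $\Pi \mapsto \bar\varphi^{-1}(\Pi\cap\Sigma)$, which via $u_j:\bar W_j\to W_j$ lifts to the identification $\check{\P}^3 \cong |\O_{W_j}(2H)\otimes\mathcal{I}_{Z_j}|$.

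Next, I would check that under this isomorphism, irreducible $\delta$-nodal members of $\bar\fL^j$ correspond exactly to planes $\Pi$ tangent to $B$ at $\delta$ smooth points and meeting $B$ transversally at the remaining intersection points. For $\Pi$ not through the vertex $v$ of $\Sigma$, the conic $C = \Pi\cap\Sigma$ is smooth and $\bar\varphi^{-1}(C)\to C$ is the double cover branched along the degree-$8$ cycle $C\cap B$; hence an ordinary node appears above $p\in C\cap B$ precisely when $p$ is a smooth point of $B$ and $\Pi$ is simply tangent to $B$ there. The locus of planes through $v$ forms a $\P^2\subset\check{\P}^3$ over which $\bar\varphi^{-1}(\Pi\cap\Sigma)$ is reducible (being the pullback of two rulings), and therefore contributes nothing to the Severi variety of irreducible nodal curves; moreover a generic plane through $v$ meets $B$ transversally at the residual $2$ points off $v$, so it does not belong to the Zariski closure of the $\delta$-tangent locus either. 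Taking closures then gives the sought isomorphism of Severi varieties with the closures of the $\delta$-tangent loci in $\check{\P}^3$.

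Finally, I would apply Proposition \ref{p:dJ} to $B\subset\P^3$ with $d=8$ (as $B=\Sigma\cap Q_4$ for a quartic $Q_4$), $g=0$, $N=3$, and $\tau=\delta\in\{1,2,3\}$. The hypotheses are straightforward to verify: $B$ is non-degenerate, since a plane containing $B$ would cut $\Sigma$ in a conic of degree $2 < 8=\deg B$; the normalization is unramified because the only singularity is an ordinary $6$-fold point, whose six smooth branches lift to distinct unramified points of $\bar B\cong\P^1$; and $2\tau < d$ holds throughout. The degree of the $\tau$-tangent locus is then the coefficient of $u^\tau v^{8-2\tau}$ in $(1+2u+v)^{8-\tau}$, namely $\binom{8-\tau}{\tau}\,2^\tau$, giving $14$, $60$, and $80$ for $\tau = 1, 2, 3$ respectively. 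The main technical point, already addressed in the preceding paragraph, is the irreducibility check ensuring that the $\P^2$ of vertex-containing planes does not contribute spurious components to the closure of the Severi variety.
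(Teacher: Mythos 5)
Your proposal is correct and follows exactly the route the paper intends: the paper states this corollary without a separate proof, as an immediate consequence of Proposition \ref{p:2:1} (identifying nodal members of $\fL^j$ with tangent planes to the branch curve $B$) combined with the de Jonqui\`eres-type formula of Proposition \ref{p:dJ} applied to the rational, non-degenerate, degree $8$ curve $B$ with unramified normalization. Your verification of the hypotheses of Proposition \ref{p:dJ} and your discussion of the plane of vertex-containing (hence reducible) sections supply details the paper leaves implicit, and your degree computation $\binom{8-\tau}{\tau}2^\tau$ reproduces $14$, $60$, $80$.
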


For the proof of Proposition \ref {p:2:1} we need two preliminary lemmas.

\begin{lemma}
\label{lem:dim-2Sigma}
The linear system $|\O_{W_j}(2H)\otimes\mathcal{I}_{Z_j}|\subset
|2H_{W_j}|$ has dimension $3$.
\end{lemma}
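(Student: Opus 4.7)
The plan is to compute $h^0(W_j, \mathcal{O}_{W_j}(2H) \otimes \mathcal{I}_{Z_j}) = 4$. On the Hirzebruch surface $W_j \cong \F_4$ with negative section $D_j$ and ruling $F$, one has $H = D_j + 4F$ and the standard formula $h^0(\F_4, a D_j + b F) = \sum_{i=0}^{a}\max(0, b - 4i + 1)$ gives $h^0(W_j, \O(2H)) = 9 + 5 + 1 = 15$. Via the short exact sequence
\begin{equation*}
0 \to \O_{W_j}(2H) \otimes \mathcal{I}_{Z_j} \to \O_{W_j}(2H) \to \O_{Z_j} \to 0,
\end{equation*}
the question reduces to showing that the restriction map $r : H^0(W_j, \O(2H)) \to H^0(\O_{Z_j}) = \C^{12}$ has rank exactly $11$.

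For the lower bound $h^0 \geq 4$, I would exhibit four linearly independent sections. Three come from the $2$-dimensional subsystem $2D_j + G_j^1 + \cdots + G_j^6 + |2F|$ already singled out in Step (\ref{Kum:coplanar}b) of \S\ref{s:kummer-ls}. The fourth section will come from an irreducible $2$-section $B \in |2H|$ of the ruling $\pi: W_j \to \P^1$ meeting each $G_j^s$ exactly in the scheme $Z_j^s$; such a $B$ is not in the above $3$-dimensional subspace since it does not contain $D_j$. To prove existence, write a general section in coordinates as $\varsigma = y^2 p_8(u,v) + xy\, p_4(u,v) + x^2 p_0$, where $\{y=0\}$ defines $D_j$ and the $G_j^s$ sit over the points $a_s \in \P^1$; vanishing on $Z_j^s$ is equivalent to the proportionality $(p_8(a_s) : p_4(a_s) : p_0) \propto (A_s : B_s : C_s)$, where $A_s y^2 + B_s xy + C_s x^2$ is the quadratic form on $G_j^s$ cutting out $Z_j^s$. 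Eliminating the proportionality constants yields that a nonzero $p_0$ is admissible if and only if the six values $(B_s / C_s)_{s=1}^6$ lie on a polynomial of degree $\leq 4$ on $\P^1$—this is a single linear condition on $\C^6$ which, for the special $Z_j^s$ coming from the two rulings of the tangent cone $Q_j^s$ through $D_j \cap Q_j^s$, is satisfied by virtue of the global $16_6$-configuration on $S_0$. This yields the desired fourth section.

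For the upper bound $h^0 \leq 4$, I would work on the blow-up $u_j: \bar W_j \to W_j$ at the $12$ points of $Z_j$, where $h^0(W_j, \O(2H) \otimes \mathcal{I}_{Z_j}) = h^0(\bar W_j, \bar D)$ with $\bar D := 2H - \sum_{s=1}^6 (I_j^{s\prime} + I_j^{s\prime\prime})$. A direct Riemann-Roch computation gives $\bar D^2 = 16 - 12 = 4$ and $\bar D \cdot K_{\bar W_j} = 0$, hence $\chi(\bar D) = 3$. Since $K_{\bar W_j} - \bar D = -4D_j - 14F + 2\sum_s (I_j^{s\prime} + I_j^{s\prime\prime})$ is not effective, $h^2(\bar D) = 0$, so $h^0(\bar D) = 3 + h^1(\bar D)$. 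Combined with the lower bound $h^0 \geq 4$, this forces $h^1 = 1$ and $h^0 = 4$, so $\dim \fL^j = 3$.

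The main obstacle is the existence of the fourth section, i.e.\ verifying that the $12$ points $Z_j$ impose exactly $11$ (and not $12$) independent conditions on $|2H|$. An alternative, more conceptual route—which I would use to double-check—is to invoke Lemma \ref{l:tg-id}(ii): the linear system $\fL^j$ was identified in Step (\ref{Kum:coplanar}b) with the twisted system $|\L_0(-M^j)|$ on $\bar X_0$, which, by the general theory of limit linear systems, is isomorphic to the exceptional divisor $\tilde\fL^j$ of the blow-up of the $4$-fold $\fP$ at a smooth point, hence canonically a $\P^3$.
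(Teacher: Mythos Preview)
Your upper--bound argument has a genuine gap. The Riemann--Roch computation is correct and gives $h^0(\bar D)-h^1(\bar D)=3$ with $h^2(\bar D)=0$, but this is \emph{not} an upper bound on $h^0$: combined with the lower bound $h^0\geqslant 4$ you only obtain $h^1\geqslant 1$, not $h^1=1$. Nothing you have written excludes, say, $h^0=5$ and $h^1=2$. Your ``alternative, more conceptual route'' does not rescue this: the exceptional divisor of the blow--up of the $4$--fold $\fP$ at a smooth point is indeed a $\P^3$, but identifying it with the \emph{full} twisted linear system $\fL^j$ (rather than with a $3$--dimensional linear subspace of it) already presupposes $\dim\fL^j\leqslant 3$, which is exactly the content of the lemma. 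Semicontinuity likewise only yields the lower bound.

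The paper's argument is shorter and avoids Riemann--Roch altogether. It restricts to the six rulings $G_j^1+\cdots+G_j^6$: the kernel of the restriction map
\[
r:\H^0\bigl(W_j,\O_{W_j}(2H)\bigr)\longrightarrow
\bigoplus_{s=1}^{6}\H^0\bigl(G_j^s,\O_{G_j^s}(2H)\bigr)
\]
is $\H^0\bigl(W_j,\O_{W_j}(2H-6F)\bigr)$, which has dimension $3$. The crucial point---which simultaneously handles both the lower and the upper bound, and which your coordinate computation with the $B_s/C_s$ was groping towards---is that $Z_j$ is cut out on $\sum_s G_j^s$ by a \emph{general} curve in $|2H|$; hence the subspace of sections on $\sum_s G_j^s$ vanishing on $Z_j$ meets $\mathrm{Im}(r)$ in exactly a line $\langle\sigma\rangle$, and $\H^0\bigl(\O_{W_j}(2H)\otimes\mathcal I_{Z_j}\bigr)=r^{-1}(\langle\sigma\rangle)$ has dimension $3+1=4$. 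Your appeal to ``the global $16_6$--configuration'' is the right intuition for why such a curve in $|2H|$ exists, but the restriction--sequence packaging makes the dimension count immediate.
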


\begin{proof}
The 0-cycle  ${Z_j}$ is cut out on
$G_j^1+\cdots+G_j^6$ by a general curve in $|2H|$.
Let then
\[\sigma\in \bigoplus_{s=1}^ 6 \H^0(G_j^ s, \O_{G_j^ s}(2H))\cong 
\H^ 0(\P^ 1, \O_{\P^ 1}(2))^ {\oplus 6}\]
be a non--sero section vanishing at ${Z_j}$. 
Then 
$ \H^ 0(W_j, \O_{W_j}(2H)\otimes\mathcal{I}_{Z_j})\cong r^ {-1}(\langle \sigma\rangle)$
where 
\[r:\H^0(W_j, \O_{W_j}(2H)) \to \bigoplus_{s=1}^ 6 
\H^0(G_j^ s, \O_{G_j^ s}(2H))\cong \H^ 0(\P^ 1, \O_{\P^ 1}(2))^ {\oplus 6}\]
is the restriction map. The assertion now follows from the restriction
exact sequence, since 
\[  \h^ 0(W_j, \O_{W_j}(2H)\otimes\mathcal{I}_{Z_j})=
1+\h^ 0(W_j, \O_{W_j}(2H-6F))=4.\]
\end{proof}

\begin{lemma}
\label{lem:2Sigma-deg2}
The rational map $\varphi_j$
has degree $2$ onto its image, and
its restriction to any  line of the ruling $|F_{W_j}|$ but the six
$G^s_j$, $1\leqslant s\leqslant 6$, has degree $2$ as 
well.
\end{lemma}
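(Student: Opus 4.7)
The plan has two main parts: first, analyze the restriction of $\fL^j$ to a general fiber $F \in |F_{W_j}|$ (not one of the six $G^s_j$); second, deduce the global degree of $\varphi_j$ via a self-intersection computation on the blow-up $\bar W_j$.

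For the first part, set $V := H^0(W_j, \O_{W_j}(2H_{W_j}) \otimes \mathcal{I}_{Z_j})$ so that $\fL^j = \P V$ has $\dim V = 4$ by Lemma \ref{lem:dim-2Sigma}, and let $V_F := H^0(W_j, \O_{W_j}(2H_{W_j} - F) \otimes \mathcal{I}_{Z_j})$ be the subspace of sections vanishing on $F$. As $F'$ varies in $|F_{W_j}| \cong \P^1$, the divisor
\[
F + 2D_j + G_j^1 + \cdots + G_j^6 + F'
\]
has class $2H_{W_j}$, contains $Z_j \subset \sum_s G_j^s$, and contains $F$; it thus belongs to $V_F$, yielding $\dim V_F \geq 2$. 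The restriction map $r_F : V \to H^0(\O_F(2H_{W_j})) = H^0(\O_{\P^1}(2))$ therefore has image of dimension at most $2$, so the trace of $\fL^j$ on $F$ is contained in a $g^1_2$.

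To see the trace is exactly a $g^1_2$, I argue by contradiction: if the trace had projective dimension $\leq 0$ for a general $F$, then $\varphi_j\vert_F$ would be constant on every general ruling, so $\varphi_j$ would factor through the ruling map $W_j \to \P^1$. But on the blow-up $u_j : \bar W_j \to W_j$ resolving $Z_j$, the class
\[
M := u_j^*(2H_{W_j}) - \sum\nolimits_{s=1}^6 \bigl({I_j^s}' + {I_j^s}''\bigr)
\]
is base-point-free with $h^0(M) = 4$, and
\[
M^2 = (2H_{W_j})^2 - 12 = 16 - 12 = 4 \neq 0,
\]
which forces the image $\Sigma = \bar\varphi(\bar W_j)$ to be two-dimensional, a contradiction. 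Hence $\varphi_j\vert_F$ has degree $2$ onto its image, which is a line in $\P^3$.

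For the global degree: the image $\Sigma$ is non-degenerate in $\P^3$ (a basis of $H^0(M)$ giving the map coordinates is $\C$-linearly independent, so no linear form vanishes on $\Sigma$), hence $\deg \Sigma \geq 2$; combined with $\deg \bar\varphi \cdot \deg \Sigma = M^2 = 4$, this yields $\deg \bar\varphi \leq 2$. Conversely, a birational morphism restricts to a generically birational morphism on a generic subvariety, so the degree $2$ restriction to a general $F$ forces $\varphi_j$ to be non-birational, giving $\deg \varphi_j \geq 2$. Combining, $\deg \varphi_j = 2$. The main obstacle is the exclusion of the degenerate $g^0_2$ case in the first step; this is resolved by the two-dimensionality of $\Sigma$, which ultimately rests on the computation $M^2 = 4$ and the base-point-freeness of $|M|$ on $\bar W_j$.
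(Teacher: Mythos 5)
Your computation that $\dim V_F\geqslant 2$ for a general $F\in|F_{W_j}|$, hence that the trace of $\fL^j$ on $F$ is contained in a $g^1_2$, is correct and is a clean linear--algebra alternative to the paper's pointwise argument (which exhibits, for a general $x\in W_j$, a conjugate point $x'$ on $F_x$ not separated from $x$ by the system). There are however two genuine gaps, both amounting to uncontrolled base locus. First, knowing that the trace on a general $F$ is a moving $g^1_2$ does not yield degree $2$ for $\varphi_j\vert_F$: a $g^1_2$ with a base point is a fixed point plus a $g^1_1$, and induces a degree $1$ map onto a line. You only exclude the case of projective dimension $0$. To exclude a base point one must control $\mathrm{Bs}(\fL^j)$: the $\P^2$ of members $F+F'+2D_j+\sum_s G_j^s$ shows that $\mathrm{Bs}(\fL^j)\subseteq D_j\cup\bigcup_s G^s_j$, so a base point on a general $F$ would force $D_j\subseteq \mathrm{Bs}(\fL^j)$, which is excluded by $h^0\bigl(\O_{W_j}(2H-D_j)\otimes\mathcal{I}_{Z_j}\bigr)=h^0\bigl(\O_{W_j}(D_j+2F)\bigr)=3<4$; some such computation is needed and is missing.

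Second, and more seriously, you assert that $|M|$ is base--point--free on $\bar W_j$. This is precisely what the paper establishes in the proof of Proposition \ref{p:2:1} \emph{using} the present lemma (the absence of fixed components of $\fL^j$ is deduced there from the degree--$2$ statement on the rulings), so invoking it here is circular unless you prove it independently --- and it is load--bearing twice in your argument: without it, $M^2=4$ is compatible with a one--dimensional image (the excess being absorbed by base points or fixed components of the moving part), and the identity $\deg\bar\varphi\cdot\deg\Sigma=M^2$ fails. The paper's proof avoids all of this: it identifies a general $x$ with its conjugate $x'$ on $F_x$ by a dimension count, and then uses the pencil $2D_j+G_j^1+\cdots+G_j^6+F_x+|F|$, whose moving part $|F|$ is base--point--free for free, to see that the fibre of $\varphi_j$ over a general point of $\varphi_j(F_x)$ is contained in $F_x$; this yields the global degree $2$ with no self--intersection computation and no appeal to properties of $|M|$ proved later.
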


\begin{proof}
Let $x \in W_j$ be a  general point 
and let $F_x$ be the  line of the ruling containing $x$.
One can find a divisor 
$D\in |\O_{W_j}(2H)\otimes \mathcal{I}_{Z_j}|$ containing $x$
but not containing $F_x$. Let $x+x'$ be the length  two scheme cut out by 
$D$ on $F_x$.
By an argument similar to the one in the proof of Lemma
\ref{lem:dim-2Sigma}, one has 
$\dim \bigl( |\O_{W_j}(2H)\otimes \mathcal{I}_{Z_j} \otimes {\cal
  I}_{x+x'}| \bigr)=2$.
This shows that $x$ and $x'$ are mapped to the same
point by $\varphi$.
Then, considering the sublinear system 
\[
2D_j+G_j^1+\cdots+G_j^6+F_x+\Phi,\quad 
\Phi \in |\O_{W_j}(F)|,
\]
of $\fL^ j$, with fixed divisor $2D_j+G_j^1+\cdots+G_j^6+F_x$,
the assertion follows from the base point freeness of
$|\O_{W_j}(F)|$.
\end{proof}

\begin{proof}[Proof of Proposition \ref{p:2:1}]
First we prove that $\fL^j$  has no fixed components, hence that the
same holds for  $\bar \fL^j$.
Suppose $\Phi$ is such  a fixed component. By Lemma \ref{lem:2Sigma-deg2}, $\Phi\cdot F=0$, hence $\Phi$ should consist of 
curves contained in rulings. The argument of the proof of Lemma \ref {lem:dim-2Sigma} shows that no such a curve may  occur in $\Phi$, a contradiction.   

Let $D\in \bar \fL^j$ be a general element. By Lemmas   \ref{lem:dim-2Sigma} and \ref {lem:2Sigma-deg2}, $D$ is irreducible and hyperelliptic, since $D\cdot F=2$. 
Moreover $D^ 2=4$ and $p_a(D)=3$. This implies that $D$ is smooth and that  $\bar \fL^j$ is base point free.  Moreover the image $\Sigma$ of $\varphi$ has degree 2.  Since 
$D\cdot \tilde{D}_j=0$ and $\tilde D_j^ 2=-4$, the connected divisor
$\tilde D_j$ is contracted to a double point $v$ of $\Sigma$, which is
therefore a cone.

Since $D$ is mapped $2:1$ to a general plane section of $\Sigma$,
which is a conic, we see that $\deg (B)=8$.  
Let $\Phi \in |F|_{W_j}$ be general, and $\ell$ its image via
$\varphi$, which is a ruling of $\Sigma$. 
The restriction $\left.\varphi\right|_\Phi: \Phi\to \ell$ is a degree
2 morphism, which is  ramified at the intersection point of $\Phi$
with $D_j$. 
This implies that  $\ell$ meets $B$ at one single point off the vertex
$v$ of $\Sigma$. Hence $B$ has a unique irreducible component $B_0$ which
meets the general ruling $\ell$ in one point off $v$. We claim that
$B=B_0$. If not, $B-B_0$ consists of rulings $\ell_1,\ldots, \ell_n$,
corresponding to rulings $F_1,\ldots, F_n$, clearly all  different
from the $G_j^ s$, with $1\leqslant s\leqslant 6$. Then the
restrictions $\left.\varphi\right|_{F_i}: F_i\to \ell_i$ would be
isomorphisms, for $1\leqslant i\leqslant n$, which is clearly
impossible.  Hence $B$ is irreducible, rational, sits in $\vert
\O_\Sigma(4)\vert$. 
Finally, taking a  plane section of $\Sigma$ consisting of two general rulings, we see that
it has only two intersection points with $B$ off $v$. Hence $B$ has a point of multipilicity 6 at $v$ and the assertion follows. \end{proof}

\begin{remark}\label{rem:singular} Each of the curves
  $\hat{G}_j^s+{I_j^s}'+{I_j^s}'' \in |F|_{\bar W_j}$, for $1\leqslant
  s\leqslant 6$, is mapped by $\bar \varphi$ to a ruling $\ell_s$ of
  $\Sigma$, and this ruling has no intersection point with $B$ off
  $v$. This implies that $v$ is an ordinary 6--tuple point for $B$ and
  that the tangent cone to $B$ at $v$ consists of the  rulings
  $\ell_1,\ldots, \ell_6$ of $\Sigma$. 
\end{remark}

\begin{remark} \label{rem:2-1cone}
Let $S'\to \Delta$ be the image of $\bar{X} \to \Delta$
via the map defined by the linear system 
$|\L(-2W_j-\sum_s \tQ_j^s)|$. One has $S'_t\cong S_{t^2}$ for $t\neq 0$,
and the new central fibre $S'_0$ is  a double quadratic cone
$\Sigma$ in $\P^3$.
\end{remark}

\subsection{The limit Severi varieties} \label {s:kummer-concl}

In this section we describe the regular components of the limit Severi
varieties $\fV_{1,\delta}(\bar X)$ for $1\leqslant \delta\leqslant 3$.
The discussion here parallels the one in \S \ref 
{s:tetra-concl}, therefore we will be sketchy, leaving to the reader most of the straightforward verifications, based on the description of
the limit linear system in \S \ref {s:kummer-ls}.

\begin{proposition} [Limits of $1$-nodal curves]
\label{p:1-kummer}
The regular components of the limit Severi variety \linebreak
$\fV_{1,1}(\bar X)$ are the following 
(they all appear with multiplicity $1$, but the ones in
\eqref{it:double} which appear with multiplicity $2$):\\ 
\begin{inparaenum}[\normalfont (i)]
\item \label{it:kummer}
$V\bigl(\delta_{\tilde{S}_0}=1\bigr)$, which is
isomorphic to the Kummer quartic surface $\check{S}_0 \subset
|\O_{S_0}(1)| \cong \check{\P}^3$;\\
\item \label{it:quadrics}
$V\bigl(\tQ_s,\delta_{\tQ_s}=1\bigr)$, which is
isomorphic to the smooth quadric $\check{Q}_s \subset |\O_{Q_s}(1)|
\cong \check{\P}^3$, for $1\leqslant s\leqslant 16$;\\
\item \label {it:double} $V(\tilde Q_s, \tau_{E_s,2}=1)$, which is
  isomorphic to a quadric cone in 
$|\O_{Q_s}(1)|$, for $1\leqslant s\leqslant 16$;\\
\item
$V\bigl(\tQ_{s'}+\tQ_{s''},\delta_{\tQ_{s'}}=1\bigr)$, which is  isomorphic to $\check{Q}_s \subset |\O_{Q_s}(1)|
\cong \check{\P}^3$, for $1 \leqslant s'< s'' \leqslant 16$;\\
\item
$V\bigl(\tQ_{s'}+\tQ_{s''}+\tQ_{s'''},
\delta_{\tQ_{s'}}=1 \bigr)$, for
$1 \leqslant s',s'',s''' \leqslant 16$
such that $\tQ_{s'},\tQ_{s''},\tQ_{s'''}$ are pairwise distinct and do
not meet a common $W_j$: 
it is again isomorphic to $\check{Q}_s \subset |\O_{Q_s}(1)|
\cong \check{\P}^3$;\\
\item \label{it:doubleconics}
$V\bigl(2W_j+\tQ^1_j+\cdots +\tQ^6_j,
\delta_{W_j}=1 \bigr)$,  which is isomorphic  to the
degree $14$ surface $\check{B} \subset |\O_B(1)|\cong
\check{\P}^3$, for $1\leqslant j\leqslant 16$.\\
\end{inparaenum}
\end{proposition}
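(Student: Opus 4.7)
My plan is to apply Proposition \ref{p:zrr} together with formula \eqref{reg-Severi} to enumerate the regular components of $\fV_{1,1}(\bar{X})$. Since the construction of $\bar X$ in \S\ref{s:kummer-ls} involves no small resolution (the nodes of $\bar S$ are resolved by honest blow-ups), the index set $I$ in Definition \ref{d:zrr} is always empty. For $\delta = 1$ the combinatorial types therefore reduce to either (a) one simple node on an irreducible component of $\bar X_0$, with multiplicity $\mu = 1$; or (b) a simple tacnode on a single double curve of $\bar X_0$ (one of the $E_s$'s, $D_j$'s, or $G_j^s$'s), with multiplicity $\mu = 2$ by Remark \ref{r:smoothness}.

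The first step would be to enumerate the effective divisors $W$ supported on $\bar X_0$ with $h^0(\L_0(-W)) = 4$. By Remark \ref{r:twist}, up to linear equivalence on $\bar X$ these are precisely the twists giving rise to the irreducible components of the limit linear system described in \S\ref{s:kummer-ls}, namely $W = 0$, $W = \tilde Q_s$, $W = \tilde Q_{s'} + \tilde Q_{s''}$, $W = \tilde Q_{s'} + \tilde Q_{s''} + \tilde Q_{s'''}$ (for triples of nodes not lying on a common contact conic), and $W = 2W_j + \tilde Q_j^1 + \cdots + \tilde Q_j^6$. For each such $W$, using the restriction-class computations already performed in \S\ref{s:kummer-ls}, I would identify the codimension-$1$ subvarieties $V(W,\bdelta,\emptyset,\ubtau) \subset |\L_0(-W)|$ and verify they have the expected codimension.

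For $W=0$ only $V(\delta_{\tilde S_0}=1)$ contributes, yielding the dual Kummer surface $\check S_0$ by Proposition \ref{p:dual-kummer}, giving item (i). For $W=\tilde Q_s$, the restriction to $\tilde Q_s\cong \P^1\times \P^1$ is the complete plane-sections system, so $V(\tilde Q_s,\delta_{\tilde Q_s}=1)$ is the dual smooth quadric $\check Q_s$ (item (ii)), while the locus of sections tangent to the $(1,1)$-conic $E_s\subset Q_s\subset \P^3=|H|^\vee_{\tilde Q_s}$ is the dual surface $\check E_s$, a quadric cone, yielding item (iii) with multiplicity $2$. For $W=\tilde Q_{s'}+\tilde Q_{s''}$ (respectively $W=\tilde Q_{s'}+\tilde Q_{s''}+\tilde Q_{s'''}$), the natural projection $\fL_{s's''}\to |H|_{\tilde Q_{s'}}$ constructed in Step~(\ref{Kum:pencils}) of \S\ref{s:kummer-ls} (respectively the analogous projection for $\fL_{s's''s'''}$) identifies the component $V(W,\delta_{\tilde Q_{s'}}=1)$ with a surface isomorphic to $\check Q_{s'}$, giving items (iv) and (v). Finally, for $W=2W_j+\sum_s\tilde Q_j^s$, the restriction to $W_j$ is the linear system $|\O_{W_j}(2H)\otimes \mathcal{I}_{Z_j}|$ of \eqref{W_j}, and Corollary \ref{c:2:1} identifies $V(W,\delta_{W_j}=1)$ with the degree-$14$ dual surface $\check B$ of the branch curve from Proposition \ref{p:2:1}, giving item (vi).

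The main obstacle is twofold. First, one must rule out spurious tacnodal components along the remaining double curves: a tacnode along $D_j$ when $W\not\supset W_j$, or along $G_j^s$, is excluded because the restriction of $\L_0(-W)$ to the relevant component is either trivial or has no sections cutting out the required tangent divisor, as one reads off from the restriction-class tables of \S\ref{s:kummer-ls}. Second, in cases (iv) and (v), the one-dimensional kernel of the restriction map $\H^0(\tilde Q_{s''},H)\to \H^0(E_{s''},\O(2))$ (and analogously for $\tilde Q_{s'''}$) introduces an apparent extra degree of freedom in the $\tilde Q_{s''}$-component of a nodal curve, and I would have to check, by the argument of Step~(\ref{Kum:pencils}), that this freedom does not inflate the dimension of $V$ beyond the expected codimension $1$ and that the projection to $|H|_{\tilde Q_{s'}}$ yields the claimed identification with $\check Q_{s'}$. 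Once these verifications are in place, the proposition is a direct application of Proposition \ref{p:zrr}.
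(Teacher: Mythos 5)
Your proposal follows exactly the route the paper intends: the proposition is stated there without a formal proof, the text referring the reader to Proposition \ref{p:zrr} applied to the components of the limit linear system constructed in \S\ref{s:kummer-ls}, together with Proposition \ref{p:dual-kummer} for item (i), the dual quadrics and the dual conic (a quadric cone) for items (ii)--(iii), and Proposition \ref{p:2:1} with Corollary \ref{c:2:1} for item (vi) --- precisely the ingredients you assemble, including the observations that $I=\emptyset$ for this degeneration and that the only multiplicity greater than $1$ comes from the tacnodal components. The verification you flag for items (iv)--(v), namely that the one-dimensional ambiguity in the $\tilde Q_{s''}$-component does not alter the expected codimension and that the projection realizes the claimed identification, is indeed the only delicate point, and it is likewise left implicit in the paper.
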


\begin {corollary} [Theorem \ref{T:kummer} 
for $\delta=1$]
\label{cor:thmB2i} 
The family $f:S\to \Delta$ of general quartic surfaces
degenerating to a Kummer surface $S_0$ we started with, with smooth
total space $S$, and endowed with the line bundle $\O_S(1)$,
is $1$--well behaved, with good model $\varpi: \bar
X\to \Delta$. 
The limit in $|\O_{S_0}(1)|$ of the dual surfaces
$\check S_t$, $t\in\Delta^*$, consists in the union of the dual
$\check S_0$ of $S_0$ (which is again a Kummer surface),
plus the $16$ planes of the $16_6$ configuration of
$\check S_0$, each counted with multiplicity $2$.
\end{corollary}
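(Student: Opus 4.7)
The plan is to invoke Proposition \ref{prop:deg}\,(ii) in order to establish that $\varpi:\bar X\to\Delta$ is a $1$--good model of $f:S\to\Delta$, using that $d_1=36$ by Proposition \ref{p:deg-dual}. Since $S_0$ is irreducible, the only admissible vector $\tilde{\mathbf n}$ with $|\tilde{\mathbf n}|=m_1=2$ is $\tilde{\mathbf n}=(2,0,\ldots,0)$, i.e.\ two general points on $\tilde S_0\subset\bar X_0$. I would verify the numerical identity
\[\sum_{V\in\mathrm{Irr}(\reg\fV_{1,1}(\bar X,\tilde\L))}\mu(V)\,\deg_{\tilde{\mathbf n}}(V)=36,\]
going through the regular components listed in Proposition \ref{p:1-kummer}; by Proposition \ref{prop:deg}\,(ii), equality forces goodness, and $\cru\fV_{1,1}(S)$ is then read off as the push-forward of $\reg\fV_{1,1}(\bar X)$ to $|\O_{S_0}(1)|\cong\check\P^3$.

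Next I would compute $\deg_{\tilde{\mathbf n}}(V)$ for each $V$ in turn. The component $V(\delta_{\tilde S_0}=1)\cong\check S_0$ contributes $4$: two general points impose two conditions on $|H_0|_{\tilde S_0}\cong\check\P^3$, cutting out a line that meets the degree--$4$ Kummer surface $\check S_0$ in four points. Each $V(\tilde Q_s,\delta_{\tilde Q_s}=1)\cong\check Q_s$ contributes $2$: the two points determine a unique curve in $|H_0-E_s|_{\tilde S_0}$ cutting $E_s$ transversely in two distinct points, and the matching extensions to $\tilde Q_s$ form a pencil of plane sections of $Q_s\cong\P^1\times\P^1$ through those two fixed points of $E_s$ --- a line in $|\O_{Q_s}(1)|$ meeting the smooth quadric $\check Q_s$ in $2$ points. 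For the tacnodal component $V(\tilde Q_s,\tau_{E_s,2}=1)$ (of multiplicity $\mu=2$ by Proposition \ref{p:zrr}), every member of the same pencil restricts to $E_s$ as the two distinct chosen points, so none is tangent to $E_s$: the contribution vanishes. The remaining components contribute $0$ as well: those in $\fL_{s's''}$ or $\fL_{s's''s'''}$ have $\tilde S_0$--part moving in $|H_0-E_{s'}-E_{s''}|$ (dim $1$) or $|H_0-E_{s'}-E_{s''}-E_{s'''}|$ (dim $0$), too small for two general conditions; and for $V(2W_j+\sum_s\tilde Q_j^s,\delta_{W_j}=1)$, the relation $2D_j+\sum_{s=1}^6E_j^s\lineq H_0$ on $\tilde S_0$ shows that the twist has zero restriction to $\tilde S_0$, so its members never cover a generic point of $\tilde S_0$. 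The total $4+16\cdot 2=36$ matches $d_1$, establishing the good model property.

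Finally, I would describe $\cru\fV_{1,1}(S)=\phi_*\reg\fV_{1,1}(\bar X)$ as a $2$--cycle in $\check\P^3$. Only two kinds of regular components have $2$--dimensional image: $V(\delta_{\tilde S_0}=1)$ maps birationally onto $\check S_0$, contributing $[\check S_0]$; and each $V(\tilde Q_s,\delta_{\tilde Q_s}=1)$ maps with degree $2$ onto the plane $\Pi_s\subset\check\P^3$ of hyperplanes through the node $p_s$ (a generic $\pi\in\Pi_s$ is hit by the two singular members of the associated pencil on $Q_s$), contributing $2[\Pi_s]$. All remaining components, including the tacnodal ones, are contracted to subvarieties of dimension $\leqslant 1$; in particular, $V(\tilde Q_s,\tau_{E_s,2}=1)$ maps to the dual curve of $E_s$ inside $\Pi_s$. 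Hence
\begin{equation*}
\cru\fV_{1,1}(S)=[\check S_0]+2\sum_{s=1}^{16}[\Pi_s],
\end{equation*}
of total degree $4+32=36=d_1$. By the biduality used in the proof of Proposition \ref{p:dual-kummer}, the plane $\Pi_s=\check p_s$ is exactly the plane of the contact conic on $\check S_0$ dual to the node $p_s$, so the $\Pi_s$ are the $16$ planes of the $16_6$--configuration of $\check S_0$. The main delicate point to formalize is the vanishing of the tacnodal contribution to $\cru\fV_{1,1}(S)$: both the $\tilde{\mathbf n}$--degree and the push-forward are killed by the rigidity $D\cap E_s=p_1+p_2$ in any pencil of $(1,1)$--divisors on $Q_s$ through two fixed distinct points $p_1,p_2\in E_s$.
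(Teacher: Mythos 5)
Your proposal is correct and follows essentially the same route as the paper: the paper's proof likewise computes the push-forward of the regular components listed in Proposition \ref{p:1-kummer} (finding $\check S_0$ for $V(\delta_{\tilde S_0}=1)$, $2\check p_s$ for each $V(\tQ_s,\delta_{\tQ_s}=1)$, and $0$ for all the others), observes that the total degree is $36=d_1$, and concludes by Proposition \ref{p:deg-dual}. Your version merely makes explicit the $\mathbf n$--degree bookkeeping (via Proposition \ref{prop:deg}) and the pencil argument on $Q_s$ that the paper leaves to the reader; both are accurate.
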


\begin{proof}
The push--forward by the morphism $\fP'''_0\to \fP_0 \cong
|\O_{S_0}(1)|$ of the regular 
components of $\fV_{1,1}$ with their respective multiplicities in 
$\reg \fV_{1,1}$
is $\check S_0$ in case \eqref{it:kummer}, 
$2\cdot \check {p}_s$ in case \eqref{it:quadrics}, and $0$ otherwise.
The push-forward of $\reg \fV_{1,1}(\bar X)$ has thus total degree $36$, and
is therefore the crude limit Severi variety
$\cru\fV_{1,1}(S)$ by Proposition
\ref{p:deg-dual}. 
\end{proof}

\begin{remark}\label{rem:dual2} 
\begin{inparaenum}[\bf (a)]
\item
Similar arguments show that $\varpi:\bar X\to \Delta$ is a $1$--good
model for the degenerations of general quartic surfaces obtained 
from $\bar X\to \Delta$ via
the line bundles $\L(-2W_j-\tQ^1_j-\cdots-\tQ^6_j)$ and $\L(-\tilde
Q_s)$ respectively (see Remarks \ref{rem:2-1cone} and
\ref{rem:one-node} for a description of these degenerations). 

To see this in the former case, 
let us consider two general points on a given
$W_j$, and enumerate the regular members of $\fV_{1,1}$ that contain
them.
There are $2$ curves in  \eqref{it:kummer}
(indeed, the two points on $W_j$ project to two general points on
$D_j\cong \Gamma_j\subset S_0 \subset \P^3$, which span a line $\ell
\subset \check{\P}^3$; the limiting curves in ${S}_0$ passing through the
two original  points on $W_j$ correspond to the intersection points of
$\check \ell$ with $\check{S}_0$;
now $\check \ell$ meets $\check{S}_0$ with
multiplicity $2$ at the double point which is the  image of $\Gamma_j$
via the Gauss map, and only the two remaining intersection points are
relevant). 
There are in addition $2$ limiting curves in each of the $10$ components
of type \eqref{it:quadrics} corresponding to the $\tQ_s$'s that do not
meet $W_j$, and $14$ in the relevant component of type
\eqref{it:doubleconics}. 

In this case, the crude limit Severi variety therefore consists, in
the notation of Remark \ref{rem:2-1cone}, of the
degree $14$ surface $\check{B}$, 
plus the plane $\check v$ with multiplicity $22$ 
(this has degree $36$ as required).

For the degeneration given by $\L(-\tilde Q_s)$, 
the crude limit Severi variety consists, in
the notation of Remark \ref{rem:one-node},
of the dual to the smooth quadric $Q$, plus the dual to the conic
$\Gamma$ with multiplicity $2$, plus the fifteen planes $\check p$
with multiplicity $2$, where $p$ ranges among the fifteen marked
points on the double plane $\Pi$.

\item
\label{rem:not-good} One can see that  $\varpi:\bar X\to \Delta$ is  not a $1$--good
model for the degeneration
to a union of two double planes obtained via the line bundle
$\L(-\tilde Q_{s'}-\tilde Q_{s''})$ described in Remark
\ref{rem:two-nodes}.
In addition (see Step (\ref{Kum:coplanar}a))
 the line bundles
$\L(-\tilde Q_{s'}-\tilde Q_{s''}-\tilde Q_{s'''})$, though
corresponding to $3$--dimensional components of the limit linear
system, do not provide suitable degenerations of surfaces.
Despite all this,  it seems plausible that one can obtain a good model by
making further modifications of $\bar X\to \Delta$.
The first thing to do would  be to blow--up the curves
$C_{s's''s'''}$.
\end{inparaenum}
\end{remark}

\begin{proposition} [Limits of $2$-nodal curves]
\label{p:2-kummer}
The regular components of the limit Severi variety \linebreak
$\fV_{1,2}(\bar X)$ are the following
(they all appear with multiplicity $1$, except the ones in  \eqref
{it:2quadrics2} appearing with multiplicity $2$):\\ 
\begin{inparaenum}[\normalfont (i)]
\item \label{it:2quadrics}
$V\bigl(\tQ_{s'}+\tQ_{s''},\delta_{\tQ_{s'}}=\delta_{\tQ_{s''}}=1
\bigr)$ for $s'\neq s''$, proper transform of the intersection of two
smooth quadrics in $\fL_{s's''}$;\\
\item \label{it:2quadrics2}
$V\bigl(\tQ_{s'}+\tQ_{s''},\delta_{\tQ_{s'}}=1, \tau_{E_{s''},2}=1
\bigr)$ for $s'\neq s''$, proper transform of the intersection of a
smooth quadric and a quadric cone in $\fL_{s's''}$;\\
\item \label{it:2quadrics-spe} 
$V\bigl(\tQ_{s'}+\tQ_{s''}+\tQ_{s'''},
\delta_{\tQ_{s'}}=\delta_{\tQ_{s''}}=1 \bigr)$ for
$1 \leqslant s',s'',s''' \leqslant 16$
such that $\tQ_{s'},\tQ_{s''},\tQ_{s'''}$ are pairwise distinct and do
not meet a common $W_j$,
proper transform of the intersection of two
smooth quadrics in $\fL_{s's''s'''}$;\\ 
\item \label{it:2conic-2}
$V\bigl(2W_j+\tQ^1_j+\cdots +\tQ^6_j,\delta_{W_j}=2 \bigr)$
for each $j\in\{1,\ldots, 16\}$,
proper transform of a degree $60$ curve in $\fL^j$.\\
\end{inparaenum}
\end{proposition}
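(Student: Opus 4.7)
The plan is to parallel the proof of Proposition \ref{p:1-kummer}, applying Definition \ref{def:reg-comps} and Proposition \ref{p:zrr} to the description of the limit linear system from Proposition \ref{prop:limlin-kummer}. First I would enumerate the candidate tuples $(W,\bdelta,I,\ubtau)$ with $|\bdelta|+|I|+\nu(\ubtau)=2$. Here $I=\emptyset$, since no small resolution was performed in \S\ref{S:Kummer-degen} (the ordinary double points on $\bar S$ were resolved by a single blow--up each, not flopped). The relevant twists $W$ supported on $\bar X_0$ with $\h^0(\L_0(-W))=4$ are exactly the ones associated to the components of the limit linear system identified in \S\ref{s:kummer-ls}, namely $W=0$, $\tQ_s$, $\tQ_{s'}+\tQ_{s''}$, $\tQ_{s'}+\tQ_{s''}+\tQ_{s'''}$ (for non--$16_6$ triples), and $2W_j+\tQ_j^1+\cdots+\tQ_j^6$. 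For each such $W$, a case-by-case inspection of the pairs $(\bdelta,\ubtau)$ with $|\bdelta|+\nu(\ubtau)=2$ determining a non-empty locus of expected codimension $2$ produces exactly the four families (i)--(iv).

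Next, I would identify the geometry of each family. In cases (i) and (iii), imposing a node on a smooth quadric component $\tQ_s$ inside $\fL_{s's''}\cong\P^3$ or $\fL_{s's''s'''}\cong\P^3$ amounts to a single quadric condition, namely the vanishing of the discriminant of the restriction to $\tQ_s$ (a pencil of $(1,1)$-divisors matched along $E_s$). Two such independent quadric conditions therefore cut out a $(2,2)$-complete intersection in the ambient $\P^3$. Case (ii) is analogous: the simple-tacnode condition along $E_{s''}$ (second-order tangency with reduced tangent cone $T_{E_{s''}}$) is again a quadric condition on $\fL_{s's''}$, giving another $(2,2)$ complete intersection curve. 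Case (iv) is a direct application of Corollary \ref{c:2:1}, which identifies the $2$-nodal Severi variety in $|\O_{W_j}(2H)\otimes\mathcal I_{Z_j}|$ as a curve of degree $60$.

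The multiplicities are read off from Proposition \ref{p:zrr} via $\mu(\ubtau)=\prod_m m^{\tau_m}$. In cases (i), (iii), and (iv), $\ubtau$ is trivial, whence $\mu=1$. In case (ii), one has $\tau_{E_{s''},2}=1$, contributing the factor $\mu=2^1=2$, as stated.

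The main obstacle is verifying that no other $(W,\bdelta,\ubtau)$ yields a regular component. I would rule out, by dimension count, combinations such as two nodes on the same $\tQ_s$ (the sub-pencil of $|H|_{\tQ_s}$ matched along $E_s$ is only $1$-dimensional, forcing excess codimension), the presence of a higher tacnode $\tau_{E_s,m}=1$ with $m\geqslant 3$ (empty for $\delta=2$ once the codimension is required to be exact), and the mixed configurations with one node on $\tilde S_0$ and one elsewhere (which correspond to curves on $\tilde S_0$ acquiring a cusp-like limiting behaviour and either fail to parametrize irreducible $\delta$-nodal limits, or appear in higher codimension). The key nontrivial exclusion concerns the tacnode locus of (ii): here the local analysis of \cite{concettina1,concettina2,galati-knutsen} guarantees both the expected codimension and the stated multiplicity, and I would invoke it directly rather than attempt a local computation from scratch.
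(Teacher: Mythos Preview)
Your overall strategy matches the paper's: list the twists furnished by \S\ref{s:kummer-ls}, run Proposition~\ref{p:zrr} over all $(\bdelta,\ubtau)$ with $|\bdelta|+\nu(\ubtau)=2$, and read off the multiplicities. Your identification of families (i)--(iv), the $(2,2)$-intersection descriptions, the appeal to Corollary~\ref{c:2:1}, and the multiplicity computation are all fine.

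The genuine gap is in the exclusions, which are the actual content of the proof. You do not address $\delta_{\tilde S_0}=2$, and your treatment of the mixed case ``one node on $\tilde S_0$ and one elsewhere'' is hand-waving (there is no ``cusp-like limiting behaviour'' or excess-codimension argument here). The paper rules these out via the self-duality of the Kummer surface (Proposition~\ref{p:dual-kummer}): since $\check S_0$ is again a Kummer, its singular locus is $0$-dimensional, so no plane is tangent to $S_0$ at two smooth points; and $\check p_i\cap\check S_0$ is twice a contact conic, so every plane through a node $p_i$ and tangent to $S_0$ is in fact tangent at a point infinitely near $p_i$, not at a smooth point away from $E_i$. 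You also omit two further configurations that must be excluded. First, $\delta_{\tQ_s}=\tau_{E_s,2}=1$: a nodal $(1,1)$-curve on $\tQ_s$ is two rulings, which can only be ``tangent'' to $E_s$ if the node lies on $E_s$, and then the total curve has three local branches there rather than a tacnode, so condition~(iv) of Definition~\ref{d:zrr} fails. Second, $\tau_{E_{s'},2}=\tau_{E_{s''},2}=1$: a curve in $|H_0-E_{s'}-E_{s''}|_{\tilde S_0}$ tangent to both $E_{s'}$ and $E_{s''}$ corresponds to a plane through $p_{s'},p_{s''}$ tangent to both tangent cones, and the only such planes are the two planes of the $16_6$ configuration through $p_{s'},p_{s''}$; the resulting curve then contains one of the double curves $D_j$, violating condition~(i) of Definition~\ref{d:zrr}. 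Without these four exclusions you have not shown that (i)--(iv) exhaust the regular components.
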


\begin{proof}
Again, one checks that the components listed in the above statement
are the 
only ones provided by Proposition \ref{p:zrr},
taking the following points into account:\\
\begin{inparaenum}[(a)]
\item 
the condition $\delta_{\tilde{S}_0}=2$ is impossible to fulfil,
because there is no plane of $\P^3$ tangent to $S_0$ at exactly
two points (see Proposition \ref{p:dual-kummer});\\
\item 
the condition $\delta_{\tilde{S}_0}=\delta_{\tQ_i}=1$ is also impossible to
fulfil, because there is no plane in $\P^3$ tangent to
$S_0$ at exactly one point, and passing through one of its double
points.  Indeeed, let $p_i$ be a  double point of $S_0$,  the dual plane  $\check p_i$  is everywhere tangent to
$\check{S}_0$ along the contact conic Gauss image of $E_i$;\\
\item
the condition $\delta_{\tilde Q_s}=\tau_{E_s,2}=1$ imposes to a member
of $|H|_{\tilde Q_s}$ to be the sum of two rulings intersecting at a
point on $E_s$, and such a curve does not belong to the limit Severi
variety:\\
\item
the condition 
$\tau_{E_{s'},2}=\tau_{E_{s''},2}=1$
imposes to contain one of the two curves $D_j$ intersecting both
$E_{s'}$ and $E_{s''}$,
which violates condition \eqref{no-double-curve} of Definition
\ref{d:zrr}.
\end{inparaenum}
\end{proof}

\begin{remark}\label{rem:480_2} As in Remark \ref {rem:480},
we can enumerate the $480$ limits of $2$--nodal curves
passing through a general point in certain irreducible components of
$\bar{X}_0$:\\
\begin{inparaenum}[(a)]
\item \label{case:kummer-2}
for a general point on $\tilde{S}_0$, we find $4$
  limit curves in each of the $\binom{16}{2}=120$ 
components  in  \eqref{it:2quadrics} of
Proposition \ref{p:2-kummer};\\
\item \label{case:2conic-2}
 for a general point on a given $W_j$, we find
 $60$ limit curves in the appropriate component in  \eqref{it:2conic-2},
and $4$ in each of the $\binom{16}{2}-\binom{6}{2}=105$ different 
components of type \eqref{it:2quadrics} such that $\tilde Q_{s'}$ and
$\tilde Q_{s''}$ do not both meet $W_j$.
\end{inparaenum}

This shows that $\bar X\to \Delta$ is a $2$--good model for the
degenerations of quartics corresponding to the line bundles $\L$ and 
$\L(-2W_j-\tQ^1_j-\cdots-\tQ^6_j)$.
In particular, it implies Corollary \ref{cor:thmB2ii} below.
\end{remark}

\begin {corollary} [Theorem \ref{T:kummer} for $\delta=2$]
\label{cor:thmB2ii}
Same setting as in Corollary \ref {cor:thmB2i}. 
The crude limit Severi variety $\cru \fV_{1,2}(S)$ consists of the
images in $\vert \O_{S_0}(1)\vert$  
of the $120$ irreducible curves  listed in case \eqref{case:kummer-2} of Remark \ref {rem:480_2}.
Each of them projects $4:1$ onto a pencil of planes containing two
double points of $S_0$.\end{corollary}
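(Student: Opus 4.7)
The plan is to combine the enumerative check in Remark \ref{rem:480_2} with a component-by-component analysis of the pushforward $\phi: \fP'''_0 \to |\O_{S_0}(1)|$. First I would establish the $2$-goodness of $\bar X \to \Delta$: for a general point on $\tilde S_0$, Remark \ref{rem:480_2} \eqref{case:kummer-2} supplies exactly $120 \cdot 4 = 480$ regular limit curves (each with multiplicity $1$), matching the degree $d_{2,4} = 480$ from Proposition \ref{p:deg-dual}. Proposition \ref{prop:deg} \textnormal{(ii)} then yields $\cru \fV_{1,2}(S) = \phi_* \bigl( \reg \fV_{1,2}(\bar X) \bigr)$.

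Next I would determine the image of each regular component of $\fV_{1,2}(\bar X)$ listed in Proposition \ref{p:2-kummer}. Components of types \eqref{it:2quadrics-spe} and \eqref{it:2conic-2} are supported in the exceptional divisors $\tilde \fL_{s's''s'''}$ and $\tilde \fL^j$ respectively, which are mapped by $\phi$ to single points of $|\O_{S_0}(1)|$ (by the construction of \S\ref{s:kummer-ls}); hence they are contracted and contribute nothing to the $1$-cycle $\cru \fV_{1,2}(S)$. Components of types \eqref{it:2quadrics} and \eqref{it:2quadrics2} both live in $\tilde \fL_{s's''}$, and the restriction of $\phi$ to $\tilde \fL_{s's''}$ factors through the $\P^2$-bundle $\rho_{s's''}: \tilde \fL_{s's''} \to \ell_{s's''}$, where $\ell_{s's''}$ is identified with the pencil of planes in $|\O_{S_0}(1)|$ through the two nodes $p_{s'}$ and $p_{s''}$ of $S_0$.

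For type \eqref{it:2quadrics}, I would exploit that the Severi variety is the complete intersection of two smooth quadrics inside $\fL_{s's''} \cong \P^3$, hence an irreducible curve of degree $4$; since $\rho_{s's''}$ corresponds to the linear projection of $\fL_{s's''}$ from the line $\{\varsigma=0\}$, and this degree $4$ curve does not contain that line (as curves on the line have non-nodal components on both $\tilde Q_{s'}$ and $\tilde Q_{s''}$), it projects $4:1$ onto $\ell_{s's''}$. For type \eqref{it:2quadrics2}, I would show that the tacnode condition $\tau_{E_{s''},2}=1$ forces the component of the limit curve on $\tilde S_0$ to be tangent to $E_{s''}$, equivalently, the corresponding plane section of $S_0$ must be tangent to the tangent cone $TC_{p_{s''}}(S_0)$ at one of its rulings; the set of such planes forms a conic inside the $\P^2$ of planes through $p_{s''}$, so it meets the pencil $\ell_{s's''}$ in only $2$ points. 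The image of this component under $\rho_{s's''}$ is therefore $0$-dimensional, and it contributes nothing to the pushforward cycle.

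The main obstacle, I expect, lies in verifying cleanly that type \eqref{it:2quadrics2} fails to dominate $\ell_{s's''}$; in particular, making rigorous the claim that the tangency to $E_{s''}$ translates into tangency to the tangent cone in the pencil. Once this and the $4:1$ projection for \eqref{it:2quadrics} are settled, the identification of $\cru \fV_{1,2}(S)$ as the union of the $120$ lines $\ell_{s's''}$, each covered $4:1$ by the corresponding component of type \eqref{it:2quadrics}, follows immediately, and the total degree $120 \cdot 4 = 480 = d_{2,4}$ provides a final consistency check.
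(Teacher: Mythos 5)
Your proposal is correct and follows essentially the same route as the paper: the proof there is precisely the enumerative count of Remark \ref{rem:480_2} \eqref{case:kummer-2} ($120\times 4=480=d_{2,4}$) combined with Proposition \ref{prop:deg}, the contraction of the remaining regular components of Proposition \ref{p:2-kummer} under $\fP'''_0\to|\O_{S_0}(1)|$ being left implicit. The only point to tighten is that the $4:1$ claim for type \eqref{it:2quadrics} requires the quartic curve $\check Q_{s'}\cap\check Q_{s''}$ to be \emph{disjoint} from the centre line $\{\varsigma=0\}$ of the projection $\rho_{s's''}$, not merely not to contain it; this does hold, since each of the two quadrics meets that line only at one of the two distinct points $[(\sigma_{E_{s'}},0,0)]$ and $[(0,\sigma_{E_{s''}},0)]$, so your argument goes through.
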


\begin{proposition} [Limits of $3$-nodal curves]
\label{p:3-4planes_Kummer}
The family $\bar X\to \Delta$ is absolutely $3$--good, and the limit
Severi variety $\fV_{1,3}$ is reduced, consisting of:\\
\begin{inparaenum}[\normalfont (i)]
\item \label {item:aa}
$8$ distinct points in each 
$V\bigl(-\tQ_{s'}-\tQ_{s''}-\tQ_{s'''},
\delta_{\tQ_{s'}}=\delta_{\tQ_{s''}}=\delta_{\tQ_{s'''}}=1 \bigr)$, 
where $1 \leqslant s',s'',s''' \leqslant 16$ are such that 
$\left<p_{s'},p_{s''},p_{s'''} \right>$ is a plane that does not
belong to the $16_6$ configuration of $S_0$;\\
\item
the $80$ distinct points in each
$V\bigl(2W_j+\tQ^1_j+\cdots +\tQ^6_j,\delta_{W_j}=3 \bigr)$, 
 corresponding to the triple points of the double curve of $\check{B}
\subset |\O_B(1)|\cong \check{\P}^3$ that are also triple points of
$\check{B}$.
\end{inparaenum}
\end{proposition}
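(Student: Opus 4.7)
The plan is to mimic the short argument used for Proposition \ref{p:3-4planes}. First I would enumerate, via Proposition \ref{p:zrr}, all data $(W,\bdelta,I,\ubtau)$ with $|\bdelta|+|I|+\nu(\ubtau)=3$ producing a non-empty locus of codimension $3$ in $|\L(-W)\otimes \O_{\bar X_0}|$, keeping only those contributing to $\reg\fV_{1,3}(\bar X)$. The obstructions already identified in the proof of Proposition \ref{p:2-kummer} extend to the three-nodal range: any condition $\delta_{\tilde S_0}\geqslant 1$ combined with an extra node or tacnode at a $\tQ_s$ or $W_j$ produces a locus of lower than expected codimension, because $\check S_0$ is tangent to each plane $\check p_s$ along a contact conic; the condition $\delta_{\tQ_s}=1$ cannot be combined with $\tau_{E_s,2}=1$; and two simultaneous simple tacnodes $\tau_{E_{s'},2}=\tau_{E_{s''},2}=1$ force the curve to contain one of the two double curves $D_j$ meeting both $E_{s'}$ and $E_{s''}$, violating condition (i) of Definition \ref{d:zrr}. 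After discarding these cases, only the two families listed in the statement survive.

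Next I would compute the degree of each surviving component. Inside $\fL_{s's''s'''}\cong \P^3$, the scheme $V(\tQ_{s'}+\tQ_{s''}+\tQ_{s'''},\delta_{\tQ_{s'}}=\delta_{\tQ_{s''}}=\delta_{\tQ_{s'''}}=1)$ is the scheme-theoretic intersection of three quadric hypersurfaces (each identified with a $\check Q_s$ as in Proposition \ref{p:1-kummer}(ii)): for a generic triple of nodes not lying on a common contact conic, B\'ezout yields $8$ reduced points. The number of such unordered triples is $\binom{16}{3}-16\binom{6}{3}=240$, so this family contributes $240\cdot 8=1920$ points. For each $j\in \{1,\ldots,16\}$, Corollary \ref{c:2:1} identifies $V(2W_j+\tQ^1_j+\cdots+\tQ^6_j,\delta_{W_j}=3)$ with the $80$ points of $\check{\P}^3$ parametrizing tritangent planes to the branch curve $B$, contributing a further $16\cdot 80=1280$ points.

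The total is $1920+1280=3200=d_{3,4}$, the degree given by Proposition \ref{p:deg-dual}. Each listed component has $\ubtau=0$, so its multiplicity in $\reg\fV_{1,3}(\bar X)$ is $\mu(\ubtau)=1$. Applying Proposition \ref{prop:deg}(ii) with $\tilde{\bf n}=0$ (the Severi variety $V_{1,3}(\bar X_t,\L_t)$ is $0$-dimensional), the matching of the total count with $d_{3,4}$ forces the regular and full limit Severi cycles to coincide, so $\bar X\to \Delta$ is $3$-absolutely good. Reducedness of $\fV_{1,3}(\bar X)$ is then immediate: it is a $0$-dimensional scheme of degree $3200$ containing $3200$ distinct support points, each counted with multiplicity one.

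The main obstacle is the bookkeeping in the first paragraph: one has to enumerate and discard every combination of $\delta_\bullet$'s, $\tau_{\bullet,m}$'s, and twists $W$ not appearing in the statement. Unlike the tetrahedron case, where the combinatorics is organized by the incidence graph of the four faces, here the $16_6$ configuration constrains which subsets of nodes impose independent conditions, and some care is needed to verify that every surviving locus has the expected codimension. Once this exhaustivity check is carried out, the conceptual content of the argument is the numerical coincidence $240\cdot 8+16\cdot 80=3200$, which forces the absence of any spurious contributions.
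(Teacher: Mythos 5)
Your proposal is correct and follows essentially the same route as the paper: the paper's own proof is just the count $240\cdot 8+16\cdot 80=3200=d_{3,4}$ (with the $8$ and $80$ coming from the three-quadric intersection in $\fL_{s's''s'''}$ and from Corollary \ref{c:2:1} respectively), which via Proposition \ref{prop:deg} forces absolute goodness and reducedness. Note that your first-paragraph case-by-case exclusion, while harmless, is logically superfluous here: since the listed regular components already account for all $3200$ points and $d_3\geqslant \deg(\reg\fV_3)$, no other component, regular or not, can occur.
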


\begin{proof}
There are $240$ unordered triples $\{s',s'',s'''\}$
such that the corresponding double points of $S_0$ do not lie on a
common $D_j$,
so $\reg \fV_{1,3}$ has degree $3200$, which fits  with Proposition
\ref {p:deg-dual}.
\end{proof}

\begin{corollary} [Theorem \ref{T:kummer} for $\delta=3$]
\label{cor:thmB2iii} 
Same setting as in Corollary \ref {cor:thmB2i}. 
The crude limit Severi variety $\cru \fV_{1,3}(S) \subset |\O_{S_0}(1)|$
consists of:\\
\begin{inparaenum}[\normalfont (i)]
\item
the $240$ points corresponding to a plane through three nodes of
$S_0$, but not member of the $16_6$ configuration, each counted with
multiplicity $8$;\\
\item
the $16$ points corresponding to a member of the $16_6$ configuration,
each counted with multiplicity $80$.
\end{inparaenum}
\end{corollary}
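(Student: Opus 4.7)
The plan is to deduce this corollary directly from Proposition \ref{p:3-4planes_Kummer} by push-forward along the morphism $\phi:\fP_0'''\to |\O_{S_0}(1)|$. Since $\varpi:\bar X\to\Delta$ is $3$-absolutely good, Definition \ref{def:good-model} gives the equality of cycles $\cru\fV_{1,3}(S)=\phi_*\bigl(\reg\fV_{1,3}(\bar X,\L)\bigr)=\phi_*\bigl(\fV_{1,3}(\bar X)\bigr)$, so the task reduces to analyzing, component by component, how each of the irreducible pieces of $\fV_{1,3}(\bar X)$ listed in Proposition \ref{p:3-4planes_Kummer} maps into $|\O_{S_0}(1)|$.

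First I would treat the components of type $V(\tQ_{s'}+\tQ_{s''}+\tQ_{s'''},\delta_{\tQ_{s'}}=\delta_{\tQ_{s''}}=\delta_{\tQ_{s'''}}=1)$, indexed by triples such that $\langle p_{s'},p_{s''},p_{s'''}\rangle$ is not a member of the $16_6$ configuration. Each such component sits inside the exceptional divisor $\tilde\fL_{s's''s'''}\subset\fP_0'''$, which by Step \eqref{Kum:coplanar}(a) of \S\ref{s:kummer-ls} is the exceptional locus of the blow-up of $|\O_{S_0}(1)|$ at the point $[H_{s's''s'''}]$. Consequently $\phi$ contracts the whole of $\tilde\fL_{s's''s'''}$, and in particular the $8$ reduced points forming this Severi component, onto the single point $[H_{s's''s'''}]$; the push-forward therefore contributes $8[H_{s's''s'''}]$. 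Since there are $\binom{16}{3}-16\binom{6}{3}=240$ such triples, we get in total the $240$ points of type (i), each with multiplicity $8$.

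Next I would turn to the components $V(2W_j+\tQ_j^1+\cdots+\tQ_j^6,\delta_{W_j}=3)$, $1\leqslant j\leqslant 16$. Each lies in the exceptional component $\tilde\fL^j\subset\fP_0'''$, which by Step \eqref{Kum:coplanar}(b) of \S\ref{s:kummer-ls} is the exceptional divisor of the blow-up of $|\O_{S_0}(1)|$ at the point $[2D_j]$ representing the plane of the $16_6$ configuration that cuts out $2\Gamma_j$ on $S_0$. Thus $\phi$ contracts $\tilde\fL^j$, hence the $80$ reduced points of the Severi component, onto $[2D_j]$, producing the contribution $80[2D_j]$ and the $16$ points of type (ii) with multiplicity $80$.

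As a consistency check, the total degree of $\phi_*\bigl(\fV_{1,3}(\bar X)\bigr)$ comes out to $240\cdot 8+16\cdot 80=3200$, in agreement with Proposition \ref{p:deg-dual} for $k=4$; this is in fact already guaranteed by Proposition \ref{p:3-4planes_Kummer}, and it tells us that no other regular component can contribute and that nothing has been overlooked. The only real obstacle in writing this out is the careful identification of which component of the limit linear system contains each piece of the limit Severi variety, together with the verification that $\phi$ contracts those components as claimed; both of these have however already been settled in \S\ref{s:kummer-ls} and in the proof of Proposition \ref{prop:limlin-kummer}, so the remaining argument is essentially bookkeeping.
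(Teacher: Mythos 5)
Your argument is correct and is exactly the intended one: the paper gives no separate proof of this corollary because it follows from Proposition \ref{p:3-4planes_Kummer} by the same push-forward reasoning used in Corollaries \ref{cor:thmB2i} and \ref{cor:thmB2ii}, namely that absolute $3$-goodness gives $\cru\fV_{1,3}(S)=\phi_*\bigl(\fV_{1,3}(\bar X)\bigr)$ and that $\phi$ contracts each $\tilde\fL_{s's''s'''}$ to $[H_{s's''s'''}]$ and each $\tilde\fL^j$ to $[2D_j]$. Your degree check $240\cdot 8+16\cdot 80=3200$ matches the paper's own verification against Proposition \ref{p:deg-dual}.
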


\section{Plane quartics curves through 
 points in special position}
\label{S:triangle}

In this section we prove the key result needed for the proof of
Theorem \ref{T:triangle}, itself given in \S\ref{s:tetra-concl}. 
We believe this result, independently predicted
with tropical methods by E.~Brugall\'e and G.~Mikhalkin (private
communication), is interesting on its own.
Its proof shows once again the usefulness of constructing (relative)
good models. 

The general framework is the same as that of \S\ref{S:4planes} and
\S\ref{S:Kummer-degen}, and we are going to be sketchy here.

\subsection{The degeneration and its good model} 

We start with the trivial family $f:S:=\P^2\times\Delta \to\Delta$,
together with flatly varying data for $t\in\Delta$
of three independent lines
$a_t,b_t,c_t$ lying in $S_t$, and of a $0$-dimensional scheme $Z_t$ of
degree $12$ cut out on $a_t+b_t+c_t$ by a quartic curve $\Gamma_t$ in
$S_t$, which is general for $t\in \Delta^*$.
We denote by $\O_S(1)$ the pull--back line bundle of 
$\O_{\P^ 2}(1)$ via the projection $S\to \P^ 2$.

We  blow--up $S$ along the line $c_0$. This produces 
a new family $Y \to \Delta$, the central fibre $Y_0$ of which
is the transverse union of
a plane $P$ (the proper transform of $S_0$, which we may identify with
$P$) and of an $\F_1$ surface $W$ (the exceptional divisor).
The curve $E:=P\cap W$ is the line $c_0$ in $P$, and
the $(-1)$-- section in $W$.
The limit on $Y_0$ of the three lines 
$a_t,b_t, c_t$ on the fibre $Y_t\cong \P^ 2$, for $t\in \Delta^ *$,
consists of:\\
\begin{inparaenum}[(i)]
\item two general lines $a,b$ in $P$ plus  the curves 
$a',b'\in |F|_W$ matching them on $E$;\\
\item a curve $c\in |H|_W=|F+E|_W$  on $W$.
\end{inparaenum}

We denote by $\O_Y(1)$ the pull--back of $\O_S(1)$ and we set
$\L^\natural=\O_Y(4)\otimes \O_Y(-W)$. 
One has $\L^\natural_t \cong \O_{\P^ 2}(4)$ for $t\in \Delta^ *$,
whereas $\L^\natural_0$ restrict to $\O_{P}(3H)$ and 
$\O_{W}(4F+E)\cong \O_{W}(4H-3E)$ respectively.
We may assume that the quartic curve 
$\Gamma_t\in \vert \L^\natural_t\vert$ cutting $Z_t$ on $a_t+b_t+c_t$
for $t\in \Delta^ *$ tends, for $t\to 0$,  to a general curve $\Gamma_0\in \vert \L^\natural_0\vert$. 
Then $\Gamma_0=\Gamma_P+\Gamma_W$, where $\Gamma_P$ is a general cubic
in $P$ and $\Gamma_Q\in  \vert 4H-3E \vert_W$, with $\Gamma_P$ and
$\Gamma_W$ matching along $E$. 
Accordigly  $Z_0=Z_P+Z_W$, where $Z_P$ has length 6 consisting of 
$3$ points on $a$ and 3 on $b$, and $Z_W$ consists of $1$ point on both
$a'$ and $b'$, and $4$ points on $c$ (see Figure \ref{f:triangle}).

\begin{figure}
\begin{center}
\includegraphics[height=3cm]{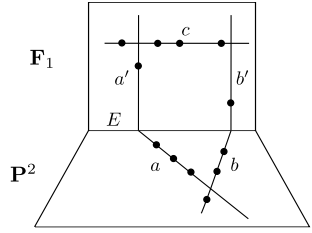}
\end{center}
\caption{Degeneration of base points on a triangle}
\label{f:triangle}
\end{figure}

Next we consider the blow--up 
$\epsilon: X\to Y$ along the curve $Z$ in $Y$ described by $Z_t$, 
for $t\in \Delta$, and
thus obtain a new family $\pi: X\to \Delta$, 
where each $X_t$ is the blow--up
of $Y_t$ along $Z_t$.
We call $E_Z$ the exceptional divisor of $\epsilon$.
The fibre of $\left.\epsilon\right|_{E_Z }: E_Z\to \Delta$ at 
$t\in \Delta$ consists of the twelve $(-1)$--curves of the 
blow--up of $Y_t$ at $Z_t$. The central fibre $X_0$ is the transverse 
union of $\tilde P$ and $\tilde W$,  respectively the blow--ups of $P$
and $W$ along $Z_P$ and $Z_W$; we denote by $E_P$ and $E_W$ the 
corresponding exceptional divisors. 

We let $\L:= \epsilon^* \L^\natural \otimes \O_X(-E_Z)$.
Recall from \S\ref{s:planes} that the fibre of
$\P(\pi_*(\L))$ over $t\in
\Delta^*$ has dimension $3$.
We will see that $X\to \Delta$, endowed with $\L$, is well behaved and
we will describe 
the crude limit Severi variety $\cru \fV_\delta$
for $1\leqslant \delta\leqslant 3$. This analysis will prove Theorem 
\ref{T:triangle}.

\begin{remark}\label{rem:central}
We shall need a detailed description of the linear system 
$\vert\L_0\vert$.  
The vector space  
$\H^ 0(X_0, \L_0)$ is the subspace of
$ \H^ 0(\tilde W, \O_{\tilde W}
(4H-3E-E_W))\times  \H^ 0(\tilde P, \O_{\tilde P}(3H-E_P))$ which is the
fibred product corresponding to the Cartesian diagram
\begin{equation}\label{eq:squarebis}
\xymatrix@=15pt{
\H^ 0(X_0, \L_0)   \ar[d]  \ar[rr] 
&& \H^ 0(\tilde P, \O_{\tilde P}(3H-E_P)) \ar[d]^ {r_P} \\
 \H^ 0(\tilde W, \O_{\tilde W} (4H-3E-E_W)) \ar[rr]_{r_W} 
&& \H^ 0(E,\left .\L \right|_{E})\cong \H^ 0(\P^ 1, \O_{\P^ 1}(3))
}\end{equation}
where $r_P, r_W$ are restriction maps.  The map $r_W$ is injective,
whereas $r_P$ has  a 1-dimensional kernel generated by a section $s$
vanishing on the proper transforms of $a+b+c$. Since 
$\h^ 0(X_0,\L_0)\geqslant 4$ by semicontinuity, one has
${\rm Im}(r_P)={\rm Im}(r_W)$, and therefore 
$\H^ 0(X_0, \L_0)\cong \H^ 0(\tilde P, \O_{\tilde P}(3H-E_P)) $ has
also dimension 4. Geometrically, for a general curve $C_P\in \vert
3H-E_P\vert$, there is a unique curve $C_W\in \vert 4H-3E-E_W\vert$
matching it along $E$ and $C_P+C_W\in \vert \L_0\vert$. On the other
hand  $(0,s)\in \H^ 0(X_0, \L_0)$ is the only non--trivial section (up
to a constant)  identically vanishing on a component of the central
fibre (namely $\tilde W$),  
and $\H^ 0(X_0, \L_0) /(s)\cong \H^ 0(\tilde W, \O_{\tilde W} (4H-3E-E_W))$. 
Therefore, if we denote by $D$ the  point corresponding to $(0,s)$  in $\vert \L_0\vert$, a line through $D$ parametrizes the pencil
consisting of a fixed divisor in $\vert 4H-3E-E_W\vert$ on $\tilde W$ plus all divisors in $\vert 3H-E_P\vert$ matching it on $E$. 

We will denote by $\mathfrak R$  the $g^ 2_3$ on $E$ given by $\vert {\rm Im}(r_P)\vert =\vert {\rm Im}(r_W)\vert$.
\end{remark}

To get a good model, we first  blow--up the proper transform of $a$ in
$\tilde P$, and then we blow--up the proper transform of $b$ on the
strict transform of $\tilde P$. We thus obtain a new family 
$\varpi: \bar X\to \Delta$. 
The general fibre $\bar X_t$, $t\in \Delta^ *$, is
isomorphic to $X_t$. The central fibre $\bar X_0$ has four
components (see Figure \ref{f:triangle2}):\\ 
\begin{inparaenum}[(i)]
\item the proper transform of $\tilde P$, which is isomorphic to $\tilde P$;\\
\item the proper transform $\bar W$ of $\tilde W$, which is isomorphic to the blow--up of $\tilde W$ at the two points $a\cap E, b\cap E$, with exceptional divisors $E_a$ and $E_b$; \\
\item the exceptional divisor $W_b$ of the last blow--up, which is isomorphic to $\F_0$;\\
\item the proper transform $W_a$ of the exceptional divisor over $a$, which is the blow--up of an $\F_0$--surface, at the point corresponding to $a\cap b$ (which is a general point of $\F_0$) with exceptional divisor $E_{ab}$.
\end{inparaenum}

As usual, we go on calling $\L$ the pull--back to $\bar X$ of the line
bundle $\L$ on $X$.

\begin{figure}
\begin{center}
\includegraphics[height=5cm]{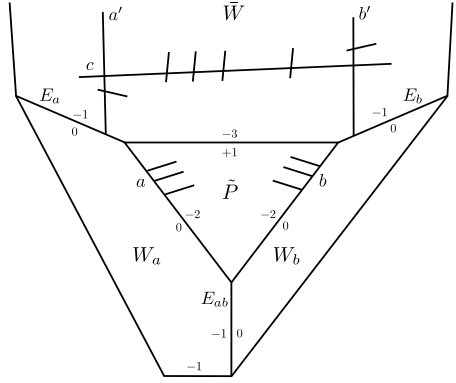}
\end{center}
\caption{Good model for plane quartics through twelve points}
\label{f:triangle2}
\end{figure}

\subsection{The limit linear system}

We shall now describe the limit linear system $\fL$ associated to $\L$. 
As usual,  we start with  ${\fP} := \P(\varpi_*(\L))$, and we
consider the blow--up ${\fP}'\to {\fP}$ at the point $D\in \fP_0\cong
|\L_0|$. The central fibre  of ${\fP}'\to \Delta$ is, as we 
will see, the limit linear system $\fL$. It consists of only two
components: 
the proper transform $\fL_1$ of $\vert \L_0\vert$ and the exceptional
divisor $\fL_2\cong \P^ 3$. Let us describe these two components in
terms of twisted linear systems on the central fibre.

Since the  map $r_W$ in \eqref {eq:squarebis} is injective, it is
clear that 
$\fL_2 \cong \vert \L_0(-\bar W-W_a-W_b)\vert$.
The line bundle $ \L_0(-\bar W-W_a-W_b)$ is
 trivial on $\tilde P$ and restricts  to  
$\O_{\bar W}(4H-2E-3E_a-3E_b-E_W),\O_{W_a}(H-E_{ab}),\O_{W_b}(H)$ on $\bar W,W_a,W_b$
respectively. Once chosen $C_W\in \bigl|\O_{\bar W}(4H-2E-3E_a-3E_b-E_W)\bigr|$, there is only one possible choice of two curves $C_a$ and $C_b$ in 
$\bigl|\O_{W_a}(H-E_{ab})\bigr|$ and $\bigl|\O_{W_b}(H)\bigr|$ respectively,
that match with $C_W$ along $E_a, E_b$ respectively. They
automatically match along $E_{ab}$.

In conclusion, by mapping $\bar W$ to $\P^ 2$ (via $|H|_{\bar W}$),
we have:

\begin{proposition}\label{propo:descrlim} 
The component $\fL_2\cong \vert \L_0(-\bar W-W_a-W_b)\vert$ 
of $\fP'_0$ is
isomorphic to a $3$-dimensional linear system of plane
quartics with an imposed double point $x$,
prescribed tangent lines $t_1,t_2$ at $x$, and
six further base points, two of which general on $t_1,t_2$ and the
remaining four on a general line. 
\end{proposition}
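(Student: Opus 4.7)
The identification $\fL_2 \cong |\L_0(-\bar W - W_a - W_b)|$ is a direct application of Lemma \ref{l:tg-id}(i): the point $D \in |\L_0|$ blown up in $\fP \to \fP'$ corresponds to the section of $\L_0$ that vanishes identically on $\bar W \cup W_a \cup W_b$ (see Remark \ref{rem:central}), and the exceptional divisor identifies with the projectivization of sections of $\L_0$ vanishing on those components. The reduction to a linear system on $\bar W$ alone is exactly the content of the discussion preceding the Proposition statement: a choice of $C_W \in |\O_{\bar W}(4H-2E-3E_a-3E_b-E_W)|$ determines uniquely $C_a \in |\O_{W_a}(H-E_{ab})|$ and $C_b \in |\O_{W_b}(H)|$ matching it along $E_a, E_b$, and these two automatically match along $E_{ab}$.

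The key step is then to interpret this linear system on $\bar W$ as a sublinear system of $|\O_{\P^2}(4)|$ via the birational morphism $\mu : \bar W \to \P^2$ associated to $|H|_{\bar W}$. I would factor $\mu$ as follows: first blow up $\P^2$ at a point $x$ to obtain $\F_1$ (with exceptional $E_{\F_1}$), then blow up the six points of $Z_W$, whose images in $\P^2$ are a point $q_1$ on the line $t_1 := \mu(a')$, a point $q_2$ on $t_2 := \mu(b')$, and four points on the line $\ell := \mu(c)$; finally blow up the two points $p_a, p_b \in E_{\F_1}$ to get $\bar W$. Since $p_a = a' \cap E$ and $p_b = b' \cap E$ are the unique intersection points of $E_{\F_1}$ with the strict transforms of $t_1$ and $t_2$ respectively, they correspond exactly to the tangent directions of $t_1$ and $t_2$ at $x$. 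Using the identification $E_{\F_1} = E + E_a + E_b$ of divisors on $\bar W$, one rewrites
\[
4H - 2E - 3E_a - 3E_b - E_W \;=\; 4H - 2E_{\F_1} - E_a - E_b - E_W,
\]
which translates into plane quartics with a double point at $x$ (imposed by $-2E_{\F_1}$), with prescribed tangents $t_1, t_2$ at that double point (imposed by $-E_a - E_b$), and passing through the six base points (imposed by $-E_W$).

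To conclude, I check the dimension: a plane quartic has $14$ parameters; a double point at $x$ costs $3$ conditions, the two prescribed tangents $2$ more, and the $6$ base points at most $6$ further conditions, giving dimension at least $3$. Since $\dim \fL_2 = 3$ by Remark \ref{rem:central}, the inequality is actually an equality, confirming both the dimension and the fact that all conditions are independent for generic choices. The only substantive step beyond bookkeeping is verifying the geometric role of $p_a, p_b$ as encoding the tangent directions $t_1, t_2$; everything else is a straightforward computation in the blow-up and a restriction-matching argument across the components of $\bar X_0$.
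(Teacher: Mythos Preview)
Your proposal is correct and follows essentially the same route as the paper: the paper's argument (given in the two paragraphs immediately preceding the Proposition) also reduces $\fL_2$ to the system $|\O_{\bar W}(4H-2E-3E_a-3E_b-E_W)|$ via the matching argument on $W_a,W_b$, and then simply says ``by mapping $\bar W$ to $\P^2$ (via $|H|_{\bar W}$)''; you have filled in the divisor--class bookkeeping and the dimension count that the paper leaves implicit. One minor remark: your appeal to Lemma~\ref{l:tg-id}(i) is slightly imprecise, since the exceptional $\P^3$ is the projectivised tangent space to the $4$--fold $\fP$ at $D$ rather than to the fibre $|\L_0|\cong\P^3$; the paper instead invokes the injectivity of $r_W$ from Remark~\ref{rem:central}, but both arguments lead to the same identification $\fL_2\cong|\L_0(-\bar W-W_a-W_b)|$ in the spirit of Example~\ref{ex:gh}.
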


To identify $\fL_1$ as the blow--up of $\vert \L_0\vert$ at $D$, we
take into account Lemma \ref {l:tg-id}, which tells us that the
exceptional divisor 
${\mathfrak E}\subset \fL_1$ identifies with $\mathfrak R$.
Since ${\mathfrak E}=\fL_1\cap \fL_2$, the linear system
${\mathfrak E}$ identifies with a sublinear system of codimension $1$ in
$\fL_2$, namely that of curves
\[
a+b+E+C,\quad 
C\in \bigl| 4H-3E-3E_a-3E_b-E_W \bigr|_W
\]
(in the setting of Proposition \ref{propo:descrlim}, $C$ corresponds
to a quartic plane curve with a triple point at $x$ passing through
the six simple base points).

It follows from this analysis that $\fL$ is the limit as $t\to 0$ of
the linear systems $|\L_t|$, $t\in \Delta^*$, in the sense of
\S\ref{s:resolving}.

\subsection{The limit Severi varieties}
\label{s:lim-triangle}

We will use the notion of ${\bf n}$--degree introduced in Definition
\ref {def:enne}. However we will restrict our attention to the case
in which we fix 1 or 2 points only on $\tilde P$.  Hence, if we agree
to set $\tilde P=Q_1$, then we call $P$--degree
of a component $V$ of $\fV_\delta$ its ${\bf n}$--degree with
${\bf n}=(3-\delta,0,0,0)$; we denote it by $\deg_P( V)$.

\begin{proposition} [Limits of $1$-nodal curves]
\label{p:1-triangle} 
The regular components of the limit Severi variety \linebreak
$\fV_1(\bar X,\L)$
are the following, all appearing with multiplicity $1$, except
\eqref {item:3}, which has multiplicity $2$:\\ 
\begin{inparaenum}[\normalfont (i)]
\item  \label{item:1}
$V(\delta_{\tilde P}=1)$, with $P$--degree $9$;\\
\item  \label{item:2}
$V(\delta_{\bar W}=1)$, with $P$--degree $4$;\\
\item \label{item:3}
$V(\tau_{E,2}=1)$, with $P$--degree $4$;\\
\item  \label{item:4}
$V(\bar W+W_a+W_b,\delta_{\bar W}=1)$, with $P$--degree $0$.
\end{inparaenum}
\end{proposition}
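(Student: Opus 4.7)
My plan is to apply Proposition~\ref{p:zrr} to enumerate the regular components of $\fV_1(\bar X,\L)$ in the twisted linear systems $|\L_0(-W)|$, then determine each multiplicity from $\mu(\utau)$ and each $P$-degree via a dual-variety or enumerative argument. The first step is to pin down the admissible twists: since $\L_0$ restricts trivially to both $W_a$ and $W_b$, the required condition $\h^0(\L_0(-W))=4$ forces $W\in\{0,\;\bar W+W_a+W_b\}$, giving the two components $\fL_1$ and $\fL_2$ of the limit linear system constructed above. Within these, the only non-vacuous placements of a single node or $2$-tacnode with $|\bdelta|+|I|+\nu(\utau)=1$ are precisely the four cases (i)--(iv): nodes on $W_a,W_b$ are excluded by the trivial restriction, and tacnodes on double curves other than $E$ fail for the same reason on at least one of the two surrounding components. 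The multiplicities then follow immediately from $\mu(\utau)$: $\mu=1$ in (i), (ii), (iv), and $\mu=2$ in (iii) for the single $2$-tacnode factor.

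Three of the four $P$-degrees can be extracted from the geometry of the rational map $\sigma:\tilde P\dasharrow\P^3$ defined by $|3H-E_P|$, which realises $\tilde P$ birationally onto a cubic monoid-like surface $\Sigma\subset\P^3$. For (i), the only singularity of $\Sigma$ is the $A_2$ point produced by contracting the $A_2$ chain of two $(-2)$-curves $a$ and $b$ in $\tilde P$ meeting transversally at their common vertex, so Proposition~\ref{p:deg-dual-sing} yields $\deg\check\Sigma=3\cdot 2^2-3=9$. For (iii), the line $E\subset\tilde P$ embeds into $\Sigma$ as a smooth rational normal cubic $\bar E\subset\P^3$; a tacnode on $E$ corresponds to a hyperplane tangent to $\bar E$, and Proposition~\ref{p:dJ} with $(\tau,d,g)=(1,3,0)$ gives the degree of the tangent developable as the $uv$-coefficient of $(1+2u+v)^2$, namely $4$. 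For (iv), the component lies entirely inside $\fL_2$, which the morphism $\fP'_0\to|\L_0|$ contracts to the single point $D=[a+b+E]$; two generic points of $\tilde P$ avoid $a\cup b\cup E$, so the associated linear conditions in $|\L_0|$ miss $D$ and their proper transforms in $\fP'_0$ do not meet $\fL_2$, giving $P$-degree $0$.

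The main obstacle is case~(ii). Here $\L_0|_{\bar W}$ cuts out only a $2$-dimensional system on $\bar W$ whose general member has arithmetic genus zero, so there are no irreducible $1$-nodal members and the Severi variety $V_1^W\subset|\L_0|_{\bar W}\cong\P^2$ parametrises purely reducible configurations; the $P$-degree still equals $\deg V_1^W$, since a generic line in $\fL_1\cong|\L_0|$ defined by two points on $\tilde P$ avoids $D$ and projects linearly to a generic line in $|\L_0|_{\bar W}$. I would attack this by analysing the birational morphism $\bar W\to\P^2$ induced by $|\L_0|_{\bar W}$ and bookkeeping its exceptional fibres: besides several simple $(-1)$-contractions, this morphism also contracts two $A_2$ chains sitting over the fibres $a',b'$, so lines in $\P^2$ through the image of such an $A_2$ chain pull back to $3$-component curves lying in $V_2^W$, not $V_1^W$. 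A similar degeneration excludes the pencil of lines through the image of $E$, because the divisor $B_*\in|3F+E-E_W|$ paired with a variable fibre splits further into a triangle of lines and contributes $3$ extra nodes, hence lies in $V_3^W$. After this accounting only four pencils---one for each base point on the section $c$---remain in $V_1^W$, each contributing a single line in $|\L_0|_{\bar W}$, giving $\deg V_1^W=4$.
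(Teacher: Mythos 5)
Your overall strategy coincides with the paper's: apply Proposition \ref{p:zrr} to list the components, read the multiplicities off $\mu(\utau)$, and compute the $P$--degrees case by case. Cases \eqref{item:1} and \eqref{item:4} are argued exactly as in the paper, and in case \eqref{item:2} your conclusion (four pencils, one for each base point of $Z_W$ on $c$, each giving a line in the two--dimensional system $|\L_0|_{\bar W}$) is the paper's; you exclude the configurations $F+c+a'+b'$ with $F$ a general ruling by counting the extra nodes they acquire on $\bar W$, whereas the paper excludes them by observing that the matching curve on $\tilde P$ would be forced to contain $a$ and $b$ --- both mechanisms are sound.

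The one genuine flaw is in case \eqref{item:3}. The image $\sigma(E)$ is \emph{not} a rational normal (twisted) cubic: the restriction map $\H^0(\tilde P,\O_{\tilde P}(3H-E_P))\to \H^0(E,\O_E(3))$ has the one--dimensional kernel spanned by the section vanishing on $a+b+E$, so $E$ is mapped by a $g^2_3$ onto a \emph{nodal plane cubic} $C_E$ spanning only a plane of $\P^3$ (the paper uses this explicitly when counting the three flexes of $C_E$ in the proof of Proposition \ref{p:2-triangle}). Consequently Proposition \ref{p:dJ}, which assumes non--degeneracy, does not apply as you invoke it. Your number $4$ nevertheless survives, because the quantity being computed is intrinsic: a pencil of curves of $|3H-E_P|$ through two general points of $\tilde P$ cuts out a general $g^1_3$ inside the $g^2_3$ $\mathfrak R$ on $E\cong\P^1$, and such a $g^1_3$ has $2\cdot 3-2=4$ ramification points by Riemann--Hurwitz --- this is the paper's one--line argument (equivalently, the dual curve of a nodal plane cubic also has degree $4$ by the Pl\"ucker formulae). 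You should replace the twisted--cubic argument by this count.
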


\begin{proof}
The list is an application of Proposition \ref{p:zrr}.
The only things to prove are the degree assertions. 
Since $\fL_2$ is trivial on $\tilde P$, case  \eqref{item:4} is trivial.
Case \eqref{item:1} follows from Proposition \ref{p:deg-dual-sing},
because the $P$--degree of $V(\delta_{\tilde P}=1)$
is the degree $9$ of the dual surface of the image $X_P$ of $\tilde P$
via the 
linear system $\vert 3H-E_P\vert$, which is a cubic surface with an
$A_2$ double point  (see Proposition \ref {p:deg-dual-sing}). 

As for \eqref {item:2}, note that nodal curves in $\vert
4H-3E-E_W\vert$ on $\bar W$ 
consist of a ruling in $\vert F\vert$ plus a curve $C$ in  $\vert
4H-F-3E-E_W\vert$. 
If $F$ does not intersect one of the 4 exceptional curves in $E_W$
meeting $c_0$,  
then $C=c_0+a'+b'$ and the matching curve on $\tilde P$ contains the
proper transform of $a$ and $b$, which is not allowed. 
So $F$ has to contain one of the 4 exceptional curves 
in $E_W$ meeting $c_0$. This gives rise to four pencils of singular
curves in  
$\vert 4H-3E-E_W\vert$, which produce (see Remark \ref{rem:central})
four $2$--dimensional 
linear subsystems in  $\vert \L_0\vert$, and this  implies the degree assertion. 

The degree assertion in \eqref{item:3} follows from the fact that a
$g^ 1_3$ on $E$ has 4 ramification points.
\end{proof}

\begin{proposition} [Limits of $2$-nodal curves]
\label{p:2-triangle}
The regular components of the limit Severi variety \linebreak
$\fV_2(\bar X,\L)$ are the
following, all appearing with multiplicity $1$, except \eqref
{item:4a} and \eqref {item:5a}, which have multiplicity $2$, and
\eqref {item:6a}, which has multiplicity $3$:\\ 
\begin{inparaenum}[\normalfont (i)]
\item \label {item:1a}
$V(\delta_{\tilde P}=2)$, with $P$-degree $9$;\\
\item  \label {item:2a}
$V(\delta_{\bar W}=2)$, with $P$-degree $6$;\\
\item  \label {item:3a}
$V(\delta_{\tilde P}=\delta_{\bar W}=1)$, with $P$-degree $36$;\\ 
\item  \label {item:4a}
$V(\delta_{\tilde P}=\tau_{E,2}=1)$, with $P$-degree $28$;\\
\item  \label {item:5a}
$V(\delta_{\bar W}=\tau_{E,2}=1)$, with $P$-degree $8$;\\
\item  \label {item:6a}
$V(\tau_{E,3}=1)$, with $P$-degree $3$;\\
\item  \label {item:7a}
$V(\bar W+W_a+W_b,\delta_{\bar W}=2)$, with $P$--degree $0$.
\end{inparaenum}
\end{proposition}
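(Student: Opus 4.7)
The plan is to imitate the proof of Proposition~\ref{p:1-triangle}, applying Proposition~\ref{p:zrr} to the good model $\varpi: \bar X \to \Delta$ to enumerate the regular codimension-$2$ components of the limit Severi variety, and then computing the $P$-degree and multiplicity of each. The admissible twists $W$ are restricted by the dimensionality condition in Proposition~\ref{p:zrr}; from Remark~\ref{rem:central} and the description of $\fL = \fL_1 \cup \fL_2$, they are $W = 0$ (yielding items (i)--(vi)) and $W = \bar W + W_a + W_b$ (yielding item (vii), since $\fL_2$ restricts trivially to $\tilde P$ and $W_a, W_b$ carry no positive-dimensional family of nodal curves). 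Enumerating the partitions of $2 = |\bdelta| + \nu(\ubtau)$ gives exactly the seven types listed, and the multiplicities follow from $\mu(\ubtau) = \prod_m m^{\tau_m}$: $\mu = 1$ when no tacnodes appear (items (i), (ii), (iii), (vii)), $\mu = 2$ for a single simple tacnode (items (iv), (v)), and $\mu = 3$ for a single $3$-tacnode (item (vi)).

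The $P$-degree computations proceed case by case. Item (vii) is trivial since $\L(-\bar W - W_a - W_b)$ is trivial on $\tilde P$. For (i), the $P$-degree is the degree of the nodal locus of the dual surface to the cubic $X_P \subset \P^3$ (image of $\tilde P$ by $|3H - E_P|$, which carries a single $A_2$ singularity), extending the computation $\deg(\check X_P) = 9$ used in Proposition~\ref{p:1-triangle}. For (ii), a $2$-nodal curve in $|4H - 3E - E_W|$ on $\bar W$ splits as two rulings plus a residual; as in the proof of item (ii) of Proposition~\ref{p:1-triangle}, each ruling must contain one of the four $E_W$-curves meeting $c_0$, and the $\binom{4}{2} = 6$ resulting configurations give the count. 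For (iii), the $9$ nodal curves on $\tilde P$ through $x$ determine $9$ divisors in the $g^2_3$ $\mathfrak R \subset |\O_E(3)|$, each matched by $4$ nodal curves on $\bar W$ inducing the same divisor on $E$, giving $36$. For (vi), a $3$-tacnode on $E$ corresponds to a divisor $3p$ in $\mathfrak R$; the diagonal $\{3p : p \in E\}$ is a twisted cubic in $|\O_E(3)| \cong \P^3$, meeting the general $\P^2$ cut out in $|3H - E_P|$ by the condition of passing through $x$ in exactly $3$ points.

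For items (iv) and (v), the key observation is that a simple tacnode along $E$ corresponds to a divisor with a double point in the restriction $C \cdot E$. One thus intersects the curve of nodal sections through $x$ on $\tilde P$ (respectively on $\bar W$) with the pull-back of the discriminant hypersurface in $|\O_E(3)|$. The degrees $28$ and $8$ should emerge from an application of de Jonqui\`eres' formula (Proposition~\ref{p:dJ}) combined with the enumerative data of Proposition~\ref{p:1-triangle}, and from a careful bookkeeping of the incidence between nodal curves on one side of $E$ and tangency conditions along $E$ on the other.

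The main obstacle is item (i), which requires an explicit formula for the degree of the nodal locus of the dual surface of a cubic with an isolated $A_2$ singularity; a clean approach is to apply Lemma~\ref{l:pencil} to a Lefschetz pencil of plane sections of $X_P$, or alternatively to degenerate $X_P$ further and track nodal limits. A secondary difficulty lies in items (iv) and (v), where the combination of nodal and tangency data requires a delicate intersection-theoretic argument; and in verifying that the list (i)--(vii) is complete, which amounts to ruling out additional components via the expected-codimension hypothesis of Proposition~\ref{p:zrr}.
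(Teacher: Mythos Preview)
Your overall framework is correct: the list (i)--(vii), the multiplicities, and the triviality of case~\eqref{item:7a} all follow from Proposition~\ref{p:zrr} exactly as you say. The gaps are in the $P$--degree computations, where several of your arguments are either incomplete or misdirected compared with the paper's.

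For \eqref{item:1a} you propose computing the degree of the nodal curve of $\check X_P$, and flag this as the main obstacle. The paper avoids this entirely: a $2$--nodal plane cubic is a line plus a conic, so a $2$--nodal section of $X_P$ through a general point is determined by a line of $X_P$ not through the $A_2$--point, and there are $9$ such lines. Your route is not wrong in principle, but it is substantially harder to execute on a singular cubic and is not needed.

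For \eqref{item:3a} your logic is flawed. You speak of ``the $9$ nodal curves on $\tilde P$ through $x$'', but through one general point there is a one--parameter family of nodal curves in $|3H-E_P|$, not $9$. The paper runs the argument from the other side: a nodal curve on $\bar W$ forces a ruling to split, which can happen in $4$ ways; each choice fixes an extra base point on $E$, and then the number of nodal curves in $|3H-E_P|$ through $x$ and that base point is $\deg(\check X_P)=9$. Hence $4\times 9=36$.

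For \eqref{item:4a} and \eqref{item:5a} you give only a heuristic. The paper's arguments are concrete and do not use de~Jonqui\`eres. For \eqref{item:5a}: each of the $4$ splitting rulings leaves a residual pencil in $|3H-2E|$ cutting a $g^1_2$ on $E$, hence containing $2$ members tangent to $E$; this gives $8$. For \eqref{item:4a}: one intersects $\check X_P$ (degree $9$) with $\check C_E$ (degree $4$) in $\check\P^3$, then removes with multiplicity $2$ (by Lemma~\ref{l:tangency}) the excess component $\Gamma_2$ of planes tangent to $X_P$ at a point of $C_E$; a first--polar computation gives $\deg\Gamma_2=4$, hence $36-8=28$.

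For \eqref{item:6a} your twisted--cubic argument is essentially correct but stated confusingly: the relevant $\P^2$ is $\mathfrak R$ itself (the image of the restriction map), not a hyperplane in $|3H-E_P|$. The paper phrases this as counting flexes of the nodal plane cubic $C_E$, which is the same computation.
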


\begin{proof}
Again, the list is an immediate application of Proposition
\ref{p:zrr},
and the only things to prove are the degree assertions. Once more case \eqref{item:7a} is clear.

In case \eqref  {item:1a} the degree equals the number of lines on
$X_P$ (the cubic surface image of $\tilde P$), that do not contain the
double point; this is $9$. 

In case \eqref  {item:2a}, we have to
consider the  binodal curves in $\vert 4H-3E-E_W\vert$ not containing
$E$. 
Such curves split into a sum 
$\Phi_1+\Phi_2+C$, where $\Phi_1$ and $\Phi_2$ are the strict
transforms of two curves in $|F|_W$. 
They are uniquely determined by the choice of two  curves in $E_W$ meeting $c_0$: these
fix the two rulings in $\vert F\vert$ containing them, and there is a unique curve in
$|2H-E|$ containing the remaining curves in $E_W$. This shows that the degree is $6$. 

Next, the limit curves of type \eqref{item:3a} consist of  a
nodal cubic in $\vert 3H-E_P\vert_{\tilde P}$
and a nodal curve in $\vert 4H-3E-E_W\vert_{\bar W}$; a ruling
necessarily splits from the latter curve.
 Again, the splitting rulings $F$ are the ones containing one of the four curves in $E_W$ meeting $c_0$.
The curves in $|3H-2E|_{\bar W}$ containing the remaining curves in
$E_W$, fill up a pencil. 
Let $F_0$ be one of these four rulings.
The number of nodal curves  in $\vert 3H-E_P\vert_{\tilde P}$ passing
through the base point $F_0 \cap E$ and through a fixed general point on
$\tilde P$ equals   
the degree of the dual surface of $X_P$, which is 9. 
For each such curve, there is a unique curve in the aforementioned
pencil on $\tilde W$ matching it. This shows that the degree is 36. 

The general limit curve of type \eqref{item:4a}
can be identified with the general plane of 
$\P^ 3=|3H-E_P|_{\tilde P}^\vee$
which is tangent to both $X_P$ and the curve $C_E$ (image of $E$ in
$X_P$), at different points. 
The required degree is the number of such planes passing through a
general point $p$ of $X_P$.  
The planes in question 
are parametrized in $\check \P^ 3$  by a component $\Gamma_1$ of 
$\check{X}_P\cap \check{C}_E$: one needs to remove from 
$\check{X}_P\cap \check{C}_E$ the component $\Gamma_2$, the general
point of which corresponds to a plane which is tangent to
$X_P$ at a general point of $C_E$.
The latter appears with multiplicity $2$ in $\check{X}_P\cap
\check{C}_E$ by Lemma \ref{l:tangency}. Moreover, $\check{X}_P$ and $\check{C}_E$ have respective degrees $9$ and
$4$ by Proposition \ref{p:deg-dual-sing}.
Thus we have 
\[
\deg_P\bigl(V(\delta_{\tilde P}=\tau_{E,2}=1)\bigr)
=36-2\deg(\Gamma_2).
\]
To compute $\deg(\Gamma_2)$, take a general point $q=(q_0:\ldots: q_3)
\in \P^ 3$, and
let $P_q(X_P)$ be the \emph{first polar} of $X_P$ with respect to $q$,
i.e. the surface of homogeneous equation 
\[
q_0 \frac {\partial f}{\partial x_0}+\cdots+q_0 \frac {\partial f}{\partial x_3}=0,
\]
where $f=0$ is the homogeneous equation of $X_P$.
The number of planes containing $q$ and tangent to $X_P$ at a point of
$C_E$ is then equal to the number of points of $P_q(X_P)\cap C_E$,
distinct from the singular point $v$ of $X_P$.
A local computation, which can be left to the reader, shows that
$v$ appears with multiplicity $2$ in $P_p(X_P)\cap C_E$, which shows
that $\deg (\Gamma_2)=4$, whence 
$\deg_P\bigl(V(\delta_{\tilde P}=\tau_{E,2}=1)\bigr)=28$. 

In case \eqref {item:5a}, we have to determine the curves in
$\vert 4H-3E-E_W\vert$ with one node (so that some  
ruling splits) that are also tangent to $E$. As usual, the splitting
rulings are the one containing one of the four curves in $E_W$ meeting
$c_0$. Inside the residual pencil there are  2 tangent curves at
$E$. This yields the degree 8 assertion.

Finally, in case \eqref {item:6a}, the degree  equals the number of
flexes of $C_E$, which is a nodal plane cubic: this is $3$.
\end{proof}

\begin{proposition} [Limits of $3$-nodal curves]
\label{p:3-triangle}  
The regular components of the limit Severi variety \linebreak
$\fV_3(\bar X,\L)$ are the
following $0$--dimensional varieties, all appearing with multiplicity
$1$, except the ones in \eqref {bitem4} and \eqref {bitem5} appearing
with multiplicity $2$, and \eqref {bitem6}  with multiplicity $3$:\\ 
\begin{inparaenum}[\normalfont (i)]
\item \label{bitem1}
$V(\delta_{\tilde P}=3)$, which consists of
$6$ points;\\
\item  \label{bitem2}
$V(\delta_{\tilde P}=2,\delta_{\bar W}=1)$,
which consists of $36$ points;\\
\item  \label{bitem3}
$V(\delta_{\tilde P}=1,\delta_{\bar W}=2)$,
which consists of $54$ points;\\
\item  \label{bitem4}
$V(\delta_{\tilde P}=2,\tau_{E,2}=1)$,
which consists of $18$ points;\\
\item  \label{bitem5}
$V(\delta_{\tilde P}=\delta_{\bar W}=\tau_{E,2}=1)$,
which consists of $56$ points;\\
\item  \label{bitem6}
$V(\delta_{\tilde P}=\tau_{E,3}=1)$,
which consists of $18$ points;\\
\item  \label{bitem7}
$V(\bar W+W_a+W_b,\delta_{\bar W}=3)$,
which consists of $6$ points.
\end{inparaenum}
\end{proposition}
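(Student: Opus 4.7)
The strategy is entirely parallel to that of Propositions \ref{p:1-triangle} and \ref{p:2-triangle}. First I would apply Proposition \ref{p:zrr} to enumerate the candidate regular components, then verify the codimension and count the points in each.

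\textbf{Step 1 (listing the components).} The limit linear system $\fL$ has two irreducible components, $\fL_1$ and $\fL_2$, corresponding to the two choices $W=0$ and $W=\bar W+W_a+W_b$ in Definition \ref{d:zrr}. For each choice of $W$, I would enumerate all $(\bdelta,\ubtau)$ with $|\bdelta|+\nu(\ubtau)=3$ and $I=\emptyset$, and discard those forbidden by Definition \ref{d:zrr} (no component of the divisor may contain $E$, no passage through triple points, no two tacnodes along $E$ that would force a fixed curve through the same $g^1_3$-cycle). As in the proof of Proposition \ref{p:2-triangle}, configurations of the form $\tau_{E,2}=1, \delta_{\bar W}=2$ or $\tau_{E,2}=\tau_{E,2}'=1$ are excluded because they force one to contain $a$, $b$ or $c$. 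The survivors are exactly the seven types in the statement. The multiplicities are read off as $\mu(\ubtau)=2^{\tau_{E,2}}\cdot 3^{\tau_{E,3}}$ from Definition \ref{d:zrr}.

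\textbf{Step 2 (counting the cardinalities).} Each count reduces to enumerative geometry on the cubic surface $X_P$ (image of $\tilde P$ by $|3H-E_P|$, with a single $A_2$ singularity) and on the ruled surface $\bar W$. Items (i)--(iii) concern trinodal curves with nodes distributed between $\tilde P$ and $\bar W$: for (i), count the tritangent planes to $X_P$ that avoid the singular point (the smooth cubic has $45$ such planes by Proposition \ref{p:deg-dual}, from which the contributions collapsing to the $A_2$ must be subtracted). For (ii) and (iii), use the fact from the proof of Proposition \ref{p:1-triangle}(ii) that each nodal curve in $|4H-3E-E_W|_{\bar W}$ forces splitting of a ruling through one of the $4$ exceptional curves in $E_W$ meeting $c_0$, so two-nodal curves split two such rulings in $\binom{4}{2}=6$ ways; the match along $E$ fixes the attachment points, and we multiply by the appropriate count of nodal cubics in $|3H-E_P|$ through the fixed points. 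Items (iv) and (v) are hybrids obtained by combining tacnodal conditions with further nodes, handled using the degrees of $\check X_P$ and its double curve, together with the local computation on the polar surface as in the proof of Proposition \ref{p:2-triangle}(iv). For (vi) one counts pairs (flex of the plane cubic $C_E \subset X_P$, plane through the flex tangent to $X_P$ at one further point): the $3$ flexes combine with the relevant enumeration on $X_P$. Item (vii) is identical to the analogous item (iv) in Proposition \ref{p:1-triangle}, transferred to $\delta_{\bar W}=3$.

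\textbf{Main obstacle.} The technical heart of the argument is the precise enumeration of Pl\"ucker-type numbers on the singular cubic surface $X_P$ (namely $d_2(X_P)$ and $d_3(X_P)$ and the multiplicity of the $A_2$ singularity in the relevant polar intersections), together with the verification that all the listed schemes have the expected codimension $3$ in their respective twisted linear systems. The former requires a careful local computation in the normal form $x^2=y^3+z^k$ of the $A_2$ singularity, refining the argument used for $V(\delta_{\tilde P}=\tau_{E,2}=1)$ in Proposition \ref{p:2-triangle}(iv). Once this is done, the total count $\sum \mu(V)\cdot \#V = 304$ drops out; this agrees with the expected dimension assertion and provides the lower bound on the degree of $V(M_i,\delta_{P_i}=3)$ needed in the proof of Theorem \ref{T:tetrahedron}, whence equality follows a posteriori from Proposition \ref{p:deg-dual}.
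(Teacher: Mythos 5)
Your overall strategy (apply Proposition \ref{p:zrr}, exclude the impossible configurations, then count each regular component) is the same as the paper's, and the exclusions in your Step 1 are essentially right. But the counts in Step 2 — which are the whole content of the proof — are where the proposal breaks down. The key missing ingredient is Lemma \ref{l:spec-pencil}: the counts $56=4\cdot 2\cdot 7$ in \eqref{bitem5} and $18=3\cdot 6$ in \eqref{bitem6} require knowing that the pencil of curves in $|3H-E_P|$ through a general $q\in E$ and tangent to $E$ at a general $p\in E$ contains exactly $7$ irreducible nodal members, and that the pencil osculating $E$ at $p$ contains exactly $6$. The paper obtains these numbers by realizing the pencils as elliptic fibrations on suitable blow--ups of $\tilde P$ and applying the topological formula of Lemma \ref{l:pencil} ($12$ singular fibres counted with multiplicity, of which $a+b+E$ accounts for $3$ and the member containing the exceptional $(-2)$--curve over $p$ for $2$). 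Your proposed tools — degrees of $\check{X}_P$ and of its double curve, and polar computations near the $A_2$ point — do not produce these two numbers, and nothing in your sketch would.

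Several of the other counts are also approached less directly than necessary, or not at all. For \eqref{bitem1} the answer is not ``$45$ minus corrections'': a $3$--nodal member of $|3H-E_P|$ is a triple of lines in $P$, distinct from $a$ and $b$, covering the six points of $Z_P$ (three on $a$, three on $b$), i.e.\ a perfect matching, whence $3!=6$; your route through the $45$ tritangent planes of a smooth cubic would require controlling how they degenerate onto the $A_2$--cubic $X_P$, which you do not do. Similarly \eqref{bitem4} is elementary ($9$ choices of line, times the $2$ tangent members of the residual pencil of conics, which cuts out a $g^1_2$ on $E$) and needs neither $\check{X}_P$ nor polars. Finally, your treatment of \eqref{bitem7} (``identical to item (iv) of Proposition \ref{p:1-triangle}'') transfers nothing: that item has $P$--degree $0$ and carries no count. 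The correct argument is that the curve splits as $C_a+C_b$ with $C_a\in|2H-E-2E_a-E_b|$ and $C_b\in|2H-E-E_a-2E_b|$, each forced through the base point on $b'$, resp.\ $a'$, so the component is determined by the choice of which two of the four points of $Z_W$ on $c_0$ lie on $C_a$, giving $\binom{4}{2}=6$.
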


In the course of the proof, we will need the following lemma.
\begin{lemma}
\label{l:spec-pencil}
Let $p,q$ be general points on $E$.\\
\begin{inparaenum}[\normalfont (i)]
\item \label {uno} The pencil ${\mathfrak l}\subset \vert 3H-E_P\vert$ of curves containing $q$, and 
tangent to $E$ at $p$, contains exactly $7$ irreducible
nodal curves not singular at $p$.\\
\item \label{due} The pencil ${\mathfrak m}\subset \vert 3H-E_P\vert$ of curves with a contact of order $3$
with $E$ at  $p$ contains exactly $6$ irreducible nodal curves not singular at $p$. 
\end{inparaenum}\end{lemma}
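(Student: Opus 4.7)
My plan is to prove both parts via the Euler characteristic formula of Lemma~\ref{l:pencil}, applied to the rational elliptic surface obtained by resolving the base scheme of the pencil, combined with a dual-variety analysis on the cubic surface $X_P\subset \P^3$. Recall from the discussion of Proposition~\ref{p:1-triangle} that $X_P$, the image of $\tilde P$ under $|3H-E_P|$, is a cubic surface with a single $A_2$-singularity $v$ (the image of the two $(-2)$-curves $a,b$ on $\tilde P$), whose dual has degree $9$ by Proposition~\ref{p:deg-dual-sing}; moreover, the image $C_E$ of $E$ is a nodal plane cubic through $v$.

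The first step is to resolve the base scheme of each pencil by three additional blow-ups of $\tilde P$: for $\mathfrak l$, at the point $q$, at $p$, and at an infinitely near point $p_1$ in the direction of $E$; for $\mathfrak m$, at $p$ and at two successive infinitely near points $p_1, p_2$ in the $E$-direction. The resulting smooth surface $\hat P$ has $\chitop(\hat P) = \chitop(\tilde P) + 3 = 12$ and is fibred over $\P^1$ by a morphism $\phi$ whose general fibre is the proper transform of a smooth plane cubic, of arithmetic and geometric genus $1$ (Euler characteristic $0$). Lemma~\ref{l:pencil} then reads $12 = \sum_{b} \chitop(F_b)$, the sum ranging over the finitely many singular fibres.

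The second step is to identify the singular fibres. The triangle $a+b+E$ lies in both pencils (as $E\subset a+b+E$, any imposed contact at a point of $E$ is realised automatically), and its proper transform on $\hat P$ consists of three rational curves meeting pairwise in one point, contributing $\chitop=3$. I will exclude any other reducible or non-reduced member by a base-locus argument: the only conic through $Z_P$ is $a+b$, so any cubic in the pencil containing $a$, $b$, or $E$ as a component either reduces to the triangle or fails the imposed condition. The remaining singular members are thus irreducible cubics with a single cusp ($\chitop=2$) or a single node ($\chitop=1$), the latter possibly at the point $p$ itself.

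The main obstacle will be to enumerate the cuspidal members of each pencil, and for part~(ii) also the nodal members with their node at $p$. I will handle this by identifying each pencil with a line $\lambda \subset \check\P^3 \cong |\O_{X_P}(1)|$: nodal members correspond to the simple intersections of $\lambda$ with $\check X_P$ (total degree $9$), cuspidal members to tangential intersections or intersections with the cuspidal edge of $\check X_P$, and nodes at $p$ to intersections of $\lambda$ with $\check C_E$ along the component of $\check X_P\cap\check C_E$ parametrising planes tangent to $X_P$ along $C_E$, the latter non-transverse by Lemma~\ref{l:tangency}. The delicate step is the correct accounting for the intersection multiplicity of $\lambda$ with $\check X_P$ at the point corresponding to the triangle (which lies on $\check X_P$) and along the pencil of planes tangent to $X_P$ at the $A_2$-vertex $v$. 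Combined with the Euler characteristic balance of Step~2, this should yield the claimed count of $7$ irreducible nodal members not singular at $p$ in part~(i) and $6$ in part~(ii).
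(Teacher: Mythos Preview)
Your framework is exactly the paper's: resolve the base scheme of the pencil by three blow--ups of $\tilde P$ and apply Lemma~\ref{l:pencil} to the resulting genus--one fibration $\hat P\to\P^1$, obtaining a total singular--fibre contribution of $12$. You also correctly single out the triangle $a+b+E$ as an $I_3$ fibre contributing $\chitop=3$.

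The gap is your assertion that every other singular fibre is \emph{irreducible}. Your base--locus argument rules out reducible \emph{plane cubics} in the pencil, but not fibres that become reducible only on $\hat P$. In part~(i), the first exceptional curve over $p$ is a $(-2)$--curve $\tilde E_p$ with $F\cdot\tilde E_p=0$, so it is contained in a fibre. That fibre is $\tilde E_p+R$, where $R$ is the proper transform of the unique member of $\mathfrak l$ with a double point at $p$ (an irreducible cubic, hence not caught by your argument); since $R\cdot\tilde E_p=2$, this is an $I_2$ fibre with $\chitop=2$, not $1$ as your bookkeeping would have it. The paper simply records this second reducible fibre and concludes $12=3+2+7$, with no dual--variety analysis and no separate enumeration of cuspidal members (for general $p,q$ there are none, by an obvious dimension count). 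In part~(ii) the same mechanism produces a chain $\tilde E_p^{(1)}+\tilde E_p^{(2)}$ of two $(-2)$--curves inside one fibre; together with its residual component this is again an $I_3$, and one gets $12=3+3+6$.

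Your proposed detour through $\check X_P$, $\check C_E$ and Lemma~\ref{l:tangency} could in principle be pushed through, but the ``delicate step'' you flag (the contribution at the plane of $C_E$, which passes through the $A_2$--point of $X_P$) is genuinely delicate and is not carried out in your outline. All of it becomes unnecessary once you observe that the exceptional $(-2)$--curves over $p$ are vertical for the fibration.
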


\begin{proof}
First note that $\mathfrak l$ and $\mathfrak m$ are indeed pencils by Remark
\ref{rem:central}.
Let $P_{pq} \to \tilde P$ be the blow-up at  $p$
and $q$, with exceptional curves $E_p$ and $E_q$ above $p$ and $q$
respectively. Let $P'_{pq} \to P_{pq}$ be the blow-up at the
point $E\cap E_p$, with exceptional divisor $E'_p$. 
Then $\mathfrak l$ pulls back to the  linear system 
$\bigl|3H -E_P-E_p-E_q-2E'_p\bigr|$,  which induces an
elliptic fibration $P'_{pq} \to \P^1$, with
 singular fibres in number of $12$ (each counted with its multiplicity) by Lemma \ref{l:pencil}. Among them are: 
\begin{inparaenum}[(i)]
\item the proper transform of $a+b+E$, which has $3$ nodes, hence
multiplicity $3$ as a singular fibre; 
\item the unique curve of $\mathfrak l$ containing 
 the $(-2)$-curve $E_p$, which has $2$
nodes along $E_p$, hence multiplicity $2$ as a singular fibre.
\end{inparaenum}
The remaining 7 singular fibres are  the ones we want to
count.

The proof of \eqref{due}  is similar and can be left to the reader.
\end{proof}

\begin{proof}[Proof of Proposition \ref{p:3-triangle}]
There is no member of $\fL_1$ with 3 nodes on $\bar W$, because every
such curve contains one of the curves $a',b',c_0$. 

There is no member of $\fL_1$ with two nodes on $\bar W$ and a tacnode
on $E$ either. Indeed, the
 component on  $\bar W$ of such a curve
would be the proper transform of a curve of $W$ consisting of two rulings plus a curve 
in $|2H-E|$, altogether containing $Z_W$. Each of the two lines passes through one of the points
of $Z_W$ on $c_0$. The curve in $|2H-E|$  must contain  the remaining 
points of $Z_W$, hence it is uniquely determined and cannot be tangent to $E$. 

Then the list covers all remaining possible cases, and 
we only have to prove the assertion about the cardinality of the
various sets.

The limiting curves of type \eqref{bitem1}
are in one-to-one
correspondence with the unordered triples of lines distinct from $a$
and $b$ in $P$, the union of which contains the six points of $Z_P$.
There are $6$ such triples.

The limiting curves of type \eqref{bitem2}
consist of the proper transform $C_P$ in $\tilde P$
of the union of a conic and a line on $P$ containing $Z_P$, 
plus the union $C_W$ of the proper transforms in $\bar W$ 
of a curve in  $|F|$ and one in $|3H-2E|$ altogether containing $Z_W$, 
with $C_P$ and $C_W$ matching along $E$.
We have $9$ possible pencils for $C_P$, corresponding
to the choice of two points on $Z_P$, one on $a$ and one on $b$;
each such pencil determines by restriction on $E$ a line $\mathfrak
l\subset \mathfrak R$. 
There are $4$ possible pencils for $C_W$, corresponding
to the choice of one of the points of $Z_W$ on $c_0$: there is a unique
ruling containing this point, and a pencil of curves in $|3H-2E|$
containing the five remaining points in $Z_W$;
each such pencil defines a line
$\mathfrak m\subset \mathfrak R$. 
For each of the above choices, the lines $\mathfrak l$ and $\mathfrak m$
intersect at one point, whence the order $36$.

We know from the proof of Proposition \ref{p:2-triangle} that there
are six  $2$--nodal curves in $\vert 4H-3E-E_W\vert$. 
For each such curve, there is a
pencil of matching curves in $\vert 3H-E_P\vert$. This pencil contains 
$\deg (\check{X}_P)=9$ nodal curves, whence the number $54$ of
limiting curves of type \eqref{bitem3}. 

The component on $\tilde P$ of a limiting curve
of type \eqref{bitem4}
is the proper transform of the union of a conic and a
line on $P$, containing $E_P$. 
As above, there are $9$ possible choices for the line. For each such
choice, there is a pencil of conics containing the $4$ 
points of $Z_P$ not on the line. 
This pencil cuts out a $g^1_2$ on $E$, and therefore contains
$2$ curves tangent to $E$. It follows that there are $18$ limiting
curves of type \eqref{bitem4}. 

The component on $\bar W$ of a limiting curve of type \eqref{bitem5}
is the proper transform of a  ruling of $W$ plus a curve in $|3H-2E|$
tangent to $E$, altogether passing through $Z_W$. 
The line necessarily contains one of the four points of $Z_W$ on
$c_0$. There is then 
a pencil of curves in $|3H-2E|$ containing the five remaining 
points of $Z_W$. It cuts out a $g^1_2$ on $E$, hence contains $2$ curves
tangent to $E$.
For any such curve $C_W$ on $\bar W$, there is a pencil  of curves on the
$\tilde P$--side  matching it. 
By Lemma \ref{l:spec-pencil}, this pencil contains $7$ curves,
the union of which with $C_W$ is a limiting curve of type \eqref{bitem5}.
This proves that there are $56$ such limiting curves.

As for \eqref{bitem6}, 
there are $3$ members of $\mathfrak R$ that are triple points
(see the proof of Proposition \ref{p:2-triangle}). Each of them
determines a pencil of curves on the $\tilde P$--side, which contains
six $1$-nodal curves by Lemma \ref{l:spec-pencil}. 
This implies that there are $18$ limiting curves of type \eqref{bitem6}.  

Finally we have to count the members of 
$V(\bar W+W_a+W_b,\delta_{\bar W}=3)$. 
They are in one--to--one correspondence with their components on $\bar
W$, 
which decompose into the proper transform of unions $C_a\cup C_b$ of
two curves $C_a\in |2H-E-2E_a-E_b|$ and $C_b\in|2H-E-E_a-2E_b|$,
altogether containing $Z_W$. 
The curves $C_a,C_b$ must contain the two base points on $b',a'$
respectively. We conclude that each limiting curve of type
\eqref{bitem7}
corresponds to a partition of the 4 points of $Z_W$ on $c_0$ in two
disjoint sets of two points, and the assertion follows.
\end{proof}

In conclusion, the following is an immediate consequence of
Propositions \ref{p:1-triangle}, \ref{p:2-triangle}, and
\ref{p:3-triangle}, together with the formula \eqref{degree}.

\begin{corollary} [preliminary version of Theorem \ref{T:triangle}]
\label{coro:D}
Let $a,b,c$ be three independent  lines in the projective
plane, and $Z$ be a degree $12$ divisor on $a+b+c$ cut out by a
general quartic curve. 
We consider the $3$--dimensional sub--linear system $\mathcal V$  of
$|\O_{\P^2}(4)|$ parametrizing curves containing $Z$,
and we let, for $1\leqslant \delta\leqslant 3$,
$\mathcal V_\delta$ be the Zariski closure in $\mathcal V$ of the
codimension $\delta$ locally closed subset parametrizing irreducible
$\delta$--nodal curves.
One has
\begin{equation}
\label{ineq-ThmD}
\deg(\mathcal{V}_1)\geqslant 21,\quad
\deg(\mathcal{V}_2)\geqslant 132,\quad
\text{and}\quad
\deg(\mathcal{V}_3)\geqslant 304.
\end{equation}
\end{corollary}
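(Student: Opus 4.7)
The plan is to deduce the three inequalities directly from the enumerative description of the limit Severi varieties given by Propositions \ref{p:1-triangle}, \ref{p:2-triangle}, and \ref{p:3-triangle}, combined with the general lower bound provided by Proposition \ref{prop:deg}. Indeed, the degree of $\mathcal{V}_\delta$ is, for $1 \leqslant \delta \leqslant 3$, by definition the degree of the Severi variety $V_\delta(\bar X_t, \L_t)$ for $t\in\Delta^*$, and this number is bounded below by $\sum_V \mu(V) \deg_P(V)$, where $V$ ranges over the regular components of $\fV_\delta(\bar X,\L)$, $\mu(V)$ is the multiplicity given by Proposition \ref{p:zrr}, and $\deg_P(V)$ is the $P$--degree (in the sense of Definition \ref{def:enne}) associated to the choice of $3-\delta$ general base points on $\tilde P$.

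First, for $\delta = 1$, summing the contributions listed in Proposition \ref{p:1-triangle} with their respective multiplicities gives
\[
9 + 4 + 2 \cdot 4 + 0 = 21,
\]
which establishes $\deg(\mathcal{V}_1) \geqslant 21$. Next, for $\delta = 2$, Proposition \ref{p:2-triangle} yields
\[
9 + 6 + 36 + 2\cdot 28 + 2\cdot 8 + 3\cdot 3 + 0 = 132,
\]
so that $\deg(\mathcal{V}_2) \geqslant 132$. Finally, for $\delta = 3$, the enumeration in Proposition \ref{p:3-triangle} produces
\[
6 + 36 + 54 + 2\cdot 18 + 2\cdot 56 + 3\cdot 18 + 6 = 304,
\]
yielding $\deg(\mathcal{V}_3) \geqslant 304$.

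Thus there is essentially no new work to be done: the corollary follows by direct summation once the components of $\reg\fV_\delta$ have been identified together with their multiplicities and $P$--degrees. The only subtlety worth emphasizing is the matching between $\mathcal{V}_\delta$ and a $P$--slice of $V_\delta(\bar X_t,\L_t)$: the linear system $\mathcal V$ corresponds, via the identification of a general fibre $\bar X_t\cong \P^2$ of the family $\varpi: \bar X\to \Delta$, to imposing the degree $12$ base locus $Z_t$; equivalently, cutting $V_\delta(\bar X_t, \L_t)$ with $3-\delta$ hyperplanes of type $H_x$ (for $x$ on $\tilde P$) computes the degree of $\mathcal V_\delta$, which is precisely the quantity $d_\delta$ bounded from below in \eqref{degree}.

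Because the hard enumerative input has already been extracted in the preceding propositions, there is no real obstacle here; writing the proof is a matter of displaying the three arithmetic identities above. (Sharpness of these inequalities will only be established a posteriori as a corollary of Theorem \ref{T:tetrahedron}, once the degeneration to the tetrahedron has been shown to be well behaved.)
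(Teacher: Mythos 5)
Your proposal is correct and is exactly the paper's argument: the corollary is stated there as an immediate consequence of Propositions \ref{p:1-triangle}, \ref{p:2-triangle}, and \ref{p:3-triangle} together with the lower bound \eqref{degree}, and your three weighted sums $9+4+2\cdot4=21$, $9+6+36+2\cdot28+2\cdot8+3\cdot3=132$, and $6+36+54+2\cdot18+2\cdot56+3\cdot18+6=304$ match the multiplicities and $P$--degrees listed there. Your remark identifying $\deg(\mathcal V_\delta)$ with the degree $d_\delta$ of $V_\delta(\bar X_t,\L_t)$ cut by $3-\delta$ general point conditions on $\tilde P$ is the same bookkeeping the paper performs via the notion of $P$--degree.
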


\begin{remark}
\label{rem:D}
\begin{inparaenum}[\bf (a)]
\item
{\bf (Theorem \ref{T:triangle})}
The three inequalities in \eqref{ineq-ThmD} above are actually
equalities. 
This is proved in \S\ref{s:tetra-concl},
by using both \eqref{ineq-ThmD} and the degrees of the Severi
varieties of a general quartic surface, given by Proposition
\ref{p:deg-dual}.

Incidentally, this proves that $\varpi: \bar X\to \Delta$ is a good
model for the family $\hat f: \hat S\to \Delta$ obtained by
blowing--up $S=\P^2\times \Delta$ along $Z$, and endowed with the
appropriate subline bundle of $\O_{\hat S}(1)$.

\item
\label{rem:D-red}
In particular, we have $\fV_3=\reg\fV_3$.
It then follows from Remark \ref{r:smoothness} that the relative Severi
variety $V_3(\bar X,\L)$ is smooth at the points of $\fV_3$.
This implies 
that the general fibre of $V_3(\bar X,\L)$ is reduced.
Therefore, in the setting of Corollary \ref{coro:D}, if $a+b+c$ and
$Z$ are sufficiently general, then $\mathcal{V}_3$ consists of
$304$ distinct points.
\end{inparaenum}
\end{remark}

\section{Application to the irreducibility of
Severi varieties and to the monodromy action}
\label{S:irreducibility}

Set $\hil B= \vert \O_{\P^ 3}(4)\vert$.
We have the \emph{universal family} $p: \mathcal P\to \hil B$, such
that the fibre of $p$ over $S\in \mathcal B$ is the linear 
system $\vert \O_S(1)\vert$. The variety $\mathcal P$ is a component
of the \emph{flag Hilbert scheme}, namely the one
parametrizing pairs $(C,S)$, where $C$ is a plane quartic curve in $\P^ 3$ and $S\in \mathcal B$ contains $C$.
So $\mathcal P\subset \mathcal B\times \mathcal W$, where $\mathcal W$
is the component of the Hilbert scheme of curves in $\P^ 3$ whose
general point corresponds to a plane quartic. The map $p$ is the
projection to the first factor; we let $q$ be the projection to the second factor.

Denote by $\mathcal U\subset \mathcal B$ the open subset parametrizing
smooth surfaces, and set 
$\mathcal P_\mathcal U=p^ {-1}(\mathcal U)$.
Inside $\mathcal P_\mathcal U$ we have the \emph{universal Severi
  varieties} $\ring{\hil V}_\delta$,
$1\leqslant \delta\leqslant 3$, such that for all 
$S\in \mathcal U$, the fibre of $\ring{\hil V}_\delta$ over $S$ is the
Severi variety $V_\delta(S, \O_S(1))$.
Since $S$ is a $K3$ surface, we know that for all irreducible
components $V$ of $V_\delta(S, \O_S(1))$,
we have $\dim (V)=3-\delta$, so that all
components of $\ring{\hil V}_\delta$ have codimension $\delta$ in
$\mathcal P_\mathcal U$.
We then let $\hil V_\delta$ be the Zariski closure of $\ring{\hil
  V}_\delta$ in $\hil P$; we will call it universal Severi variety as
well.

The following is immediate (and it is a special case of a more general result, see \cite {cd-universal}):

\begin{proposition}\label {prop:irr}  The universal Severi varieties $\mathcal V_\delta$ are irreducible for $1\leqslant \delta\leqslant 3$.
\end{proposition}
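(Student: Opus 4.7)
The plan is to exhibit each $\mathcal{V}_\delta$ as the image under a dominant generically bijective morphism from an irreducible incidence variety built out of quartic surfaces decorated with prescribed node data. Specifically, I would introduce
\[
J_\delta \subset \mathcal{U} \times \check{\mathbb{P}}^3 \times \mathrm{Sym}^\delta(\mathbb{P}^3),
\]
the locally closed subset of triples $(S, \Pi, \{p_1, \ldots, p_\delta\})$ such that the $p_i$ are distinct points of $\Pi \cap S$ at which the tangent plane to $S$ is $\Pi$ (so that the $p_i$ are nodes of the plane section $\Pi \cap S$).

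First I would project $J_\delta$ onto the locally closed subset $\mathcal{B}_\delta \subset \check{\mathbb{P}}^3 \times \mathrm{Sym}^\delta(\mathbb{P}^3)$ parametrizing pairs $(\Pi, \{p_1, \ldots, p_\delta\})$ with the $p_i$ distinct points of $\Pi$. Since $\mathcal{B}_\delta$ is an open subset of a $\mathrm{Sym}^\delta(\mathbb{P}^2)$-bundle over $\check{\mathbb{P}}^3$, it is irreducible of dimension $3 + 2\delta$. Over such a pair, the fibre in $J_\delta$ is the open subset of smooth quartics $S$ imposing both the passage through each $p_i$ and the coincidence $T_{p_i} S = \Pi$, i.e. $3$ linear conditions on the $35$ coefficients of $S$ per prescribed node. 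For general $(\Pi, \{p_i\}) \in \mathcal{B}_\delta$ and $\delta \leqslant 3$, these $3\delta \leqslant 9$ conditions are independent, so the generic fibre is an irreducible open subset of a projective space of dimension $34 - 3\delta$; hence $J_\delta$ is irreducible of dimension $37 - \delta$.

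Finally, I would consider the projection $\pi: J_\delta \to \mathcal{P}$ defined by $(S, \Pi, \{p_i\}) \mapsto (S, \Pi \cap S)$. Its image contains the open subset $\mathring{\mathcal{V}}_\delta$ of pairs $(S, C)$ with $C$ an irreducible $\delta$-nodal plane section of $S$: this subset is non-empty for $\delta \leqslant 3$, and over such a pair the unique preimage is $(S, \Pi, \mathrm{Sing}(C))$, so $\pi$ is birational onto $\mathcal{V}_\delta$. As $J_\delta$ is irreducible, so is $\mathcal{V}_\delta$.

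The only non-formal step is the verification that the $3\delta$ tangency conditions on $S$ are independent at a general $(\Pi, \{p_1, \ldots, p_\delta\})$; this is the expected main obstacle, but in our regime it follows either from a direct computation with the coefficients of a quartic form, or a posteriori by dimension count, using the known fibre dimension $\dim V_\delta(S, \mathcal{O}_S(1)) = 3 - \delta$ for $S \in \mathcal{U}$ general (so $\dim \mathcal{V}_\delta = 37 - \delta$, forcing the generic fibre of $J_\delta \to \mathcal{B}_\delta$ to have the expected dimension).
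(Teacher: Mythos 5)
Your reduction to the irreducibility of the incidence variety $J_\delta$ is sound in outline, and the final step (birationality of $\pi$ onto $\mathcal V_\delta$, using that $\pi^{-1}(\mathring{\mathcal V}_\delta)$ is a non-empty open subset of $J_\delta$) is fine \emph{provided} $J_\delta$ is irreducible. The gap is precisely there, for $\delta=3$: irreducibility of the base $\mathcal B_\delta$ together with irreducibility of the \emph{generic} fibre does not give irreducibility of $J_\delta$; one needs control of \emph{all} fibres, and the fibre dimension of $J_3\to\mathcal B_3$ jumps. Indeed, requiring $T_{p_i}S=\Pi$ amounts to requiring that $S\cap\Pi$ be singular at $p_i$, and three \emph{collinear} double points do not impose independent conditions on plane quartics (any such quartic contains the line $L$ through them, so the conditions drop from $9$ to $8$). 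The collinear locus has codimension $1$ in $\mathcal B_3$ and the fibre dimension there is $26$ instead of $25$, so the preimage of the collinear locus is a $34$--dimensional constructible subset of $J_3$, the same dimension as the rest; being disjoint from the non-collinear part, it cannot lie in the boundary of its closure (which has dimension $\leqslant 33$), so it is a genuine extra irreducible component. It is non-empty: take any smooth quartic $S$ containing a line $L$ and a general plane $\Pi\supset L$, with residual cubic meeting $L$ transversely at $p_1,p_2,p_3$; at each $p_i$ the tangent plane to $S$ is automatically $\Pi$. So $J_3$ is reducible as you have defined it, and your a posteriori dimension count (which only sees the component dominating $\mathcal U$) does not detect this. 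The fix is easy but must be made explicit: discard collinear triples (their images are reducible plane sections $L+C$, irrelevant to $\mathring{\mathcal V}_3$), and check that over \emph{every} non-collinear triple the nine conditions are independent and the corresponding linear system contains a smooth surface whose section is an irreducible $3$-nodal quartic; for $\delta=1,2$ no such degenerate configurations occur and your argument goes through as written.

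For comparison, the paper's proof fibres $\mathcal V_\delta$ the other way, via the second projection $q:\mathcal V_\delta\to\mathcal W$ to the Hilbert scheme of plane quartic curves: the image is the (irreducible, by the classical results of Harris on Severi varieties of plane curves) family of $\delta$-nodal plane quartics, and the fibres — quartic surfaces containing a fixed such curve — are open subsets of linear systems of constant dimension $20$. That route buys uniformity of the fibres at the cost of quoting the plane Severi irreducibility; your route is more elementary in its inputs (only symmetric products and linear algebra) but requires handling the degenerate configurations of nodes by hand, which is exactly where it currently fails.
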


\begin{proof} It suffices to consider the projection $q: \mathcal
  V_\delta\to \mathcal W$, and notice that its image is the
  irreducible variety whose general point corresponds to a quartic
  curve with $\delta$ nodes
(cf. \cite{harris, harris-morrison}),
and that the fibres are all irreducible of the same dimension 20. 
\end{proof}

Note that the irreducibility of $\mathcal V_1$ also follows from the
fact that for all $S\in \mathcal U$, we have $V_1(S,\O_S(1))
\cong \check S$. To the other extreme, $p: \ring{\mathcal V}_3\to
\mathcal U$ is a finite cover of degree 3200.  
We will denote by $G_{4,3}\sg \mathfrak S_{3200}$ the \emph{monodromy group of this covering}, which acts transitively because
$\mathcal V_3$ is irreducible.

\subsection{The irreducibility of the family of binodal plane 
sections of a general quartic surface} \label {ssec:irred}

In the middle we have $p: \ring{\mathcal V}_2\to \mathcal U$. Though
$\mathcal V_2$ is irreducible, we cannot deduce from this that for the
general $S\in \mathcal U$, the Severi variety $V_2(S, \O_S(1))$ (i.e.,
the  curve of binodal plane sections of $S$) is irreducible. Though
commonly accepted as a known fact, we have not been able to find
any proof of this in the current literature. It is the purpose
of this paragraph to provide a proof of this fact. 

In any event, we have a commutative diagram similar to the one in \eqref {eq:stein}
\[\xymatrix{
{ \hil V}_2' \ar[r]^\nu \ar[dr] \ar[d]_{p'}
& \ring{\mathcal V}_2  \ar[d]^p  \\
\mathcal U'  \ar[r]_{f} &  \hil U
}\]
where $\nu$ is the normalization of $\ring{\hil V}_2$, and $f\circ p'$ is the
Stein factorization of $p\circ \nu: {\hil V}_2'\to \hil U$.
The morphism
$f: \mathcal U'\to \mathcal U$ is finite, of degree $h$ equal to the
number of irreducible components of $V_2(S, \O_S(1))$ for 
general $S\in \mathcal U$.
The monodromy group of this covering acts
transitively. This ensures that, for general $S\in \mathcal U$,
all irreducible components of $V_2(S,\O_S(1))$ have the same degree,
which we denote by $n$. 
By Proposition \ref {prop:irrcomp}, we have $n \geqslant
36$.

\begin{theorem}\label{thm:V2} If $S\subset \P^ 3$ is a general quartic
  surface,
then the curve $V_2(S, \O_S(1))$ is irreducible. 
\end{theorem}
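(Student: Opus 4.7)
The plan is to combine the Kummer degeneration from Section \ref{S:Kummer-degen} with the monodromy action on the nodes of a general Kummer surface. Recall that by Proposition \ref{p:deg-dual}, $V_2(S,\O_S(1))$ has degree $480$; since all its irreducible components have the same degree $n$ (by transitivity of the monodromy of $f:\mathcal U'\to\mathcal U$), we have $hn=480$, and by Corollary \ref{cor:irrcomp} we already know $n\geqslant 36$.

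First I would exploit the Kummer degeneration. By Corollary \ref{cor:thmB2ii}, the crude limit Severi variety $\cru\fV_{1,2}(S)$ for a degeneration of general quartic surfaces to a general Kummer surface $S_0$ consists of $120$ irreducible curves, canonically indexed by the unordered pairs of nodes of $S_0$, each of degree $4$ in $|\O_{S_0}(1)|$ and appearing with multiplicity one. Every irreducible component of $V_2(S_t)$, $t\in\Delta^*$, specializes to a union of some of these $120$ curves; by transitivity of the monodromy of $f$, the number $k$ of limit curves used is the same for every component. Hence $n=4k$ and $hk=120$, with $k\geqslant 9$.

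Then I would use the monodromy action of $G_{16,6}$, which permutes the $120$ limit curves via its action on pairs of nodes and preserves the induced partition into $h$ blocks of size $k$. By Lemma \ref{l:S_6}, the stabilizer of a contact conic $\Gamma$ in $G_{16,6}$ surjects onto the full symmetric group $\mathfrak S_6$ on the $6$ nodes of $\Gamma$. The induced action of $\mathfrak S_6$ on the $15$ pairs of $\Gamma$ is primitive (because $\mathfrak S_2\times\mathfrak S_4$ is a maximal subgroup of $\mathfrak S_6$), so the restriction of the global partition to these $15$ pairs is trivial: either (a) all $15$ lie in a single block, or (b) they lie in $15$ distinct blocks. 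By transitivity of $G_{16,6}$ on contact conics (which follows from self--duality, Proposition \ref{p:dual-kummer}, combined with Corollary \ref{cor:geer}), the same alternative occurs for every contact conic $\Gamma$.

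The main step is ruling out case (b) and concluding in case (a). In case (b), every block contains at most one pair from each contact conic; since every pair lies on exactly two contact conics (see \S\ref{s:16_6}), a block of size $k$ determines $2k$ pairwise distinct contact conics, whence $2k\leqslant 16$ and $k\leqslant 8$, contradicting $k\geqslant 9$. In case (a), the block $B$ containing all $15$ pairs of a chosen conic $\Gamma_1$ must also contain all $15$ pairs of every other contact conic $\Gamma_j$: indeed, $\Gamma_1\cap\Gamma_j$ is a pair of nodes lying both in $B$ (via $\Gamma_1$) and in the block attached to $\Gamma_j$, forcing these two blocks to coincide. Iterating, $B$ contains all $120$ pairs, so $h=1$, proving the irreducibility of $V_2(S,\O_S(1))$.
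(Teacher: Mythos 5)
Your proof is correct, and its core step is genuinely different from the one in the paper. Both arguments share the same setup: the degeneration to a general Kummer surface, the identification of the crude limit of $V_2(S_t,\O_{S_t}(1))$ with the $120$ lines $\mathfrak l_{pq}$ (each arising from a multiplicity-one regular component of $\fV_{1,2}(\bar X)$ mapping $4:1$ onto it), the bound $n\geqslant 36$ from Corollary \ref{cor:irrcomp}, Lemma \ref{l:S_6}, and the transitivity of the monodromy on contact conics. Where the paper proceeds by progressively enlarging the limit $\Gamma_0$ of a single component --- first producing two lines $\mathfrak l_{pq},\mathfrak l_{pq'}$ with $p,q,q'$ on a contact conic, which requires constructing an explicit monodromy element via a further degeneration to a product Kummer surface (Figure \ref{f:triples-exc}) --- you instead view the decomposition of the $120$ pairs into components as a system of imprimitivity for the monodromy action, and exploit the primitivity of the $\mathfrak S_6$-action on the $15$ pairs of nodes of a contact conic (maximality of $\mathfrak S_2\times\mathfrak S_4$ in $\mathfrak S_6$). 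This yields a clean dichotomy, the singleton case is killed by the elementary count $2k\leqslant 16$ against $k\geqslant 9$, and the other case propagates along the connected incidence graph of contact conics. Your route entirely avoids the product-Kummer degeneration and the case analysis of triples of nodes, at the cost only of a standard group-theoretic fact; it is arguably shorter and more transparent. One point you should make explicit: the assertion that the $120$ lines are \emph{partitioned} among the components (no line shared by two components), which underlies both $n=4k$, $hk=120$ and the step ``forcing these two blocks to coincide''. This is exactly what the multiplicity-one statement of Proposition \ref{p:2-kummer} buys via Remark \ref{r:smoothness} (smoothness of the relative Severi variety at the general point of each such component, hence membership in a unique irreducible component of $V_2$); you invoke the multiplicity but should spell out this inference, as the paper does at the start of its proof.
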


\begin{proof}
Let $S_0$ be a general quartic Kummer surface, and
$f:\hil S\to \Delta$ a family of surfaces induced as in Example
\ref{ex:pi3} by a pencil generated by $S_0$ and a general quartic
$S_\infty$.
Given two distinct nodes $p$ and $q$ of $S_0$, we denote by $\mathfrak l_ {pq}$
the pencil of plane sections of $S_0$ passing through $p$ and $q$.
Corollary \ref {cor:thmB2ii} asserts that the union of these lines,
each counted with multiplicity 4, is the crude limit Severi variety
$\cru \fV_2(\hil S,\O_{\hil S}(1))$.

Let $\Gamma_t$ be an irreducible component  of
$V_1(S_t,\O_{S_t}(1))$, for $t\in \Delta^*$, and let $\Gamma_0$ be its (crude)
limit as $t$ tends to $0$, which consists of
a certain number of (quadruple) curves $\mathfrak l_{pq}$. Note that, by Proposition \ref {p:2-kummer}, the pull--back of the lines
$\mathfrak l_ {pq}$ to the good limit constructed in  \S \ref {S:Kummer-degen} all appear
with multiplicity 1 in the limit Severi variety. This yields that,
 if $\mathfrak l$ is an irreducible component of $\Gamma_0$,
then it cannot be in the limit of an irreducible component
$\Gamma'_t$ of $V_1(S_t,\O_{S_t}(1))$ other than $\Gamma_t$.

We shall prove successively the following claims,
the last one of which proves the
theorem:\\
\begin{inparaenum}[(i)]
\item \label{auno}
$\Gamma_0$ contains two curves  $\mathfrak l_{pq}$, $\mathfrak l_{pq'}$,
with $q\neq q'$;\\ 
\item \label {adue}
$\Gamma_0$  contains two curves  $\mathfrak l_{pq}$, $\mathfrak l_{pq'}$,
with $q\neq q'$, and  $p,q,q'$ on a contact conic $D$ of $S_0$;\\
\item \label{atre}
there is a contact conic $D$ of $S_0$, such that 
$\Gamma_0$ contains all curves ${\mathfrak l}_{pq}$ with $p,q \in D$; \\
\item \label{aquattro}
 property \eqref{atre} holds for every contact conic of $S_0$;\\
\item \label{acinque}
$\Gamma_0$ contains all curves $\mathfrak l_{pq}$.
\end{inparaenum}

\smallskip
If $\Gamma_0$ does not verify \eqref {auno}, then it contains at most
8 curves of type $\mathfrak l_{pq}$, a contradiction
to $n \geqslant 36$. 
To prove \eqref {adue}, we consider two curves  $\mathfrak l_{pq}$ and
$\mathfrak l_{pq'}$ contained in $\Gamma_0$, and assume that $p,q,q'$ do
not  lie on a contact conic, otherwise there is nothing to prove.
Consider a degeneration of $S_0$ to a product Kummer surface
$\mathsf{S}$, and let $\mathsf{p,q,q'}$ be the limits on $\mathsf{S}$
of $p,q,q'$ respectively:
they are necessarily in one of the three configurations depicted in 
Figure \ref{f:triples-exc}.
In all three cases, we can exchange two horizontal lines in $\sf S$
(as indicated in Figure \ref{f:triples-exc}),
thus moving $\sf q'$ to $\sf q''$,
in such a way that $\sf p$ and $\sf q$ remain fixed,
and there is a limit in $\sf S$ of contact conics that contains
the three points $\sf p$, $\sf q'$, and $\sf q''$.
Accordingly, there is an element $\gamma\in G_{16,6}$ 
mapping $p,q,q'$ to $p,q,q''$ respectively, such that $p,q',q''$ lie
on a contact conic $D$ of $S_0$.
Then $\gamma(\Gamma_0)$ contains $\gamma({\mathfrak l}_{pq})=
{\mathfrak l}_{pq}$. By the remark  preceding the statement of \eqref {auno}--\eqref {acinque},  we have $\gamma(\Gamma_0)=\Gamma_0$.
It follows that $\Gamma_0$ contains ${\mathfrak l}_{pq'}$ and ${\mathfrak
  l}_{pq''}$, and therefore satisfies \eqref{adue}.

\begin{figure}
\begin{center}
\includegraphics[height=2.5cm]{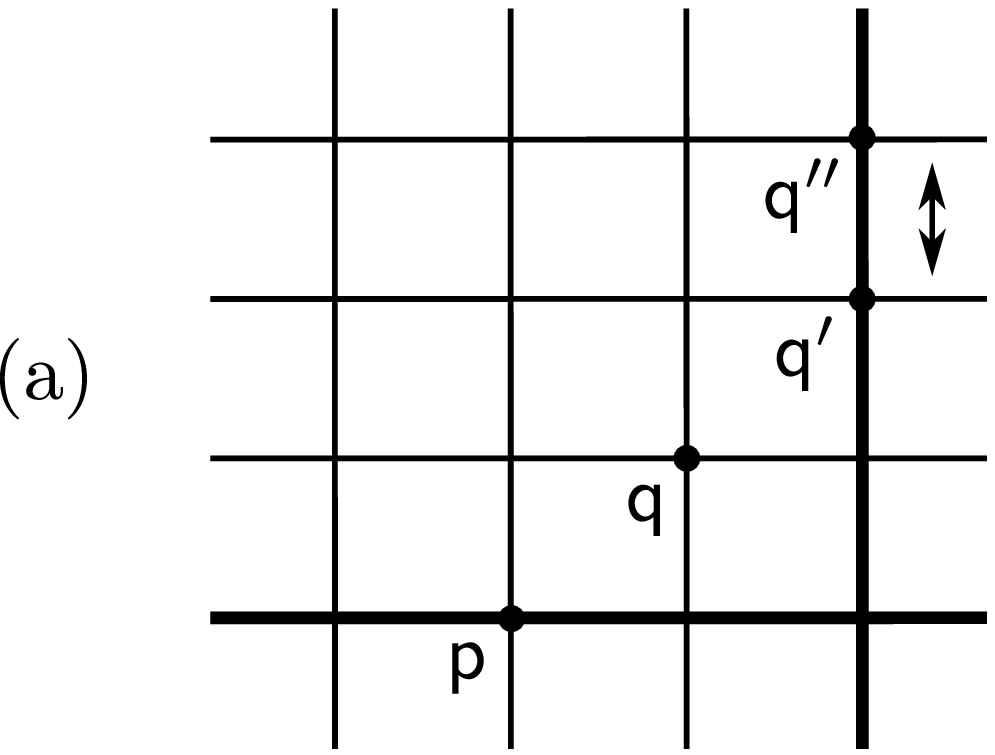}
\hspace{1cm}
\includegraphics[height=2.5cm]{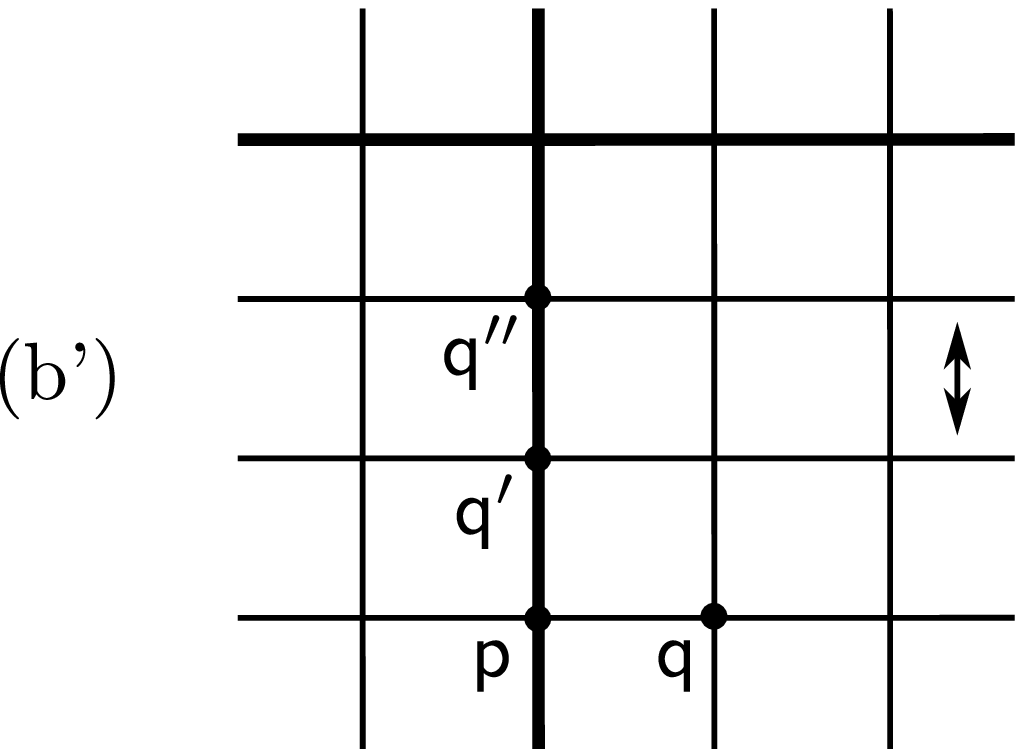}
\hspace{1cm}
\includegraphics[height=2.5cm]{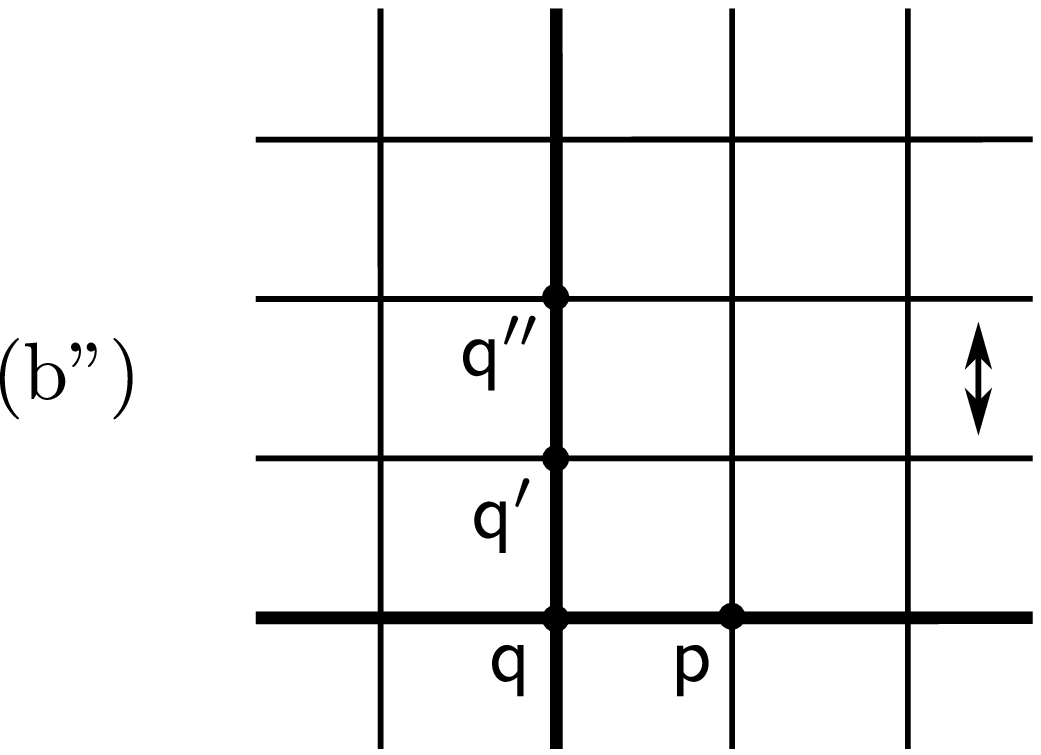}
\end{center}
\caption{How to obtain three double points on a double conic}
\label{f:triples-exc}
\end{figure}

Claim \eqref{atre} follows from \eqref{adue} and the fact
that the monodromy acts as $\mathfrak S_6$ on the set of nodes lying
on $D$
(see Lemma \ref {l:S_6}).
As for \eqref{aquattro}, let $D'$ be any other contact conic of
$S_0$. There exists $\gamma\in G_{16,6}$ interchanging $D$ and $D'$
(again by Lemma \ref {l:S_6}).
The action of $\gamma$ preserves $D\cap D'=\{x,y\}$. 
We know that
$\Gamma_0$ contains $\mathfrak l_{xy}+\mathfrak l_{xy'}$ with $y'\in D$
different from $y$. Then the same argument as above yields that
$\Gamma_0$ contains $\mathfrak l_{\gamma(x)\gamma(y')}$, where
$\gamma(x)\in \{x,y\}$ and $\gamma(y')\in D'-\{x,y\}$.
This implies that $\Gamma_0$ satisfies \eqref{adue} for $D'$, and 
therefore \eqref{atre} holds for $D'$.
Finally \eqref{aquattro} implies \eqref{acinque}.
\end{proof} 

It is natural to conjecture that Theorem \ref {thm:V2}  is a particular case of the following  general statement:

\begin {conjecture}\label{cong:irrednod} Let $S\subset \P^ 3$ be a general surface of degree $d\ge 4$. Then the following curves are irreducible:\\
\begin{inparaenum}[(i)]
\item $V_2(S, \O_S(1))$, the curve of binodal plane sections of $S$;\\
\item $V_\kappa(S, \O_S(1))$, the curve of cuspidal plane sections of $S$.\\
\end{inparaenum}
\end{conjecture}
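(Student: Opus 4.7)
The plan is to follow the blueprint of Theorem \ref{thm:V2}, extending its degeneration strategy to arbitrary degree $d\geqslant 4$. The overall philosophy is to establish universal irreducibility first, and then use a specific degeneration with large automorphism group to force the general fibre to be irreducible.

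First, I would prove universal irreducibility of both $\mathcal V_2$ and of its cuspidal analogue $\mathcal V_\kappa$ (the Zariski closure in $\mathcal P_\mathcal U$ of the locus of pairs $(C,S)$ such that $C$ is a cuspidal plane section of $S$). This is the exact Harris-style argument of Proposition \ref{prop:irr}: the projection to the universal family of binodal (resp.\ cuspidal) plane curves of degree $d$ has irreducible base by the classical Harris--Ran theorem on irreducibility of Severi varieties of plane curves, and its fibres (open subsets of the linear system of degree $d$ surfaces containing a given plane curve) are irreducible.

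To deduce fibrewise irreducibility from universal irreducibility, I would degenerate a general degree $d$ surface to the union $S_0=\Pi_1\cup\cdots\cup\Pi_d$ of $d$ independent hyperplanes, generalising the tetrahedral degeneration of \S\ref{S:4planes}. The symmetric group $\mathfrak S_d$ acts on $S_0$ by permuting faces, and this symmetry lifts to monodromy in the nearby fibres. One first constructs a good model as in \S\ref{ssec:model}, with semistable reduction blowing up vertices, edges, and the associated ordinary double points coming from the base locus of the pencil (for $d>4$ there are no higher strata, so the combinatorics is in fact more uniform than in the tetrahedral case). Next, one identifies the regular components of the limit Severi variety $\cru \fV_2(S)$ exactly as in Proposition \ref{p:2-4planes}, the ingredients being (a) dual curves of edges, (b) data attached to pairs of double points of the total space lying on non-coplanar edges, and (c) contributions from vertices; for the cuspidal case the analogous components should come from specialisations in which a tangent plane along a double curve $R\subset S_0$ acquires an extra order of tangency with $R$. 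Given an irreducible component $\Gamma_t$ of the Severi variety of $S_t$, its crude limit $\Gamma_0$ is a union of components of $\cru\fV_2$ (resp.\ $\cru\fV_\kappa$); one then argues as in claims (i)--(v) of the proof of Theorem \ref{thm:V2}: using the $\mathfrak S_d$-monodromy, one first forces $\Gamma_0$ to contain two ``adjacent'' components, then propagates along the edge/vertex graph of $S_0$ to conclude that $\Gamma_0$ is everything, so $h=1$.

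I expect two principal obstacles. The minor one is the enumerative bookkeeping: to certify that the $d$-hedral degeneration is well behaved one needs the analogue of Proposition \ref{p:deg-dual} for cuspidal sections and an extension of Proposition \ref{p:zrr}'s multiplicity count, together with the degree of the cuspidal curve of a general degree $d$ surface, which classically equals $d(d-1)(11d-24)$ and must be matched against the sum of contributions from the regular components. The serious obstacle, specific to part (ii), is that the local deformation theory of \cite{concettina1,concettina2,galati-knutsen} underlying Proposition \ref{p:zrr} is developed only for tacnodal specialisations producing nodes; handling the cuspidal analogue requires an extension of this local theory to describe how a plane section of the total space of a semistable family can specialise to a cuspidal curve in the central fibre along a double curve. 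Pending this extension, the nodal case (i) should go through in full, while (ii) would be conditional on the local theory.
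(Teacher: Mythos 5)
First, be aware that the paper offers no proof of this statement: it is stated as Conjecture \ref{cong:irrednod} and explicitly deferred to future work, the only case actually established being part (i) for $d=4$ (Theorem \ref{thm:V2}). Your proposal must therefore be judged as a research programme rather than against an argument in the text. As such it picks the right general strategy (universal irreducibility plus a degeneration with large monodromy), but it has genuine gaps beyond the one you flag. The decisive inputs in the proof of Theorem \ref{thm:V2} are not the mere existence of a symmetric degeneration; they are (a) the multiplicity-one separation statement, via Remark \ref{r:smoothness}, guaranteeing that a component of the crude limit cannot lie in the closure of two distinct components of the relative Severi variety; (b) a lower bound on the degree of any irreducible component of $V_2(S,\O_S(1))$ (Corollary \ref{cor:irrcomp}, obtained from an independent degeneration to a double quadric), without which the propagation argument cannot even start --- claim (i) in that proof is a counting argument against this bound; and (c) the certification that the degeneration is $2$--well behaved, which for $d=4$ rests on the classical degrees of Proposition \ref{p:deg-dual} together with the auxiliary computation of Theorem \ref{T:triangle}. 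None of these is supplied for your $d$-plane degeneration: for $d>4$ the analogue of (c) requires the degrees of Severi varieties of curves in linear systems with base points on arrangements of lines, precisely the kind of result the authors single out as open; and ``propagate along the edge/vertex graph'' is not yet an argument, since a priori a limit $\Gamma_0$ could be a proper $\mathfrak S_d$-stable (or stabilizer-stable) sub-configuration of the regular components. Note also that the Kummer degeneration was chosen in the paper exactly because its node monodromy $G_{16,6}$ is $2$-transitive and admits the product-Kummer specialization; you would need to develop the corresponding monodromy statements for the double points of the total space of the $d$-plane pencil.

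For part (ii) the gap you acknowledge is, as things stand, fatal: Proposition \ref{p:zrr} and the local theory it cites control only nodal and tacnodal specializations, and nothing in the paper describes how an $A_2$-singularity of a plane section arises in the limit. Two further points: the universal-irreducibility step for the cuspidal family needs the irreducibility of the locus of irreducible one-cuspidal plane curves of degree $d$ (true --- the incidence variety of pairs (point, curve cuspidal there) is a projective bundle over $\P^2$ --- but this is not Harris's nodal theorem and should be argued); and your enumerative control number is off: the cuspidal curve of the dual of a general degree $d$ surface has degree $4d(d-1)(d-2)$ (equal to $96$ for $d=4$), not $d(d-1)(11d-24)$.
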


We hope to come back to this in a future work. 

\subsection{Some noteworthy subgroups of  $G_{4,3}
\sg \sym_{3200}$}
\label{ssec:monod}

In this section we use the degenerations we studied in \S\S \ref
{S:4planes} and \ref {S:Kummer-degen} 
to give some information on the monodromy group $G_{4,3}$ of
$p:\ring{\hil V}_3\to \hil U$.
We will use the following:

\begin{remark}\label{rem:restrict} Let $f: X\to Y$ be a dominant,
  generically finite morphism of degree $n$  between projective irreducible varieties, with monodromy group $G\sg \sym_n$.
Let $V\subset Y$ be an irreducible codimension $1$ subvariety,
the generic point of which is a smooth point of $Y$.
Then $f_V:=\left. f \right |_{f^ {-1}(V)}  : f^ {-1}(V)\to V$
is still generically finite, with monodromy group $G_V$.
If $V$ is not contained in the branch locus of $f$,
then  $G_V\sg G$. 

Suppose to the contrary that $V$ is contained in a component of the
branch locus of $f$. Then
$G_V\sg \sym_{n_V}$, with $n_V:=\deg f_V < n$, and $G_V$ is no longer
a subgroup of $G$.
We can however consider the  \emph{local monodromy group 
$\loc G_V$ of $f$ around $V$}, i.e. the subgroup of $G\sg \sym_n$
generated by  permutations associated to non--trivial loops 
\emph{turning around} $V$.
Precisely: 
let $U_V$ be a tubular neighbourhood  of $V$ in $Y$;
then $\loc G_V$ is the image in $G$ of the subgroup
$\pi_1\bigl( U_V - V \bigr)$ of  $\pi_1(Y-V)$.

There is an  epimorphism $\loc G_V \to G_V$,
obtained by deforming loops
in $U_V - V$ to loops in $V$.
We let $\loc H_V$ be the kernel of this epimorphism, so that one has
the exact sequence of groups
\begin{equation}
\label{loc-monodromy-seq}
1\to \loc H_V\to \loc G_V \to G_V \to 1.
\end{equation}
\end{remark}

\medskip
We first apply this  to the degeneration studied in \S \ref
{S:4planes}. To this end, we consider the 12--dimensional subvariety
$\hil T$ of  $\hil B$ which is the Zariski closure of the
set of fourtuples of distinct planes. 
Let $f: \tetra{\tilde{\hil B}}\to \hil B$ be the blow--up of 
${\hil B}$
along $\hil T$, with exceptional divisor 
$\tilde {\hil T}$.
The proof of the following lemma  (similar to Lemma \ref
{l:tg-id}) can be left to the reader: 

\begin{lemma}\label{lem:blow}
Let $X$ be a general point of $\hil T$. Then the fibre of $f$ over $X$
can be identified with 
$\vert \O_\Lambda(4)\vert$, where $\Lambda={\rm Sing}(X)$.
\end{lemma}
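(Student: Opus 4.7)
Since $\hil T \subset \hil B$ has dimension $12$ in the $34$-dimensional $\hil B$, and a general $X = P_1+P_2+P_3+P_4$ is a smooth point of $\hil T$, the fibre $f^{-1}(X)$ of the blow-up is canonically the projectivization of the normal space $N_{\hil T/\hil B,X}$, of dimension $21$. The plan is to compute $T_X\hil T$ explicitly as a subspace of $T_X\hil B = \H^0(\P^3,\O(4))/\langle X\rangle$, identify it with $\H^0(\P^3,\mathcal{I}_\Lambda(4))/\langle X\rangle$ via a dimension count, and then use the restriction exact sequence along $\Lambda$ to identify the normal space with $\H^0(\Lambda,\O_\Lambda(4))$.

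First, I would compute $T_X\hil T$. Writing $X = \ell_1\ell_2\ell_3\ell_4$ with $\ell_i$ a defining linear form of $P_i$, and setting $L_k:=\prod_{i\neq k}\ell_i$, the differential at $([\ell_1],\ldots,[\ell_4])$ of the natural map $(\check\P^3)^4\to \hil B$ sends a tangent vector $(m_1,\ldots,m_4)$, with $m_k\in \H^0(\P^3,\O(1))$ representing a class modulo $\langle\ell_k\rangle$, to $\sum_k L_k m_k$ in $\H^0(\P^3,\O(4))/\langle X\rangle$. Restricting to each $P_i$ shows the kernel of this map is $4$-dimensional (spanned by $m_k=c_k\ell_k$), so the image has dimension $12$, matching $\dim\hil T$ and confirming smoothness. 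Each such tangent vector vanishes on $\Lambda=\bigcup_{i<j}(P_i\cap P_j)$: on the edge $P_i\cap P_j$, every $L_k$ vanishes, since if $k\notin\{i,j\}$ it contains both $\ell_i$ and $\ell_j$, while if $k\in\{i,j\}$, say $k=i$, it still contains $\ell_j$. Hence $T_X\hil T \subseteq \H^0(\P^3,\mathcal{I}_\Lambda(4))/\langle X\rangle$.

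Next, I would verify equality by computing $\h^0(\P^3,\mathcal{I}_\Lambda(4))=13$. Taking the $P_i$ to be the four coordinate hyperplanes, so that $\Lambda$ is the union of the six coordinate lines, a degree-$4$ monomial $x_0^{a_0}x_1^{a_1}x_2^{a_2}x_3^{a_3}$ vanishes on all of $\Lambda$ iff for each pair $\{i,j\}$ at least one of $a_i,a_j$ is positive, i.e., at most one exponent vanishes: this yields the unique monomial $x_0x_1x_2x_3$ and the $12$ monomials of shape $x_j^2 x_k x_l$, totalling $13$. Therefore $\H^0(\P^3,\mathcal{I}_\Lambda(4))/\langle X\rangle$ is also $12$-dimensional, and coincides with $T_X\hil T$.

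The proof concludes with the restriction exact sequence along $\Lambda$. The partial-normalization sequence
\[
0 \to \O_\Lambda \to \bigoplus_{i<j}\O_{L_{ij}} \to \bigoplus_{v}\C^2 \to 0,
\]
where $v$ ranges over the four triple points of $\Lambda$ (the vertices of the tetrahedron), gives $\h^0(\Lambda,\O_\Lambda(4))=6\cdot 5-4\cdot 2=22$ upon twisting by $\O(4)$ and observing that evaluation at vertices is surjective. Since $35-13=22$, the restriction $\H^0(\P^3,\O(4))\to \H^0(\Lambda,\O_\Lambda(4))$ is surjective, yielding the canonical identification
\[
N_{\hil T/\hil B,X} = \H^0(\P^3,\O(4))/\H^0(\P^3,\mathcal{I}_\Lambda(4)) \cong \H^0(\Lambda,\O_\Lambda(4)),
\]
which projectivizes to $f^{-1}(X)\cong |\O_\Lambda(4)|$. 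The main obstacle is the dimensional identification in Step 2: while the inclusion $T_X\hil T\subseteq \H^0(\mathcal{I}_\Lambda(4))/\langle X\rangle$ is formal, equality rests on the explicit monomial count, the only genuinely computational input.
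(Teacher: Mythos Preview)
Your proof is correct. The paper does not actually give a proof of this lemma: it only says that the argument is ``similar to Lemma~\ref{l:tg-id}'' and ``can be left to the reader''. Your write-up is precisely the natural way to make that hint explicit --- identifying $T_X\hil T$ with $\H^0(\P^3,\mathcal{I}_\Lambda(4))/\langle X\rangle$ and then reading off the normal space from the restriction sequence --- so there is no divergence in approach, only in level of detail.
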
 

Thus, for general $X\in \hil T$, a general point of the fibre of $f$
over $X$ can be identified with a pair $(X,D)$, with
$D\in  \vert \O_\Lambda(4)\vert$ general,  where 
$\Lambda={\rm Sing}(X)$.
Consider a family $f:S\to \Delta$ of surfaces in $\P^3$, induced as in
Example \ref{ex:pi3} by a pencil $\mathfrak{l}$
generated by $X$ and a general quartic; 
then the singular locus of $S$ is a member of
$\vert \O_\Lambda(4)\vert$,
which corresponds to the tangent direction normal to $\hil T$ defined
by $\mathfrak l$ in $\hil B$.

\medskip
Now the universal family $p: \mathcal P \to \mathcal B$ can be pulled
back to $\tilde p: \tilde {\mathcal P}\to \tetra{\hil{\tilde B}}$,
and the analysis of \S \ref {S:4planes} tells us that we have
a generically finite map
$\tilde p: \tilde{\hil V}_3 \to \tetra{\tilde{\hil B}}$, which
restricts to $p: \ring{\hil V}_3\to \hil U$ over $\hil U$,
and such that $\tilde {\mathcal T}$ is in the
branch locus of $\tilde p$. 
We let $\tetra G$ be the 
monodromy group of 
$\tilde p: \tilde{\hil V}_3 \to \tetra{\tilde{\hil B}}$
on $\tilde{\hil T}$,
and $\loc{\tetra G}$, resp. $\loc{\tetra H}$, be
as in \eqref{loc-monodromy-seq}.

\begin{proposition}\label{prop:monodrom1}
Consider a general $(X,D)\in \tilde {\mathcal B_t}$.
One has:\\
\begin{inparaenum}[\normalfont (a)]
\item \label {it:111} $\tetra G\cong \prod _{i=1}^ 4 G_i$, where:\\
\begin{inparaenum}[(i)]
\item  \label {it:222}  $G_1\cong \mathfrak S_{1024}$  is the monodromy
  group of planes containing three points in $D$, but no
  edge of $X$; \\
\item  \label {it:333}  $G_2\cong \sym_4\times \sym_3\times
(\sym_{4})^ 2$  is
  the monodromy group of planes containing a vertex of $X$ and two
  points in $D$, but no edge of $X$;\\ 
\item  \label {it:444}  $G_3\cong \mathfrak S_6\times \mathfrak S_4$  is the
  monodromy group of planes containing an edge of $X$, and a point in
  $D$ on the opposite edge of $X$;\\ 
\item \label {it:555}   $G_4\cong  \mathfrak S_4$  is the monodromy group
  of faces of $X$;\\ 
\end{inparaenum}
\item  \label {it:666}  
$\loc{\tetra H}\cong \mathfrak S_3\times G \times H$,
where $G\sg \sym_{16}$ is the monodromy group  of 
 bitangent lines to $1$--nodal plane quartics as in Proposition
\ref{p:projectW},
 and $H\sg \sym_{304}$ is the monodromy group of irreducible  trinodal
curves in the linear system of quartic curves with 12 base points at a
general divisor of $\vert \O_{a+b+c}(4)\vert$, with $a,b,c$ three
lines not in a pencil (see \S \ref {S:triangle}).  
\end{inparaenum}
\end{proposition}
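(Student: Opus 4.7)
The plan is to apply Remark \ref{rem:restrict} to the subvariety $\tilde{\hil T} \subset \tetra{\tilde{\hil B}}$, using the explicit description of the limit Severi variety $\fV_{1,3}(\bar X)$ provided by Corollary \ref{cor:thmBiii} (equivalently, Proposition \ref{p:3-4planes}). All subsequent arguments rest on the fact that the 3200 three-nodal plane sections of $S_t$ degenerate, as $t\to 0$ in the family $\varpi:\bar X\to\Delta$ above a general point of $\tilde{\hil T}$, into the $1024+192+24+4=1244$ configurations enumerated in Corollary \ref{cor:thmBiii}, with respective local multiplicities $1,3,16,304$.

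For part (a), the product decomposition $\tetra G\cong \prod_{i=1}^4 G_i$ is a consequence of the observation that each of the four types of limit is distinguished by a combinatorial invariant (the nature of the intersection of the associated plane with the tetrahedron $X$ and the divisor $D$), hence is preserved by any monodromy transformation. Identification of each factor then proceeds by producing enough monodromy. The factor $G_4\cong\sym_4$ is immediate by letting $X$ vary in $\hil T$, which permutes the four faces as the full symmetric group. For $G_3$ (resp.\ $G_2$), the labeling by pairs (edge, point of $D$ on opposite edge), resp.\ by $4$-tuples (vertex, unordered pair of edges among the three opposite to the vertex, point on each of these two edges), reveals the claimed product structure; the individual $\sym_4$ and $\sym_3$ factors are realized by letting $X$ vary (hence permuting vertices, faces, edges) and letting $D$ vary in the fibre $|\O_\Lambda(4)|$ of the blow-up, which independently permutes the four points cut out on each edge. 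The delicate point is to show that $G_1\cong \sym_{1024}$: I would achieve this by producing a collection of transpositions via further degenerations inside $\tilde{\hil T}$ (making two nodes of $S$ collide along an edge, or making $D$ acquire a double root on an edge), and then use that the obvious subgroup $(\sym_4)^6\rtimes \sym_4$ acts $2$-transitively on the set of $1024$ triples to generate $\sym_{1024}$ from any single transposition.

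For part (b), I would use the exact sequence \eqref{loc-monodromy-seq} together with the multiplicity data from Corollary \ref{cor:thmBiii}. Type (i), with multiplicity $1$, contributes trivially to $\loc{\tetra H}$. For type (ii), the three curves clustering at each of the $192$ limits correspond to the three singular hyperplane sections of the cubic surface $T=\phi_k(T^k)$ of Proposition \ref{p:cubics-gauss}, i.e.\ to the three lines of $T$ joining pairs of its $A_2$ singularities; the group $\sym_3$ is obtained by combining the local monodromy $\Z/3$ around $\tilde{\hil T}$ at a fixed $(X,D)$ with the $\sym_3$-action on the three lines of $T$ produced by varying $(X,D)$ in $\tilde{\hil T}$. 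For type (iii), the 16 curves clustering at each of the $24$ limits are the preimages of the $16$ bitangent lines to the plane quartic $B_0$ of Proposition \ref{p:projectW}, and their monodromy is by definition the group $G\sg \sym_{16}$ given in Corollary \ref{cor:projectW}. For type (iv), the 304 curves clustering at each of the four faces are the members of $V_3(\mathcal V)$ described in Theorem \ref{T:triangle}, whose monodromy is $H$. The fact that these three contributions assemble into a \emph{direct} product follows from the disjointness of the exceptional components $T^k$, $W_{ij}$, $P_i$ of $\bar X_0$ supporting the three classes of limits, which allows one to realize the three families of loops independently within a small neighbourhood of $(X,D)$ in $\tetra{\tilde\hil B}$.

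The principal obstacle I anticipate is the identification $G_1\cong\sym_{1024}$ in part (a): the combinatorial action of the natural symmetries of the pair $(X,D)$ generates only a very small subgroup, so one must produce vanishing cycles via genuinely geometric degenerations and verify that the resulting transpositions are transitive enough under the combinatorial action to generate the full symmetric group. A secondary subtlety in part (b) is to check that the $\sym_3$ factor is realized in full, and not only its cyclic subgroup of order $3$, which is what a single local loop around $\tilde{\hil T}$ would produce; this requires the compatibility argument between local loops and the global monodromy on $\tilde{\hil T}$ sketched above.
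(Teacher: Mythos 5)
Your overall strategy coincides with the paper's: both rest on Corollary \ref{cor:thmBiii} (equivalently Proposition \ref{p:3-4planes}), on the criterion that a doubly transitive subgroup of $\sym_n$ containing a transposition is all of $\sym_n$, and, for part (b), on identifying the sheets clustering over $\tilde{\hil T}$ with the geometric objects carried by the exceptional components of the good model. The paper's own proof is a two-line appeal to these facts, so the substance lies exactly where you place it. However, your treatment of $G_1$ contains a genuine gap. The group $(\sym_4)^6\rtimes\sym_4$ is not $2$-transitive on the $1024$ triples; it is not even transitive. The $16$ non-facial triples of edges of the tetrahedron split into two $\sym_4$-orbits: the $4$ triples of edges concurrent at a vertex and the $12$ triples forming a path using all four vertices, so the combinatorial group has (at least) two orbits, of sizes $256$ and $768$, on the $1024$ planes. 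Hence the stabilizer of a plane is far from transitive on the rest, and the reduction of $G_1\cong\sym_{1024}$ to the production of a single transposition collapses. Worse, the degenerations you propose (two nodes of $S$ colliding on an edge, or $D$ acquiring a double root) produce a transposition of two of the $24$ nodes, which induces on the $1024$ planes a product of many disjoint transpositions, never a single one; since every monodromy permutation of these planes is induced by a permutation of the $24$ nodes (each plane being the span of a unique triple), no transposition of planes can be obtained this way. Your argument for \eqref{it:222} therefore cannot be repaired along the proposed lines, and you would need a genuinely different source of monodromy (or of double transitivity) than the combinatorial symmetries of $(X,D)$.

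A second, more localized error occurs in part (b), case (ii). The three curves clustering at a point of $V(M^k,\delta_{T^k}=1,E,E')$ are \emph{not} the three lines of $T$ joining pairs of its $A_2$-points: those lines form a fixed, monodromy-invariant configuration attached to the special tangent planes $\Pi_\ell$ with $\Pi_\ell\cap T=3\ell$. The correct identification, and the one the paper uses, is that the two conditions imposed by $E$ and $E'$ cut a general pencil $\ell\subset|\O_T(1)|$, and the three curves are the three singular members of that pencil, i.e.\ the three points of $\check T\cap\ell$ for a general line $\ell\subset\check\P^3$. The factor $\sym_3$ then arises as the monodromy of a general line section of the irreducible cubic surface $\check T$ (which is again a three-$A_2$-nodal cubic by Proposition \ref{p:cubics-gauss}); this is the standard irreducibility-plus-tangency argument, and it also settles your worry about obtaining the full $\sym_3$ rather than only $\Z/3$. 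The identifications you give for the $G$ and $H$ factors, via Corollary \ref{cor:projectW} and Theorem \ref{T:triangle}, agree with the statement and with the paper.
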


\begin{proof} The proof follows from Corollary \ref {cor:thmBiii}.
  Recall  that a group $G\sg \sym_n$ is equal to $\mathfrak S_n$,
  if and only if it contains a transposition and it is doubly
  transitive. Using this, it is easy to verify  the assertions in
  \eqref  {it:222}--\eqref  {it:555}
(see \cite[p.698]{harris-gal}).
As for \eqref  {it:666},  the
  factor $\mathfrak S_3$ comes from the fact that the monodromy acts as
  the full symmetric group on a  general line section of the
  irreducible cubic surface $T$ as in Proposition \ref
  {p:cubics}. \end{proof}

Analogous considerations can be made for the degeneration studied in
\S  \ref {S:Kummer-degen}. In that case, we consider the
18--dimensional subvariety $\hil K$ of  $\hil B$ which is
the Zariski closure of the set $\ring{\hil K}$ of Kummer surfaces. 
Let  $g: \kum{\tilde{\hil B}} \to \hil B$ be the blow--up 
along $\hil K$, with exceptional divisor $\tilde{\hil K}$.
In this case we have:

\begin{lemma}\label{lem:blow2} Let $X\in \hil K$ be a general point,
  with singular locus $N$. Then the fibre of $g$ over $X$ can be
  identified with $\vert \O_N(4)\vert\cong \P^ {15}$.\end{lemma}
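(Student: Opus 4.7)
The plan is to show that $\hil K$ is smooth of codimension $16$ at a general $X \in \hil K$, to identify the normal space $N_{\hil K/\hil B, X}$ naturally with $\H^0(N, \O_N(4))$ via restriction, and then to apply the fact that the fibre of a blow-up over a smooth point of the centre is the projectivized normal space.

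First, I would observe that a first-order deformation $X + t Y$ of $X$ as a quartic surface preserves each of the $16$ nodes $p_i \in N$ if and only if $Y(p_i) = 0$ for all $i$. Hence
\[
T_X \hil K \;\subseteq\; \bigl\{\, Y \in \H^0(\P^3, \O(4)) \;:\; Y|_N = 0 \,\bigr\}\big/\langle X \rangle,
\]
and the right-hand side is the kernel in $T_X \hil B$ of the evaluation map $\mathrm{ev}_N : T_X \hil B \to \H^0(N, \O_N(4))$. The key technical point is that $N$ imposes $16$ independent conditions on quartics; granted this, the codimension of the right-hand side is $16$, matching the codimension $16$ of $\hil K$ in $\hil B$, and thus $\hil K$ is smooth at $X$ with tangent space equal to $\ker(\mathrm{ev}_N) / \langle X \rangle$.

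To prove that $N$ imposes $16$ independent conditions on quartics in $\P^3$, I would argue by semicontinuity and degenerate $X$ to a product Kummer surface $\mathsf{X}$ as in \S\ref{s:monodromy}, whose $16$ singular points form a $4 \times 4$ grid $\mathsf N$ on a smooth quadric $Q \cong \P^1 \times \P^1 \subset \P^3$. Using the short exact sequence
\[
0 \to \H^0(\P^3, \O(2)) \xrightarrow{\,\cdot\,q\,} \H^0(\P^3, \O(4)) \to \H^0(Q, \O_Q(4,4)) \to 0,
\]
where $q$ is the equation of $Q$ (exactness following from the dimension count $35-10=25$), it suffices to show that $\mathsf N$ imposes $16$ independent conditions on $|\O_Q(4,4)|$. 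Since $\mathsf N$ is cut out on $Q$ by a complete intersection of two divisors of classes $(4,0)$ and $(0,4)$, the sections of $\O_Q(4,4)$ vanishing on $\mathsf N$ form a $9$-dimensional subspace (spanned by $x_1 x_2 x_3 x_4 \cdot \H^0(\O_{\P^1}(4))$ and $y_1 y_2 y_3 y_4 \cdot \H^0(\O_{\P^1}(4))$, with a one-dimensional overlap), hence $\mathsf N$ imposes exactly $25-9=16$ independent conditions on $\H^0(Q,\O_Q(4,4))$, and the claim follows by composition with the surjection above.

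Finally, the restriction map $\mathrm{ev}_N$ induces an isomorphism $N_{\hil K/\hil B, X} \cong \H^0(N, \O_N(4))$; passing to projectivizations (with the convention of \S\ref{conv}), the fibre of $g$ over $X$ identifies with $\P(\H^0(N, \O_N(4))) = |\O_N(4)| \cong \P^{15}$. The main obstacle is the independence of the conditions imposed by $N$ on quartics, which I would settle through the above product Kummer degeneration.
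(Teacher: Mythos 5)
Your proof is correct, and it fills a genuine gap: the paper states Lemma \ref{lem:blow2} without proof (its tetrahedral analogue, Lemma \ref{lem:blow}, is explicitly left to the reader as being ``similar to Lemma \ref{l:tg-id}''). The identification you propose --- evaluation at $N$ inducing an isomorphism $N_{\hil K/\hil B,X}\cong \H^0(N,\O_N(4))$ --- is certainly the one the authors intend, but to make it literal one needs exactly the two inputs you isolate: smoothness of $\hil K$ at a general $X$ with tangent space $\ker(\mathrm{ev}_N)/\langle X\rangle$, and surjectivity of $\mathrm{ev}_N$, i.e.\ the independence of the conditions imposed by the sixteen nodes on quartics. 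Your reduction of the latter to the $4\times 4$ grid on a smooth quadric, via the product Kummer degeneration of \S\ref{s:monodromy} and lower semicontinuity of the rank of the evaluation map, is sound. One point should be made explicit: exhibiting the $9$--dimensional spanning set only bounds the space of $(4,4)$--forms through the grid from \emph{below}, which is the wrong direction; what you need is that this space is at most $9$--dimensional. This follows either from the Koszul resolution of the complete intersection $\mathsf{N}$ (as your phrasing suggests), or directly by observing that a $(4,4)$--form vanishing on the grid restricts on each ruling $\{x_i\}\times\P^1$ to a multiple of the degree--$4$ binary form cutting out the four points $y_j$, hence lies in $f\cdot\H^0(\O_Q(0,4))+g\cdot\H^0(\O_Q(4,0))$, where $f$ and $g$ are the forms of bidegrees $(4,0)$ and $(0,4)$ cutting out the grid. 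With that line added, the argument is complete.
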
 

The universal family $p: \mathcal P \to \mathcal B$ can be
pulled back to $\tilde p: \tilde {\mathcal P}\to 
\kum{\tilde{\hil B}}$.
The analysis of \S \ref  {S:Kummer-degen} tells us that
we have a map
$\tilde p: \tilde{\hil V}_3 \to \kum{\tilde{\hil B}}$, 
generically finite over $\tilde {\hil K}$, which
is  in the branch locus of $\tilde p$. 
We let $\kum G$ be the 
monodromy group of $\tilde p$ on $\tilde {\mathcal K}$,
and set $\loc{\kum G}$ and $\loc{\kum H}$
as in \eqref{loc-monodromy-seq}.

\begin{proposition}\label{prop:monodrom2} One has:\\
\begin{inparaenum}[\normalfont (a)]
\item \label {it:sic}
$\kum G\cong G_{16,6}\times G'$, where $G'$ is the monodromy group of unordered
triples of distinct nodes of a general Kummer surface which do not lie on a
contact conic
(see \S\ref{s:monodromy} for the definition of $G_{16,6}$);\\ 
\item \label{it:tic}
$\loc{\kum H}\cong \sym_8 \times G''$,
  where $G''$ is the monodromy group of the tritangent planes to a
  rational curve $B$ of degree $8$ as in the statement of Proposition 
\ref{p:2:1}. 
\end{inparaenum}
\end{proposition}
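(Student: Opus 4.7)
The plan parallels that of Proposition \ref{prop:monodrom1}, using the Kummer degeneration of \S\ref{S:Kummer-degen} and the enumeration of regular components of $\fV_{1,3}(\bar X)$ given in Proposition \ref{p:3-4planes_Kummer}. Over a general point $(X,D) \in \tilde{\hil K}$, a fiber of $\tilde p: \tilde{\hil V}_3 \to \kum{\tilde{\hil B}}$ consists of 3200 points partitioning into two families of labels:
\begin{inparaenum}[\normalfont (i)]
\item 16 contact--conic labels, each carrying 80 points (identified with tritangent planes to the branch curve $B$ of Proposition \ref{p:2:1});
\item 240 triple labels, each carrying 8 points (indexing the choices of small resolutions at the three nodes of the triple).
\end{inparaenum}

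For part (a), one shows that $\kum G$ acts on the 256 labels through two independent channels. The natural projection $\tilde{\hil K} \to \hil K$ is dominant with connected generic fiber $|\O_N(4)|\cong \P^{15}$ by Lemma \ref{lem:blow2}, so the monodromy from loops in $\ring{\hil K}$ lifts to $\kum G$; by Proposition \ref{p:dual-kummer} and \S\ref{s:monodromy}, this produces a diagonal copy of $G_{16,6}$ permuting the 16 conic labels with image $G_{16,6}\sg \sym_{16}$ and simultaneously the 240 triple labels with image $G'\sg \sym_{240}$. To obtain the direct product structure, I would exhibit further loops staying inside a single fiber $|\O_N(4)|\cong \P^{15}$ (hence fixing the Kummer surface and its 16 contact conics) that nevertheless permute the 240 triple labels by arbitrary elements of $G'$, thus providing a second, independent copy of $G'$.

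For part (b), a small loop in $\hil B$ transverse to $\hil K$ at $(X,D)$ corresponds to a family $f:S \to \Delta$ as in \S\ref{S:Kummer-degen}, with good model $\varpi:\bar X \to \Delta$. The kernel $\loc{\kum H}$ in the exact sequence \eqref{loc-monodromy-seq} captures the permutations of points inside each block of 80 or 8, and decomposes as two independent actions on disjoint blocks:
\begin{inparaenum}[\normalfont (i)]
\item on the 80 points over each contact conic, which by Proposition \ref{p:2:1} and Corollary \ref{c:2:1} correspond to tritangent planes to the degree 8 rational curve $B$, the monodromy is $G''$ by definition;
\item on the 8 points over each triple $\{s',s'',s'''\}$, indexing the small resolutions of the three ordinary double points of $S$ lying above $p_{s'},p_{s''},p_{s'''}$, the full $\sym_8$ arises by combining the transparent $(\Z/2)^3$--action from independent resolution-sign flips with permutations of the three nodes of the triple afforded by the transitivity results of \S\ref{s:monodromy}.
\end{inparaenum}
This yields $\loc{\kum H}\cong \sym_8 \times G''$.

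The main obstacle is realizing the full $\sym_8$ factor in (b): the naive small-resolution analysis provides only the subgroup $(\Z/2)^3 \sg \sym_8$, and producing the extra generators of $\sym_8$ requires combining local degeneration monodromy with the global node permutations from Propositions \ref{p:2-transitivity} and \ref{p:3-transitivity}. A parallel issue in (a) is exhibiting non--trivial triple monodromy supported inside $|\O_N(4)|$, which is the technical heart of the independence claim $\kum G \cong G_{16,6}\times G'$.
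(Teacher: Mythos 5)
Your overall skeleton --- the $16+240$ labels over a general point of $\tilde{\hil K}$, the two block types of sizes $80$ and $8$, and the use of the sequence \eqref{loc-monodromy-seq} --- is the right one, and your identification of the $80$--blocks with the tritangent planes to $B$, with monodromy $G''$ by definition, agrees with the paper. But both of the points you single out as ``the main obstacles'' come from misreadings of the geometry, and the fixes you propose would fail. For (a): a loop contained in a single fibre $|\O_N(4)|\cong\P^{15}$ of $\tilde{\hil K}\to\hil K$ fixes the Kummer surface $X$, hence fixes each of its $16$ nodes and each of its $16$ contact conics individually; since both the conic--labels and the $240$ triple--labels are determined by $X$ alone (only the $8$ points \emph{inside} a block depend on the normal direction $D$), such loops act trivially on all $256$ labels and cannot produce the ``independent copy of $G'$'' you want. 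The action of $\kum G$ on the labels factors entirely through the monodromy of $\ring{\hil K}$ on nodes and contact conics, and the paper simply reads (a) off from Proposition \ref{p:3-4planes_Kummer}, the two orbits giving the two named factors.

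For (b): the $\sym_8$ factor has nothing to do with small resolutions. In the Kummer degeneration the total space $S$ is smooth; after the degree $2$ base change the resulting ordinary double points are resolved by genuine blow--ups producing the exceptional quadrics $\tQ_s$, and there is no binary resolution choice anywhere. The $8$ points in the block over a triple $\{s',s'',s'''\}$ are the members of the twisted system $\fL_{s's''s'''}$ with one node on each of $\tQ_{s'},\tQ_{s''},\tQ_{s'''}$, i.e.\ the $8$ points of the complete intersection $\check Q_{s'}\cap\check Q_{s''}\cap\check Q_{s'''}$ of three general quadrics in the dual $\P^3$; the paper's proof of (b) is precisely the assertion that the monodromy on the $8$ intersection points of three general quadrics in $\P^3$ is the full symmetric group. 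Your $(\Z/2)^3$ has no geometric meaning here, and the ``permutations of the three nodes of the triple'' you propose to adjoin move other labels of the fibre, hence do not lie in the kernel $\loc{\kum H}$ at all --- nor would $(\Z/2)^3\rtimes\sym_3$, of order $48$, come close to generating $\sym_8$ in any case.
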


\begin{proof} Part \eqref {it:sic} follows right away from Proposition
  \ref {p:3-4planes_Kummer}. Part \eqref   {it:tic} also follows,
  since the monodromy on complete intersections of three general
  quadrics in $\P^ 3$ (which gives the multiplicity 8 in \eqref
  {item:aa} of  
Proposition \ref {p:3-4planes_Kummer}) is clearly the full symmetric
group.  \end{proof} 

Concerning the group $G'$ appearing in Proposition \ref {prop:monodrom2}, \eqref {it:sic},
remember that it acts with at most two orbits on the set of of unordered
triples of distinct of nodes of a general Kummer surface which do not lie on a
contact conic (see Proposition \ref {p:3-transitivity}, \eqref {it:beta}).

\begin{closing}

\vskip .2cm \noindent
\textsc{%
Dipartimento di Matematica, 
Universit\`a degli Studi di Roma Tor Vergata,
Via della Ricerca Scientifica,
 00133 Roma, Italy} \\
\texttt{cilibert@mat.uniroma2.it}

\vskip .4cm \noindent
\textsc{%
Institut de Math\'ematiques de Toulouse (CNRS UMR 5219),
Universit\'e Paul Sabatier,
 31062 Tou\-louse Cedex 9, France} \\
\texttt{thomas.dedieu@m4x.org}

\end{closing}

\end{document}